\documentclass[12pt,reqno]{amsart} 
\usepackage{amsmath, amssymb, mathscinet,mathtools}
\usepackage{amsthm} 
\usepackage[all]{xy}
\usepackage{enumerate}
\usepackage[T1]{fontenc}

\usepackage{hyperref} 
\usepackage{mathrsfs} 

\usepackage{graphicx}
\usepackage{color}
\usepackage{comment}

\usepackage{titlesec}

\titleformat{\section}{\normalfont\bfseries\large}{\thesection.}{0.5em}{}
\titleformat{\subsection}{\normalfont\bfseries}{\thesubsection.}{0.5em}{}
\titleformat{\subsubsection}{\normalfont\bfseries}{\thesubsubsection.}{0.5em}{}

\theoremstyle{plain} 
\theoremstyle{plain}
\newtheorem{theorem}{Theorem}[section]
\newtheorem{lemma}[theorem]{Lemma}
\newtheorem{corollary}[theorem]{Corollary}
\newtheorem{proposition}[theorem]{Proposition}

\theoremstyle{definition}
\newtheorem{definition}[theorem]{Definition}
\newtheorem{remark}[theorem]{Remark}
\newtheorem{example}[theorem]{Example}

\newtheorem{conjecture}[theorem]{Conjecture}
\newtheorem{question}[theorem]{Question}

\newcommand{\N}{\mathbb{N}}
\def\rank{\mathop{\mathrm{rank}}\nolimits}
\def\dim{\mathop{\mathrm{dim}}\nolimits}

\def\det{\mathop{\mathrm{det}}\nolimits}

\newcommand{\supp}{\mathop{\mathrm{supp}}\nolimits}

\makeatletter
\def\address#1#2{\begingroup
	\noindent\parbox[t]{7.8cm}{%
		\small{\scshape\ignorespaces#1}\par\vskip1ex
		\noindent\small{\itshape E-mail address}%
		\/: #2\par\vskip4ex}\hfill%
	\endgroup}%
\makeatother
\title[]{\uppercase{Entropy on Homogeneous Spaces and Classification Results for Subgroups with the Pair Rapid Decay Property
}}
\author{
	\textsc{Jvbin Yao} 
}
\date{} 
\begin{document}
	
	\maketitle

	\begin{abstract}
		
		We study pair rapid decay for homogeneous spaces \(G/H\) and its applications to random walks and subgroup structure. The entropy framework for groups with rapid decay is extended to homogeneous spaces, proving that the asymptotic Shannon entropy on \(G/H\) agrees with a spectral-radius quantity \(c(G,H;\mu)\) for measures with finite entropy and suitable finite moment, and that the lower and upper asymptotic R\'enyi entropy rates converge to the Shannon entropy as \(\alpha\downarrow1\). For finitely supported measures, we also obtain a spectral-radius formula for the asymptotic R\'enyi entropy rates \(h_\alpha(X,\mu)\), \(\alpha\in(1,2]\), and hence continuity at \(\alpha=1\). We further introduce the notion of subexponential Lorentz control for pairs \((G,H)\) and study the associated classification problems for finitely generated subgroups \(H\le G\) for which \((G,H)\) has pair rapid decay or belongs to \(\mathbf{SLC}_{\mathrm{subexp}}\). We obtain a complete criterion in the strongly relatively hyperbolic case and explicit classifications in several hyperbolic settings. We also show that for \(G=\mathrm{SL}_n(\mathbb Z)\), \(n\ge3\), the conditions \((G,H)\in \mathbf{SLC}_{\mathrm{subexp}}\), pair rapid decay, and finite index of \(H\) in \(G\) are equivalent.

	\end{abstract}

	\section{Introduction}
	\label{sec:introduction} 
	
	The rapid decay property (RD) was first proved for free groups by Haagerup \cite{Haagerup1979}. Its systematic study goes back to Jolissaint \cite{Jolissaint1990}, who introduced the property in its modern form and established it for groups of polynomial growth; hyperbolic groups were subsequently shown to satisfy RD by work of de la Harpe and Jolissaint \cite{deLaHarpe1988,Jolissaint1990}. Since then, RD has been proved for many further classes of geometric groups, including relatively hyperbolic groups relative to subgroups with RD \cite{DrutuSapir2005}, mapping class groups \cite{BehrstockMinsky2011}, and various cocompact lattices in semisimple Lie groups \cite{Lafforgue2000}. At the same time, RD is far from ubiquitous: groups containing amenable subgroups of superpolynomial growth do not satisfy RD \cite{Jolissaint1990}. In particular, \(\mathrm{SL}(n,\mathbb Z)\) for \(n\ge3\), and more generally many nonuniform lattices in higher-rank semisimple Lie groups, lie outside the usual RD framework. These examples suggest that the classical formulation of RD is often too rigid to capture interesting quotient and relative settings. Motivated by this, Chatterji and Zarka introduced a pair version of RD by replacing the left-regular representation of \(G\) with the quasi-regular representation on \(G/H\) \cite{ChatterjiZarka2024v1}. This relative formulation is flexible enough to capture situations in which the ambient group itself does not satisfy RD, while the homogeneous space \(G/H\) still satisfies a strong decay estimate.

	 The first goal of this paper is analytic. We extend to homogeneous spaces the entropy framework recently developed for groups with RD in \cite{anderson2024}. The key input is a weighted interpolation estimate for quasi-regular representations on \(\ell^q(G/H)\), obtained under the pair rapid decay assumption of \cite{ChatterjiZarka2024v1}. This yields entropy identities on \(G/H\), shows that the lower and upper asymptotic R\'enyi entropy rates converge to the Shannon entropy as \(\alpha\downarrow1\), and when \(\mu\) is finitely supported, gives spectral-radius formulas for the asymptotic R\'enyi entropy rates and continuity of \(h_\alpha(X,\mu)\) at \(\alpha=1\).

	To state the result, we write \(\underline{h}(X,\mu)\) and \(\overline{h}(X,\mu)\) for the lower and upper asymptotic entropy of the random walk on \(X=G/H\), \(h_\alpha(X,\mu)\) for the asymptotic R\'enyi entropy rate of order \(\alpha>1\), \(r_\alpha(\mu)\) for the spectral radius of the Markov operator on \(\ell^\alpha(X)\), and \(c(G,H;\mu)\) for the corresponding spectral-radius quantity introduced below. Precise definitions are given in Section~\ref{sec:entropy}.

	\begin{theorem}\label{thm:A}
		Let \(X=G/H\). Assume that \((G,H)\) has pair rapid decay, and let \(\mu\) be a probability measure on \(G\) with finite entropy and finite \(\beta\)-moment with respect to \(1+\ell\) for some \(\beta\ge 2s_1\). Then
		\[
		\underline{h}(X,\mu)=\overline{h}(X,\mu)=c(G,H;\mu).
		\]
		Hence the asymptotic Shannon entropy $
		h(X,\mu)\coloneqq \lim_{n\to\infty}\frac1n H(\mu^{*n}*\delta_{eH})$
		exists, and
		\[
		\lim_{\alpha\downarrow1}\underline{h}_{\alpha}(X,\mu)
		=
		\lim_{\alpha\downarrow1}\overline{h}_{\alpha}(X,\mu)
		=
		h(X,\mu).
		\]
		If \(\mu\) is finitely supported, then for every \(\alpha\in(1,2]\) the limit defining \(h_\alpha(X,\mu)\) exists and satisfies
		$h_\alpha(X,\mu)=\frac{\alpha}{1-\alpha}\log r_\alpha(\mu).$
		In particular,
		\[
		\lim_{\alpha\downarrow1} h_\alpha(X,\mu)=h(X,\mu).
		\]
	\end{theorem}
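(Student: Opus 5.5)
We follow the scheme of \cite{anderson2024}, replacing the left-regular representation by the quasi-regular representation $\lambda_{G/H}$ on $\ell^q(X)$. Write $p_n=\mu^{*n}*\delta_{eH}$ and $\|p_n\|_\alpha$ for its $\ell^\alpha(X)$ norm. Then $H_\alpha(p_n)=\frac{\alpha}{1-\alpha}\log\|p_n\|_\alpha$, and $c(G,H;\mu)$ is the limit of $-\frac{d}{d\alpha}\big|_{\alpha=1}$-type quantities built from $\log r_\alpha(\mu)$, in the form used in Section~\ref{sec:entropy}.

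\textbf{Step 1: weighted interpolation estimate.} The analytic input is a bound on the operator norm $\|\lambda_{G/H}(f)\|_{\ell^q\to\ell^q}$ for $q\in[1,2]$. At $q=2$, pair rapid decay gives
\[
\|\lambda_{G/H}(f)\|_{2\to2}\le C\|f(1+\ell)^{s_1}\|_2 .
\]
At $q=1$ we have the trivial bound $\|f\|_1$. Riesz--Thorin (or Stein) interpolation with weights then yields
\[
\|\lambda(f)\|_{q\to q}\le C^{\theta}\|f(1+\ell)^{\theta s_1}\|_{p_\theta},\qquad 1/q=1-\theta/2 .
\]
Combined with H\"older's inequality and the $\beta$-moment hypothesis (this is where $\beta\ge 2s_1$ enters), this gives $\|p_n\|_\alpha\ge r_\alpha(\mu)^n$. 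It also gives a matching upper bound of the form
\[
\|p_n\|_\alpha\le \mathrm{poly}(n)\cdot\|\mu^{*n}\|_{\text{weighted}},
\]
with polynomial loss only.

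\textbf{Step 2: Shannon entropy.} Subadditivity-type arguments are not available on $X$, since $H(p_{n+m})\le H(p_n)+H(\mu^{*m})$ rather than being exactly subadditive. We therefore sandwich the Shannon entropy instead. Monotonicity of R\'enyi entropy gives
\[
H(p_n)\ge H_\alpha(p_n)=\tfrac{\alpha}{1-\alpha}\log\|p_n\|_\alpha \quad\text{for }\alpha>1 .
\]
Together with the polynomial-loss upper bound from Step 1, this produces
\[
\liminf \tfrac1nH(p_n)\ge \tfrac{\alpha}{1-\alpha}\log r_\alpha(\mu)-o(1)\quad\text{for each }\alpha>1 .
\]
For the upper bound we use duality at $\alpha<1$, i.e.\ $q<1$ via convexity in $1/q$. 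Alternatively, we bound $\limsup\frac1nH(p_n)$ by $c(G,H;\mu)$ using the concavity of $t\mapsto\log\|p_n\|_{1/t}$ and the derivative at $t=1$; finite entropy of $\mu$ controls $\frac1nH(p_n)\le H(\mu)$ and makes the derivative finite. Letting $\alpha\downarrow1$ squeezes both bounds to $c(G,H;\mu)$. The same squeeze shows that $\underline h_\alpha$ and $\overline h_\alpha$ lie between $\frac{\alpha}{1-\alpha}\log r_\alpha(\mu)$ plus $o(1)$ corrections and $h$. Since $\frac{\alpha}{1-\alpha}\log r_\alpha(\mu)\to c(G,H;\mu)$ by definition and convexity, this gives the R\'enyi limits.

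\textbf{Step 3: finitely supported case.} When $\mu$ is finitely supported, $\mu^{*n}$ is supported in a ball of radius $O(n)$. The weight $(1+\ell)^{\theta s_1}$ then costs only a factor $O(n^{s_1})$, so Step 1 gives
\[
r_\alpha(\mu)^n\le\|p_n\|_\alpha\le Cn^{s_1}\,r_\alpha(\mu)^n\quad\text{for }\alpha\in(1,2] .
\]
The lower bound holds because the spectral radius of the Markov operator on $\ell^\alpha$ is attained on $\delta_{eH}$ by a Gelfand-formula argument, using $G$-invariance. Hence the limit $h_\alpha(X,\mu)=\frac{\alpha}{1-\alpha}\log r_\alpha(\mu)$ exists. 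Continuity at $\alpha=1$ follows from the second assertion of the theorem.

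The main obstacle will be Step 2's upper bound for $\overline h$. That step requires passing from norms at $\alpha>1$ to Shannon entropy, uniformly in $n$, which means controlling the derivative of $\alpha\mapsto\log\|p_n\|_\alpha$ near $\alpha=1$. My first attempt will be to use log-convexity in $1/\alpha$ to compare the difference quotient at $\alpha$ with the derivative at $1$. I will then bound the interpolation constant's $\theta$-dependence so that it contributes only $O(\log n)$.
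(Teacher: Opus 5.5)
Your proposal has a genuine gap at exactly the step you flag. The upper bound $\overline h(X,\mu)\le c(G,H;\mu)$ is the only place where pair rapid decay is needed for the Shannon statement, and it is not proved; neither route you sketch can prove it. For fixed $p_n$, the function $\phi(t)=\log\|p_n\|_{1/t}$ is convex with $\phi(1)=0$ and $\phi'(1)=H(p_n)$. For $t<1$, i.e.\ $\alpha=1/t>1$, convexity only says that the secant slope $-\phi(t)/(1-t)=H_\alpha(p_n)$ is at most $\phi'(1)$. That is $H_\alpha(p_n)\le H(p_n)$, the direction you already used for the lower bound. Bounding $H(p_n)$ from above by secants needs $t>1$, i.e.\ $\alpha<1$. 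There is no Banach space, spectral radius, or RD estimate there, so ``duality at $\alpha<1$'' has nothing to act on. Finally, uniform-in-$n$ control of $\frac{d}{d\alpha}\log\|p_n\|_\alpha$ near $\alpha=1$ would already imply $\overline h_\alpha\to\overline h$, which is part of what is being proved; convexity alone cannot supply it.

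The missing idea is to use an inequality that is exact for each $n$, and to let $p\to\infty$ before $n\to\infty$. Set $q=p/(p-1)$, $\theta=2/p$, $s=2s_1$ and $\nu_n=p_n$. Interpolation between the $\ell^1$ bound and pair RD, together with $r_q(\mu)^n\le\|\lambda_{X,q}(\mu)^n\|$, gives for every $n$ and every $p\ge2$
\[
r_q(\mu)^n\le C_h^{2/p}\,\bigl\|\pi_\#\bigl(\mu^{*n}(1+\ell)^{s/p}\bigr)\bigr\|_{\ell^q(X)}\le C_h^{2/p}\,\bigl\|\nu_n\,\omega_n^{1/p}\bigr\|_{\ell^q(X)}.
\]
The second step is conditional Jensen with $\omega_n(x)=\mathbb E[(1+\ell(Z_n))^s\mid X_n=x]$, where $X_n=Z_nH$. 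Now apply $-\frac pn\log$ and let $p\to\infty$ with $n$ fixed. The left side gives $-p\log r_q(\mu)\to c(G,H;\mu)$. On the right, $-p\log\|\nu_n\omega_n^{1/p}\|_q\to H(\nu_n)-\sum_x\nu_n(x)\log\omega_n(x)$; this is a one-sided derivative at $q=1$ for one fixed measure of finite entropy, so no uniformity in $n$ enters. The result is
\[
c(G,H;\mu)\ge\frac1nH(\nu_n)-\frac1n\Bigl(\log\mathbb E\bigl[(1+\ell(Z_n))^{s}\bigr]+2\log C_h\Bigr).
\]
The hypothesis $\beta\ge 2s_1$ is used exactly here, to make the bracket $O(\log n)$ via $1+\ell(Z_n)\le\sum_{i\le n}(1+\ell(s_i))$. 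It does not enter through H\"older in Step~1. Your remark about the $\theta$-dependence of the interpolation constant points this way, but the argument closes only in this fixed-$n$ form.

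Two further errors need repair.

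\emph{The form of pair RD.} It is stated with the mixed norm $\|\pi_\#(|f|(1+\ell)^{s_1})\|_{\ell^2(X)}$, not $\|f(1+\ell)^{s_1}\|_{\ell^2(G)}$. The interpolated estimate is therefore in the $(q,1)$ norm $\|\pi_\#(|f|(1+\ell)^{\theta s_1})\|_{\ell^q(X)}$, obtained by bilinear complex interpolation of weighted mixed-norm couples. Only this norm of $\mu^{*n}$ reduces to $\|\nu_n\|_q$; an $\ell^q(G)$ norm would give R\'enyi entropies of $\mu^{*n}$ on $G$.

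\emph{The Step~3 sandwich.} It is inverted: $r_\alpha(\mu)^n\le\|p_n\|_\alpha$ is false in general. For simple random walk on $\mathbb Z$ with $\alpha=2$, one has $r_2=1$ but $\|p_n\|_2\to0$. The correct chain is $\|p_n\|_\alpha\le\|\lambda_{X,\alpha}(\mu)^n\|\le C^{\theta}(1+nR)^{D\theta}\|p_n\|_\alpha$. The left inequality is trivial since $p_n=\lambda_{X,\alpha}(\mu)^n\delta_{eH}$; the right one is the RD input. The same trivial inequality gives $\underline h\ge c$ and $\underline h_\alpha\ge\frac{\alpha}{1-\alpha}\log r_\alpha(\mu)$ for moment measures. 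Together with $\overline h_\alpha\le h=c$, this yields the R\'enyi limits. Your Step~1 claim $\|p_n\|_\alpha\ge r_\alpha(\mu)^n$ is therefore neither justified nor needed.
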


	 In particular, when \(H=\{e\}\), Theorem~\ref{thm:A} gives a partial affirmative answer to the continuity problem raised in \cite{GolubevaPanTamuz2024} for random walks on non-amenable groups.

	The second goal of the paper is structural. The proof of the entropy results shows that several arguments require much less than polynomial rapid decay. This motivates a weaker notion, called \emph{subexponential Lorentz control}, denoted \((G,H)\in \mathbf{SLC}_{\mathrm{subexp}}\), in which the polynomial weight in pair rapid decay is replaced by a radial weight of subexponential growth. This weaker condition is still strong enough for the entropy-theoretic applications developed here, but flexible enough to occur well beyond the usual rapid decay setting.
	
	A natural problem is therefore to classify, for a given finitely generated group \(G\), those finitely generated subgroups \(H\le G\) for which \((G,H)\) has pair rapid decay, or more generally satisfies \((G,H)\in \mathbf{SLC}_{\mathrm{subexp}}\). In the co-amenable case, Chatterji and Zarka obtained a satisfactory answer: if \(H\) is co-amenable in \(G\), then \((G,H)\) has pair rapid decay if and only if the Schreier graph \(G/H\) has polynomial growth \cite[Theorem~1.2]{ChatterjiZarka2024v1}. Outside the co-amenable case, however, no comparable general classification seems to be known. Our approach is therefore case-by-case: we first treat classes for which a clean description is possible, and then turn to broader families where one can still isolate strong necessary conditions or rigidity phenomena.

	Our first structural result provides a more precise formulation of a statement from \cite{ChatterjiZarka2024v1} and gives a complete criterion in the strongly relatively hyperbolic setting.

    \begin{theorem}\label{thm:B}
	Let \(G\) be a finitely generated group, and let \(H<G\) be a proper finitely generated subgroup. Assume that \(G\) is strongly relatively hyperbolic with respect to \(H\). Then the pair \((G,H)\) has pair rapid decay if and only if \(H\) has polynomial growth with respect to the induced length \(\ell_G|_H\).
    \end{theorem}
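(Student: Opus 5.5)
The proof has two directions, and the pair-RD condition will be used in its Chatterji--Zarka form. That form asks for a polynomial $P$ with
\[\|\lambda_{G/H}(f)\|_{\mathrm{op}}\le P(r)\|f\|_2\]
for $f$ supported in the $r$-ball of $G$ with respect to $\ell_G$. An equivalent variant is phrased with $\ell^2(G/H)$-norms of pushforwards.

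\emph{Necessity.} Suppose $(G,H)$ has pair RD. I would test the inequality on $f=\mathbf 1_{B_H(r)}$, where $B_H(r)=\{h\in H:\ell_G(h)\le r\}$. Since $f$ is supported in $H$, it fixes the coset $eH$, so $\lambda_{G/H}(f)\delta_{eH}=|B_H(r)|\,\delta_{eH}$. Hence
\[\|\lambda_{G/H}(f)\|\ge |B_H(r)|,\]
while $\|f\|_2=|B_H(r)|^{1/2}$. Pair RD then gives $|B_H(r)|^{1/2}\le P(r)$, so $H$ has polynomial growth with respect to $\ell_G|_H$. This direction needs no hyperbolicity; the only care is matching the exact normalization of pair RD used in \cite{ChatterjiZarka2024v1}.

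\emph{Sufficiency.} Assume $|B_H(r)|\le Cr^d$. Here I would adapt the Dru\c{t}u--Sapir proof of RD for relatively hyperbolic groups \cite{DrutuSapir2005} to the quasi-regular representation. I expect the proof in \cite{ChatterjiZarka2024v1} does essentially this, and the point is to make it precise. The plan has three steps.

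\begin{enumerate}[(i)]
\item Reduce to a three-ball estimate: it suffices to bound
\[\sum_{x\in B(n)}\ \sum_{yH\in G/H} f(x)\,\xi(x^{-1}yH)\,\eta(yH)\le P(n)\,\|f\|_2\|\xi\|_2\|\eta\|_2 .\]
For this I would use the Chatterji--Ruane decomposition of $\ell^2(G/H)$ functions over spheres, or equivalently estimate $\langle f*\xi,\eta\rangle$ for $\xi,\eta$ supported on spheres of radii $k,m$ in the Schreier graph. Coset spheres are measured by the minimal length of a representative.
\item Use relative thin triangles in the coned-off (relative) Cayley graph. A product $x\cdot g$ with $|x|=n$, $|g|=k$ and $|xg|=m$ factors as $x=ab$ and $g=b^{-1}c$ with $|b|\approx(n+k-m)/2$. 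The defect is that the center of the triangle may lie in a parabolic coset $tH$, inside which the path can travel far. Dru\c{t}u--Sapir control this "fat" part by RD of the parabolic. Here, since the parabolic is $H$ itself and we quotient by $H$ on the right, the relevant contribution is instead a sum over elements of $H$ of bounded $\ell_G$-length. Polynomial growth of $B_H$ then gives polynomially many choices.
\item Apply the standard Cauchy--Schwarz bookkeeping (Haagerup's argument) to bound the sum by $(n+1)\cdot\#\{\text{center choices}\}\cdot\|f\|\|\xi\|\|\eta\|$, which is polynomial in $n$.
\end{enumerate}

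\emph{Main obstacle.} Step (ii) is the hard part. It requires precisely controlling how geodesic triangles in $G$ interact with the coset structure of $G/H$. This uses strong relative hyperbolicity, specifically bounded coset penetration and the fact that peripheral cosets are uniformly quasiconvex and almost malnormal. Those properties show that a coset geodesic decomposes, up to bounded error, into a "hyperbolic part" (handled as in the hyperbolic RD proof) and a single excursion in an $H$-coset, whose length is counted using $|B_H(r)|\le Cr^d$. I would first try to prove a lemma stating that each triple $(x,\xi\text{-coset},\eta\text{-coset})$ determines the middle piece $b$ up to $O(1)$ ambiguity times an element of $B_H(Cn)$. Summing over $B_H(Cn)$ then costs only the polynomial factor $Cn^d$.
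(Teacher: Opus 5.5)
\textbf{The necessity argument has a genuine gap: it uses the wrong norm in pair rapid decay.} In \cite{ChatterjiZarka2024v1} and in this paper, the right-hand side is $\|f\|_{(2,1)}=\|\pi_\#|f|\|_{\ell^2(G/H)}$, where $|f|$ is summed in $\ell^1$ along each coset of $H$. This norm dominates $\|f\|_{\ell^2(G)}$ and is not equivalent to it. Your inequality implies pair RD, but not conversely.

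For $f=\mathbf 1_{B_H(r)}$, the whole support lies in the single coset $eH$. So $\|f\|_{(2,1)}=|B_H(r)|$, and your test yields only the vacuous bound $|B_H(r)|\le P(r)\,|B_H(r)|$.

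Necessity genuinely needs the hypothesis. Every finite-index $H$ gives pair RD, since $\|f\|_h\le\|f\|_1\le\sqrt{[G:H]}\,\|f\|_{(2,1)}$. Likewise $H=\{e\}\times F_2\le F_2\times F_2$ gives pair RD by pulling back RD of $F_2$ (Lemma~\ref{lem:pullback-pair-rd-slc}). In both examples $H$ can have exponential growth, so your claim that this direction "needs no hyperbolicity" is false.

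The missing idea is almost malnormality: for $t\in G\setminus H$ (which exists since $H$ is proper), the group $J_t:=H\cap t^{-1}Ht$ is finite \cite{Osin2006}. You must spread the test function over many cosets by conjugating it:
\begin{itemize}
\item Take $f_R=\sum_{h\in F_R}\delta_{tht^{-1}}$ with $F_R=H\cap B_G(R)$.
\item The points $th_1t^{-1}$ and $th_2t^{-1}$ lie in one left coset of $H$ only if $h_1^{-1}h_2\in J_t$. Hence $\|f_R\|_{(2,1)}\le\sqrt{|J_t|\,|F_R|}$.
\item On the other hand, $f_R*\delta_t=\sum_h\delta_{th}$ lies in the single coset $tH$, so $\|f_R\|_h\ge|F_R|$.
\item Since $\supp f_R\subset B(R+2\ell(t))$, pair RD gives $|F_R|\le C^2|J_t|\,(1+R+2\ell(t))^{2D}$.
\end{itemize}
This is exactly the paper's argument.

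\textbf{For sufficiency, re-running the Dru\c{t}u--Sapir argument for $\lambda_{G/H}$ is unnecessary.} As written it is also not a proof: the lemma in step (ii), locating the central piece up to an element of $B_H(Cn)$, is only announced. The direction follows from known results:
\begin{itemize}
\item On $H$ one has $\ell_G\le c\,|\cdot|_H$. So polynomial growth for $\ell_G|_H$ gives polynomial growth of $H$ for its own word metric. Hence $H$ is virtually nilpotent, so it has RD and is amenable.
\item By \cite[Thm.~1.1]{DrutuSapir2005}, $G$ then has RD.
\item The pointwise bound $\pi_\#|f*\phi|\le\lambda_{G/H}(|f|)\,\pi_\#|\phi|$ gives $\|f\|_h\le\|\lambda_{G/H}(|f|)\|$.
\item Amenability of $H$ gives $\lambda_{G/H}\prec\lambda_G$.
\end{itemize}
Together these yield $\|f\|_h\le\|\lambda_G(|f|)\|\le P(r)\|f\|_2\le P(r)\|f\|_{(2,1)}$ for $f$ supported in $B(r)$. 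The paper likewise obtains RD of $G$ from \cite{DrutuSapir2005} and then passes to the pair via \cite[Proposition~2.15]{ChatterjiZarka2024v1}.
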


    We then turn to classification results in the classes considered here. In several important families of groups, the finitely generated subgroups \(H\le G\) for which \((G,H)\in \mathbf{SLC}_{\mathrm{subexp}}\) admit an explicit description.

 \begin{theorem}\label{thm:C}
 	Let \(H\le G\) be a finitely generated subgroup. Then the following hold.
 	
 	\begin{enumerate}
 		\item If \(G=F_n\) for some \(n\ge 2\), or if \(G=\pi_1(\Sigma_g)\), where \(\Sigma_g\) is a closed orientable surface of genus \(g\ge 2\), then \((G,H)\in \mathbf{SLC}_{\mathrm{subexp}}\) if and only if \(H\) has finite index in \(G\), or \(H\cong \mathbb Z\), or \(H=\{e\}\).
 		
 		\item More generally, if \(G=\mathrm{SL}(2,\mathbb Z)\), or if \(G\) is a non-elementary finitely generated locally quasiconvex hyperbolic group, or if \(G\) is a hyperbolic group with \(\beta_1^{(2)}(G)>0\), then \((G,H)\in \mathbf{SLC}_{\mathrm{subexp}}\) if and only if \(H\) has finite index in \(G\) or \(H\) is virtually cyclic.
 		
 		\item If \(M\) is a closed hyperbolic \(3\)-manifold and \(G=\pi_1(M)\), then \((G,H)\in \mathbf{SLC}_{\mathrm{subexp}}\) if and only if \(H\) is virtually cyclic or \(\operatorname{Core}_G(H)\neq \{e\}\).
 		
 		\item If \(G<\mathrm{Isom}(\mathbb H^3)\) is a non-elementary finitely generated discrete subgroup, then \((G,H)\in \mathbf{SLC}_{\mathrm{subexp}}\) if and only if \(H\) is virtually cyclic, or \(H\) is a rank-\(2\) parabolic subgroup, or \(H\) is \(s\)-normal in \(G\).
 	\end{enumerate}
 \end{theorem}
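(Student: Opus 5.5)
Theorem~\ref{thm:C} is a classification, so the proof has two directions in each item: sufficiency (the listed subgroups lie in \(\mathbf{SLC}_{\mathrm{subexp}}\)) and necessity (no other subgroups do). Both rest on structural facts about \(\mathbf{SLC}_{\mathrm{subexp}}\) that I would establish first.

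\emph{Sufficiency tools.}
\begin{enumerate}
\item[(S1)] If \(H\) has finite index, then \(G/H\) is finite, so every radial weight is bounded and the estimate is trivial.
\item[(S2)] If \(H\) is virtually cyclic (in particular \(H\cong\mathbb Z\) or \(H=\{e\}\)) in a group with RD, then a cyclic subgroup is undistorted in a hyperbolic group. Hence \(H\) has linear growth in \(\ell_G|_H\), and the relatively-hyperbolic-type argument of Theorem~\ref{thm:B} applies. Alternatively, one could argue directly: \(\ell^2(G/H)\) is controlled by \(\ell^2(G)\) after averaging over \(H\), at the cost of a polynomial factor. Pair RD follows, and pair RD implies \(\mathbf{SLC}_{\mathrm{subexp}}\).
\item[(S3)] If \(N=\operatorname{Core}_G(H)\neq\{e\}\), or more generally \(H\) is \(s\)-normal, I would pass to the quotient by the core. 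In the hyperbolic 3-manifold case, a finitely generated subgroup with nontrivial normal core is, by the tameness/covering theorem, either of finite index or a virtual fiber subgroup. For fibers, \(G/H\) is quasi-isometric to a line, hence amenable of polynomial growth, and the Chatterji--Zarka co-amenable criterion \cite[Theorem~1.2]{ChatterjiZarka2024v1} gives pair RD.
\item[(S4)] Rank-2 parabolic subgroups in Kleinian groups are handled by Theorem~\ref{thm:B}: \(G\) is hyperbolic relative to the cusp subgroups, and \(\mathbb Z^2\) has polynomial growth. Since Theorem~\ref{thm:B} requires \(G\) to be strongly relatively hyperbolic with respect to \(H\) alone, I would use a version of it allowing several peripheral subgroups.
\end{enumerate}

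\emph{Necessity tool.} I would prove a lemma: if \((G,H)\in\mathbf{SLC}_{\mathrm{subexp}}\) and \(\mu\) is symmetric and finitely supported, then the spectral radius of \(\mu\) on \(\ell^2(G/H)\) equals that on \(\ell^2(G)\). The idea is that the subexponential weight, combined with Lorentz control, bounds \(\|\lambda_{G/H}(\mu^{*n})\|\) by \(e^{o(n)}\sup|\mu^{*n}|\)-type quantities. Combining this with the Kesten-type identity \(\rho_{G/H}=\rho_G\) shows that, in the non-amenable settings of Theorem~\ref{thm:C}, \(H\) must be "spectrally negligible". Concretely, I would use the known equality \(\rho(G/H)=\rho(G)\) if and only if \(H\) has growth rate at most \(\tfrac12\) of the critical exponent (the Coulon--Dal'Bo--Sambusetti and Roblin-type cogrowth criterion). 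The plan is then to use the structure of hyperbolic groups to upgrade "small cogrowth" to the listed cases. A quasiconvex infinite-index non-elementary \(H\) has a limit set that is a proper subset, and I would show directly that \(\mathbf{SLC}_{\mathrm{subexp}}\) forces the Poincaré series of \(H\) to converge at exponent \(0^+\), that is, subexponential growth of \(H\) in \(\ell_G\). A subgroup of subexponential growth in a hyperbolic group is virtually cyclic.

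\emph{Applying the tools item by item.}
\begin{enumerate}
\item For free and surface groups, every finitely generated \(H\) is quasiconvex. Subexponential growth then forces \(H\) to be virtually cyclic, hence \(\mathbb Z\) or trivial since these groups are torsion-free, unless \(H\) has finite index.
\item For \(\mathrm{SL}(2,\mathbb Z)\) and locally quasiconvex hyperbolic groups, the same argument applies. When \(\beta_1^{(2)}>0\), I would instead use a result that finitely generated infinite normal-core or infinite-index subgroups force growth considerations; the first \(L^2\)-Betti number gives that infinite-index finitely generated subgroups cannot contain infinite normal subgroups (Gaboriau), which lets me reduce to the quasiconvex-type dichotomy.
\item For hyperbolic 3-manifolds, tameness plus Canary's covering theorem gives the dichotomy: a finitely generated \(H\) is either geometrically finite, hence quasiconvex and handled as above, or a virtual fiber, which has nontrivial core and is handled by (S3).
\item For Kleinian groups, the same dichotomy holds with the addition of parabolic subgroups. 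A rank-1 parabolic subgroup is virtually cyclic. A rank-2 parabolic subgroup is handled by (S4). Remaining degenerate pieces are \(s\)-normal.
\end{enumerate}

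\emph{Main obstacle.} The key step is the necessity lemma: turning \(\mathbf{SLC}_{\mathrm{subexp}}\) into subexponential growth of \(H\) relative to \(\ell_G\). I would first attempt it by testing the Lorentz inequality on \(f=\mathbf 1_{B_G(n)}\) against the vector \(\delta_{eH}\). The mass that \(f\) places on \(H\cap B_G(n)\) is then compared with \(e^{o(n)}\|f\|_2\). Since \(|B_G(n)|\) grows exponentially, this yields \(|H\cap B_G(n)|\le e^{o(n)}|B_G(n)|^{1/2}\), which is only partial. I would then refine this using quasiconvexity, restricting to geodesics through cosets so that the two sides balance and the exponent drops to \(o(n)\).
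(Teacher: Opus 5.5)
Your necessity argument has a genuine gap. Every item of Theorem~\ref{thm:C} routes its ``only if'' direction through it, so none of them is actually proved.

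\begin{itemize}
\item The proposed lemma, that $(G,H)\in\mathbf{SLC}_{\mathrm{subexp}}$ forces $\rho_{G/H}(\mu)=\rho_G(\mu)$, is false. Take $[G:H]<\infty$, or $H$ one of the virtual fiber subgroups of item~(3), which are co-amenable with polynomial-growth Schreier graph. Then $(G,H)$ even has pair RD, yet $\rho_{G/H}(\mu)=1>\rho_G(\mu)$ by Kesten.
\item The cogrowth criterion would not help even if it applied. The condition $\delta_H\le\tfrac12\delta_G$ holds for many non-elementary quasiconvex subgroups, e.g.\ $\langle a^{100},b^{100}\rangle\le F_2$. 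So spectral-radius equality cannot detect virtual cyclicity.
\item Your fallback, testing $\mathbf 1_{B_G(n)}$ against $\delta_{eH}$, yields only bounds like $|H\cap B_G(n)|\le e^{o(n)}|B_G(n)|^{1/2}$. These are compatible with exponential growth of $H$, and ``refining along geodesics through cosets'' is not an argument.
\item Groups with $\beta_1^{(2)}(G)>0$ need not be locally quasiconvex (e.g.\ $H_0*F_N$ with $H_0$ containing a non-quasiconvex finitely presented subgroup). So that case cannot be reduced to a quasiconvex dichotomy.
\item In item~(4), ``remaining degenerate pieces are $s$-normal'' is exactly the claim that needs proof.
\end{itemize}

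\textbf{The missing idea.} Failure of $s$-normality by itself kills exponential growth. Suppose $t\in G$ has $J_t:=H\cap t^{-1}Ht$ finite, and set $f_R:=\sum_{h\in H\cap B_G(R)}\delta_{tht^{-1}}$ and $\phi:=\delta_t$.
\begin{itemize}
\item $f_R*\phi=\sum_h\delta_{th}$ lies in the single coset $tH$, so $\|f_R*\phi\|_{(2,1)}=|H\cap B_G(R)|$.
\item $f_R$ meets each left coset of $H$ in at most $|J_t|$ points, so $\|f_Rw\|_{(2,1)}\le W(R+2\ell(t))\,(|J_t|\,|H\cap B_G(R)|)^{1/2}$.
\item Hence $|H\cap B_G(R)|\le C_h^2|J_t|\,W(R+2\ell(t))^2$, which is subexponential. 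This is Corollary~\ref{cor:non-s-normal}; the $K_x$-coset variant gives $|H\cap B_G(R)|\le C_h^2W(R+\ell(x))^2|K_x\cap B_G(2R)|$.
\end{itemize}
A non-elementary $H$ contains some $F_2=\langle u,v\rangle$, and $\ell_G\le C\ell_{F_2}$ on it, so $|H\cap B_G(R)|$ grows exponentially. Thus $\mathbf{SLC}_{\mathrm{subexp}}$ forces every non-elementary $H$ to be $s$-normal, with no quasiconvexity or undistortedness needed.

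Limit sets enter only to produce $t$. A proper limit set plus north--south dynamics gives $t$ with $t\Lambda_H\cap\Lambda_H=\varnothing$, hence $J_t$ finite; this settles items~(1) and~(2) in the quasiconvex cases. The remaining cases then go as follows.
\begin{itemize}
\item For $\beta_1^{(2)}(G)>0$, apply Peterson--Thom \cite[Cor.~5.13]{PetersonThom2011} to the $s$-normal subgroup $H$.
\item In item~(3), $s$-normality gives $\Lambda(H)=\partial G$, and tameness plus the covering theorem make $H$ a virtual fiber. You still owe an argument that a virtual fiber has nontrivial core; the paper uses Lemma~\ref{lem:normal-intersection-kleinian} for this.
\end{itemize}

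\textbf{Sufficiency.} Your sufficiency side is close to the paper's, with three caveats.
\begin{itemize}
\item Theorem~\ref{thm:B} does not apply to non-maximal cyclic subgroups, which are not almost malnormal. One uses RD of $G$ and \cite[Cor.~2.14]{ChatterjiZarka2024v1} directly.
\item In item~(4), RD of the Kleinian group $G$ must itself be proved. The paper does this via tameness and a torsion-free finite-index subgroup that is, or embeds into, a lattice. Relative hyperbolicity with respect to the cusp groups, which your (S4) assumes, would need separate justification for geometrically infinite $G$.
\item ``Passing to the quotient by the core'' does nothing for $s$-normal subgroups until $\Lambda(H)=\Lambda(G)$ and tameness identify them as virtual fibers. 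Their Schreier graphs then have polynomial growth by Lemma~\ref{lem:poly-growth-schreier-commensurable-normal}.
\end{itemize}
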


   Theorem~\ref{thm:C} shows that, in the classes considered here, the condition \((G,H)\in \mathbf{SLC}_{\mathrm{subexp}}\) is highly restrictive.
    
    More generally, for relatively hyperbolic groups we obtain strong necessary conditions for \((G,H)\in \mathbf{SLC}_{\mathrm{subexp}}\). In particular, a finitely generated subgroup \(H\) must be \(s\)-normal, parabolic, or virtually cyclic. We also derive concrete obstructions in terms of limit set and ambient growth, and show that for relatively quasiconvex subgroups, after factoring out the peripheral part, what remains is virtually cyclic.

    The final part of the paper turns to higher-rank nonuniform lattices. Our main result there treats the model case \(G=\mathrm{SL}_n(\mathbb Z)\).
    
    \begin{theorem}\label{thm:D}
    	Let \(n\ge3\), let \(G=\mathrm{SL}_n(\mathbb Z)\), and let \(H\le G\) be a subgroup. Then the following are equivalent:
    	\begin{enumerate}
    		\item \((G,H)\in \mathbf{SLC}_{\mathrm{subexp}}\);
    		\item \((G,H)\) has pair rapid decay;
    		\item \(H\) has finite index in \(G\).
    	\end{enumerate}
    \end{theorem}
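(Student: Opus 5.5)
We prove the cycle of implications \((3)\Rightarrow(2)\Rightarrow(1)\Rightarrow(3)\).

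\textbf{Step \((3)\Rightarrow(2)\).} If \(H\) has finite index in \(G\), then \(G/H\) is finite. The quasi-regular representation \(\lambda_{G/H}\) is then a finite-dimensional unitary representation. For any finitely supported \(f\) we have \(\|\lambda_{G/H}(f)\|\le \|f\|_1\), and Cauchy--Schwarz on the finite orbit gives a bound of the required pair-RD form. More carefully, we use the definition of pair rapid decay from \cite{ChatterjiZarka2024v1}, in which \(\ell^2(G)\)-norms are weighted by \((1+\ell)^s\) and the relevant counts are cosets in \(G/H\). Since \(|G/H|<\infty\), the radial decomposition of \(f\) over spheres in \(G/H\) has only finitely many nonempty pieces. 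Each piece is controlled by the trivial estimate, with a constant depending on \(|G/H|\). Equivalently, \(H\) of finite index is co-amenable and \(G/H\) has bounded, hence polynomial, growth, so Theorem~1.2 of \cite{ChatterjiZarka2024v1} applies directly.

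\textbf{Step \((2)\Rightarrow(1)\).} This is formal. A polynomial weight \((1+\ell)^s\) is a radial weight of subexponential growth, so pair rapid decay is a special case of subexponential Lorentz control. Here I would simply invoke the comparison lemma the paper proves when introducing \(\mathbf{SLC}_{\mathrm{subexp}}\).

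\textbf{Step \((1)\Rightarrow(3)\).} This is the main step. The mechanism should be the one driving the whole paper: membership in \(\mathbf{SLC}_{\mathrm{subexp}}\) forces \(\lambda_{G/H}\) to behave "like" \(\lambda_G\) at exponential scale. Concretely, for a symmetric finitely supported generating measure \(\mu\), the subexponential weight disappears under \(n\)-th roots. Applying the control to \(\mu^{*n}\), which is supported in a ball of radius \(Cn\), gives
\[
\|\lambda_{G/H}(\mu)\|=\lim_n \|\lambda_{G/H}(\mu^{*n})\|^{1/n}\le \lim_n \|\mu^{*n}\|_{2}^{1/n}\cdot e^{o(1)} .
\]
Rewriting the right-hand side through the \(\ell^2\)-return probabilities on \(G/H\) (the quantity \(c(G,H;\mu)\) and \(r_2(\mu)\) from Section~\ref{sec:entropy}), one obtains a dichotomy. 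Either \(\|\lambda_{G/H}(\mu)\|<1\), so \(H\) is not co-amenable and the spectral radius on \(G/H\) must equal an \(\ell^2\)-type radius determined by coset counting, or \(H\) is co-amenable.

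\emph{Co-amenable case.} Subexponential control together with co-amenability forces \(G/H\) to have subexponential growth, by the argument of \cite[Theorem~1.2]{ChatterjiZarka2024v1} with polynomial replaced by subexponential. By Margulis normal subgroup–type rigidity, one rules out infinite index. I would use property (T) for \(n\ge3\): a co-amenable subgroup of a property (T) group has finite index, because \(\ell^2(G/H)\) weakly contains the trivial representation and (T) then yields an invariant vector, i.e.\ \(G/H\) is finite. This settles this case cleanly.

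\emph{Non-co-amenable case.} This is where I expect the main obstacle. The plan is to exploit the amenable subgroups of superpolynomial (exponential) growth in \(\mathrm{SL}_n(\mathbb Z)\), e.g.\ the solvable group \(\mathbb Z^2\rtimes_A\mathbb Z\) sitting in the upper-left block, or unipotent subgroups with exponential distortion. Pick such an amenable subgroup \(S\) and consider the restriction of \(\lambda_{G/H}\) to \(S\). It decomposes as a sum of quasi-regular representations \(\ell^2(S/(S\cap gHg^{-1}))\). If for some \(g\) the intersection \(S\cap gHg^{-1}\) is co-amenable in \(S\) with exponentially growing quotient, testing the control on Følner-type averages over large balls of \(S\) violates any subexponential weight, exactly as in Jolissaint's obstruction \cite{Jolissaint1990}. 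So every conjugate of \(H\) must meet these solvable subgroups in finite-index pieces, or at least with quotients of subexponential growth. The same must then hold for all \(G\)-conjugates of the block subgroups. Hence \(H\) contains finite-index subgroups of many unipotent root subgroups \(E_{ij}(m\mathbb Z)\). By the Tits/Bass–Milnor–Serre result, the subgroup generated by \(E_{ij}(m\mathbb Z)\) for all \(i\ne j\) has finite index in \(\mathrm{SL}_n(\mathbb Z)\) for \(n\ge3\), so \(H\) has finite index.

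The delicate point is making the "Følner averaging on \(S\)" estimate uniform across conjugates and extracting unipotent elements of \(H\) from it. If that fails, I would fall back on showing that the \(\mathbf{SLC}_{\mathrm{subexp}}\) estimate forces weak containment of \(1_G\) in \(\lambda_{G/H}\) (via the exponential-growth amenable subgroup), reducing to the co-amenable case and property (T).
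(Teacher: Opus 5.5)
\textbf{The gap is the extraction step in (1)$\Rightarrow$(3).} Your implications (3)$\Rightarrow$(2)$\Rightarrow$(1) are fine; for the first, the direct bound $\|f\|_h\le\|f\|_1\le\sqrt{[G:H]}\,\|f\|_{(2,1)}$ already suffices. Your plan for (1)$\Rightarrow$(3) has the same architecture as the paper's:
\begin{enumerate}
\item restrict to an amenable solvable subgroup $S\cong\mathbb Z^2\rtimes_A\mathbb Z$ (Lemma~\ref{lem:restriction-subgroup});
\item use the subexponential Chatterji--Zarka criterion (Remark~\ref{rem:subexp-analogue-thm12}) to get subexponential growth of the Schreier graph of $S/(S\cap gHg^{-1})$;
\item extract unipotent elements of $H$;
\item finish with Tits/Bass--Milnor--Serre.
\end{enumerate}
Step 3 is only asserted (``Hence $H$ contains finite-index subgroups of many unipotent root subgroups''). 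You flag it yourself as the delicate point but never justify it. What is needed is the following statement. If $K=V\rtimes_A\langle t\rangle$ with $V\cong\mathbb Z^2$ and $A$ hyperbolic, and $L\le K$ satisfies $\operatorname{rank}_{\mathbb Z}(L\cap V)\le 1$, then the Schreier graph of $K/L$ has exponential growth.

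Its contrapositive is exactly what turns subexponential growth of $S/(S\cap gHg^{-1})$ into finite index of $gHg^{-1}\cap V$ in $V$, hence into elements $I+mE_{ij}\in H$. This is the paper's Lemma~\ref{le:rank-exponential}, and it is the real content of the theorem. Write $L\cap V\subseteq\mathbb Z\xi$ with $\xi$ primitive and $(\xi,\eta)$ a basis. The $2^m$ elements $v_\varepsilon t^{J+m}$, with $v_\varepsilon=\sum_{r<m}\varepsilon_rA^r\eta$, have length $O(m)$. They lie in pairwise distinct $L$-cosets: a coincidence would force $\sum_j\sigma_ja_j=0$ with $a_j=\det(A^j\xi,\eta)$. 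This is excluded by the lacunarity $|a_j|>\sum_{i<j}|a_i|$, which comes from $a_j=c_+\lambda^j+c_-\lambda^{-j}$ with $c_+\neq0$.

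Without this lemma, or an equivalent exponential-distortion argument, (1)$\Rightarrow$(3) is not proved. Your fallback of forcing weak containment of $1_G$ in $\lambda_{G/H}$ has no mechanism. Indeed, $\mathbf{SLC}_{\mathrm{subexp}}$ does not imply co-amenability in general, e.g.\ $(F_2,\{e\})$.

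\textbf{With the lemma, your forward route works.} It is slightly more direct than the paper's contrapositive route through Theorem~\ref{thm:line-parabolic-rank-criterion} and Lemma~\ref{lem:rank-two-reduction}. For each $i\neq j$, pick $k\notin\{i,j\}$ (this uses $n\ge3$) and set $V_{ij}=\langle I+E_{ij},\,I+E_{ik}\rangle$. Let $t$ be the identity outside the $\{j,k\}$-block, with a suitable hyperbolic block, so that $t$ normalizes $V_{ij}$ and acts on it by $A$. The lemma then gives $I+m_{ij}E_{ij}\in H$ for all $i\neq j$, and Theorem~\ref{thm:congruence-inclusion} finishes.

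\textbf{Smaller points.}
\begin{itemize}
\item Your co-amenable/non-co-amenable case split is unnecessary. Every subgroup of the amenable group $S$ is co-amenable in $S$, so the restriction argument applies to every $H$; the property (T) argument is correct but redundant.
\item No uniformity over conjugates is needed, since each conjugate gives a separate qualitative conclusion.
\item Your displayed spectral inequality is wrong as written. The $(2,1)$-norm of $\mu^{*n}$ is $\|\pi_\#\mu^{*n}\|_2$, not $\|\mu^{*n}\|_2$. With $\|\mu^{*n}\|_2$ the bound already fails for finite-index $H$: the left side equals $1$, while the right side tends to $\rho_G(\mu)<1$. It plays no role in your argument and should simply be dropped.
\end{itemize}
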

    
    Thus, in this higher-rank example, the only subgroups \(H\le G\) for which \((G,H)\in \mathbf{SLC}_{\mathrm{subexp}}\), equivalently for which \((G,H)\) has pair rapid decay, are the finite-index subgroups.

    Beyond the concrete results above, the paper points to a natural structural question for non-elementary hyperbolic groups. All presently known finitely generated subgroups \(H\le G\) of infinite index with \((G,H)\in \mathbf{SLC}_{\mathrm{subexp}}\) and \(H\) not virtually cyclic have nontrivial normal core. This suggests that, in the hyperbolic setting, nontrivial normal core may be the only source of such examples.

	On the analytic side, the weighted interpolation inequality is used to convert pair rapid decay or \(\mathbf{SLC}_{\mathrm{subexp}}\) type estimates into entropy identities and spectral radius formulas. On the geometric side, a recurring strategy is to show that \((G,H)\in \mathbf{SLC}_{\mathrm{subexp}}\) forces strong control on the growth of \(H\) through the intersections \(H\cap xHx^{-1}\). In hyperbolic settings, this is combined with limit-set methods, quasiconvexity, and, in some cases, subgroup tameness and virtual fibration, to force strong restrictions on \(H\), including nontrivial normal core. In higher rank, we begin by analyzing the pair \(\bigl(\mathrm{SL}_3(\mathbb Z),\mathrm{UT}_3(\mathbb Z)\bigr)\). The proof in this example suggests that the main obstruction in the general case should come from a suitable parabolic subgroup \(P\). We therefore isolate and study this parabolic model first, and the argument developed there then leads to the proof for \(\mathrm{SL}_n(\mathbb Z)\), \(n\ge3\).

	The paper is organized as follows. In Section~\ref{sec:entropy}, we develop the interpolation machinery for quasi-regular representations and prove the entropy and R\'enyi entropy results on homogeneous spaces. In Subsection~\ref{subsec:hyperbolic}, we introduce \(\mathbf{SLC}_{\mathrm{subexp}}\) and study the subgroup-classification problem for free groups, hyperbolic groups, and relatively hyperbolic groups. In Subsection~\ref{subsec:lattices}, we study the subgroup-classification problem in the model higher-rank case \(G=\mathrm{SL}_n(\mathbb Z)\).

   \bigskip

	\section{Entropy on Homogeneous Spaces}\label{sec:entropy}

	In this section, we develop the analytic framework for random walks on homogeneous spaces \(G/H\). We begin by introducing the basic setup and extending the relevant entropy-theoretic notions from groups to homogeneous spaces. We then establish a weighted interpolation estimate for quasi-regular representations on \(\ell^q(G/H)\), which will serve as a key tool in the sequel, and apply it to the study of asymptotic Shannon and R\'enyi entropy.

	Let $G$ be a countable finitely generated discrete group and let $H \le G$ be a subgroup. We consider the homogeneous space defined by the quotient $X \coloneqq G/H = \{gH : g \in G\}$.
   Fix a finite symmetric generating set $S$ for $G$.  Let $\ell = \ell_S : G \to \N \cup \{0\}$ denote the word length function associated with $S$, defined by $\ell(g) = \min\{n : g = s_1 \dots s_n,\ s_i \in S\}$.
	
	Let $\mu$ be a probability measure on $G$, and consider the random walk on $X$ induced by $\mu$. For part of the discussion below, we shall assume that $\mu$ has finite support, namely that $\supp(\mu) \coloneqq \{g \in G : \mu(g) > 0\}$
	is finite. Later, we shall also consider more general probability measures under suitable entropy and moment assumptions.

	We consider the canonical projection map $\pi: G \to X$ given by $\pi(g) = gH$. For any non-negative function $f: G \to [0, \infty)$, we define the \emph{fiber summation} (or push-forward) operator $\pi_{\#} f : X \to [0, \infty]$ by
	$$
	(\pi_{\#}f)(x) \coloneqq \sum_{g \in \pi^{-1}(x)} f(g) = \sum_{gH=x} f(g), \quad \text{for } x \in X.
	$$
	
	Let $(Z_n)_{n \ge 0}$ be the random walk on $G$ defined by $Z_0 = e$ and
	$$
	Z_n = s_n s_{n-1} \dots s_1,
	$$
	where the increments $s_i$ are independent and identically distributed (i.i.d.) random variables with distribution $\mu$. We examine the trajectory of this random walk projected onto the homogeneous space $X$: $X_n \coloneqq Z_n H \in X$.
	
	Let $\nu_n \in \mathcal{P}(X)$ denote the distribution of $X_n$. Explicitly, with $o=eH$,
	$$
	\nu_n \coloneqq \mathrm{Law}(X_n) = \pi_{\#}(\mu^{*n}) = \mu^{*n} * \delta_o,
	$$
	where the convolution on $\mathcal{P}(X)$ is induced by the left action of $G$ on $X=G/H$.

   For any probability measure $\nu$ on $X$, the Shannon entropy is defined as
   $$
   H(\nu) \coloneqq -\sum_{x \in X} \nu(x) \log \nu(x),
   $$
   with the standard convention that $0 \log 0 = 0$. We define the lower and upper asymptotic entropies of the random walk on the quotient space $G/H$ as
   $$
   \underline{h}(G/H, \mu) \coloneqq \liminf_{n \to \infty} \frac{1}{n} H(\nu_n),
   $$
   and
   $$
   \overline{h}(G/H, \mu) \coloneqq \limsup_{n \to \infty} \frac{1}{n} H(\nu_n).
   $$
   
   For $1 \le q \le \infty$, let $\lambda_{X,q}$ denote the \emph{quasi-regular representation} of $G$ on the Banach space $\ell^q(X)$, defined by
   $$
   (\lambda_{X,q}(g)\xi)(x) = \xi(g^{-1}x), \quad \text{for all } \xi \in \ell^q(X), \, g \in G, \, x \in X.
   $$
   This action extends linearly to the group algebra $\mathbb{C}G$ (the space of formal linear combinations of group elements with complex coefficients). For any $f \in \mathbb{C}G$, the convolution operator is defined as
   $$
   \lambda_{X,q}(f) \coloneqq \sum_{g \in G} f(g)\lambda_{X,q}(g).
   $$
   We define the \emph{$q$-pseudofunction algebra} associated with the pair $(G, H)$, denoted by $PF_q(G,H)$, as the closure of the image of the group algebra under the operator norm of $B(\ell^q(X))$:
   $$
   PF_q(G,H) \coloneqq \overline{\lambda_{X,q}(\mathbb{C}G)}^{\lVert \cdot \rVert_{B(\ell^q(X))}}.
   $$
   
   Let $p \in [2, \infty)$ and let $q = \frac{p}{p-1} \in (1, 2]$ be its conjugate exponent. We focus on the spectral properties of the probability measure $\mu$ when viewed as an operator within this algebra.
   
   We denote the spectral radius of $\lambda_{X,q}(\mu)$ in $PF_q(G,H)$ by $r_q(\mu)$. Explicitly,
   $$
   r_q(\mu) \coloneqq r_{PF_q(G,H)}(\lambda_{X,q}(\mu)) = \lim_{n \to \infty} \lVert \lambda_{X,q}(\mu)^n \rVert_{B(\ell^q(X))}^{1/n}.
   $$
   Note that here $\mu$ is identified with the element $\sum_{g \in G} \mu(g)g \in \mathbb{C}G$, and hence with its image $\lambda_{X,q}(\mu)$ in $PF_q(G,H)$.

   \begin{lemma}
   	Assume that $\mu$ is a probability measure on $G$ such that $r_q(\mu)>0$ for every $q\in(1,2]$. Then the function
   	$$
   	p \mapsto -p \log r_{p/(p-1)}(\mu)
   	$$
   	is monotonically increasing for $p \in [2,\infty)$.
   \end{lemma}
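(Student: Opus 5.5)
The plan is to reparametrize by $t=1/q=1-1/p\in[1/2,1)$, so that $p=1/(1-t)$ and $p$ increasing corresponds to $t$ increasing. Set
\[
\varphi(t)\coloneqq \log r_{1/t}(\mu),\qquad t\in[1/2,1].
\]
Then
\[
-p\log r_{p/(p-1)}(\mu)=-\frac{\varphi(t)}{1-t}=\frac{\varphi(1)-\varphi(t)}{1-t},
\]
provided $\varphi(1)=0$. So the claim reduces to two facts: $\varphi$ is convex on $[1/2,1]$, and $\varphi(1)=0$. Granting both, the slope of the chord from $(t,\varphi(t))$ to the fixed endpoint $(1,\varphi(1))$ is a nondecreasing function of $t$, which is exactly the asserted monotonicity. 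The hypothesis $r_q(\mu)>0$ guarantees that $\varphi$ is finite on $[1/2,1)$.

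\emph{Step 1: the endpoint $q=1$.} The operator $T=\lambda_{X,1}(\mu)$ is convolution of functions on $X$ by a probability measure. Hence $\|T\xi\|_1\le\|\xi\|_1$, and equality holds for $\xi=\delta_o$, since $T^n\delta_o=\nu_n$ is a probability measure. It follows that $\|T^n\|_{B(\ell^1(X))}=1$ for every $n$, so $r_1(\mu)=1$ and $\varphi(1)=0$.

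\emph{Step 2: convexity.} Fix $n$ and consider the single operator $T^n=\lambda(\mu^{*n})$, which acts consistently on all $\ell^q(X)$, $1\le q\le 2$. By the Riesz--Thorin theorem, the function
\[
t\mapsto \varphi_n(t)\coloneqq \frac1n\log\|T^n\|_{B(\ell^{1/t}(X))}
\]
is convex on $[1/2,1]$. Submultiplicativity of operator norms and Fekete's lemma give $\varphi_n(t)\to\varphi(t)$ pointwise, with the limit equal to the infimum. A pointwise limit of convex functions is convex, and $\varphi$ is finite on the whole interval: it is bounded above by $0$ there, bounded below on $[1/2,1)$ by the positivity hypothesis, and equal to $0$ at $t=1$. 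Hence $\varphi$ is convex.

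The only point needing care is the application of Riesz--Thorin. It must be applied to complex $\ell^q$ spaces, with the interpolated endpoints including $q=1$, which lies outside the hypothesis range $(1,2]$. This is harmless because Step 1 computes $\varphi(1)$ directly. I do not expect a real obstacle: once the change of variables $t=1/q$ is made, the statement is the standard monotonicity of chord slopes of a convex function toward a fixed endpoint.
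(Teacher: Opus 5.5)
Your proposal is correct and is essentially the paper's argument. The paper applies Riesz--Thorin to $T^n$ between $\ell^1(X)$ and $\ell^q(X)$, uses $\|T^n\|_{B(\ell^1(X))}\le 1$, and solves $1/u=(1-\theta)+\theta/q$ to get $\theta=p_q/p_u$; that single interpolation is exactly your chord-slope inequality toward the endpoint $t=1$. In your formulation you therefore only need interpolation against that endpoint and only $\varphi(1)\le 0$, so the full convexity of $\varphi$ on $[1/2,1]$ and the equality $r_1(\mu)=1$ are more than the statement requires, though they are harmless.
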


   \begin{proof}
   	Let $1<u<q\le2$, and let $p_u$ and $p_q$ be the conjugate exponents of $u$ and $q$, respectively. Then $p_u>p_q$.
   	
   	Consider the Markov operator
   	$$
   	T\xi(x)\coloneqq \sum_{g\in G}\mu(g)\,\xi(g^{-1}x).
   	$$
   	For each $r\in[1,\infty]$, this defines a contraction on $\ell^r(X)$. In particular,
   $\|T\|_{B(\ell^1(X))}\le1.$
   	For $q\in(1,2]$, we identify $T$ with $\lambda_{X,q}(\mu)$ on $\ell^q(X)$, so its spectral radius is $r_q(\mu)$.
   	By the Riesz--Thorin interpolation theorem, there exists $\theta\in(0,1)$ such that
   	$$
   	\|T^n\|_{B(\ell^u(X))}
   	\le
   	\|T^n\|_{B(\ell^1(X))}^{1-\theta}
   	\|T^n\|_{B(\ell^q(X))}^{\theta}
   	\le
   	\|T^n\|_{B(\ell^q(X))}^{\theta},
   	$$
   	with $1/u=(1-\theta)+\theta/q$.
   	Taking $n$-th roots and letting $n\to\infty$, we obtain
   	$$
   	r_u(\mu)\le r_q(\mu)^\theta.
   	$$
   	Since $1/u=1-1/p_u$ and $1/q=1-1/p_q$, the relation $1/u=(1-\theta)+\theta/q$ yields $\theta=p_q/p_u$. Hence
   	$$
   	r_u(\mu)\le r_q(\mu)^{p_q/p_u},
   	$$
   	and therefore
   	$$
   	-p_u\log r_u(\mu)\ge -p_q\log r_q(\mu).
   	$$
   	This proves that $p\mapsto -p\log r_{p/(p-1)}(\mu)$ is monotonically increasing on $[2,\infty)$.
   \end{proof}



\medskip

By the previous lemma, the limit $\lim_{p \to \infty} \bigl(-p \log r_{p/(p-1)}(\mu)\bigr)$ exists in $[0,\infty]$.

\begin{definition}
	We define
	$$
	c(G,H;\mu) \coloneqq \lim_{p \to \infty} \bigl(-p \log r_{p/(p-1)}(\mu)\bigr).
	$$
\end{definition}

\begin{proposition}\label{prop:rq-positive}
	Let $\mu$ be a probability measure on $G$ with finite entropy. Then for every $q \in (1,2]$, one has $0<r_q(\mu)\le 1.$
	In particular,  $c(G,H;\mu)\ge 0$.
\end{proposition}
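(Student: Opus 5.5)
Upper bound: the Markov operator $T=\lambda_{X,q}(\mu)$ is a contraction on every $\ell^r(X)$, because it is a convex combination of isometries $\lambda_{X,q}(g)$. Hence $\|T^n\|_{B(\ell^q(X))}\le 1$ for all $n$, and so $r_q(\mu)\le 1$.

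For positivity, I would bound $\|T^n\|$ from below by testing on a single Dirac mass. Put $o=eH$ and $\xi=\delta_o\in\ell^q(X)$, so $\|\xi\|_q=1$. Then $T^n\delta_o=\nu_n=\mu^{*n}*\delta_o$, which gives
\[
\|T^n\|_{B(\ell^q(X))}\ \ge\ \|\nu_n\|_{\ell^q(X)}.
\]
The task is therefore to show that $\|\nu_n\|_q^{1/n}$ stays bounded below by a positive constant. I would obtain this from the entropy assumption via Jensen's inequality. Writing $\|\nu\|_q^q=\sum_x\nu(x)\,\nu(x)^{q-1}$ and using convexity of $t\mapsto e^{t}$, we get
\[
\log\|\nu_n\|_q^q\ \ge\ (q-1)\sum_x\nu_n(x)\log\nu_n(x)=-(q-1)H(\nu_n).
\]
Equivalently, the Rényi entropy of order $q>1$ is at most the Shannon entropy. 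This yields
\[
\|\nu_n\|_q\ \ge\ \exp\Bigl(-\tfrac{q-1}{q}H(\nu_n)\Bigr).
\]

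The remaining step is to show that $H(\nu_n)$ grows at most linearly. This follows from two standard facts. First, $H(\nu_n)\le H(\mu^{*n})$, since $\nu_n=\pi_\#\mu^{*n}$ and push-forward does not increase entropy. Second, $H(\mu^{*n})\le nH(\mu)$, since $\mu^{*n}$ is the image of $\mu^{\otimes n}$ under the product map. Both rely on $H(\mu)<\infty$, and this is exactly where that hypothesis enters. Combining,
\[
\|T^n\|^{1/n}\ \ge\ \exp\Bigl(-\tfrac{q-1}{q}H(\mu)\Bigr),
\]
and letting $n\to\infty$ gives
\[
r_q(\mu)\ \ge\ \exp\Bigl(-\tfrac{q-1}{q}H(\mu)\Bigr)>0.
\]

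The consequence for $c(G,H;\mu)$ is then immediate. Since $r_{p/(p-1)}(\mu)\in(0,1]$, we have $-p\log r_{p/(p-1)}(\mu)\ge 0$ for every $p\ge 2$, so the limit satisfies $c(G,H;\mu)\ge0$. The positivity also makes the preceding lemma applicable, so the limit defining $c$ exists. The only mildly delicate point is the Jensen step when $\nu_n$ has infinite support. There I would apply Jensen to the probability measure $\nu_n$ restricted to its support, where $\log\nu_n(x)$ is finite. Because $H(\nu_n)<\infty$, the function $\log\nu_n$ is $\nu_n$-integrable, so the inequality holds without issue. I expect this step to be routine as well.
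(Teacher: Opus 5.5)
Your proposal is correct and matches the paper's proof essentially step for step. Both use the contraction bound for $r_q(\mu)\le 1$, the test vector $\delta_o$ giving $\|T^n\|\ge\|\nu_n\|_q$, the R\'enyi--Shannon comparison via Jensen, and $H(\nu_n)\le H(\mu^{*n})\le nH(\mu)$, arriving at the same explicit bound $r_q(\mu)\ge\exp\bigl(-\tfrac{q-1}{q}H(\mu)\bigr)$; your remark on applying Jensen over the support of $\nu_n$ is a harmless detail the paper leaves implicit.
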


\begin{proof}
	Let $T_q \coloneqq \lambda_{X,q}(\mu)$. Since each operator $\lambda_{X,q}(g)$ acts isometrically on $\ell^q(X)$, we have
	$$
	\lVert T_q \rVert_{B(\ell^q(X))}
	\le \sum_{g \in G} \mu(g)\,\lVert \lambda_{X,q}(g) \rVert_{B(\ell^q(X))}
	= \sum_{g \in G} \mu(g)
	= 1.
	$$
	It follows that $r_q(\mu) \le 1$.
	
	To prove positivity, let $o=eH$ and let $\delta_o \in \ell^q(X)$ be the Dirac mass at $o$. Then $\lVert \delta_o \rVert_q = 1$ and
	$$
	T_q^n \delta_o = \nu_n.
	$$
	Hence
	$$
	\lVert T_q^n \rVert_{B(\ell^q(X))} \ge \lVert T_q^n \delta_o \rVert_q = \lVert \nu_n \rVert_q.
	$$
	
	Since $H(\mu)<\infty$, we have
	$$
	H(\nu_n)=H(X_n)\le H(Z_n)\le nH(\mu)<\infty.
	$$
	For a probability measure $\nu$ on $X$, define its Rényi entropy of order $q$ by
	$$
	H_q(\nu) \coloneqq \frac{1}{1-q} \log \left( \sum_{x \in X} \nu(x)^q \right).
	$$
	Since $q = \frac{p}{p-1}$, we have $\frac{q}{1-q} = -p$, and hence
	$$
	H_q(\nu) = \frac{1}{1-q} \log \bigl(\lVert \nu \rVert_q^q\bigr) = -p \log \lVert \nu \rVert_q.
	$$
	We claim that $H(\nu) \ge H_q(\nu)$ for every probability measure $\nu$ on $X$. Indeed, by the concavity of the logarithm,
	$$
	(q-1)\sum_{x \in X} \nu(x)\log \nu(x)
	= \sum_{x \in X} \nu(x)\log\bigl(\nu(x)^{q-1}\bigr)
	\le \log\left(\sum_{x \in X} \nu(x)^q\right).
	$$
	Multiplying by $-1/(q-1)$ yields $H(\nu) \ge H_q(\nu)$.
	
	Applying this to $\nu_n$, and noting that $\frac{q}{1-q}<0$, we obtain
	$$
	\log \|\nu_n\|_q
	\ge
	\frac{1-q}{q} H(\nu_n)
	\ge
	\frac{1-q}{q}\, n H(\mu).
	$$
	Therefore
	$$
	\|\nu_n\|_q^{1/n}
	\ge
	\exp\!\left(\frac{1-q}{q}H(\mu)\right)
	>0.
	$$
	Taking the limit superior, we get
	$$
	r_q(\mu)
	=
	\lim_{n \to \infty} \lVert T_q^n \rVert_{B(\ell^q(X))}^{1/n}
	\ge
	\limsup_{n \to \infty} \lVert \nu_n \rVert_q^{1/n}
	\ge
	\exp\!\left(\frac{1-q}{q}H(\mu)\right)
	>0.
	$$
	Thus $0 < r_q(\mu) \le 1$.
\end{proof}

	\begin{lemma}\label{lem:entropy_inequality}
		Assume that $\mu$ has finite support. Then the lower asymptotic entropy of the random walk on $G/H$ dominates $c(G,H;\mu)$, namely,
		$$
		\underline{h}(G/H,\mu) \ge c(G,H;\mu).
		$$
	\end{lemma}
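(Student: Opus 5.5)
The plan is to compare the Shannon entropy of $\nu_n$ with its R\'enyi entropy of order $q$, and then bound $\|\nu_n\|_q$ \emph{from above} by the operator norm of $T_q^n$, where $T_q=\lambda_{X,q}(\mu)$. This reverses the direction used in Proposition~\ref{prop:rq-positive}. Finite support enters only through finite entropy, so that Proposition~\ref{prop:rq-positive} guarantees $0<r_q(\mu)\le 1$ and all logarithms below are finite.

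Fix $p\in[2,\infty)$ and set $q=p/(p-1)$. In the proof of Proposition~\ref{prop:rq-positive} it was shown that every probability measure $\nu$ on $X$ satisfies
\[
H(\nu)\ge H_q(\nu)=-p\log\|\nu\|_q .
\]
Applying this to $\nu_n=T_q^n\delta_o$, and using $\|\delta_o\|_q=1$, gives
\[
\|\nu_n\|_q\le \|T_q^n\|_{B(\ell^q(X))}.
\]
Combining the two displays and dividing by $n$, we get
\[
\frac1n H(\nu_n)\;\ge\; -p\log\bigl(\|T_q^n\|_{B(\ell^q(X))}^{1/n}\bigr).
\]

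Next, take $\liminf_{n\to\infty}$. By the definition of the spectral radius (Gelfand's formula), $\|T_q^n\|^{1/n}\to r_q(\mu)$. Since $r_q(\mu)>0$ by Proposition~\ref{prop:rq-positive}, the right-hand side converges to $-p\log r_{p/(p-1)}(\mu)$. Hence
\[
\underline{h}(G/H,\mu)\ge -p\log r_{p/(p-1)}(\mu)\qquad\text{for every } p\ge 2.
\]

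Finally, the left-hand side does not depend on $p$, so we may let $p\to\infty$. The monotonicity lemma shows that the limit exists, and by definition it equals $c(G,H;\mu)$. This gives $\underline{h}(G/H,\mu)\ge c(G,H;\mu)$.

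There is no substantial obstacle here. The only point needing care is the passage to the limit in $n$, which requires $r_q(\mu)>0$ so that the logarithm stays finite; this is exactly what Proposition~\ref{prop:rq-positive} provides. The deeper reverse inequality, which uses pair rapid decay, is not needed for this lemma.
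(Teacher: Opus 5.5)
Your proposal is correct and follows essentially the same route as the paper: combine $H(\nu_n)\ge H_q(\nu_n)=-p\log\|\nu_n\|_q$ with $\|\nu_n\|_q\le\|T_q^n\|_{B(\ell^q(X))}$, let $n\to\infty$ to get $\underline{h}(G/H,\mu)\ge -p\log r_q(\mu)$, then let $p\to\infty$. The only cosmetic difference is that the paper passes through $L=\limsup_n\|\nu_n\|_q^{1/n}\le r_q(\mu)$ with an $\varepsilon$-argument, whereas you take the limit of the convergent sequence $\|T_q^n\|^{1/n}$ directly, using $r_q(\mu)>0$ from Proposition~\ref{prop:rq-positive}.
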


\begin{proof}
	Fix $p \ge 2$, and set $q = \frac{p}{p-1} \in (1,2]$. Let $T_q \coloneqq \lambda_{X,q}(\mu)$. Since $\nu_n = T_q^n \delta_o$ and $\lVert \delta_o \rVert_q = 1$, we have
	$$
	\lVert \nu_n \rVert_q \le \lVert T_q^n \rVert_{B(\ell^q(X))}.
	$$
	Hence $\limsup_{n \to \infty} \lVert \nu_n \rVert_q^{1/n} \le r_q(\mu)$.
	
	
	
	And we also have
	$$
	\frac{1}{n} H(\nu_n) \ge \frac{1}{n} H_q(\nu_n) = -p \log \bigl(\lVert \nu_n \rVert_q^{1/n}\bigr).
	$$
	Let $L \coloneqq \limsup_{n \to \infty} \lVert \nu_n \rVert_q^{1/n}$. Then for every $\varepsilon > 0$, one has $\lVert \nu_n \rVert_q^{1/n} \le L+\varepsilon$ for all sufficiently large $n$, and therefore
	$$
	\frac{1}{n} H(\nu_n) \ge -p \log(L+\varepsilon)
	$$
	for all sufficiently large $n$. Passing to the limit inferior gives
	$$
	\underline{h}(G/H,\mu) \ge -p \log(L+\varepsilon).
	$$
	Letting $\varepsilon \downarrow 0$, we obtain
	$$
	\underline{h}(G/H,\mu) \ge -p \log L \ge -p \log r_q(\mu),
	$$
	where the last inequality follows from $L \le r_q(\mu)$ and the fact that $x \mapsto -p\log x$ is decreasing on $(0,\infty)$.
	
	Since this holds for every $p \ge 2$, we may let $p \to \infty$ and conclude from the definition of $c(G,H;\mu)$ that
	$$
	\underline{h}(G/H,\mu) \ge c(G,H;\mu).
	$$
\end{proof}


\subsection{Interpolation Estimate}\label{subsec:Interpolation-Estimate}

We now proceed to establish a norm inequality for the quasi-regular representation by interpolating between two endpoint cases. Our goal is to prove an estimate of the form:
\begin{equation} \label{eq:target_estimate}
	\lVert \lambda_{X,q}(f) \rVert_{B(\ell^q(X))} \le C^{1/p} \lVert \pi_{\#}(|f|\omega^{s/p}) \rVert_q,
\end{equation}
where $\omega(g) \coloneqq 1 + \ell(g)$ is the weighted length function, and $s > 0$ is a parameter related to the decay rate.

Let us define the bilinear map $B(f, \xi) \coloneqq \lambda_{X}(f)\xi, \quad \text{for } f \in \mathbb{C}G, \, \xi \in \ell^1(X) \cap \ell^2(X).$ and we refer to the standard theory of interpolation spaces (\cite[Theorems 4.4.1 and 5.1.1]{bergh1976}).


\begin{lemma}\label{lem:interp-norm-est-weighted}
	Let $0<\theta<1$ and let $q$ be determined by
	\[
	\frac1q=\frac{1-\theta}{1}+\frac{\theta}{2}=1-\frac{\theta}{2}.
	\]
	Let $w:G\to(0,\infty)$ be any strictly positive function and let $\beta_0,\beta_1\in\mathbb{R}$.
	Define norms on $\mathbb{C}G$ by
	\[
	\|f\|_{A_0}\coloneqq \bigl\|\pi_\#\bigl(|f|\,w^{\beta_0}\bigr)\bigr\|_{\ell^1(X)},
	\qquad
	\|f\|_{A_1}\coloneqq \bigl\|\pi_\#\bigl(|f|\,w^{\beta_1}\bigr)\bigr\|_{\ell^2(X)},
	\]
	and let $A_0$ and $A_1$ denote the completions of $\mathbb{C}G$ with respect to these norms.
	Set
	\[
	\beta_\theta\coloneqq (1-\theta)\beta_0+\theta\beta_1.
	\]
	Then for every $f\in\mathbb{C}G$ one has
	\begin{equation}\label{eq:interp-norm-est-weighted}
		\|f\|_{(A_0,A_1)_{[\theta]}}
		\le
		\bigl\|\pi_\#\bigl(|f|\,w^{\beta_\theta}\bigr)\bigr\|_{\ell^q(X)}.
	\end{equation}
\end{lemma}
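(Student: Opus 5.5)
The plan is to use the definition of the complex interpolation norm directly: for fixed $f\in\mathbb{C}G$ we build an explicit admissible analytic family $F$ on the strip $S=\{0\le \operatorname{Re} z\le 1\}$ with $F(\theta)=f$, and we check that its boundary values have norm at most $\|\pi_\#(|f|w^{\beta_\theta})\|_{\ell^q(X)}$ in $A_0$ and in $A_1$. This is the classical Riesz--Thorin/Stein construction, in which the exponent of $|f|$ is varied analytically, adapted so that the exponent acts on the fiber sums $\pi_\#$.

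\emph{Normalisation.} If $f=0$ there is nothing to prove. Otherwise set $a\coloneqq \pi_\#(|f|\,w^{\beta_\theta})$, a finitely supported function on $X$. By homogeneity of both sides we may assume $\|a\|_{\ell^q(X)}=1$. Since $w>0$, we have $a(gH)>0$ whenever $f(g)\neq0$, so powers $a(gH)^{\gamma}$ with complex $\gamma$ make sense on $\supp f$.

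\emph{The analytic family.} Write $\beta(z)\coloneqq(1-z)\beta_0+z\beta_1$. For $g\in\supp f$ define
\[
F(z)(g)\coloneqq f(g)\,w(g)^{\beta_\theta-\beta(z)}\,a(gH)^{q(1-z/2)-1},
\]
and set $F(z)(g)=0$ for $g\notin\supp f$. Complex powers are taken as $\exp(\cdot\,\log(\cdot))$ of positive reals, so each coordinate is entire in $z$. Since $q(1-\theta/2)=1$, we get $F(\theta)=f$.

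\emph{Boundary values.} On $\operatorname{Re}z=0$ we have $|w^{\beta_\theta-\beta(it)}|=w^{\beta_\theta-\beta_0}$ and $|a^{q(1-it/2)-1}|=a^{q-1}$. Hence
\[
\pi_\#\bigl(|F(it)|\,w^{\beta_0}\bigr)=a^{q-1}\,\pi_\#\bigl(|f|w^{\beta_\theta}\bigr)=a^q,
\]
whose $\ell^1$ norm is $\|a\|_q^q=1$. On $\operatorname{Re}z=1$ we similarly get
\[
\pi_\#\bigl(|F(1+it)|\,w^{\beta_1}\bigr)=a^{q/2},
\]
whose $\ell^2$ norm is $(\sum_x a(x)^q)^{1/2}=1$. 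Therefore $\|f\|_{(A_0,A_1)_{[\theta]}}\le\max\bigl(\sup_t\|F(it)\|_{A_0},\sup_t\|F(1+it)\|_{A_1}\bigr)\le1$, which is the claimed inequality \eqref{eq:interp-norm-est-weighted} after undoing the normalisation.

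\emph{Admissibility (the main obstacle).} The remaining point is that $F$ lies in the admissible class $\mathcal F(A_0,A_1)$ of \cite[Section 4.1]{bergh1976}: bounded and continuous on $S$ with values in $A_0+A_1$, analytic in the interior, and with $t\mapsto F(j+it)$ continuous into $A_j$. Here finite support makes this routine. All values of $F$ lie in the finite-dimensional space of functions supported on $\supp f$, and on this space every norm is equivalent to the $A_0$, $A_1$ and $A_0+A_1$ norms. Each coordinate of $F$ is entire, and for $z\in S$ its modulus is bounded uniformly, since the exponents have real parts in compact intervals and only finitely many positive bases $w(g)$, $a(gH)$ occur. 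Hence $F$ is bounded, continuous and analytic in all the required senses. If one uses the variant definition requiring decay at infinity, one multiplies by $e^{\varepsilon(z-\theta)^2}$ and lets $\varepsilon\to0$.
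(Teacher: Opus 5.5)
Your proposal is correct and follows essentially the same route as the paper: the paper builds the same analytic family $F(z)(g)=a_x(z)f(g)w(g)^{(\theta-z)(\beta_1-\beta_0)}$, putting the normalisation by $C=\|\pi_\#(|f|w^{\beta_\theta})\|_q$ inside $a_x$ instead of using homogeneity. It then computes the same boundary fiber sums $c^q/C^{q-1}$ and $c^{q/2}/C^{q/2-1}$, and inserts the Gaussian factor $e^{\varepsilon(z-\theta)^2}$ before letting $\varepsilon\to0$; your finite-dimensionality argument for admissibility justifies in detail a point the paper only asserts.
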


\begin{proof}
	Set
	\[
	c\coloneqq \pi_\#\bigl(|f|\,w^{\beta_\theta}\bigr)\in \ell^{q}(X),
	\qquad
	c(x)=\sum_{g\in x}|f(g)|\,w(g)^{\beta_\theta},
	\]
	and $C\coloneqq \|c\|_{q}=\Bigl(\sum_{x\in X}c(x)^q\Bigr)^{1/q}$.
	If $C=0$, then $c\equiv 0$ and hence $f\equiv 0$, so \eqref{eq:interp-norm-est-weighted} is trivial. Assume $C>0$.
	
	For each $x\in X$, define on the closed strip $S=\{z\in\mathbb{C}:0\le \Re z\le 1\}$,
	\[
	a_x(z)\coloneqq
	\begin{cases}
		\exp\!\Bigl(\bigl(q(1-\tfrac{z}{2})-1\bigr)\log\bigl(\tfrac{c(x)}{C}\bigr)\Bigr), & c(x)>0,\\[1mm]
		1, & c(x)=0,
	\end{cases}
	\]
	where $\log$ denotes the real logarithm on $(0,\infty)$. Define $F:S\to \mathbb{C}G$ by
	\begin{equation}\label{eq:Fz-def-weighted}
		F(z)(g)\coloneqq a_x(z)\,f(g)\,w(g)^{\,\beta_\theta-\beta_0-z(\beta_1-\beta_0)}
		\qquad (g\in x,\ x=\pi(g)\in X).
	\end{equation}
	Equivalently, since $\beta_\theta-\beta_0=\theta(\beta_1-\beta_0)$,
	\[
	F(z)(g)=a_x(z)\,f(g)\,w(g)^{\,(\theta-z)(\beta_1-\beta_0)}.
	\]
	Then $F(z)\in A_0+A_1$ for every $z\in S$, and $F$ is bounded and continuous on $S$ and analytic on the interior strip.
	
	As usual, we enforce boundary decay by a Gaussian. Fix $\varepsilon>0$ and set
	\[
	\psi_\varepsilon(z)\coloneqq \exp\!\bigl(\varepsilon(z-\theta)^2\bigr),
	\qquad
	\widetilde F_\varepsilon(z)\coloneqq \psi_\varepsilon(z)F(z).
	\]
	Then $\widetilde F_\varepsilon\in \mathcal{F}(A_0,A_1)$ and $\widetilde F_\varepsilon(\theta)=F(\theta)$.
	Moreover, since $q(1-\theta/2)-1=0$, we have $a_x(\theta)=1$ for all $x$, and thus
	\[
	F(\theta)(g)=f(g)\,w(g)^{(\theta-\theta)(\beta_1-\beta_0)}=f(g),
	\]
	so $\widetilde F_\varepsilon(\theta)=f$.
	
	\medskip
	\noindent\textbf{Boundary $\Re z=0$.}
	Let $z=it$. Then
	\[
	|a_x(it)|=
	\begin{cases}
		\bigl(\tfrac{c(x)}{C}\bigr)^{q-1}, & c(x)>0,\\
		1, & c(x)=0,
	\end{cases}
	\]
	and since $w(g)>0$ we have $|w(g)^{-it(\beta_1-\beta_0)}|=1$. Therefore, for every $x\in X$,
	\[
	\sum_{g\in x}|F(it)(g)|\,w(g)^{\beta_0}
	=|a_x(it)|\sum_{g\in x}|f(g)|\,w(g)^{\beta_\theta}
	=\Bigl(\tfrac{c(x)}{C}\Bigr)^{q-1}c(x)
	=\frac{c(x)^q}{C^{q-1}}.
	\]
	Hence
	\[
	\|F(it)\|_{A_0}
	=\|\pi_\#(|F(it)|\,w^{\beta_0})\|_{\ell^1(X)}
	=\sum_{x\in X}\sum_{g\in x}|F(it)(g)|\,w(g)^{\beta_0}
	=\frac{1}{C^{q-1}}\sum_{x\in X}c(x)^q
	=C.
	\]
	Consequently,
	\begin{equation}\label{eq:B0-weighted}
		\sup_{t\in\mathbb{R}}\|\widetilde F_\varepsilon(it)\|_{A_0}
		\le \Bigl(\sup_{t\in\mathbb{R}}|\psi_\varepsilon(it)|\Bigr)\Bigl(\sup_{t\in\mathbb{R}}\|F(it)\|_{A_0}\Bigr)
		\le e^{\varepsilon\theta^2}\,C. \tag{$B_0$}
	\end{equation}
	
	\medskip
	\noindent\textbf{Boundary $\Re z=1$.}
	Let $z=1+it$. Then
	\[
	|a_x(1+it)|=
	\begin{cases}
		\bigl(\tfrac{c(x)}{C}\bigr)^{\frac{q}{2}-1}, & c(x)>0,\\
		1, & c(x)=0,
	\end{cases}
	\]
	and again $|w(g)^{-it(\beta_1-\beta_0)}|=1$. Thus, for every $x\in X$,
	\[
	\sum_{g\in x}|F(1+it)(g)|\,w(g)^{\beta_1}
	=|a_x(1+it)|\sum_{g\in x}|f(g)|\,w(g)^{\beta_\theta}
	=\frac{c(x)^{q/2}}{C^{\frac{q}{2}-1}}.
	\]
	Hence
	\[
	\|F(1+it)\|_{A_1}
	=\Biggl(\sum_{x\in X}\Bigl(\sum_{g\in x}|F(1+it)(g)|\,w(g)^{\beta_1}\Bigr)^2\Biggr)^{1/2}
	=\Biggl(\sum_{x\in X}\Bigl(\frac{c(x)^{q/2}}{C^{\frac{q}{2}-1}}\Bigr)^2\Biggr)^{1/2}
	=C.
	\]
	Consequently,
	\begin{equation}\label{eq:B1-weighted}
		\sup_{t\in\mathbb{R}}\|\widetilde F_\varepsilon(1+it)\|_{A_1}
		\le \Bigl(\sup_{t\in\mathbb{R}}|\psi_\varepsilon(1+it)|\Bigr)\Bigl(\sup_{t\in\mathbb{R}}\|F(1+it)\|_{A_1}\Bigr)
		\le e^{\varepsilon(1-\theta)^2}\,C. \tag{$B_1$}
	\end{equation}
	
	By \eqref{eq:B0-weighted}--\eqref{eq:B1-weighted},
	\[
	\|\widetilde F_\varepsilon\|_{\mathcal{F}(A_0,A_1)}
	=\max\Bigl\{\sup_{t\in\mathbb{R}}\|\widetilde F_\varepsilon(it)\|_{A_0},\ 
	\sup_{t\in\mathbb{R}}\|\widetilde F_\varepsilon(1+it)\|_{A_1}\Bigr\}
	\le e^{\varepsilon m}\,C,
	\quad m\coloneqq \max\{\theta^2,(1-\theta)^2\}.
	\]
	By the definition of the complex interpolation norm,
	\[
	\|f\|_{(A_0,A_1)_{[\theta]}}
	=\inf\bigl\{\|F\|_{\mathcal{F}(A_0,A_1)}:\ F\in\mathcal{F}(A_0,A_1),\ F(\theta)=f\bigr\}
	\le \|\widetilde F_\varepsilon\|_{\mathcal{F}(A_0,A_1)}
	\le e^{\varepsilon m}\,C.
	\]
	Letting $\varepsilon\to 0$ yields
	\[
	\|f\|_{(A_0,A_1)_{[\theta]}}
	\le C
	=\bigl\|\pi_\#\bigl(|f|\,w^{\beta_\theta}\bigr)\bigr\|_{\ell^q(X)},
	\]
	which proves \eqref{eq:interp-norm-est-weighted}.
\end{proof}


\begin{corollary}\label{cor:interp-norm-est-one-sided}
	Under the assumptions of Lemma~\ref{lem:interp-norm-est-weighted}, take $\beta_0=0$ and $\beta_1=1$.
	Then for every $f\in\mathbb{C}G$,
	\[
	\|f\|_{(A_0,A_1)_{[\theta]}}
	\le \bigl\|\pi_\#\bigl(|f|\,w^{\theta}\bigr)\bigr\|_{\ell^q(X)},
	\qquad
	\frac1q=1-\frac{\theta}{2}.
	\]
\end{corollary}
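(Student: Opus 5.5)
This is a direct specialization of Lemma~\ref{lem:interp-norm-est-weighted}, so the plan is to substitute the chosen parameters and read off the conclusion; no new interpolation argument is needed. First, observe that the lemma holds for arbitrary real exponents $\beta_0,\beta_1$ and any strictly positive weight $w:G\to(0,\infty)$. The choice $\beta_0=0$, $\beta_1=1$ is therefore admissible.

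With this choice the two endpoint norms become
\[
\|f\|_{A_0}=\|\pi_\#(|f|)\|_{\ell^1(X)}, \qquad \|f\|_{A_1}=\|\pi_\#(|f|\,w)\|_{\ell^2(X)},
\]
so $A_0$ is the unweighted fiberwise $\ell^1$ space and $A_1$ is the $w$-weighted fiberwise $\ell^2$ space. Next I compute the interpolated exponent:
\[
\beta_\theta=(1-\theta)\cdot 0+\theta\cdot 1=\theta.
\]
The relation $\frac1q=1-\frac\theta2$ is unchanged from the lemma. Inserting $\beta_\theta=\theta$ into \eqref{eq:interp-norm-est-weighted} gives exactly
\[
\|f\|_{(A_0,A_1)_{[\theta]}}\le\|\pi_\#(|f|\,w^\theta)\|_{\ell^q(X)}
\]
for every $f\in\mathbb{C}G$.

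There is no real obstacle. The only check is that no hypothesis of the lemma depended on $\beta_0\neq\beta_1$ or on a sign condition. Inspecting the construction of $F(z)$ in \eqref{eq:Fz-def-weighted}, the factor $w(g)^{(\theta-z)(\beta_1-\beta_0)}=w(g)^{\theta-z}$ is well defined because $w>0$. Its modulus on the boundary lines $\Re z=0$ and $\Re z=1$ is $w^{\theta}$ and $w^{\theta-1}$ respectively, which is precisely what the boundary computations $(B_0)$ and $(B_1)$ used. So the corollary follows verbatim from the lemma.
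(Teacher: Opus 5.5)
Your proposal is correct and matches the paper, which gives no separate proof and treats the corollary as the immediate specialization of Lemma~\ref{lem:interp-norm-est-weighted} with $\beta_\theta=(1-\theta)\cdot 0+\theta\cdot 1=\theta$. Your check of the boundary moduli $w^{\theta}$ and $w^{\theta-1}$ is accurate but not needed, since the lemma places no restriction on the real exponents $\beta_0,\beta_1$.
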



\begin{proposition}\label{prop:interp-quasi-regular-weighted-theta}
	Let $0<\theta<1$ and let $q$ be determined by
	\[
	\frac1q=\frac{1-\theta}{1}+\frac{\theta}{2}=1-\frac{\theta}{2}
	\qquad\Bigl(\text{equivalently } q=\frac{2}{2-\theta}\in(1,2)\Bigr).
	\]
	Let $w:G\to(0,\infty)$ be a strictly positive function and let $\beta_0,\beta_1\in\mathbb{R}$.
	Define norms on $\mathbb{C}G$ by
	\[
	\|f\|_{A_0}\coloneqq \bigl\|\pi_\#\bigl(|f|\,w^{\beta_0}\bigr)\bigr\|_{\ell^1(X)},
	\qquad
	\|f\|_{A_1}\coloneqq \bigl\|\pi_\#\bigl(|f|\,w^{\beta_1}\bigr)\bigr\|_{\ell^2(X)},
	\]
	and let $A_0,A_1$ be the corresponding completions.
   Set $\beta_\theta\coloneqq (1-\theta)\beta_0+\theta\beta_1$.
	Assume:
	\begin{enumerate}
		\item[(E$_0$)] $M_0\coloneqq \|w^{-\beta_0}\|_{\ell^\infty(G)}<\infty$
		(e.g.\ if $w\ge 1$ and $\beta_0\ge 0$, then $M_0\le 1$);
		\item[(E$_1$)] there exists $M_1>0$ such that for all $f\in\mathbb{C}G$ and all $\xi\in\ell^2(X)$,
		\[
		\|\lambda_X(f)\xi\|_2\le M_1\,\bigl\|\pi_\#\bigl(|f|\,w^{\beta_1}\bigr)\bigr\|_{\ell^2(X)}\,\|\xi\|_2
		= M_1\,\|f\|_{A_1}\,\|\xi\|_2.
		\]
	\end{enumerate}
	Then for every $f\in\mathbb{C}G$ one has
	\begin{equation}
	\|\lambda_{X,q}(f)\|_{B(\ell^q(X))}
	\le
	M_0^{\,1-\theta}\,M_1^{\,\theta}\,
	\bigl\|\pi_\#\bigl(|f|\,w^{\beta_\theta}\bigr)\bigr\|_{\ell^q(X)}.
	\end{equation}
\end{proposition}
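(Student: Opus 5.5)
The plan is to realize the bilinear map $B(f,\xi)=\lambda_X(f)\xi$ as bounded at the two endpoints. Then we apply bilinear complex interpolation (\cite[Theorem 4.4.1]{bergh1976}) together with the identification $(\ell^1(X),\ell^2(X))_{[\theta]}=\ell^q(X)$ with equal norms (\cite[Theorem 5.1.1]{bergh1976}). Finally, Lemma~\ref{lem:interp-norm-est-weighted} controls the interpolated norm of $f$.

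\textbf{Step 1: endpoint $0$.} For $f\in\mathbb{C}G$ and $\xi\in\ell^1(X)$, each $\lambda_X(g)$ is an isometry of $\ell^1(X)$. Hence
\[
\|\lambda_X(f)\xi\|_1\le\sum_{g}|f(g)|\,\|\xi\|_1
=\sum_g |f(g)|w(g)^{\beta_0}w(g)^{-\beta_0}\|\xi\|_1
\le M_0\,\|\pi_\#(|f|w^{\beta_0})\|_{\ell^1(X)}\|\xi\|_1 ,
\]
because $\sum_g |f(g)|w(g)^{\beta_0}=\|\pi_\#(|f|w^{\beta_0})\|_{\ell^1(X)}$, the fibers partitioning $G$. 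So $B:A_0\times\ell^1(X)\to\ell^1(X)$ has norm at most $M_0$.

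\textbf{Step 2: endpoint $1$.} Hypothesis (E$_1$) says exactly that $B:A_1\times\ell^2(X)\to\ell^2(X)$ has norm at most $M_1$.

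\textbf{Step 3: extension and interpolation.} Both estimates hold on the dense subspace $\mathbb{C}G\times(\ell^1\cap\ell^2)(X)$, so $B$ extends to the completions, consistently on $A_0\cap A_1$. Bilinear interpolation then gives, for all $f\in\mathbb{C}G$ and $\xi\in\ell^1\cap\ell^2$,
\[
\|\lambda_X(f)\xi\|_{q}\le M_0^{1-\theta}M_1^{\theta}\,\|f\|_{(A_0,A_1)_{[\theta]}}\|\xi\|_{q}.
\]
Lemma~\ref{lem:interp-norm-est-weighted} bounds $\|f\|_{(A_0,A_1)_{[\theta]}}$ by $\|\pi_\#(|f|w^{\beta_\theta})\|_{\ell^q(X)}$. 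Density of $\ell^1\cap\ell^2$ (indeed of finitely supported functions) in $\ell^q(X)$ then yields the claimed operator-norm bound for $\lambda_{X,q}(f)$.

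\textbf{Main obstacle.} The only delicate point is the formal applicability of the bilinear theorem. One needs $A_0,A_1$ to form a compatible couple, i.e. to embed continuously in a common Hausdorff space. A natural choice is functions on $G$ with pointwise evaluation, since both norms dominate $|f(g)|w(g)^{\beta_i}$. One must also check that the interpolated norm of $f\in\mathbb{C}G$ is the one estimated in the Lemma.

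If the compatibility is troublesome, the fallback is a direct three-lines argument. Take the analytic family $F(z)$ from the proof of the Lemma and an analytic family $\Xi(z)$ interpolating $\xi$ between $\ell^1$ and $\ell^2$. Pair $\lambda_X(F(z))\Xi(z)$ with an analytic family $\eta(z)$ interpolating a norming element of $\ell^{q'}$ between $\ell^\infty$ and $\ell^2$. Using the Gaussian factor as in the Lemma, one bounds the resulting scalar function by $M_0C$ on $\Re z=0$ and $M_1C$ on $\Re z=1$. All sums are then finite for finitely supported $\xi,\eta$ and $f\in\mathbb{C}G$, so analyticity is immediate.
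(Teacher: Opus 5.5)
Your proposal is correct and follows essentially the same route as the paper: the two endpoint bounds for $B(f,\xi)=\lambda_X(f)\xi$, bilinear complex interpolation via \cite[Theorem~4.4.1]{bergh1976} and \cite[Theorem~5.1.1]{bergh1976}, and then Lemma~\ref{lem:interp-norm-est-weighted}, with the compatibility issue resolved as you suggest by realizing $A_0,A_1$ concretely inside $\mathbb{C}^G$. The only point you assert rather than prove is the $A_1$-endpoint bound on all of $A_0\cap A_1$; the paper proves it by approximating $f$ by elements of $\mathbb{C}G$ simultaneously in both norms (finite-support truncation) and using the auxiliary bound $\|B(f,\xi)\|_2\le M_0\|f\|_{A_0}\|\xi\|_2$, whereas your three-lines fallback (which does work as sketched) avoids this issue entirely.
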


\begin{proof}
	Using finite-support truncation, we identify the completions $A_0$ and $A_1$
	isometrically with the concrete subspaces of $\mathbb C^G$ given by
	\[
	A_j=\Bigl\{f:G\to\mathbb C:\ \bigl\|\pi_\#\bigl(|f|\,w^{\beta_j}\bigr)\bigr\|_{\ell^{p_j}(X)}<\infty\Bigr\},
	\qquad (p_0=1,\ p_1=2).
	\]
	Thus $A_0,A_1\subset \mathbb C^G$ form a compatible Banach couple, and $\mathbb CG$
	is dense in each.
	
	Set $X_0=Y_0=\ell^1(X)$ and $X_1=Y_1=\ell^2(X)$.
	For $f\in A_0\cap A_1$ and $\xi\in \ell^1(X)\cap \ell^2(X)$, 
	$B(f,\xi)= \sum_{g\in G} f(g)\lambda_X(g)\xi $ is well-defined. Indeed, by (E$_0$),
	\[
	\sum_{g\in G}|f(g)|
	\le \|w^{-\beta_0}\|_{\ell^\infty(G)} \sum_{g\in G}|f(g)|\,w(g)^{\beta_0}
	= M_0\,\|f\|_{A_0}<\infty,
	\]
	and  for $j=1,2$,
	\[
	\sum_{g\in G}\bigl\|f(g)\lambda_X(g)\xi\bigr\|_j
	\le \Bigl(\sum_{g\in G}|f(g)|\Bigr)\|\xi\|_j<\infty.
	\]
	Hence the defining series converges absolutely in both $\ell^1(X)$ and $\ell^2(X)$,
	so $B(f,\xi)\in \ell^1(X)\cap \ell^2(X)$.
	
	\medskip
	\noindent\textbf{Endpoint $E_0$ ($\ell^1$ bound).}
	For $f\in A_0\cap A_1$ and $\xi\in\ell^1(X)\cap\ell^2(X)$,
	since $G$ acts on $X$ by permutations, $\|\lambda_X(g)\xi\|_1=\|\xi\|_1$. Hence
	\[
	\|B(f,\xi)\|_1
	\le \sum_{g\in G}|f(g)|\,\|\lambda_X(g)\xi\|_1
	=\Bigl(\sum_{g\in G}|f(g)|\Bigr)\|\xi\|_1.
	\]
	Using $|f(g)|=|f(g)|w(g)^{\beta_0}\,w(g)^{-\beta_0}$ and
	$M_0=\|w^{-\beta_0}\|_\infty$, we get
	\[
	\sum_{g\in G}|f(g)|
	\le M_0\sum_{g\in G}|f(g)|w(g)^{\beta_0}
	= M_0\,\bigl\|\pi_\#\bigl(|f|\,w^{\beta_0}\bigr)\bigr\|_{\ell^1(X)}
	= M_0\,\|f\|_{A_0}.
	\]
	Thus
	\[
	\|B(f,\xi)\|_1\le M_0\,\|f\|_{A_0}\,\|\xi\|_1.
	\]
	We shall also use the analogous estimate in $\ell^2$:
	\begin{equation}\label{eq:aux-A0-to-l2}
		\|B(f,\xi)\|_2
		\le \sum_{g\in G}|f(g)|\,\|\lambda_X(g)\xi\|_2
		\le M_0\,\|f\|_{A_0}\,\|\xi\|_2,
		\qquad
		f\in A_0,\ \xi\in \ell^2(X).
	\end{equation}
	
	\medskip
	\noindent\textbf{Endpoint $E_1$ ($\ell^2$ bound).}
	Let $f\in A_0\cap A_1$ and $\xi\in\ell^1(X)\cap\ell^2(X)$.
	Choose $f_n\in \mathbb CG$ such that
	\[
	\|f_n-f\|_{A_0}+\|f_n-f\|_{A_1}\longrightarrow 0.
	\]
	For each $n$, assumption (E$_1$) gives
	\[
	\|B(f_n,\xi)\|_2
	=\|\lambda_X(f_n)\xi\|_2
	\le M_1\,\|f_n\|_{A_1}\,\|\xi\|_2.
	\]
	On the other hand, by \eqref{eq:aux-A0-to-l2},
	\[
	\|B(f_n,\xi)-B(f,\xi)\|_2
	=\|B(f_n-f,\xi)\|_2
	\le M_0\,\|f_n-f\|_{A_0}\,\|\xi\|_2
	\longrightarrow 0.
	\]
	Hence $B(f_n,\xi)\to B(f,\xi)$ in $\ell^2(X)$, and therefore
	\[
	\|B(f,\xi)\|_2
	\le \limsup_{n\to\infty}\|B(f_n,\xi)\|_2
	\le M_1\Bigl(\lim_{n\to\infty}\|f_n\|_{A_1}\Bigr)\|\xi\|_2
	= M_1\,\|f\|_{A_1}\,\|\xi\|_2.
	\]
	
	\medskip
	\noindent\textbf{Interpolation.}
	We have shown that the bilinear map
	\[
	B:(A_0\cap A_1)\times (X_0\cap X_1)\longrightarrow Y_0\cap Y_1
	\]
	satisfies
	\[
	\|B(f,\xi)\|_{Y_0}\le M_0\,\|f\|_{A_0}\,\|\xi\|_{X_0},
	\qquad
	\|B(f,\xi)\|_{Y_1}\le M_1\,\|f\|_{A_1}\,\|\xi\|_{X_1}.
	\]
	Hence, by Theorem~4.4.1 in \cite{bergh1976}, $B$ extends to a bounded bilinear map
	\[
	\widetilde B:(A_0,A_1)_{[\theta]}\times (X_0,X_1)_{[\theta]}
	\longrightarrow (Y_0,Y_1)_{[\theta]}
	\]
	with operator norm
	\[
	\|\widetilde B\|
	\le M_0^{1-\theta}M_1^\theta.
	\]
	Moreover, by Theorem~5.1.1 in \cite{bergh1976},
	\[
	(X_0,X_1)_{[\theta]}=(Y_0,Y_1)_{[\theta]}=\ell^q(X),
	\qquad
	\frac1q=1-\frac{\theta}{2}.
	\]
	Now let $f\in\mathbb CG$. Then $f\in A_0\cap A_1\subset (A_0,A_1)_{[\theta]}$, and for
	every $\xi\in \ell^1(X)\cap\ell^2(X)$,
	\[
	\widetilde B(f,\xi)=B(f,\xi)=\lambda_X(f)\xi.
	\]
	Since $f$ has finite support, $\lambda_{X,q}(f)$ is a bounded operator on $\ell^q(X)$. As $\ell^1(X)\cap\ell^2(X)$ is dense in
	$\ell^q(X)$, it follows that $\widetilde B(f,\xi)=\lambda_{X,q}(f)\xi$ for $\xi\in\ell^q(X)$.
	Therefore, for every $f\in\mathbb CG$ and every $\xi\in\ell^q(X)$,
	\[
	\|\lambda_{X,q}(f)\xi\|_q
	=\|\widetilde B(f,\xi)\|_q
	\le M_0^{1-\theta}M_1^\theta\,\|f\|_{(A_0,A_1)_{[\theta]}}\,\|\xi\|_q.
	\]
	Taking the supremum over $\|\xi\|_q=1$ gives
	\[
	\|\lambda_{X,q}(f)\|_{B(\ell^q(X))}
	\le M_0^{1-\theta}M_1^\theta\,\|f\|_{(A_0,A_1)_{[\theta]}}.
	\]
	Finally, Lemma~\ref{lem:interp-norm-est-weighted} yields
	\[
	\|f\|_{(A_0,A_1)_{[\theta]}}
	\le
	\bigl\|\pi_\#\bigl(|f|\,w^{\beta_\theta}\bigr)\bigr\|_{\ell^q(X)}.
	\]
	Combining the last two inequalities proves the claim.
\end{proof}

\begin{remark}\label{rem:interp-quasi-regular-weighted-p}
	Under the assumptions of Proposition~\ref{prop:interp-quasi-regular-weighted-theta}, let $2<p<\infty$ and set
	$q=\frac{p}{p-1}$. Applying Proposition~\ref{prop:interp-quasi-regular-weighted-theta} with $\theta=\frac{2}{p}$
	(one has $\theta\in(0,1)$) gives
	\[
	\|\lambda_{X,q}(f)\|_{B(\ell^q(X))}
	\le
	M_0^{\,1-\frac{2}{p}}\,M_1^{\,\frac{2}{p}}\,
	\bigl\|\pi_\#\bigl(|f|\,w^{\beta_p}\bigr)\bigr\|_{\ell^q(X)},
	\qquad
	\beta_p\coloneqq \Bigl(1-\frac{2}{p}\Bigr)\beta_0+\frac{2}{p}\beta_1.
	\]
	Indeed, $\frac1q=1-\frac{\theta}{2}=1-\frac1p$, hence $q=\frac{p}{p-1}$, and
	$\beta_\theta=(1-\theta)\beta_0+\theta\beta_1=\beta_p$.
	
	Moreover, the endpoint case $p=2$ (i.e.\ $q=2$) follows directly from the $\ell^2$ estimate \textup{(E$_1$)}.
\end{remark}

\medskip

We now introduce the rapid decay property for pairs.

\begin{definition}(\cite{ChatterjiZarka2024v1})
	We say that the pair $(G,H)$ has the \emph{rapid decay property}, or simply \emph{pair rapid decay}, if there exist constants $C_h>0$ and $s_1>0$ such that
	$$
	\|f\|_{h} \le C_h\, \|f(1+\ell)^{s_1}\|_{(2,1)}
	$$
	for all $f \in \mathbb{C}G$.
\end{definition}

\begin{corollary}\label{cor:hybrid-rd-implies-q-bound}
	Assume that the pair $(G,H)$ has \emph{pair rapid decay}. Let $0<\theta<1$, and let $q$ be given by $\frac1q = 1 - \frac{\theta}{2}$. Then, for every $f \in \mathbb{C}G$,
	$$
	\|\lambda_{X,q}(f)\|_{B(\ell^q(X))}
	\le
	C_h^{\,\theta}\,
	\bigl\|\pi_\#\bigl(|f|\,(1+\ell)^{s_1\theta}\bigr)\bigr\|_{\ell^q(X)}.
	$$
\end{corollary}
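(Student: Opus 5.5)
The plan is to apply Proposition~\ref{prop:interp-quasi-regular-weighted-theta} with the weight $w\coloneqq 1+\ell$ and exponents $\beta_0\coloneqq 0$, $\beta_1\coloneqq s_1$. With these choices, $\beta_\theta=(1-\theta)\cdot 0+\theta s_1=s_1\theta$. So the right-hand side of the proposition becomes exactly $\bigl\|\pi_\#\bigl(|f|\,(1+\ell)^{s_1\theta}\bigr)\bigr\|_{\ell^q(X)}$, and it remains to check the two hypotheses and identify the constants $M_0$ and $M_1$.

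For (E$_0$), the weight satisfies $w\ge 1$ and $\beta_0=0$. Hence $w^{-\beta_0}\equiv 1$ and $M_0=1$, so the factor $M_0^{1-\theta}$ disappears.

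For (E$_1$), I will unpack the definition of pair rapid decay.

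\begin{itemize}
\item The norm $\|f\|_h$ is the operator norm of $\lambda_X(f)$ on $\ell^2(X)$, i.e.\ $\|\lambda_{X,2}(f)\|_{B(\ell^2(X))}$.
\item The mixed norm $\|\cdot\|_{(2,1)}$ of \cite{ChatterjiZarka2024v1} is the $\ell^2$-norm over cosets of the $\ell^1$-norm inside each coset. Thus $\|f(1+\ell)^{s_1}\|_{(2,1)}=\bigl\|\pi_\#\bigl(|f|\,(1+\ell)^{s_1}\bigr)\bigr\|_{\ell^2(X)}=\|f\|_{A_1}$.
\end{itemize}

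Pair rapid decay then reads $\|\lambda_X(f)\xi\|_2\le C_h\|f\|_{A_1}\|\xi\|_2$ for all $f\in\mathbb CG$ and $\xi\in\ell^2(X)$. This is (E$_1$) with $M_1=C_h$.

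The proposition then gives
\[
\|\lambda_{X,q}(f)\|_{B(\ell^q(X))}\le 1^{1-\theta}\,C_h^{\theta}\,\bigl\|\pi_\#\bigl(|f|\,(1+\ell)^{s_1\theta}\bigr)\bigr\|_{\ell^q(X)},
\]
which is the claim.

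The only point needing care is the identification of the $(2,1)$-norm with $\|\pi_\#(\cdot)\|_{\ell^2(X)}$. Specifically, I must check that the cosets in the Chatterji--Zarka definition are the left cosets $gH$, matching the fibers of $\pi$, rather than right cosets. If their convention uses $Hg$, I would pass through $f\mapsto \check f$, $\check f(g)=f(g^{-1})$. This uses $\ell(g^{-1})=\ell(g)$ and the fact that $\|\lambda_X(\check f)\|=\|\lambda_X(f)^*\|$ up to complex conjugation of coefficients, which does not change the norm. Everything else is a direct substitution.
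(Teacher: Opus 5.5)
Your route is the paper's: apply Proposition~\ref{prop:interp-quasi-regular-weighted-theta} with $w=1+\ell$, $\beta_0=0$, $\beta_1=s_1$. This gives $M_0=1$, $M_1=C_h$ and $\beta_\theta=s_1\theta$.

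One step must be corrected: your first bullet is false as stated. In this paper, following \cite{ChatterjiZarka2024v1}, $\|f\|_h$ is the operator norm of left convolution $\phi\mapsto f*\phi$ on $\ell^{(2,1)}(G,H)$. It is not the norm of $\lambda_X(f)$ on $\ell^2(X)$, and the two differ in general. For example, if $H=G$ then $\|f\|_h=\|f\|_1$, whereas $\lambda_X(f)$ is multiplication by $\sum_g f(g)$ on $\ell^2(X)=\mathbb C$. What is true, and all you need, is the inequality $\|\lambda_X(f)\|_{B(\ell^2(X))}\le\|f\|_h$. This is \cite[Lemma~2.12(1)]{ChatterjiZarka2024v1}, and it is exactly what the paper invokes at this point.

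The inequality is quick to prove. For finitely supported $\xi\in\ell^2(X)$, choose one representative $\sigma(x)\in x$ in each coset and put $\phi:=\sum_{x}\xi(x)\delta_{\sigma(x)}\in\mathbb CG$. Then $\|\phi\|_{(2,1)}=\|\xi\|_2$, and
\[
\sum_{h\in H}(f*\phi)(gh)=\sum_{y}f(y)\,\xi(y^{-1}gH)=(\lambda_X(f)\xi)(gH).
\]
Hence $|(\lambda_X(f)\xi)(gH)|\le\sum_{h\in H}|(f*\phi)(gh)|$, and therefore $\|\lambda_X(f)\xi\|_2\le\|f*\phi\|_{(2,1)}\le\|f\|_h\,\|\xi\|_2$. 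Density finishes the argument. With this inequality in place of your claimed identity, pair rapid decay yields (E$_1$) with $M_1=C_h$, and the rest of your argument goes through unchanged.

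The left/right coset contingency is not needed. The $(2,1)$-norm is taken over the left cosets $gH$, i.e.\ the fibres of $\pi$. This is the identity $\|f(1+\ell)^{s_1}\|_{(2,1)}=\|\pi_\#(|f|(1+\ell)^{s_1})\|_{\ell^2(X)}$ that the paper quotes from \cite[Remark~2.4]{ChatterjiZarka2024v1}.
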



\begin{proof}
	Set $w(g)\coloneqq 1+\ell(g)$, $\beta_0\coloneqq 0$ and $\beta_1\coloneqq s_1$.
	By the identity $\|f(1+\ell)^{s_1}\|_{(2,1)}=\|\pi_\#(|f|w^{\beta_1})\|_2$
	(\cite[Remark~2.4]{ChatterjiZarka2024v1})
	and the estimate $\|\lambda_X(f)\|_{B(\ell^2(X))}\le \|f\|_h$ for the quasi-regular
	representation (\cite[Lemma~2.12(1)]{ChatterjiZarka2024v1}),
	the pair rapid decay assumption yields
	\[
	\|\lambda_X(f)\|_{B(\ell^2(X))}
	\le C_h\,\bigl\|\pi_\#\bigl(|f|\,w^{\beta_1}\bigr)\bigr\|_2.
	\]
	Thus Proposition~\ref{prop:interp-quasi-regular-weighted-theta} applies with
	$M_0=1$ and $M_1=C_h$. Since $\beta_\theta=(1-\theta)\beta_0+\theta\beta_1=\theta s_1$,
	we obtain
	\[
	\|\lambda_{X,q}(f)\|_{B(\ell^q(X))}
	\le C_h^{\theta}\,\bigl\|\pi_\#\bigl(|f|\,w^{\beta_\theta}\bigr)\bigr\|_q
	=
	C_h^{\theta}\,\bigl\|\pi_\#\bigl(|f|\,(1+\ell)^{s_1\theta}\bigr)\bigr\|_q,
	\]
	as claimed.
\end{proof}


\begin{remark}[Ball-supported estimate]
	For later applications it is enough to have the following ball-supported bound.
	Assume  $\supp(f)\subset B(R)$ and that $(G,H)$ has pair rapid decay. Let $0<\theta<1$ and define $q$ by $\frac1q=\frac{1-\theta}{1}+\frac{\theta}{2}=1-\frac{\theta}{2}$.
	One has
	\begin{equation}\label{eq:target}
		\|\lambda_{G/H}(f)\| \le C^{\theta}(R+1)^{D\theta}\,\|f\|_{(q,1)}\,
	\end{equation}
	where for $1\le p<\infty$ we write
	$\|f\|_{(p,1)}:=\big(\sum_{x\in G/H}(\sum_{g\in \pi^{-1}(x)}|f(g)|)^p\big)^{1/p}$.

	\smallskip
	Although \eqref{eq:target} follows from Corollary~\ref{cor:hybrid-rd-implies-q-bound}, it also admits
	a direct proof via standard interpolation.
	By Proposition~2.7(3) in \cite{ChatterjiZarka2024v1}, pair rapid decay is equivalent to the existence of constants
	$C,D\ge 0$ such that $\|u\|_h\le C(R+1)^D\|u\|_{(2,1)}$ whenever $\supp(u)\subset B(R)$.
	Together with $\|\lambda_{G/H}(u)\|_{2\to2}\le \|u\|_h$, this gives the $\ell^2$ endpoint estimate
	$\|\lambda_{G/H}(u)\|_{2\to2}\le C(R+1)^D\|u\|_{(2,1)}$ for all $u$ supported in $B(R)$.
	
	For convenience, fix $R\ge 0$ and set $P_R f:=1_{B(R)}f$, so that $\supp(P_R f)\subset B(R)$.
	Consider the bilinear map $B_R(f,\xi):=\lambda_{G/H}(P_R f)\xi$.
	The $\ell^1$ endpoint is immediate from the triangle inequality and the fact that each $\lambda_{G/H}(g)$ is an
	$\ell^1$-isometry: $\|B_R(f,\xi)\|_1\le \|f\|_1\|\xi\|_1$.
	The $\ell^2$ endpoint follows from the previous paragraph:
	$\|B_R(f,\xi)\|_2\le C(R+1)^D\|f\|_{(2,1)}\|\xi\|_2$.
	
	Thus $B_R$ is bounded both as $X_0\times Y_0\to Z_0$ and as $X_1\times Y_1\to Z_1$, where
	$X_0=\ell^1(G)=\ell^{(1,1)}(G,H)$, $X_1=\ell^{(2,1)}(G,H)$, and $Y_0=Z_0=\ell^1(G/H)$, $Y_1=Z_1=\ell^2(G/H)$.
	Applying the bilinear complex interpolation theorem \cite[Thm.~4.4.1]{bergh1976} yields
	\[
	\|B_R\|_{[X_0,X_1]_\theta\times [Y_0,Y_1]_\theta\to [Z_0,Z_1]_\theta}\le (C(R+1)^D)^\theta .
	\]
	
	To identify the interpolation spaces, we use
	$[Y_0,Y_1]_\theta=[Z_0,Z_1]_\theta=\ell^q(G/H)$ with $\frac1q=\frac{1-\theta}{1}+\frac{\theta}{2}$
	\cite[Thm.~5.1.1]{bergh1976}.
	Moreover, the mixed-norm spaces satisfy the isometric identification
	$\ell^{(p,1)}(G,H)\cong \ell^p(G/H;\ell^1(H))$.
	In particular, the vector-valued complex interpolation theorem \cite[Thm.~5.1.2]{bergh1976} gives
	$[X_0,X_1]_\theta\cong \ell^{(q,1)}(G,H)$.
	
	Combining these facts, we obtain
	$\|\lambda_{G/H}(P_R f)\xi\|_q\le C^\theta(R+1)^{D\theta}\|f\|_{(q,1)}\|\xi\|_q$.
	If $\supp(f)\subset B(R)$ then $P_R f=f$, and \eqref{eq:target} follows.
	\end{remark}

\subsection{Asymptotic Entropy on Homogeneous Spaces}

We now apply the interpolation formula established in Subsection~\ref{subsec:Interpolation-Estimate} to obtain an upper bound for the upper asymptotic entropy \(\overline{h}(G/H,\mu)\).

\begin{lemma}\label{lem:upper-asymptotic-entropy}
	Assume that $(G,H)$ has pair rapid decay and that $\mu$ has finite support. Then the upper asymptotic entropy satisfies
	$$
	\overline{h}(G/H,\mu) \le c(G,H;\mu).
	$$
\end{lemma}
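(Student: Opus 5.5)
The plan is to mimic the entropy upper bound for groups with rapid decay, with Corollary~\ref{cor:hybrid-rd-implies-q-bound} as the analytic input. Fix $p>2$, set $q=p/(p-1)$ and $\theta=2/p$, so that the exponent $s_1\theta$ in Corollary~\ref{cor:hybrid-rd-implies-q-bound} is $2s_1/p$. Choose $K$ with $\supp(\mu)\subset B(K)$. Then every nonnegative $f$ with $0\le f\le \mu^{*n}$ is supported in $B(Kn)$, and the corollary gives
\[
\|\lambda_{X,q}(f)\|_{B(\ell^q(X))}\le C_h^{2/p}(1+Kn)^{2s_1/p}\,\|\pi_\#f\|_{\ell^q(X)} .
\]
Taking $f=\mu^{*n}$, and using $r_q(\mu)^n\le\|\lambda_{X,q}(\mu)^n\|$ (which holds by submultiplicativity), we get
\[
r_q(\mu)^n\le C_h^{2/p}(1+Kn)^{2s_1/p}\|\nu_n\|_q .
\]
Equivalently, $H_q(\nu_n)=-p\log\|\nu_n\|_q\le -np\log r_q(\mu)+O(\log n)$. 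This bounds the R\'enyi entropy from above, but $H\ge H_q$, so it does not yet bound $H(\nu_n)$. The actual work is to convert it into an upper bound on $H(\nu_n)$.

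To pass to Shannon entropy, I would control the mass of $\nu_n$ on points of very small probability. Fix $t>-p\log r_q(\mu)$ and $\varepsilon>0$, and let
\[
A_n=\{x\in X:\nu_n(x)\le e^{-n t}\},\qquad f_n=\mu^{*n}\,1_{\pi^{-1}(A_n)} .
\]
Then $\pi_\# f_n=\nu_n 1_{A_n}$, so
\[
\|\pi_\#f_n\|_q\le \nu_n(A_n)^{1/q}e^{-nt(q-1)/q}=\nu_n(A_n)^{1/q}e^{-nt/p}.
\]
The goal is to show $\nu_n(A_n)\to0$, or at least that it is exponentially small along with a quantitative version for a family of thresholds. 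Given that, the standard decomposition finishes the argument. Write
\[
H(\nu_n)=\sum_{x\notin A_n}\nu_n(x)\log\frac{1}{\nu_n(x)}+\sum_{x\in A_n}\nu_n(x)\log\frac{1}{\nu_n(x)} .
\]
The first sum is at most $nt$. For the second, $A_n$ lies in the image of $B(Kn)$, which has at most $|S|^{Kn}$ points, so the second sum is at most $\nu_n(A_n)\bigl(Kn\log|S|+\log(1/\nu_n(A_n))\bigr)=o(n)$ once $\nu_n(A_n)\to0$. Hence $\overline h\le t$ for every $t>-p\log r_q(\mu)$. Since by the first Lemma of this section $-p\log r_q(\mu)\le c(G,H;\mu)$ for all $p$, letting $t\downarrow -p\log r_q(\mu)$ gives $\overline h(G/H,\mu)\le c(G,H;\mu)$.

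The main obstacle is a matching lower bound for $\|\lambda_{X,q}(f_n)\|$ in terms of $\nu_n(A_n)$ and $r_q(\mu)$, of the form
\[
\|\lambda_{X,q}(f_n)\|\gtrsim \nu_n(A_n)\,r_q(\mu)^{n}\ \text{(up to subexponential factors)}.
\]
Combined with the displayed upper bound, this would give $\nu_n(A_n)^{1-1/q}\lesssim \mathrm{poly}(n)\,e^{-n(t/p+\log r_q)}$, which is exponentially small by the choice of $t$. My first attempt is a positivity argument on $\ell^q(X)$. Take the positive test vector $\xi=\nu_m/\|\nu_m\|_q$ with $m$ large and compare $\langle \lambda_{X,q}(f_n)\xi,\eta\rangle$ against $\lambda_{X,q}(\mu)^{n+m}$, using that $\mu^{*n}=f_n+(\mu^{*n}-f_n)$ splits into two nonnegative pieces. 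Failing that, I would apply the interpolation bound a second time to $\mu^{*k}*f_n$ to relate $\|\pi_\#(\mu^{*k}*f_n)\|_q$ back to $r_q(\mu)^{n+k}$, and choose $k\approx n$. Finally, one can run the argument for all $p$ simultaneously to handle the limit $p\to\infty$, since the polynomial factor $(1+Kn)^{2s_1/p}$ is harmless at the exponential scale.
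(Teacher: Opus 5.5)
You are correct up to $H_q(\nu_n)\le -np\log r_q(\mu)+2\log C_h+2s_1\log(1+Kn)$. The fixed-$p$ strategy you then pursue is aimed at a false statement. Suppose $\nu_n(A_n)\to0$ held for every $t>-p\log r_q(\mu)$. Your (correct) entropy decomposition would then give $\overline{h}(G/H,\mu)\le -p\log r_q(\mu)$ at one fixed $p$, i.e.\ the Shannon rate would be bounded by a R\'enyi-type rate of order $q>1$.

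This already fails for $G=F_2$, $H=\{e\}$ (pair rapid decay by Haagerup's theorem) and $\mu$ the simple random walk. Since $\|\nu_n\|_q\ge\nu_n(e)$, Kesten's theorem gives $r_q(\mu)\ge\sqrt3/2$. Hence $-p\log r_q(\mu)\le \frac p2\log\frac43<\frac12\log 3=h(F_2,\mu)$ whenever $2<p<\log 3/\log\frac43$.

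Consequently the lower bound $\|\lambda_{X,q}(f_n)\|\gtrsim \nu_n(A_n)\,r_q(\mu)^n$ is false in general, and neither the positivity argument nor the comparison with $\mu^{*k}*f_n$ can produce it. Take $t$ strictly between $-p\log r_q(\mu)$ and $h$. Then $\nu_n(A_n)\not\to0$, while your own RD bound gives $\|\lambda_{X,q}(f_n)\|\le C_h^{2/p}(1+Kn)^{2s_1/p}e^{-nt/p}$, which is exponentially smaller than $r_q(\mu)^n$. The underlying reason is that for fixed $q>1$ the $\ell^q$-norm only sees atypically heavy atoms, not the typical mass that governs $H(\nu_n)$.

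The missing idea is to interchange the limits instead of fixing $p$. In your bound the additive error $2\log C_h+2s_1\log(1+Kn)$ is independent of $p$, because the factor $p$ cancels the exponents $2/p$. Also $-p\log r_q(\mu)\le c(G,H;\mu)$ by monotonicity. Therefore
$H_q(\nu_n)\le n\,c(G,H;\mu)+2\log C_h+2s_1\log(1+Kn)$ for all $q\in(1,2)$. Now fix $n$ and let $q\downarrow1$. Since $\nu_n$ is finitely supported, $H_q(\nu_n)\to H(\nu_n)$ (differentiate $q\mapsto\log\sum_x\nu_n(x)^q$ at $q=1$). Dividing by $n$ and taking $\limsup$ gives the lemma.

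This is the paper's proof. It carries the conditional weight $\omega_n(x)=\mathbb{E}[(1+\ell(Z_n))^s\mid X_n=x]$ and computes $\lim_{p\to\infty}$ through $F_n(\varepsilon)$ with $\varepsilon=1/(p-1)$. In the end, however, it only uses $\omega_n\le(1+nR)^s$, which is exactly your crude bound.
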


\begin{proof}
	Fix \(p \ge 2\), and set \(q=\frac{p}{p-1}\in(1,2]\). For \(n\ge1\), let \(s=2s_1\), where \(s_1\) is the constant appearing in Corollary~\ref{cor:hybrid-rd-implies-q-bound}, and define
	$$
	a_{n,p}(x) \coloneqq \pi_{\#}\bigl(\mu^{*n}(1+\ell)^{s/p}\bigr)(x)
	= \sum_{gH=x} \mu^{*n}(g)(1+\ell(g))^{s/p}, \qquad x \in X.
	$$
	By the interpolation estimate established in Corollary~\ref{cor:hybrid-rd-implies-q-bound}, we have
	$$
	\|\lambda_{X,q}(\mu^{*n})\|_{B(\ell^q(X))}
	\le
	C^{1/p}\,\|a_{n,p}\|_{\ell^q(X)}.
	$$
	Since $\lambda_{X,q}(\mu^{*n})=\lambda_{X,q}(\mu)^n$ and $r_q(\mu)^n \le \|\lambda_{X,q}(\mu)^n\|_{B(\ell^q(X))}$, it follows that $
	r_q(\mu)^n \le C^{1/p}\,\|a_{n,p}\|_{\ell^q(X)}.$
	Hence
	$$
	-p \log r_q(\mu)
	\ge
	-\frac{p}{n}\log \|a_{n,p}\|_{\ell^q(X)} - \frac{\log C}{n}.
	$$
	
	Let $\nu_n = \mu^{*n} * \delta_o \in \mathcal{P}(X)$. For $x \in X$ with $\nu_n(x)>0$, define $
	\omega_n(x) \coloneqq \mathbb{E}\bigl[(1+\ell(Z_n))^s \mid X_n=x\bigr].$
	For notational convenience, set $\omega_n(x)=1$ when $\nu_n(x)=0$. Then for every \(x\in X\) with \(\nu_n(x)>0\), we have $
	a_{n,p}(x)
	=
	\nu_n(x)\,\mathbb{E}\bigl[(1+\ell(Z_n))^{s/p}\mid X_n=x\bigr].$
	By Jensen's inequality, $
	\mathbb{E}\bigl[(1+\ell(Z_n))^{s/p}\mid X_n=x\bigr]
	\le
	\omega_n(x)^{1/p}$ whenever $\nu_n(x)>0.$
	Therefore, $
	a_{n,p}(x)\le \nu_n(x)\,\omega_n(x)^{1/p}$ for every $x\in X.$
	and hence $
	\|a_{n,p}\|_{\ell^q(X)}
	\le
	\|\nu_n\,\omega_n^{1/p}\|_{\ell^q(X)}.$
	Consequently,
	\begin{equation}\label{eq:logrq}
	-p \log r_q(\mu)
    \ge
    -\frac{p}{n}\log \|\nu_n\,\omega_n^{1/p}\|_{\ell^q(X)} - \frac{\log C}{n}.
	\end{equation}

	Now $\nu_n$ has finite support. Thus for fixed $n$, we may compute the limit directly. Indeed,
	$$
	\|\nu_n\,\omega_n^{1/p}\|_{\ell^q(X)}^q
	=
	\sum_{x \in X} \nu_n(x)^q \omega_n(x)^{q/p}.
	$$
	Using $q=\frac{p}{p-1}$, we obtain $
	\nu_n(x)^q \omega_n(x)^{q/p}
	=
	\nu_n(x)\bigl(\nu_n(x)\omega_n(x)\bigr)^{1/(p-1)},$
	so that
	$$
	\|\nu_n\,\omega_n^{1/p}\|_{\ell^q(X)}^q
	=
	\sum_{x \in \supp(\nu_n)}
	\nu_n(x)\bigl(\nu_n(x)\omega_n(x)\bigr)^{1/(p-1)}.
	$$
	Set $\varepsilon = \frac{1}{p-1}$. Since $\frac{p}{q}=p-1=\varepsilon^{-1}$, we have
	$$
	-p \log \|\nu_n\,\omega_n^{1/p}\|_{\ell^q(X)}
	=
	-\frac{1}{\varepsilon}
	\log\left(
	\sum_{x \in \supp(\nu_n)}
	\nu_n(x)e^{\varepsilon \log(\nu_n(x)\omega_n(x))}
	\right).
	$$
	Define
	$$
	F_n(\varepsilon)
	\coloneqq
	\log\left(
	\sum_{x \in \supp(\nu_n)}
	\nu_n(x)e^{\varepsilon \log(\nu_n(x)\omega_n(x))}
	\right).
	$$
	Since $\supp(\nu_n)$ is finite, $F_n$ is differentiable at $\varepsilon=0$, with $F_n(0)=0$ and
	$$
	F_n'(0)=\sum_{x \in \supp(\nu_n)} \nu_n(x)\log(\nu_n(x)\omega_n(x)).
	$$
	Therefore
	\begin{equation}\label{eq:lim_p}
	\lim_{p \to \infty}
	\bigl(-p \log \|\nu_n\,\omega_n^{1/p}\|_{\ell^q(X)}\bigr)
	=
	H(\nu_n)-\sum_{x \in X}\nu_n(x)\log \omega_n(x).
	\end{equation}

	Let $R \coloneqq \max\{\ell(g):\mu(g)>0\}<\infty.$
	Then $\ell(Z_n)\le nR$ almost surely, and therefore $1 \le \omega_n(x)\le (1+nR)^s$ whenever $\nu_n(x)>0$. It follows that
	\begin{equation}\label{eq:log(1+nR)}
    0 \le \sum_{x \in X}\nu_n(x)\log \omega_n(x)
    \le s\log(1+nR).
	\end{equation}

	Taking the limit as $p \to \infty$ in \eqref{eq:logrq}, and combining \eqref{eq:lim_p} with \eqref{eq:log(1+nR)} we obtain
	$$
	c(G,H;\mu)
	\ge
	\frac{H(\nu_n)}{n}
	-
	\frac{s\log(1+nR)}{n}
	-
	\frac{\log C}{n}
	$$
	for every $n \ge 1$. Since $\frac{s\log(1+nR)}{n} \to 0$ and $\frac{\log C}{n} \to 0$, we conclude that
	$$
	c(G,H;\mu)
	\ge
	\limsup_{n \to \infty}\frac{H(\nu_n)}{n}
	=
	\overline{h}(G/H,\mu).
	$$
\end{proof}

\begin{remark}
	The preceding proof is closely related to the argument in \cite[Theorem 4.4]{anderson2024}. Although that result is stated for probability measures on groups endowed with a weight, its proof only uses weighted $\ell^q$-norms and entropy identities. For this reason, the same argument adapts to the present finite-support probability measure $\nu_n$ on the countable set $X=G/H$.
	
	Indeed, for fixed $n \ge 1$, define
	$$
	\omega_n(x) \coloneqq \mathbb{E}\bigl[(1+\ell(Z_n))^s \mid X_n=x\bigr],
	$$
	with $\omega_n(x)=1$ when $\nu_n(x)=0$. Then $a_{n,p}(x) \le \nu_n(x)\,\omega_n(x)^{1/p}$, and hence
	$\|a_{n,p}\|_{\ell^{q(p)}(X)} \le \|\nu_n\,\omega_n^{1/p}\|_{\ell^{q(p)}(X)}$.
	Arguing exactly as in the proof of \cite[Theorem 4.4]{anderson2024}, one obtains
	$$
	\lim_{p\to\infty}\bigl(-p\log \|\nu_n\,\omega_n^{1/p}\|_{\ell^{q(p)}(X)}\bigr)
	=
	H(\nu_n)-\sum_{x\in X}\nu_n(x)\log \omega_n(x).
	$$
	Since $\omega_n(x)\le (1+nR)^s$, this again yields
	$c(G,H;\mu)\ge \frac{H(\nu_n)}{n} - \frac{s\log(1+nR)}{n} - \frac{\log C}{n}$,
	and therefore $\overline{h}(G/H,\mu)\le c(G,H;\mu)$.
\end{remark}

\begin{proposition}
	Assume that $(G,H)$ has pair rapid decay and that $\mu$ has finite support. Then the lower and upper asymptotic entropies coincide, and their common value is $c(G,H;\mu)$. That is,
	$$
	\underline{h}(G/H,\mu)=\overline{h}(G/H,\mu)=c(G,H;\mu).
	$$
\end{proposition}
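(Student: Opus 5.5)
This proposition is obtained by sandwiching the two asymptotic entropies between the bounds already proved. Both hypotheses of Lemma~\ref{lem:entropy_inequality} and Lemma~\ref{lem:upper-asymptotic-entropy} are in force: $\mu$ is finitely supported, and $(G,H)$ has pair rapid decay.

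Lemma~\ref{lem:entropy_inequality} uses only finite support and gives
\[
\underline{h}(G/H,\mu)\ge c(G,H;\mu).
\]
Lemma~\ref{lem:upper-asymptotic-entropy} uses pair rapid decay together with finite support and gives
\[
\overline{h}(G/H,\mu)\le c(G,H;\mu).
\]
Since $\liminf\le\limsup$, we always have $\underline{h}(G/H,\mu)\le\overline{h}(G/H,\mu)$. Chaining the three inequalities yields
\[
c(G,H;\mu)\le \underline{h}(G/H,\mu)\le \overline{h}(G/H,\mu)\le c(G,H;\mu),
\]
so all three quantities are equal.

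The equalities must be read in $[0,\infty]$ with some care, so I will check finiteness and the required positivity. Finite support of $\mu$ gives $H(\mu)<\infty$. Then $H(\nu_n)\le nH(\mu)$, so $\overline{h}(G/H,\mu)\le H(\mu)<\infty$, and consequently $c(G,H;\mu)$ is finite as well. Proposition~\ref{prop:rq-positive} gives $r_q(\mu)>0$ for every $q\in(1,2]$. This is exactly the hypothesis needed for the monotonicity lemma, so the limit defining $c(G,H;\mu)$ exists and no degenerate case arises.

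A further consequence is that $\lim_n \frac1n H(\nu_n)$ exists and equals $c(G,H;\mu)$. There is no substantive obstacle here, because all the analytic work was done in the two lemmas. The only point to watch is that the same $c(G,H;\mu)$, defined via the monotone limit in $p$, is the quantity appearing in both lemmas, and it is.
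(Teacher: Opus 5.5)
Your proof is correct and matches the paper's argument: the paper also chains Lemma~\ref{lem:entropy_inequality}, Lemma~\ref{lem:upper-asymptotic-entropy}, and $\underline{h}(G/H,\mu)\le\overline{h}(G/H,\mu)$. Your finiteness check is harmless but not needed, since the sandwich already forces equality in $[0,\infty]$.
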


\begin{proof}
	By Lemma~\ref{lem:entropy_inequality} and Lemma~\ref{lem:upper-asymptotic-entropy}, we have
	$$
	\overline{h}(G/H,\mu) \le c(G,H;\mu) \le \underline{h}(G/H,\mu).
	$$
	On the other hand, by definition, $\underline{h}(G/H,\mu)\le \overline{h}(G/H,\mu).$
	Hence all three quantities coincide.
\end{proof}

\begin{corollary}
	Assume that there exist constants $C>0$ and $s>0$ such that for every $p \ge 2$, with $q=\frac{p}{p-1}$, one has
	$$
	\|\lambda_{X,q}(f)\|_{B(\ell^q(X))}
	\le
	C^{1/p}\,
	\bigl\|\pi_\#\bigl(|f|(1+\ell)^{s/p}\bigr)\bigr\|_{\ell^q(X)}
	\qquad \text{for all } f \in \mathbb{C}G.
	$$
	Let $\mu$ be a probability measure on $G$ with finite entropy and finite $\alpha$-moment with respect to $1+\ell$ for some $\alpha \ge s$, that is,
    $H(\mu)<\infty$ and $\sum_{g\in G}\mu(g)(1+\ell(g))^\alpha<\infty.$
	Then
	$$
	\underline{h}(G/H,\mu)=\overline{h}(G/H,\mu)=c(G,H;\mu).
	$$
\end{corollary}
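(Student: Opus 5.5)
We prove the two inequalities $\overline{h}(G/H,\mu)\le c(G,H;\mu)\le \underline{h}(G/H,\mu)$ separately, following the finite-support arguments of Lemma~\ref{lem:entropy_inequality} and Lemma~\ref{lem:upper-asymptotic-entropy}. The task is to identify where finite support was used and replace it by the assumptions $H(\mu)<\infty$ and finite $\alpha$-moment.

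\emph{Lower bound.} The proof of Lemma~\ref{lem:entropy_inequality} never uses finite support. It only uses $\nu_n=T_q^n\delta_o$, the inequality $H(\nu_n)\ge H_q(\nu_n)=-p\log\|\nu_n\|_q$, and $\limsup_n\|\nu_n\|_q^{1/n}\le r_q(\mu)$. Finite entropy enters through Proposition~\ref{prop:rq-positive}, which guarantees $0<r_q(\mu)$, so the logarithms are finite and $c(G,H;\mu)$ is well defined. Hence $\underline{h}\ge c(G,H;\mu)$ holds verbatim.

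\emph{Upper bound.}
\begin{enumerate}
\item Apply the hypothesis to $f=\mu^{*n}$. Since $\mu^{*n}$ need not be finitely supported, first extend the displayed inequality from $\mathbb{C}G$ to $f$ with $\|\pi_\#(|f|(1+\ell)^{s/p})\|_q<\infty$. Truncate to balls $B(R)$ and let $R\to\infty$. The operators converge in norm because $\sum_g|f(g)|<\infty$ and each $\lambda_{X,q}(g)$ is an isometry. The right-hand sides converge by monotone convergence.
\item This gives $r_q(\mu)^n\le C^{1/p}\|\nu_n\omega_n^{1/p}\|_q$ with $\omega_n(x)=\mathbb{E}[(1+\ell(Z_n))^s\mid X_n=x]$, by Jensen exactly as before.
\item Send $p\to\infty$ for fixed $n$. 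Here $\nu_n$ may have infinite support, so the differentiation of $F_n(\varepsilon)$ at $0$ must be justified. I would use convexity of $F_n$: $F_n$ is convex in $\varepsilon$ (a log-moment generating function), so
\[
F_n(\varepsilon)/\varepsilon \;\ge\; \sum_x\nu_n(x)\log(\nu_n\omega_n)(x).
\]
This inequality direction is the wrong one for an upper bound on $-F_n/\varepsilon$, which is what we need. Instead, use the elementary bound $e^{y}\le 1+y+\ldots$, or more simply the sharper route below.
\item The sharper route: since $-p\log r_q$ increases to $c$ by the monotonicity lemma, it suffices to show
\[
\liminf_{p\to\infty}\bigl(-p\log\|\nu_n\omega_n^{1/p}\|_q\bigr)\ \ge\ H(\nu_n)-\sum_x\nu_n(x)\log\omega_n(x).
\]
Write $-\tfrac1\varepsilon\log\sum_x\nu_n(x)e^{\varepsilon Y(x)}$ with $Y=\log(\nu_n\omega_n)$. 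Split $Y=Y^+-Y^-$. Since $\nu_n\le1$, we have $Y^+\le\log\omega_n$. Dominated and monotone convergence give $(\mathbb{E}e^{\varepsilon Y}-1)/\varepsilon\to\mathbb{E}Y$, provided the exponential moment of $\varepsilon Y^+$ is controlled for small $\varepsilon$. This control holds because $e^{\varepsilon Y^+}\le\omega_n^{\varepsilon}\le 1+\omega_n$. The $Y^-$ part is handled by monotone convergence: $(1-e^{-\varepsilon Y^-})/\varepsilon\uparrow Y^-$, which is integrable since $H(\nu_n)<\infty$. Using $\log(1+u)\le u$ then yields the claim.
\item Bound $\sum_x\nu_n(x)\log\omega_n(x)\le\log\mathbb{E}(1+\ell(Z_n))^s$ by Jensen. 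Then use subadditivity $1+\ell(Z_n)\le\sum_{i\le n}(1+\ell(s_i))$ and Minkowski (or convexity when $s\ge1$; if $s<1$ it is easier still) to get
\[
\mathbb{E}(1+\ell(Z_n))^s\le n^{\max(s,1)}\,\mathbb{E}(1+\ell)^{s}.
\]
This is finite because $\alpha\ge s$ and $1+\ell\ge1$.
\end{enumerate}
Dividing by $n$ gives $c\ge H(\nu_n)/n-O(\log n/n)$, hence $c\ge\overline{h}$.

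The main obstacle is the limit exchange in step 4 when $\nu_n$ has infinite support. That is the one place where both the moment and the entropy hypotheses are genuinely used, and care is needed with the signs of $\log(\nu_n\omega_n)$.
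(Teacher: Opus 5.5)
Your proposal is correct and follows the paper's proof: the lower bound is inherited verbatim from the finite-support case, and the upper bound uses truncation of $\mu^{*n}$, conditional Jensen, and the moment bound $\mathbb{E}(1+\ell(Z_n))^s\le n^{\max(s,1)}\,\mathbb{E}(1+\ell)^s$, exactly as the paper does. The one difference is your step 4, which proves the needed $p\to\infty$ inequality directly: you split $Y=Y^+-Y^-$, the negative part is integrable because $H(\nu_n)<\infty$, and the positive part is dominated via $(e^{\varepsilon Y^+}-1)/\varepsilon\le e^{Y^+}-1\le\omega_n$ for $\varepsilon\le 1$ (state this bound rather than $e^{\varepsilon Y^+}\le 1+\omega_n$). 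The paper instead appeals to a countable-set version of Theorem~4.4 of \cite{anderson2024}, so your version is more self-contained; step 3 is a dead end and should be deleted.
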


\begin{proof}
	The proof is the same as in the finite-support case, except for two points: one must extend the operator estimate from $\mathbb{C}G$ to $\mu^{*n}$, and one must replace the bound $\omega_n(x)\le (1+nR)^s$ by a moment estimate.
	
	First, the proof of $c(G,H;\mu)\le \underline{h}(G/H,\mu)$ is unchanged. Indeed, the argument used earlier only requires that $H(\nu_n)<\infty$ for all $n$.
	
	We therefore only prove $\overline{h}(G/H,\mu)\le c(G,H;\mu)$. Fix $n\ge1$ and $p\ge2$, and set $q=q(p)\coloneqq \frac{p}{p-1}$. Define
	$$
	a_{n,p}(x)\coloneqq \pi_\#\bigl(\mu^{*n}(1+\ell)^{s/p}\bigr)(x),
	\qquad x\in X.
	$$
	Since $\alpha\ge s$, we have $\sum_{g\in G}\mu(g)(1+\ell(g))^s<\infty$, and hence $a_{n,p}\in \ell^q(X)$.
	
	To apply the assumed estimate to $\mu^{*n}$, let
	$$
	f_m\coloneqq \mu^{*n}\mathbf 1_{\{\ell\le m\}}\in \mathbb CG.
	$$
	Then
	$$
	\|\lambda_{X,q}(f_m)\|_{B(\ell^q(X))}
	\le
	C^{1/p}\,
	\bigl\|\pi_\#\bigl(f_m(1+\ell)^{s/p}\bigr)\bigr\|_{\ell^q(X)}.
	$$
	Since $\|\lambda_{X,q}(\mu^{*n})-\lambda_{X,q}(f_m)\| \le \|\mu^{*n}-f_m\|_1 \to 0$ and $\bigl\|\pi_\#\bigl(f_m(1+\ell)^{s/p}\bigr)\bigr\|_{\ell^q(X)} \to \|a_{n,p}\|_{\ell^q(X)}$, we obtain
	$$
	\|\lambda_{X,q}(\mu^{*n})\|_{B(\ell^q(X))}
	\le
	C^{1/p}\|a_{n,p}\|_{\ell^q(X)}.
	$$
	Thus
	$$
	-p\log r_q(\mu)\ge -\frac{p}{n}\log\|a_{n,p}\|_{\ell^q(X)}-\frac{\log C}{n}.
	$$
	
	Now define
	$$
	\omega_n(x)\coloneqq \mathbb E\bigl[(1+\ell(Z_n))^s\mid X_n=x\bigr]
	$$
	for $\nu_n(x)>0$, and set $\omega_n(x)=1$ otherwise. Exactly as in the finite-support case, conditional Jensen gives $a_{n,p}(x)\le \nu_n(x)\omega_n(x)^{1/p}$, hence
	$$
	\|a_{n,p}\|_{\ell^q(X)}\le \|\nu_n\omega_n^{1/p}\|_{\ell^q(X)}.
	$$
	By the countable-set version of \cite[Theorem 4.4]{anderson2024}, it is therefore enough to control $\sum_{x\in X}\nu_n(x)\log\omega_n(x)$.
	
	Write $Y_i\coloneqq 1+\ell(s_i)$. Since $1+\ell(Z_n)\le Y_1+\cdots+Y_n$ and $\mu$ has finite $s$-moment, there exist constants $A>0$ and $\kappa=\max\{1,s\}$ such that
	$$
	\mathbb E\bigl[(1+\ell(Z_n))^s\bigr]\le A n^\kappa
	\qquad\text{for all }n\ge1.
	$$
	Therefore
	$$
	\sum_{x\in X}\nu_n(x)\log\omega_n(x)
	=
	\mathbb E\bigl[\log\omega_n(X_n)\bigr]
	\le
	\log\mathbb E\bigl[(1+\ell(Z_n))^s\bigr]
	\le
	\log A+\kappa\log n.
	$$
	Repeating the weighted-entropy argument from the finite-support case, we obtain
	$$
	c(G,H;\mu)
	\ge
	\frac{H(\nu_n)}{n}
	-
	\frac{\log A+\kappa\log n+\log C}{n}.
	$$
	Taking the limit superior in $n$ yields $c(G,H;\mu)\ge \overline{h}(G/H,\mu)$.
	
\end{proof}

By Corollary~\ref{cor:hybrid-rd-implies-q-bound}, taking $\theta=\frac{2}{p}$, the hypotheses of the previous corollary are satisfied with $C=C_h^2$ and $s=2s_1$. We therefore obtain the following consequence. In the special case $H=\{e\}$, it recovers the group case of \cite[Theorem 1.3]{anderson2024}; more generally, it applies to homogeneous spaces $G/H$.

\begin{corollary}\label{cor:under-over}
	Assume that $(G,H)$ has pair rapid decay and let $\mu$ be a probability measure on $G$ with finite entropy and finite $\beta$-moment with respect to $1+\ell$ for some $\beta\ge 2s_1$. Then
	$$
	\underline{h}(G/H,\mu)=\overline{h}(G/H,\mu)=c(G,H;\mu).
	$$
\end{corollary}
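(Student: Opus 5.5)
The plan is to obtain Corollary~\ref{cor:under-over} by verifying the hypotheses of the preceding corollary, whose conclusion is exactly the desired identity. That corollary needs a constant $C>0$ and an exponent $s>0$ such that, for every $p\ge 2$ and $q=p/(p-1)$,
$$
\|\lambda_{X,q}(f)\|_{B(\ell^q(X))}\le C^{1/p}\,\bigl\|\pi_\#\bigl(|f|(1+\ell)^{s/p}\bigr)\bigr\|_{\ell^q(X)}
\qquad\text{for all } f\in\mathbb{C}G.
$$
It also needs finite entropy and a finite moment of order at least $s$. I will take $C=C_h^2$ and $s=2s_1$, with $C_h$ and $s_1$ the constants in the definition of pair rapid decay. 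The moment condition is then precisely $\beta\ge 2s_1$.

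\textbf{The range $p>2$.} Here I apply Corollary~\ref{cor:hybrid-rd-implies-q-bound} with $\theta=2/p\in(0,1)$. As in Remark~\ref{rem:interp-quasi-regular-weighted-p}, this choice gives $1/q=1-\theta/2=1-1/p$, so $q=p/(p-1)$. The constant becomes
$$
C_h^{\theta}=C_h^{2/p}=(C_h^2)^{1/p},
$$
and the weight exponent becomes
$$
s_1\theta=\frac{2s_1}{p}=\frac{s}{p}.
$$
This is exactly the required inequality.

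\textbf{The endpoint $p=2$.} This is the only point that needs separate care, since $\theta=1$ lies outside the range of Corollary~\ref{cor:hybrid-rd-implies-q-bound}. In this case $q=2$, and the estimate follows directly from the $\ell^2$ bound established in the proof of that corollary:
$$
\|\lambda_X(f)\|_{B(\ell^2(X))}\le \|f\|_h\le C_h\,\bigl\|\pi_\#\bigl(|f|(1+\ell)^{s_1}\bigr)\bigr\|_{2}.
$$
This uses \cite[Remark~2.4, Lemma~2.12(1)]{ChatterjiZarka2024v1}. The right-hand side has constant $C_h=(C_h^2)^{1/2}=C^{1/2}$ and weight exponent $s_1=s/2$, which is the required form at $p=2$.

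\textbf{Conclusion.} If the endpoint argument were somehow unavailable, nothing would be lost. The proof of $\overline h\le c$ only uses the estimate as $p\to\infty$, and $c(G,H;\mu)$ is a limit of a monotone function of $p$, so restricting to $p>2$ suffices.

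With the hypotheses checked, the preceding corollary yields
$$
\underline{h}(G/H,\mu)=\overline{h}(G/H,\mu)=c(G,H;\mu).
$$

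The main obstacle is only bookkeeping: the constants and exponents must be matched exactly, in the form $C^{1/p}$ and $s/p$ uniformly in $p$, with the case $p=2$ handled as above.
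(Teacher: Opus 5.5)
Your proposal is correct and follows the paper's own argument: the paper deduces this corollary by applying Corollary~\ref{cor:hybrid-rd-implies-q-bound} with $\theta=2/p$, which verifies the hypotheses of the preceding corollary with $C=C_h^2$ and $s=2s_1$. Your separate treatment of $p=2$ via the $\ell^2$ estimate is the observation the paper records in Remark~\ref{rem:interp-quasi-regular-weighted-p}, and your remark that only large $p$ are needed for $\overline{h}\le c(G,H;\mu)$ is also accurate.
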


	\subsection{Asymptotic R\'enyi Entropy Rates}
	
	Our goal in this subsection is to study asymptotic R\'enyi entropy rates on homogeneous spaces. We first consider the limit as \(\alpha\downarrow1\). We then study existence and continuity for finitely supported measures.
	
	For $\alpha > 1$, we define the lower and upper Rényi entropy rates as follows:
	$$
	\underline{h}_\alpha \coloneqq \liminf_{n\to\infty}\frac1n H_\alpha(\nu_n),
	\qquad
	\overline{h}_\alpha \coloneqq \limsup_{n\to\infty}\frac1n H_\alpha(\nu_n).
	$$



       \begin{theorem}\label{prop:renyi-to-shannon}
       	Assume that $(G,H)$ has pair rapid decay, and let $\mu$ be a probability measure on $G$ with finite entropy and finite $\beta$-moment with respect to $1+\ell$ for some $\beta \ge 2s_1$. Then
       	$$
       	\lim_{\alpha\downarrow1}\underline{h}_\alpha
       	=
       	\lim_{\alpha\downarrow1}\overline{h}_\alpha
       	=
       	c(G,H;\mu).
       	$$
       \end{theorem}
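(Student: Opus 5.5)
Write $\alpha=q=\frac{p}{p-1}$, so that $\frac{\alpha}{1-\alpha}=-p$ and $H_\alpha(\nu)=-p\log\|\nu\|_{\alpha}$. The plan is to sandwich both $\underline{h}_\alpha$ and $\overline{h}_\alpha$ between two quantities that tend to $c(G,H;\mu)$ as $\alpha\downarrow1$, i.e.\ as $p\to\infty$.

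\textbf{Lower bound.} The argument of Lemma~\ref{lem:entropy_inequality} only uses $\|\nu_n\|_q\le\|T_q^n\|$ and the finiteness of $H(\nu_n)$, so it holds under finite entropy alone. From $\limsup_n\|\nu_n\|_q^{1/n}\le r_q(\mu)$ we get
\[
\underline{h}_\alpha\ \ge\ -p\log r_{q}(\mu)\qquad (q=\alpha).
\]
By monotonicity in $p$ (the lemma on $p\mapsto -p\log r_{p/(p-1)}(\mu)$, applicable thanks to Proposition~\ref{prop:rq-positive}), the right-hand side increases to $c(G,H;\mu)$ as $p\to\infty$. Hence $\liminf_{\alpha\downarrow1}\underline{h}_\alpha\ge c(G,H;\mu)$.

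\textbf{Upper bound.} Since $H_\alpha(\nu)\le H(\nu)$ for every probability measure $\nu$ (the concavity inequality in the proof of Proposition~\ref{prop:rq-positive}), we have $\overline{h}_\alpha\le\overline{h}(G/H,\mu)$. Corollary~\ref{cor:under-over} gives $\overline{h}(G/H,\mu)=c(G,H;\mu)$, so $\limsup_{\alpha\downarrow1}\overline{h}_\alpha\le c(G,H;\mu)$.

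\textbf{Conclusion.} Combining the two bounds with $\underline{h}_\alpha\le\overline{h}_\alpha$, we obtain
\[
-p\log r_{p/(p-1)}(\mu)\ \le\ \underline{h}_\alpha\ \le\ \overline{h}_\alpha\ \le\ c(G,H;\mu).
\]
Letting $\alpha\downarrow1$, both limits exist and equal $c(G,H;\mu)$.

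There is also an independent monotonicity fact: Rényi entropy is nonincreasing in $\alpha$, so $\underline{h}_\alpha$ and $\overline{h}_\alpha$ are monotone in $\alpha$ and their limits exist a priori. The sandwich, however, already yields existence of the limits.

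\textbf{Main point.} The only nontrivial input is the upper bound $\overline{h}\le c(G,H;\mu)$. This is exactly where pair rapid decay and the $\beta$-moment hypothesis enter, and it has already been established in Corollary~\ref{cor:under-over}. The one check needed is that the lower-bound argument does not require finite support, and it does not: the inequality $\|\nu_n\|_q\le\|T_q^n\|$ holds for any probability measure $\mu$.
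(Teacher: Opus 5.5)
Your proposal is correct and follows the paper's own proof essentially step for step. The lower bound $\underline{h}_\alpha\ge\frac{\alpha}{1-\alpha}\log r_\alpha(\mu)$ comes from $\|\nu_n\|_\alpha\le\|\lambda_{X,\alpha}(\mu)^n\|$, and the upper bound $\overline{h}_\alpha\le\overline{h}(G/H,\mu)=c(G,H;\mu)$ comes from $H_\alpha\le H$ together with Corollary~\ref{cor:under-over}; the sandwich as $\alpha\downarrow1$ then gives both limits, exactly as the paper does.
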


       \begin{proof}
       	By Corollary~\ref{cor:under-over}, we have
       	$$
       	\underline{h}(G/H,\mu)=\overline{h}(G/H,\mu)=c(G,H;\mu).
       	$$
       	In particular, the Shannon entropy rate exists, and we may denote its common value by
       	$h(X,\mu)=\lim_{n\to\infty}\frac1n H(\nu_n)=c(G,H;\mu)$.
       	
       	Let $\alpha>1$. Since Rényi entropy is monotone in the order parameter, we have $H_\alpha(\nu_n)\le H(\nu_n)$ for every $n\ge1$, and therefore $\overline{h}_\alpha\le h$.
       	
       	On the other hand, let
       	$$
       	r_\alpha \coloneqq \lim_{n\to\infty}
       	\|\lambda_{X,\alpha}(\mu)^n\|_{B(\ell^\alpha(X))}^{1/n}.
       	$$
       	Since $\nu_n=\lambda_{X,\alpha}(\mu)^n\delta_o$ and $\|\delta_o\|_\alpha=1$, we have $\|\nu_n\|_\alpha \le \|\lambda_{X,\alpha}(\mu)^n\|_{B(\ell^\alpha(X))}$, hence
       	$\limsup_{n\to\infty}\frac1n\log\|\nu_n\|_\alpha \le \log r_\alpha$.
       	Using $H_\alpha(\nu_n)=\frac{\alpha}{1-\alpha}\log\|\nu_n\|_\alpha$ and the fact that $\frac{\alpha}{1-\alpha}<0$, it follows that
       	$\underline{h}_\alpha \ge \frac{\alpha}{1-\alpha}\log r_\alpha$.
       	
       	Now let $q=\alpha$ and $p=\frac{\alpha}{\alpha-1}$. Then $q=\frac{p}{p-1}$ and $-p\log r_q=\frac{\alpha}{1-\alpha}\log r_\alpha$. By the definition of $c(G,H;\mu)$,
       	$$
       	c(G,H;\mu)
       	=
       	\lim_{\alpha\downarrow1}\frac{\alpha}{1-\alpha}\log r_\alpha.
       	$$
       	Consequently, $\liminf_{\alpha\downarrow1}\underline{h}_\alpha \ge c(G,H;\mu)$.
       	
       	Since $\underline{h}_\alpha\le \overline{h}_\alpha$ for every $\alpha>1$, we obtain
       	$$
       	c(G,H;\mu)
       	\le
       	\liminf_{\alpha\downarrow1}\underline{h}_\alpha
       	\le
       	\limsup_{\alpha\downarrow1}\underline{h}_\alpha
       	\le
       	\limsup_{\alpha\downarrow1}\overline{h}_\alpha
       	\le
       	h(X,\mu)
       	=
       	c(G,H;\mu).
       	$$
       	Thus $\lim_{\alpha\downarrow1}\underline{h}_\alpha=c(G,H;\mu)$. Moreover,
       	$\liminf_{\alpha\downarrow1}\overline{h}_\alpha
       	\ge
       	\liminf_{\alpha\downarrow1}\underline{h}_\alpha
       	=
       	c(G,H;\mu)$,
       	while $\limsup_{\alpha\downarrow1}\overline{h}_\alpha\le h(X,\mu)=c(G,H;\mu)$. Therefore
       	$$h(X,\mu)=
       	\lim_{\alpha\downarrow1}\underline{h}_\alpha
       	=
       	\lim_{\alpha\downarrow1}\overline{h}_\alpha
       	=
       	c(G,H;\mu).
       	$$
       \end{proof}

\begin{theorem}\label{prop:renyi-rate-pair-rd}
	Assume that the pair $(G,H)$ satisfies pair rapid decay. Let $\mu$ be a finitely supported probability measure on $G$, with $\supp(\mu)\subseteq B(e,R)$. Let $X:=G/H$ and $o:=eH$. For $n\ge1$, set
	\[
	\nu_n:=\mu^{*n} * \delta_o=\pi_\#(\mu^{*n})\in \ell^1(X),
	\qquad
	H_\alpha(\nu_n):=\frac{1}{1-\alpha}\log\Bigl(\sum_{x\in X}\nu_n(x)^\alpha\Bigr).
	\]
	Then for every $\alpha\in(1,2]$ the limit $h_\alpha(X,\mu):=\lim_{n\to\infty}\frac1n H_\alpha(\nu_n)$ exists and
	\[
	h_\alpha(X,\mu)=\frac{\alpha}{1-\alpha}\log r_\alpha(\mu),
	\qquad
	r_\alpha(\mu):=\lim_{n\to\infty}\bigl\|\lambda_{X,\alpha}(\mu)^n\bigr\|_{\alpha\to\alpha}^{1/n}.
	\]
\end{theorem}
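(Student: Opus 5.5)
Since $H_\alpha(\nu_n)=\frac{\alpha}{1-\alpha}\log\|\nu_n\|_\alpha$ and $\frac{\alpha}{1-\alpha}<0$, the statement is equivalent to
\[
\lim_{n\to\infty}\|\nu_n\|_\alpha^{1/n}=r_\alpha(\mu).
\]
One direction is already available. From $\nu_n=\lambda_{X,\alpha}(\mu)^n\delta_o$ we get $\|\nu_n\|_\alpha\le\|\lambda_{X,\alpha}(\mu)^n\|$, so $\limsup_n\|\nu_n\|_\alpha^{1/n}\le r_\alpha(\mu)$, exactly as in the proof of Theorem~\ref{prop:renyi-to-shannon}. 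The task is therefore to prove the lower bound $\liminf_n\|\nu_n\|_\alpha^{1/n}\ge r_\alpha(\mu)$. The natural tool is the ball-supported interpolation estimate \eqref{eq:target}. This is where pair rapid decay enters, and it is why we restrict to $\alpha\in(1,2]$: in that range $\alpha=q$ with $\frac1q=1-\frac\theta2$ for some $\theta\in(0,1]$. The endpoint $\alpha=2$ comes directly from the $\ell^2$ bound.

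\textbf{Step 1.} The first step applies \eqref{eq:target} to $f=\mu^{*n}$, which is supported in $B(nR)$. This gives
\[
\|\lambda_{X,\alpha}(\mu)^n\|_{\alpha\to\alpha}\le C^\theta(nR+1)^{D\theta}\,\|\mu^{*n}\|_{(\alpha,1)}.
\]
Since $\mu^{*n}\ge0$, we have $\|\mu^{*n}\|_{(\alpha,1)}=\|\pi_\#\mu^{*n}\|_\alpha=\|\nu_n\|_\alpha$. We then use the inequality $r_\alpha(\mu)^n\le\|\lambda_{X,\alpha}(\mu)^n\|$, which holds for every $n$ because $r^n=r(T^n)\le\|T^n\|$. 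Combining these,
\[
r_\alpha(\mu)^n\le C^\theta(nR+1)^{D\theta}\,\|\nu_n\|_\alpha\qquad\text{for every }n\ge1.
\]
Taking $n$-th roots, the polynomial factor disappears in the limit, so $\liminf_n\|\nu_n\|_\alpha^{1/n}\ge r_\alpha(\mu)$. Together with the upper bound, $\lim_n\|\nu_n\|_\alpha^{1/n}=r_\alpha(\mu)$. This limit is positive by Proposition~\ref{prop:rq-positive}, since a finitely supported $\mu$ has finite entropy. Multiplying the logarithm by $\frac{\alpha}{1-\alpha}$ then gives existence of $h_\alpha(X,\mu)$ together with the stated formula.

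\textbf{Step 2.} Alternatively, one can avoid \eqref{eq:target} and use Corollary~\ref{cor:hybrid-rd-implies-q-bound}. Bounding $(1+\ell)^{s_1\theta}\le(1+nR)^{s_1\theta}$ on $\supp\mu^{*n}$ again yields a polynomial loss. This gives the same conclusion.

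\textbf{Main obstacle.} The only real point to check is that the interpolation estimate controls the \emph{operator norm} of $\lambda_{X,\alpha}(\mu^{*n})$ by the $\ell^\alpha$ norm of the \emph{single vector} $\nu_n$, with only subexponential loss in $n$. This is precisely the content of pair rapid decay together with the finite support of $\mu$. We must also confirm that $\lambda_{X,\alpha}(\mu^{*n})=\lambda_{X,\alpha}(\mu)^n$, which is immediate for finitely supported measures. Finally, the continuity statement $\lim_{\alpha\downarrow1}h_\alpha=h$ then follows by combining this result with Theorem~\ref{prop:renyi-to-shannon}.
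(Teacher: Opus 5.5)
Your proposal is correct and is essentially the paper's proof: the paper likewise applies \eqref{eq:target} to $\mu^{*n}$ (supported in $B(e,nR)$), uses $\|\mu^{*n}\|_{(q,1)}=\|\nu_n\|_q$ to get the sandwich $\|\nu_n\|_q\le\|\lambda_{X,q}(\mu^{*n})\|_{q\to q}\le C^{\theta}(1+nR)^{D\theta}\|\nu_n\|_q$, lets the polynomial factor disappear after taking $n$-th roots, and obtains positivity of $r_q(\mu)$ from Proposition~\ref{prop:rq-positive}. The only differences are cosmetic. You use $r_\alpha(\mu)^n\le\|\lambda_{X,\alpha}(\mu)^n\|$ directly instead of passing through the existence of $\lim_n\frac1n\log\|\lambda_{X,q}(\mu^{*n})\|$ by submultiplicativity. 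Your remark that $\alpha=2$ comes from the $\ell^2$ bound also covers the endpoint $\theta=1$, which \eqref{eq:target} does not literally include, since it is stated only for $0<\theta<1$.
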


\begin{proof}
	Fix $q\in(1,2]$ and choose $\theta\in(0,1]$ such that $\frac1q=1-\frac{\theta}{2}$. Then $\nu_n=\lambda_{X,q}(\mu^{*n})\delta_o\in\ell^q(X).$
	Since $\supp(\mu^{*n})\subseteq B(e,nR)$ and $(G,H)$ satisfies Pair RD, we may apply \eqref{eq:target} to obtain
	$$	\|\lambda_{X,q}(\mu^{*n})\|_{q\to q}\le C^{\theta}(1+nR)^{D\theta}\|\mu^{*n}\|_{(q,1)}.$$

	Since $\mu^{*n}\ge0$, the definition of the $(q,1)$-norm gives
	$\|\mu^{*n}\|_{(q,1)}=\|\pi_\#(\mu^{*n})\|_q=\|\nu_n\|_q.$
	Hence
	\begin{equation}\label{eq:sandwich-compact}
		\|\nu_n\|_q\le \|\lambda_{X,q}(\mu^{*n})\|_{q\to q}\le C^{\theta}(1+nR)^{D\theta}\|\nu_n\|_q,
	\end{equation}
	where the left inequality follows from
	\[
	\|\nu_n\|_q=\|\lambda_{X,q}(\mu^{*n})\delta_o\|_q\le \|\lambda_{X,q}(\mu^{*n})\|_{q\to q}\,\|\delta_o\|_q
	\]
	and $\|\delta_o\|_q=1$.
	Taking logarithms in \eqref{eq:sandwich-compact} and dividing by $n$ (note that $\|\nu_n\|_q>0$), we get
	\[
	\frac1n\log\|\nu_n\|_q
	\le
	\frac1n\log\|\lambda_{X,q}(\mu^{*n})\|_{q\to q}
	\le
	\frac1n\log\|\nu_n\|_q+\frac1n\log\!\bigl(C^{\theta}(1+nR)^{D\theta}\bigr).
	\]
	Since $\frac1n\log\!\bigl(C^{\theta}(1+nR)^{D\theta}\bigr)\to 0$, it follows that
	\[
	\lim_{n\to\infty}\frac1n\log\|\nu_n\|_q
	=
	\lim_{n\to\infty}\frac1n\log\|\lambda_{X,q}(\mu^{*n})\|_{q\to q}.
	\]
	
	Now $\lambda_{X,q}(\mu^{*(m+n)})=\lambda_{X,q}(\mu^{*m})\lambda_{X,q}(\mu^{*n})$ implies that $\|\lambda_{X,q}(\mu^{*n})\|_{q\to q}$ is submultiplicative. Since $\lambda_{X,q}(\mu^{*n})=\lambda_{X,q}(\mu)^n$, the limit defining $r_q(\mu)$ exists, and
	\[
	\lim_{n\to\infty}\frac1n\log\|\lambda_{X,q}(\mu^{*n})\|_{q\to q}
	=
	\log r_q(\mu).
	\]
	By Proposition~\ref{prop:rq-positive} one has $r_q(\mu)>0$, so the logarithm is well defined. Therefore
	\[
	\lim_{n\to\infty}\frac1n\log\|\nu_n\|_q=\log r_q(\mu).
	\]
	
	Finally, setting $q=\alpha\in(1,2]$ and using $H_\alpha(\nu_n)=\frac{\alpha}{1-\alpha}\log\|\nu_n\|_\alpha,$
	we obtain
	\[
	h_\alpha(X,\mu)
	=
	\lim_{n\to\infty}\frac1n H_\alpha(\nu_n)
	=
	\frac{\alpha}{1-\alpha}\lim_{n\to\infty}\frac1n\log\|\nu_n\|_\alpha
	=
	\frac{\alpha}{1-\alpha}\log r_\alpha(\mu).
	\]
\end{proof}

\begin{definition}[subexponential rapid decay for a pair]\label{def:subexp-RD-pair}
	Let \(G\) be a finitely generated group, \(H<G\), and \(\ell\) a word length on \(G\) associated with a finite symmetric generating set.
	We say that \((G,H)\) satisfies \emph{subexp--Lorentz control} or has the \emph{subexponential rapid decay property}, denoted $(G,H)\in \mathbf{SLC}_{\mathrm{subexp}}$,
	if there exist \(C_h>0\) and a weight \(w:G\to[1,\infty)\) such that
	\[
	\|f\|_{h}\le C_h\,\|fw\|_{(2,1)} \qquad \forall f\in\mathbb{C}G,
	\]
	and the radial majorant \(W(t):=\sup\{w(g):\ell(g)\le t\}\) for \(t\ge 0\) 
	has subexponential growth, i.e.
	\[
	\lim_{t\to\infty}\frac{\log W(t)}{t}=0.
	\]
\end{definition}

\bigskip

The preceding proposition yields the following consequence for pairs in $\mathbf{SLC}_{\mathrm{subexp}}$.

\begin{corollary}\label{cor:subexp-weighted-rd-renyi}
	Assume that $(G,H)\in \mathbf{SLC}_{\mathrm{subexp}}$. Let $\mu$ be a finitely supported probability measure on $G$ with $\supp(\mu)\subseteq B(e,R)$, and set
	$\nu_n\coloneqq \mu^{*n}*\delta_o=\pi_\#(\mu^{*n}).$
	Then for every $\alpha\in(1,2]$ the limit
	$$
	h_\alpha(X,\mu)\coloneqq \lim_{n\to\infty}\frac1n H_\alpha(\nu_n)
	$$
	exists and satisfies
	$$
	h_\alpha(X,\mu)=\frac{\alpha}{1-\alpha}\log r_\alpha(\mu),
	$$
	where $	r_\alpha(\mu)\coloneqq \lim_{n\to\infty}\|\lambda_{X,\alpha}(\mu)^n\|_{\alpha\to\alpha}^{1/n}.$
\end{corollary}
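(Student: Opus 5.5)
The plan is to repeat the proof of Theorem~\ref{prop:renyi-rate-pair-rd}, with the polynomial factor $(1+nR)^{D}$ replaced by the subexponential majorant $W(nR)$. The key input is a ball-supported $\ell^q$ estimate analogous to \eqref{eq:target}.

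\textbf{Step 1: the $\ell^2$ endpoint.} Fix $q=\alpha\in(1,2]$ and choose $\theta\in(0,1]$ with $\frac1q=1-\frac\theta2$. Let $u\in\mathbb CG$ with $\supp(u)\subset B(R')$. Since $w\le W(R')$ on $B(R')$, the $\mathbf{SLC}_{\mathrm{subexp}}$ inequality gives
\[
\|u\|_h\le C_h\|uw\|_{(2,1)}\le C_hW(R')\|u\|_{(2,1)}.
\]
Combining this with $\|\lambda_{G/H}(u)\|_{2\to2}\le\|u\|_h$ from \cite[Lemma~2.12(1)]{ChatterjiZarka2024v1}, we get
\[
\|\lambda_{G/H}(u)\|_{2\to2}\le C_hW(R')\|u\|_{(2,1)}.
\]

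\textbf{Step 2: interpolation.} Run the bilinear complex interpolation argument of the ball-supported remark with $B_{R'}(f,\xi)=\lambda_{G/H}(1_{B(R')}f)\xi$. The $\ell^1$ endpoint has constant $1$ and the $\ell^2$ endpoint has constant $C_hW(R')$. Using $[\ell^{(1,1)},\ell^{(2,1)}]_\theta=\ell^{(q,1)}$ and $[\ell^1,\ell^2]_\theta=\ell^q$, this yields
\[
\|\lambda_{X,q}(u)\|_{q\to q}\le \bigl(C_hW(R')\bigr)^{\theta}\|u\|_{(q,1)}.
\]
For $\theta=1$ this is Step 1 directly. Alternatively, Proposition~\ref{prop:interp-quasi-regular-weighted-theta} with $\beta_0=0$, $\beta_1=1$, $M_0=1$, $M_1=C_h$ gives the bound $C_h^\theta\|\pi_\#(|u|w^\theta)\|_q$, and $w^\theta\le W(R')^\theta$ on the support gives the same estimate.

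\textbf{Step 3: the sandwich.} Apply Step 2 to $u=\mu^{*n}$, which is supported in $B(nR)$. Since $\mu^{*n}\ge0$, we have $\|\mu^{*n}\|_{(q,1)}=\|\nu_n\|_q$. Together with $\|\nu_n\|_q\le\|\lambda_{X,q}(\mu)^n\|$ (test on $\delta_o$), this gives
\[
\|\nu_n\|_q\le\|\lambda_{X,q}(\mu)^n\|_{q\to q}\le C_h^\theta W(nR)^\theta\|\nu_n\|_q .
\]
Take logarithms and divide by $n$. The error term $\frac{\theta}{n}\bigl(\log C_h+\log W(nR)\bigr)$ tends to $0$ precisely because $\log W(t)/t\to0$; this is the only place the subexponential hypothesis enters. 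If $R=0$ the error term is constant and still vanishes after dividing by $n$.

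\textbf{Step 4: conclusion.} Submultiplicativity gives $\frac1n\log\|\lambda_{X,q}(\mu)^n\|\to\log r_q(\mu)$. Proposition~\ref{prop:rq-positive} gives $r_q(\mu)>0$, since finitely supported measures have finite entropy. Therefore $\frac1n\log\|\nu_n\|_\alpha\to\log r_\alpha(\mu)$, and the identity $H_\alpha(\nu_n)=\frac{\alpha}{1-\alpha}\log\|\nu_n\|_\alpha$ yields the claimed formula for $h_\alpha(X,\mu)$.

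The main obstacle is Step 2, namely making the ball-restricted bilinear interpolation rigorous with the mixed-norm identification $[\ell^{(1,1)},\ell^{(2,1)}]_\theta=\ell^{(q,1)}$. I would bypass it by using the weighted route through Proposition~\ref{prop:interp-quasi-regular-weighted-theta}, which only needs $M_0=\|w^{0}\|_\infty=1$ and hypothesis (E$_1$). That hypothesis is exactly the $\mathbf{SLC}_{\mathrm{subexp}}$ inequality combined with the $\ell^2$ norm bound from \cite{ChatterjiZarka2024v1}.
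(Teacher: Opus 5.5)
Your proposal is correct and follows the argument the paper intends; the corollary is stated as an immediate consequence of Theorem~\ref{prop:renyi-rate-pair-rd}. You replace the factor $C^\theta(1+nR)^{D\theta}$ in the sandwich \eqref{eq:sandwich-compact} by $(C_hW(nR))^\theta$, obtained from the $\mathbf{SLC}_{\mathrm{subexp}}$ inequality through Proposition~\ref{prop:interp-quasi-regular-weighted-theta} (or the ball-supported remark), and use $\log W(t)/t\to0$ to kill the error term; your separate handling of $\alpha=2$ via the $\ell^2$ endpoint and of the case $R=0$ is appropriate care.
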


\begin{corollary}
	Assume that $(G,H)\in \mathbf{SLC}_{\mathrm{subexp}}$, and let $\mu$ be a finitely supported probability measure on $G$. Then
	$$
	\underline{h}(G/H,\mu)=\overline{h}(G/H,\mu)=c(G,H;\mu).
	$$
\end{corollary}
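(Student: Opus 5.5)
The plan is to reuse the two-sided argument from the pair rapid decay case, replacing the polynomial weight by the subexponential weight $w$. The lower bound $c(G,H;\mu)\le \underline{h}(G/H,\mu)$ is Lemma~\ref{lem:entropy_inequality}. That argument uses no decay hypothesis at all, only finite support (indeed only $H(\mu)<\infty$, via Proposition~\ref{prop:rq-positive}). So the whole task is the upper bound $\overline{h}(G/H,\mu)\le c(G,H;\mu)$. Once it is proved, the chain $\overline{h}\le c\le \underline{h}\le\overline{h}$ closes.

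For the upper bound, I would first produce an interpolated operator estimate for functions supported in balls. Fix $R$ with $\supp(\mu)\subseteq B(e,R)$, so that $\supp(\mu^{*n})\subseteq B(e,nR)$. For $u$ supported in $B(nR)$, the $\mathbf{SLC}_{\mathrm{subexp}}$ inequality together with $\|\lambda_X(u)\|_{2\to2}\le\|u\|_h$ gives
\[
\|\lambda_X(u)\|_{2\to 2}\le C_h\,\|uw\|_{(2,1)}\le C_h W(nR)\,\|u\|_{(2,1)}.
\]
Running the bilinear interpolation of the ball-supported remark, with $C(R+1)^D$ replaced by $C_hW(nR)$, then yields for $\frac1q=1-\frac{\theta}{2}$
\[
\|\lambda_{X,q}(\mu^{*n})\|_{q\to q}\le \bigl(C_hW(nR)\bigr)^{\theta}\,\|\nu_n\|_q .
\]
Here I use $\|\mu^{*n}\|_{(q,1)}=\|\nu_n\|_q$ since $\mu^{*n}\ge 0$. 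Alternatively, I could apply Proposition~\ref{prop:interp-quasi-regular-weighted-theta} with $\beta_0=0$, $\beta_1=1$ and the weight $w$ itself, and then bound $w\le W(nR)$ on the support.

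Next I take $\theta=2/p$, so that $q=p/(p-1)$. Using $r_q(\mu)^n\le\|\lambda_{X,q}(\mu)^n\|$, this gives
\[
-p\log r_q(\mu)\ \ge\ -\frac{p}{n}\log\|\nu_n\|_q-\frac{2\log C_h+2\log W(nR)}{n}=\frac1n H_q(\nu_n)-\frac{2\log C_h+2\log W(nR)}{n}.
\]
The correction term is independent of $p$, which is the point of this choice. I then let $p\to\infty$, i.e. $q\downarrow1$, for fixed $n$. Since $\nu_n$ is finitely supported, $H_q(\nu_n)\to H(\nu_n)$; this is the same differentiation of $F_n$ at $\varepsilon=0$ as in Lemma~\ref{lem:upper-asymptotic-entropy}, now with $\omega_n\equiv1$. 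This gives
\[
c(G,H;\mu)\ge \frac{H(\nu_n)}{n}-\frac{2\log C_h+2\log W(nR)}{n}.
\]
Taking $\limsup_n$ and using $\log W(nR)/n\to0$ (subexponential growth) gives $\overline{h}\le c$.

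The main obstacle is verifying that the interpolation genuinely goes through with constant $(C_hW(nR))^{\theta}$. The ball-supported remark relies on the equivalence with ball estimates from Chatterji--Zarka for polynomial weights. Here I instead need the direct endpoint bound displayed above, obtained from $w\le W(nR)$ on $B(nR)$, combined with the cut-off $P_{nR}$ and the identification $[\ell^{(1,1)},\ell^{(2,1)}]_\theta=\ell^{(q,1)}$. I would carry this out exactly as in that remark, checking that the endpoint constants are $1$ and $C_hW(nR)$. Positivity of $r_q(\mu)$, needed for the logarithms, follows from Proposition~\ref{prop:rq-positive}.
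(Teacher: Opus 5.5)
Your proof is correct and is essentially the argument the paper intends: the paper states this corollary without a written proof, as the pair-RD argument with $(1+nR)^{D}$ replaced by $W(nR)$. As in your proposal, the lower bound is Lemma~\ref{lem:entropy_inequality}, and the upper bound comes from $\|\lambda_{X,q}(\mu^{*n})\|_{q\to q}\le (C_hW(nR))^{\theta}\|\nu_n\|_q$ with $\theta=2/p$, followed by $p\to\infty$ at fixed $n$.

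The step you flag as the main obstacle is in fact immediate. Apply Proposition~\ref{prop:interp-quasi-regular-weighted-theta} with weight $w$, $\beta_0=0$, $\beta_1=1$, $M_0=1$, $M_1=C_h$, and then use $w^{\theta}\le W(nR)^{\theta}$ on $\supp(\mu^{*n})$.
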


\medskip

Combining Theorems~\ref{prop:renyi-to-shannon} and~\ref{prop:renyi-rate-pair-rd}, we immediately obtain the following consequence.

\begin{corollary}
	Assume that the pair $(G,H)$ satisfies pair rapid decay. Let $\mu$ be a finitely supported probability measure on $G$, and let $h(X,\mu) \coloneqq \lim_{n\to\infty}\frac1n H(\nu_n).$
	Then
	\[
	\lim_{\alpha \downarrow 1} h_{\alpha}(X,\mu)=h(X,\mu).
	\]
\end{corollary}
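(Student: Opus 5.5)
The statement follows by combining the two preceding theorems. No new estimate should be needed.

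First, fix a finitely supported $\mu$. It has finite entropy, because a finitely supported measure has $H(\mu)\le\log|\supp(\mu)|$. It also has finite $\beta$-moment with respect to $1+\ell$ for every $\beta$, in particular for $\beta=2s_1$, since $(1+\ell)^\beta$ is bounded by $(1+R)^\beta$ on $\supp(\mu)\subseteq B(e,R)$. So the hypotheses of Corollary~\ref{cor:under-over} and of Theorem~\ref{prop:renyi-to-shannon} hold. The first gives that $h(X,\mu)=\lim_n\frac1nH(\nu_n)$ exists and equals $c(G,H;\mu)$. (Alternatively, the finite-support proposition stated just before Corollary~\ref{cor:under-over} gives this directly.)

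Second, apply Theorem~\ref{prop:renyi-rate-pair-rd}. For every $\alpha\in(1,2]$ the limit $h_\alpha(X,\mu)$ exists. Hence $\underline{h}_\alpha=\overline{h}_\alpha=h_\alpha(X,\mu)$ for these $\alpha$. Theorem~\ref{prop:renyi-to-shannon} says that $\lim_{\alpha\downarrow1}\underline{h}_\alpha=c(G,H;\mu)$. Only the values of $\alpha$ in a right neighbourhood of $1$ matter for this limit, and on $(1,2]$ the functions $\underline{h}_\alpha$ and $h_\alpha(X,\mu)$ agree. Therefore $\lim_{\alpha\downarrow1}h_\alpha(X,\mu)=c(G,H;\mu)=h(X,\mu)$.

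Nothing here is a real obstacle. The one point to check is that the rates in Theorem~\ref{prop:renyi-to-shannon} are defined exactly as $\liminf$ and $\limsup$ of $\frac1nH_\alpha(\nu_n)$ for the same $\nu_n=\mu^{*n}*\delta_o$. They are, so the identification on $(1,2]$ is legitimate.

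One could also argue directly from the formula $h_\alpha=\frac{\alpha}{1-\alpha}\log r_\alpha(\mu)$ together with the definition of $c(G,H;\mu)$ as $\lim_{p\to\infty}(-p\log r_{p/(p-1)}(\mu))$. Setting $\alpha=p/(p-1)$ gives $\lim_{\alpha\downarrow1}h_\alpha=c(G,H;\mu)$, and this equals $h(X,\mu)$ by Corollary~\ref{cor:under-over}.
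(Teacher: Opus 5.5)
Your proposal is correct and follows the paper's own route: the paper obtains this corollary by combining Theorem~\ref{prop:renyi-to-shannon} with Theorem~\ref{prop:renyi-rate-pair-rd}, exactly as you do, and your check that a finitely supported $\mu$ has finite entropy and finite moments of every order is the hypothesis verification the paper leaves implicit. Your alternative argument, via $h_\alpha(X,\mu)=\frac{\alpha}{1-\alpha}\log r_\alpha(\mu)$, the definition of $c(G,H;\mu)$, and Corollary~\ref{cor:under-over}, is the one the paper gives in Proposition~\ref{prop:continuity-renyi-rate}.
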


\begin{remark}
	The paper \emph{Asymptotic R\'enyi entropies of random walks on groups} \cite{GolubevaPanTamuz2024} raises the problem of determining when the map \(\alpha\mapsto h_\alpha(\mu)\) is continuous at \(\alpha=1\) for random walks on non-amenable groups. In the case \(H=\{e\}\), the preceding corollary shows that property RD together with finite support of \(\mu\) provides a sufficient condition for this continuity. Hence, in the group case, our result gives a partial affirmative answer to that problem; moreover, the present formulation extends to homogeneous spaces \(G/H\).
\end{remark}

\begin{proposition}
	\label{prop:continuity-renyi-rate}
	Assume the hypotheses of Theorem~\ref{prop:renyi-rate-pair-rd}. Then the function
	$$
	\alpha \mapsto h_\alpha(X,\mu), \qquad \alpha\in(1,2],
	$$
	is continuous. In addition, the map $\alpha \mapsto h_\alpha(X,\mu)$ extends continuously to $[1,2]$ by setting $h_1(X,\mu)\coloneqq h(X,\mu)$.
\end{proposition}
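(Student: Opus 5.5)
By Theorem~\ref{prop:renyi-rate-pair-rd}, for every \(\alpha\in(1,2]\) the rate exists and \(h_\alpha(X,\mu)=\frac{\alpha}{1-\alpha}\log r_\alpha(\mu)\). Writing \(p=\frac{\alpha}{\alpha-1}\in[2,\infty)\), this is \(h_\alpha=-p\log r_{p/(p-1)}(\mu)=:\Phi(p)\). The plan is to prove continuity of \(\Phi\) on \([2,\infty)\), and then use the earlier results to handle the endpoint \(p=\infty\), i.e.\ \(\alpha=1\).

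For continuity on \((1,2]\), I would use log-convexity coming from Riesz--Thorin. Let \(1<u<v\le 2\) and \(\frac1t=\frac{1-\theta}{u}+\frac{\theta}{v}\). Then
\[
\|T^n\|_{t\to t}\le \|T^n\|_{u\to u}^{1-\theta}\,\|T^n\|_{v\to v}^{\theta},
\]
so after taking \(n\)-th roots, \(\log r_t\le(1-\theta)\log r_u+\theta\log r_v\). Hence \(\psi(s):=\log r_{1/s}(\mu)\) is a convex function of \(s=1/\alpha\in[\tfrac12,1)\). By Proposition~\ref{prop:rq-positive}, \(0<r_q\le1\), so \(\psi\) is finite and real-valued. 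A finite convex function is continuous on the open interval \((\tfrac12,1)\). Since \(\frac{\alpha}{1-\alpha}\) is continuous, \(\alpha\mapsto h_\alpha\) is continuous on \((1,2)\).

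The endpoint \(\alpha=2\), i.e.\ \(s=\tfrac12\), needs one more argument. Convexity gives only upper semicontinuity at the endpoint, so continuity at \(\alpha=2\) must come from elsewhere. The monotonicity lemma shows that \(\Phi(p)\) is increasing in \(p\), so \(h_\alpha\) is decreasing in \(\alpha\). Therefore \(\lim_{\alpha\uparrow2}h_\alpha\) exists and satisfies \(\lim_{\alpha\uparrow2}h_\alpha\ge h_2\). For the reverse inequality I would use convexity of \(\psi\) along the segment from \(s=\tfrac12\) to a fixed interior point \(s_0\). This yields \(\limsup_{s\downarrow1/2}\psi(s)\le\psi(1/2)\), which translates to \(\liminf_{\alpha\uparrow2}h_\alpha\le h_2\). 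Combining the two gives continuity at \(\alpha=2\). I expect this boundary step to be the main obstacle, and the sign bookkeeping in it should be checked carefully.

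For the extension to \(\alpha=1\), as \(\alpha\downarrow1\) we have \(p\to\infty\), so \(h_\alpha=\Phi(p)\to c(G,H;\mu)\) by the definition of \(c\). This limit exists because \(\Phi\) is monotone. Since \(\mu\) is finitely supported, it has finite entropy and all moments. Corollary~\ref{cor:under-over} therefore gives \(h(X,\mu)=c(G,H;\mu)\), which is also the content of the corollary preceding the remark. Setting \(h_1:=h(X,\mu)\) thus gives a continuous extension to \([1,2]\).
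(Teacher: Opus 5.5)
Your treatment of the open interval \((1,2)\) and of the extension to \(\alpha=1\) matches the paper's argument: Riesz--Thorin log-convexity of \(s\mapsto\log r_{1/s}\), then the definition of \(c(G,H;\mu)\) together with Corollary~\ref{cor:under-over}. The gap is at the endpoint \(\alpha=2\), and it comes from exactly the sign you asked to have checked. Since \(h_\alpha=\frac{\alpha}{1-\alpha}\psi(1/\alpha)\) with \(\frac{\alpha}{1-\alpha}<0\), the upper semicontinuity \(\limsup_{s\downarrow1/2}\psi(s)\le\psi(\tfrac12)\) translates into \(\liminf_{\alpha\uparrow2}h_\alpha\ge h_2\), not \(\le h_2\). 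This is the same inequality the monotonicity lemma already gave you. That is no accident: the lemma is proved by interpolating against the endpoint \(q=1\), so it is just convexity of \(\psi\) on \([\tfrac12,1]\) together with \(\psi(1)=\log r_1\le0\). Your two inputs therefore cannot exclude an upward jump of \(\psi\) at \(s=\tfrac12\). For instance, take \(\psi(\tfrac12)=-1\) and \(\psi(s)=-4(1-s)\) on \((\tfrac12,1]\). This function is convex, vanishes at \(s=1\), and has \(-\psi(s)/(1-s)\) nondecreasing, yet it would give \(\lim_{\alpha\uparrow2}h_\alpha=4\neq2=h_2\). The missing ingredient is the reverse bound \(\liminf_{\alpha\uparrow2}r_\alpha\ge r_2\).

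You can get this bound from Theorem~\ref{prop:renyi-rate-pair-rd} itself. It gives \(\log r_\alpha=\lim_n\frac1n\log\|\nu_n\|_\alpha\) for every \(\alpha\in(1,2]\), and \(\|\nu_n\|_\alpha\ge\|\nu_n\|_2\) for \(\alpha\le2\) because \(\ell^p\)-norms decrease in \(p\). Hence \(r_\alpha\ge r_2\), so \(h_\alpha\le\frac{\alpha}{1-\alpha}\log r_2\to h_2\) and \(\limsup_{\alpha\uparrow2}h_\alpha\le h_2\). Together with monotonicity, this gives continuity at \(2\).

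Alternatively, run Riesz--Thorin over the whole range \(1<q<\infty\). Since \(\|T^n\|_{q\to q}\ge\|\nu_n\|_\infty\ge e^{-H(\nu_n)}\ge e^{-nH(\mu)}\), your \(\psi\) is a finite convex function on \((0,1)\). Then \(s=\tfrac12\) is an interior point, and continuity there is automatic.

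For the record, the paper's own proof passes from convexity on \((\tfrac12,1)\) directly to continuity on \((1,2]\) without treating this endpoint. You were right to isolate it; it is only the argument you propose for it that does not work.
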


\begin{proof}
	Let $X=G/H$. For each $p\in[1,\infty]$, set
	$a_n(p)\coloneqq \|\lambda_{X,p}(\mu)^n\|_{\ell^p(X)\to \ell^p(X)}$.
	Then $(a_n(p))_{n\ge1}$ is submultiplicative, so
	$r_p\coloneqq \lim_{n\to\infty} a_n(p)^{1/n}$
	exists for every $p\in[1,\infty]$.
	For $\alpha\in(1,2]$, this is exactly the quantity denoted by $r_\alpha(\mu)$ in Theorem~\ref{prop:renyi-rate-pair-rd}. By Proposition~\ref{prop:rq-positive}, one has $r_p>0$ for every $p\in(1,2]$, so the function $p\mapsto \log r_p$ is well defined on $(1,2]$.
	
	We claim that $p\mapsto \log r_p$ is continuous on $(1,2]$. Fix $1<p_0,p_1\le 2$, let $0<\theta<1$, and define $p$ by
	$\frac1p=\frac{1-\theta}{p_0}+\frac{\theta}{p_1}$.
	For each $n\ge1$, define
	\[
	(T_n f)(x)\coloneqq \sum_{g\in G}\mu^{*n}(g)f(g^{-1}x), \qquad x\in X.
	\]
	Then $T_n$ acts boundedly on each $\ell^p(X)$, and its realization on $\ell^p(X)$ is exactly $\lambda_{X,p}(\mu)^n$. Applying the Riesz--Thorin interpolation theorem to $T_n$ yields
	\[
	\|\lambda_{X,p}(\mu)^n\|_{\ell^p\to\ell^p}
	\le
	\|\lambda_{X,p_0}(\mu)^n\|_{\ell^{p_0}\to\ell^{p_0}}^{\,1-\theta}
	\|\lambda_{X,p_1}(\mu)^n\|_{\ell^{p_1}\to\ell^{p_1}}^{\,\theta},
	\]
	that is, $a_n(p)\le a_n(p_0)^{1-\theta}a_n(p_1)^\theta$. Taking $n$th roots and letting $n\to\infty$, we obtain
	$r_p\le r_{p_0}^{\,1-\theta}r_{p_1}^{\,\theta}$, equivalently,
	$\log r_p\le (1-\theta)\log r_{p_0}+\theta\log r_{p_1}$.
	Thus the function $u\mapsto \log r_{1/u}$ is convex on $(1/2,1)$, and therefore continuous there. Equivalently, $p\mapsto \log r_p$ is continuous on $(1,2]$.
	
	Now Theorem~\ref{prop:renyi-rate-pair-rd} gives, for every $\alpha\in(1,2]$,
	$h_\alpha(X,\mu)=\frac{\alpha}{1-\alpha}\log r_\alpha(\mu)$.
	Since $\alpha\mapsto \alpha/(1-\alpha)$ is continuous on $(1,2]$, and since $\alpha\mapsto \log r_\alpha(\mu)$ is continuous on the same interval, it follows that $\alpha\mapsto h_\alpha(X,\mu)$ is continuous on $(1,2]$.
	
	To prove continuity at $\alpha=1$, write $p=\frac{\alpha}{\alpha-1}$. Then $\alpha=\frac{p}{p-1}$, and therefore
	\[
	\frac{\alpha}{1-\alpha}\log r_\alpha(\mu)
	=
	-p\log r_{p/(p-1)}(\mu).
	\]
	By Theorem~\ref{prop:renyi-rate-pair-rd}, we have
	$h_\alpha(X,\mu)=\frac{\alpha}{1-\alpha}\log r_\alpha(\mu)$,
	so by the definition of $c(G,H;\mu)$ we obtain
	$\lim_{\alpha\downarrow1} h_\alpha(X,\mu)=c(G,H;\mu)$.
	On the other hand, Corollary~\ref{cor:under-over} gives
	$h(X,\mu)=c(G,H;\mu)$.
	Hence
	$\lim_{\alpha\downarrow1} h_\alpha(X,\mu)=h(X,\mu)$.
	This proves that $\alpha\mapsto h_\alpha(X,\mu)$ extends continuously to $[1,2]$ by setting $h_1(X,\mu)\coloneqq h(X,\mu)$.
\end{proof}

\bigskip

\section{Subgroup Classification for \(\mathbf{SLC}_{\mathrm{subexp}}\)
}

\subsection{Subgroup Classification in Hyperbolic Settings}\label{subsec:hyperbolic}

The goal of this subsection is to study the subgroup-classification problem for \((G,H)\in \mathbf{SLC}_{\mathrm{subexp}}\) in hyperbolic settings. We begin with classes for which explicit classifications are available, including free groups, closed surface groups, hyperbolic \(3\)-manifold groups, finitely generated Kleinian groups, and several other classes of hyperbolic groups. We then turn to relatively hyperbolic groups and derive strong necessary conditions.

\begin{proposition}
	Let \(G\) be a finitely generated group with word length \(\ell\), and let
	\(B(n):=\{g\in G:\ell(g)\le n\}\), \(b_n:=|B(n)|\).
	Define \(w:G\to[1,\infty)\) by \(w(g):=b_{\ell(g)}\).
	Then for every subgroup \(H\le G\) and every \(f\in\mathbb CG\),
	\[
	\|f\|_{h,(G,H)}\le \sqrt2\,\|fw\|_{(2,1),(G,H)}.
	\]
	Moreover, \(W(R):=\sup_{\ell(g)\le R}w(g)=b_{\lfloor R\rfloor}\). In particular, if \(G\) has subexponential growth, then \(W\) is subexponential.
\end{proposition}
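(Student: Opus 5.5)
The plan is a spherical decomposition combined with Cauchy--Schwarz, in the style of the classical proof that groups of polynomial growth have RD, with the ball-cardinality weight absorbing the loss. Write \(S(n):=B(n)\setminus B(n-1)\), \(s_n:=|S(n)|\) (with \(B(-1)=\emptyset\)), and decompose \(f=\sum_{n\ge0}f_n\) with \(f_n:=f\mathbf 1_{S(n)}\). The only input I need from \cite{ChatterjiZarka2024v1} about \(\|\cdot\|_h\) is that it is a norm satisfying the trivial bound \(\|f\|_{h}\le\|f\|_{\ell^1(G)}\). This should follow from the triangle inequality together with \(\|\delta_g\|_h\le 1\), and I will check it against their definition.

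\textbf{Step 1.} Start from \(\|f\|_h\le \sum_n\|f_n\|_1\). For fixed \(n\), the function \(\pi_\#(|f_n|)\) is supported on at most \(s_n\) cosets, namely those meeting \(S(n)\). Cauchy--Schwarz on \(X\) then gives
\[
\|f_n\|_1=\bigl\|\pi_\#(|f_n|)\bigr\|_{\ell^1(X)}\le \sqrt{s_n}\,\|f_n\|_{(2,1)}.
\]

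\textbf{Step 2.} Apply Cauchy--Schwarz in \(n\) with weights \(b_n\):
\[
\sum_n\sqrt{s_n}\,\|f_n\|_{(2,1)}\le\Bigl(\sum_n \frac{s_n}{b_n^2}\Bigr)^{1/2}\Bigl(\sum_n b_n^2\|f_n\|_{(2,1)}^2\Bigr)^{1/2}.
\]
For the first factor, the \(n=0\) term is \(1\). For \(n\ge1\), use \(s_n=b_n-b_{n-1}\) and \(b_n\ge b_{n-1}\) to get
\[
\frac{s_n}{b_n^2}\le\frac{b_n-b_{n-1}}{b_nb_{n-1}}=\frac1{b_{n-1}}-\frac1{b_n}.
\]
The sum telescopes to at most \(1/b_0=1\), so the first factor is at most \(\sqrt2\).

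\textbf{Step 3.} Since \(w\equiv b_n\) on \(S(n)\), we have \(b_n^2\|f_n\|_{(2,1)}^2=\|f_nw\|_{(2,1)}^2\). Set \(a_n(x):=\pi_\#(|f_n|w)(x)\ge0\). Then \(\pi_\#(|f|w)=\sum_na_n\), and pointwise \(\bigl(\sum_na_n(x)\bigr)^2\ge\sum_na_n(x)^2\). Summing over \(x\) gives \(\sum_n\|f_nw\|_{(2,1)}^2\le\|fw\|_{(2,1)}^2\). Combining Steps 1--3 yields the constant \(\sqrt2\).

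\textbf{Step 4.} The claim about \(W\) is immediate, since \(w(g)=b_{\ell(g)}\) is monotone in \(\ell(g)\), so \(W(R)=b_{\lfloor R\rfloor}\). Subexponential growth of \(G\) means exactly \(\log b_n/n\to0\), which is the required condition on \(W\).

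The main obstacle I expect is only the bookkeeping in Step 1, namely justifying \(\|f\|_h\le\|f\|_1\) from the precise pair definition of \(\|\cdot\|_h\). If that norm were not obviously dominated by \(\ell^1\), I would instead bound each \(\|f_n\|_h\) directly via \cite[Lemma~2.12]{ChatterjiZarka2024v1}-type estimates. The rest is elementary.
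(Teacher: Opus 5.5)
Your proof is correct. The point you flagged also checks out: \(\|\cdot\|_h\) is the operator norm of left convolution on \(\ell^{(2,1)}(G,H)\), and \(\varphi\mapsto\delta_g*\varphi\) permutes the left cosets of \(H\), so it is an isometry of \(\ell^{(2,1)}\). Hence \(\|\delta_g\|_h=1\), and the triangle inequality gives \(\|f\|_h\le\|f\|_1\); the paper uses this bound without comment.

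Your route differs from the paper's. The paper makes a single Cauchy--Schwarz on \(G\) itself, \(\|f\|_1\le\|fw\|_{\ell^2(G)}\,\|w^{-1}\|_{\ell^2(G)}\), where \(\|w^{-1}\|_2^2=\sum_n s_n/b_n^2\le2\) is exactly your telescoping bound. It then finishes with \(\|fw\|_{\ell^2(G)}\le\|fw\|_{(2,1)}\), using \(\ell^2\le\ell^1\) on each coset. This is shorter, and it yields the \(H\)-independent inequality \(\|f\|_1\le\sqrt2\,\|fw\|_{\ell^2(G)}\). Your Steps~2--3 (Cauchy--Schwarz across spheres, then \(\sum_n\|f_nw\|_{(2,1)}^2\le\|fw\|_{(2,1)}^2\)) are bookkeeping that the global argument avoids.

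What your sphere-by-sphere template buys is flexibility. Step~1 only uses the number \(t_n\) of cosets meeting \(S(n)\), and \(t_n\le|B_X(o,n)|\), the ball about \(o=eH\) in the Schreier graph. If you replace \(b_n\) by \((1+n)|B_X(o,n)|\), the \(n\)-sum is at most \(\sum_n(1+n)^{-2}\). You then get \(\|f\|_h\le(\pi/\sqrt6)\,\|fw\|_{(2,1)}\) with a weight controlled by the growth of \(G/H\) rather than of \(G\). This is essentially the forward direction of Remark~\ref{rem:subexp-analogue-thm12}.

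The paper's route cannot give this. Finiteness \(\|w^{-1}\|_{\ell^2(G)}^2=C<\infty\) forces \(b_R\le C\,W(R)^2\), so that weight must grow at least like \(\sqrt{b_R}\), the growth of \(G\) itself.
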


\begin{proof}
	Let \(S(n):=\{g\in G:\ell(g)=n\}\), \(a_n:=|S(n)|\). Then
	\[
	\|w^{-1}\|_2^2=\sum_{g\in G}\frac1{w(g)^2}=\sum_{n\ge0}\frac{a_n}{b_n^2}.
	\]
	For \(n\ge1\),
	\[
	\frac{a_n}{b_n^2}=\frac{b_n-b_{n-1}}{b_n^2}
	\le \frac{b_n-b_{n-1}}{b_{n-1}b_n}
	= \frac1{b_{n-1}}-\frac1{b_n},
	\]
	hence \(\sum_{n\ge1} a_n/b_n^2\le 1\), and therefore \(\|w^{-1}\|_2^2\le 2\).
	
	Now for \(f\in\mathbb CG\), Cauchy--Schwarz gives
	\[
	\|f\|_1=\sum_{g}|f(g)|w(g)\,w(g)^{-1}
	\le \|fw\|_2\,\|w^{-1}\|_2
	\le \sqrt2\,\|fw\|_2.
	\]
	Also \(\|fw\|_2\le \|fw\|_{(2,1),(G,H)}\), since on each coset \(C\), \(\|u\|_{\ell^2(C)}\le \|u\|_{\ell^1(C)}\).
	Thus
	\[
	\|f\|_1\le \sqrt2\,\|fw\|_{(2,1),(G,H)}.
	\]
	Finally, \(\|f\|_{h,(G,H)}\le \|f\|_1\), so the claimed estimate follows.
	
	The identity \(W(R)=b_{\lfloor R\rfloor}\) is immediate from \(w(g)=b_{\ell(g)}\), and the last statement follows.
\end{proof}

\medskip

\begin{remark}\label{rem:subexp-analogue-thm12}
	As in the proof of Theorem~1.2 in \cite{ChatterjiZarka2024v1}, one obtains the corresponding subexponential statement.
	If the Schreier graph \(X=S(G,H,S)\) has subexponential growth, then \((G,H)\in \mathbf{SLC}_{\mathrm{subexp}}\).
	Conversely, if \(H\) is co-amenable in \(G\) and \((G,H)\in \mathbf{SLC}_{\mathrm{subexp}}\), then \(X\) has subexponential growth.
	Hence, under co-amenability, \((G,H)\in \mathbf{SLC}_{\mathrm{subexp}}\) if and only if \(X\) has subexponential growth.
	
	For the forward implication, write \(\mu(n)=|B_X(H,n)|\), \(r(c)=d_X(H,c)\),
	choose \(\beta_n:=(n+1)(\mu(n)+1)\), and set \(w(g):=\beta_{r(gH)}\).
	Then, exactly as in Theorem~1.2 of \cite{ChatterjiZarka2024v1}, one gets
	\[
	\|f\|_h\le C_h\|fw\|_{(2,1),(G,H)},
	\qquad
	C_h^2=\sum_{c\in G/H}\beta_{r(c)}^{-2}<\infty,
	\]
	and the radial majorant \(W(t):=\sup_{\ell(g)\le t}w(g)\) is subexponential when \(\mu\) is.
	
	For the reverse implication, co-amenability is used only through Proposition~3.6 of \cite{ChatterjiZarka2024v1}, namely
	\(\|f\|_h=\|f\|_1\) on \(\mathbb R_+G\), which yields
	\(\mu(n)\le C_h^2W(n)^2\) from the test functions \(f_n\) supported on minimal representatives of \(B_X(H,n)\).
\end{remark}

\medskip

\begin{lemma}\label{lem:free-factor-intersection}
	Let $K=H*L$ be a nontrivial free product, and let $k\in L\setminus\{e\}$.
	Then
	\[
	H\cap k^{-1}Hk=\{e\}.
	\]
\end{lemma}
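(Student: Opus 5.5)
We argue by the normal form theorem for free products. Take $h\in H\cap k^{-1}Hk$, so that $h=k^{-1}h'k$ for some $h'\in H$. We will show $h=e$. If $h'=e$, then $h=e$ at once. So assume $h'\neq e$ and aim for a contradiction.

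In $K=H*L$, consider the word $k^{-1}\,h'\,k$ with $k\in L\setminus\{e\}$ and $h'\in H\setminus\{e\}$. Its three letters are nontrivial and alternate between the factors $L$, $H$, $L$. It is therefore a reduced word of syllable length $3$. By uniqueness of reduced forms in free products, it represents an element whose reduced form has length $3$. In particular, this element does not lie in $H$, since a nontrivial element of $H$ has reduced form of length $1$ and $e$ has length $0$. This contradicts $k^{-1}h'k=h\in H$, so $h'=e$ and therefore $h=e$. Hence $H\cap k^{-1}Hk=\{e\}$.

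The only step needing care is the appeal to the normal form theorem: every element of $H*L$ has a unique expression as an alternating product of nontrivial factor elements. This is standard, so we cite it rather than prove it. Nontriviality of the free product is not even needed beyond the assumption $k\neq e$. If $H=\{e\}$, the statement is trivially true.
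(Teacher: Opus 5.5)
Your proof is correct and is essentially the paper's argument: both use uniqueness of reduced forms in $H*L$ to show that conjugating a nontrivial element of $H$ by a nontrivial element of $L$ gives a reduced word of syllable length $3$, which cannot lie in $H$. The only cosmetic difference is that the paper rewrites the relation as $kxk^{-1}=y$ and derives a contradiction from $x\neq e$, whereas you work directly with $k^{-1}h'k$ and derive it from $h'\neq e$.
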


\begin{proof}
	Take $x\in H\cap k^{-1}Hk$. Then $x\in H$, and there exists $y\in H$ such that
	$x=k^{-1}yk$, i.e. $kxk^{-1}=y\in H$.
	If $x\neq e$, then in the free product normal form for $H*L$,
	the word $kxk^{-1}$ is reduced and has the pattern
	\[
	(\text{nontrivial in }L)\cdot(\text{nontrivial in }H)\cdot(\text{nontrivial in }L),
	\]
	hence cannot lie in $H$. Therefore $x=e$.
\end{proof}

\begin{proposition}\label{thm:F2-rank-ge2-bad}
	Let $G=F_n (n\ge 2)$ with a fixed word length $\ell_G$. Let $H\le F_n$ be a finitely generated subgroup such that
	$[F_n:H]=\infty$ and $\operatorname{rank}(H)\ge 2$.
	Then $(F_n,H)\notin \mathbf{SLC}_{\mathrm{subexp}}.$

\end{proposition}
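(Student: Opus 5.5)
The plan is to test the defining inequality of $\mathbf{SLC}_{\mathrm{subexp}}$ on a function built from $H$, and to use the free-product structure supplied by Marshall Hall's theorem. The function will be spread over many distinct $H$-cosets, so its $(2,1)$-norm is only about the square root of its $\ell^1$-mass. Meanwhile a single vector will be sent by $\lambda_X(f)$ onto one point with full mass. Since $H$ has exponential growth, this mismatch will contradict subexponential growth of the radial majorant $W$.

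\emph{Step 1 (algebraic input).} Since $H$ is finitely generated, Marshall Hall's theorem gives a finite-index subgroup $K\le F_n$ with $K=H*L$. Because $[F_n:H]=\infty$, we have $L\neq\{e\}$. Fix $k\in L\setminus\{e\}$. Lemma~\ref{lem:free-factor-intersection} gives $H\cap k^{-1}Hk=\{e\}$.

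\emph{Step 2 (test function).} Write $H_N:=\{h\in H:\ell_G(h)\le N\}$ and set $f:=\mathbf 1_{\{hk:\,h\in H_N\}}\in\mathbb CG$.
\begin{itemize}
\item The cosets $hkH$ for $h\in H_N$ are pairwise distinct. Indeed, $hkH=h'kH$ means $k^{-1}h^{-1}h'k\in H$, so $h^{-1}h'\in kHk^{-1}\cap H$. Conjugating the intersection from Step 1 by $k$ shows this is $\{e\}$.
\item Hence each coset meets $\supp f$ in exactly one point. With $c:=\ell(k)$ this gives
\[
\|fw\|_{(2,1)}\le W(N+c)\,|H_N|^{1/2}.
\]
\item For the lower bound, take $\xi=\delta_{k^{-1}H}$, which has $\|\xi\|_2=1$. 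Then $\lambda_X(hk)\xi=\delta_{hH}=\delta_{o}$ for every $h\in H_N$, so $\lambda_X(f)\xi=|H_N|\,\delta_o$.
\item Using $\|\lambda_X(f)\|_{B(\ell^2(X))}\le\|f\|_h$ (\cite[Lemma~2.12(1)]{ChatterjiZarka2024v1}), the $\mathbf{SLC}_{\mathrm{subexp}}$ inequality yields
\[
|H_N|\le C_h\,W(N+c)\,|H_N|^{1/2},\qquad\text{that is}\qquad |H_N|^{1/2}\le C_h\,W(N+c).
\]
\end{itemize}

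\emph{Step 3 (growth).} Every finitely generated subgroup of $F_n$ is quasiconvex, hence undistorted. So $\ell_G|_H$ is bi-Lipschitz to a word length on $H$. Since $H$ is free of rank $\ge2$, we get $|H_N|\ge e^{aN}$ for some $a>0$ and all large $N$. Step 2 then gives
\[
\frac{\log W(N+c)}{N+c}\ge \frac{aN/2-\log C_h}{N+c}\longrightarrow \frac a2>0,
\]
which contradicts subexponential growth of $W$.

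The main point needing care is Step 2: checking that the cosets $hkH$ really are distinct, and that $\xi=\delta_{k^{-1}H}$ is mapped onto a single point with full mass. This is exactly where the free-factor lemma is used. Quasiconvexity in Step 3 is standard for free groups.
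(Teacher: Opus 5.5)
Your proof is correct and follows essentially the same route as the paper. Both use Marshall Hall's decomposition $K=H*L$ and the free-factor lemma to place the points $hk$ ($h\in H_R$) in pairwise distinct $H$-cosets, and both compare $|H_R|\le C_h\,W(\cdot)\sqrt{|H_R|}$ against the exponential growth of $H$. The differences are minor: the paper bounds $\|f_R\|_h$ from below directly by convolving with $\varphi_R=\mathbf 1_{k^{-1}H_R^{-1}}$, rather than via $\lambda_X(f)\delta_{k^{-1}H}$ and \cite[Lemma~2.12(1)]{ChatterjiZarka2024v1}; it also takes $H_R$ to be an intrinsic word-metric ball with $\ell_G(h)\le MR$, so your appeal to quasiconvexity is unnecessary, since only the trivial inequality $\ell_G(h)\le M\,|h|_H$ is needed.
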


\begin{proof}
	Assume, for contradiction, that $(F_n,H)$ satisfies $\mathbf{SLC}_{\mathrm{subexp}}$.
	By Marshall Hall's theorem for finitely generated subgroups of free groups,
	there exists a finite-index subgroup $K\le F_n$ such that
	$K=H*L.$ Since $[F_n:H]=\infty$, we must have $L\neq\{e\}$ (otherwise $K=H$ would be finite index).
	Choose $k\in L\setminus\{e\}$. By Lemma~\ref{lem:free-factor-intersection}, applied to both $k$ and $k^{-1}$, we have $H\cap k^{-1}Hk=\{e\}$ and 
	$H\cap kHk^{-1}=\{e\}.$
	By Nielsen--Schreier, $H$ is free of rank $r:=\rank(H)\ge 2$.
	Fix a free basis $B=\{u_1,\dots,u_r\}$ for $H$ and let $|\cdot|_H$ be the associated word length.
	Define
	\[
	H_R:=\{h\in H:\ |h|_H\le R\},\qquad N_R:=|H_R|.
	\]
	The standard growth formula for free groups gives, for $R\ge 1$,
     \[
      N_R
      =1+2r\sum_{j=0}^{R-1}(2r-1)^j
      \ge (2r-1)^R.
     \]
     Let$M:=\max_{u\in B}\ell_G(u)<\infty.$
    Then for every $h\in H_R$ one has $\ell_G(h)\le MR.$
    
	Define
   \[
    f_R := \mathbf{1}_{H_R k}, \qquad \varphi_R := \mathbf{1}_{k^{-1} H_R^{-1}}.
    \]
	Since $\supp(\varphi_R)\subset k^{-1}H$ is contained in a single left coset, and $|\supp(\varphi_R)|=N_R$, we get
	\[
	\|\varphi_R\|_{(2,1),(G,H)}=\|\varphi_R|_{k^{-1}H}\|_1=N_R.
	\]
	We claim that the points of $H_Rk$ lie in pairwise distinct left cosets of $H$.
	Indeed, if $h_1kH=h_2kH$, then
	\[
	(h_2k)^{-1}(h_1k)=k^{-1}h_2^{-1}h_1k\in H,
	\]
	so $h_2^{-1}h_1\in kHk^{-1}\cap H=\{e\}$, hence $h_1=h_2$.
	Moreover, for each $h\in H_R$ one has $H_Rk\cap hkH=\{hk\}$, so on each such coset the $\ell^1$-norm equals $1$.
	Therefore
	\[
	\|f_R\|_{(2,1),(G,H)}^2=\sum_{h\in H_R}1^2=N_R,
	\qquad
	\|f_R\|_{(2,1),(G,H)}=\sqrt{N_R}.
	\]
	
	For any $x\in G$,
\begin{equation*}
	(f_R*\varphi_R)(x)=\sum_{y\in G} f_R(y)\,\varphi_R(y^{-1}x)=\#\{(h_1,h_2)\in H_R^2:\ x=h_1h_2^{-1}\}.
\end{equation*}
	So $\operatorname{supp}(f_R*\varphi_R)\subset H$. Therefore
	\[
	\|f_R*\varphi_R\|_{(2,1),(G,H)}
	=\|(f_R*\varphi_R)|_H\|_1
	=\sum_{x\in H}(f_R*\varphi_R)(x)
	=|H_R|^2
	=N_R^2.
	\]
	By definition of $\|\cdot\|_h$,
	$
	\|f_R\|_{h,(G,H)}
	\ge
	\frac{\|f_R*\varphi_R\|_{(2,1),(G,H)}}{\|\varphi_R\|_{(2,1),(G,H)}}
	=
	\frac{N_R^2}{N_R}
	=
	N_R.
	$
	From $\mathbf{SLC}_{\mathrm{subexp}}$,
	\[
	N_R\le \|f_R\|_{h,(G,H)}
	\le C_h\|f_Rw\|_{(2,1),(G,H)}.
	\]
	For $h\in H_R$,
	$
	\ell_G(hk)\le \ell_G(h)+\ell_G(k)\le MR+\ell_G(k)=:T_R,
	$
	hence $w(hk)\le W(T_R)$. Since points $hk$ lie in distinct left cosets,
	\[
	\|f_Rw\|_{(2,1),(G,H)}^2
	=\sum_{h\in H_R}w(hk)^2
	\le N_RW(T_R)^2.
	\]
	Thus
	\[
	N_R\le C_hW(T_R)\sqrt{N_R},
	\qquad
	W(T_R)\ge \frac{\sqrt{N_R}}{C_h}\ge \frac{(2r-1)^{R/2}}{C_h}.
	\]
	Taking logarithms, we obtain
	\[
	\frac{\log W(T_R)}{T_R}
	\ge
	\frac{\frac{R}{2}\log(2r-1)-\log C_h}{MR+\ell_G(k)}.
	\]
	contradicting subexponentiality of $W$. Therefore $(F_n,H)$ cannot satisfy $\mathbf{SLC}_{\mathrm{subexp}}$.
\end{proof}

     \begin{theorem}\label{thm:F2-fg-classification-subexpSLC}
     	Let $H\le F_n (n\ge 2)$ be a finitely generated subgroup. Then
     	$(F_n,H)\in \mathbf{SLC}_{\mathrm{subexp}}$ if and only if
     	either $[F_n:H]<\infty$, or $H\cong \mathbb Z$, or $H=\{e\}$.
     \end{theorem}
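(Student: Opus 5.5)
The plan is to split the equivalence into its two directions, with the ``only if'' direction coming almost entirely from Proposition~\ref{thm:F2-rank-ge2-bad}. For that direction I suppose $(F_n,H)\in\mathbf{SLC}_{\mathrm{subexp}}$ with $H$ finitely generated and $[F_n:H]=\infty$. By Nielsen--Schreier, $H$ is free of some finite rank $r$. If $r\ge2$, Proposition~\ref{thm:F2-rank-ge2-bad} gives a contradiction. So $r\le1$, which means $H=\{e\}$ or $H\cong\mathbb Z$. This finishes the direction.

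For the ``if'' direction there are three cases, and in each I would exhibit a weight. If $[F_n:H]<\infty$, the Schreier graph $G/H$ is finite, so it has subexponential growth. Remark~\ref{rem:subexp-analogue-thm12} then gives $(F_n,H)\in\mathbf{SLC}_{\mathrm{subexp}}$. A direct check is also easy: $\|f\|_h\le\|f\|_1\le\sqrt{[G:H]}\,\|f\|_{(2,1)}$ by Cauchy--Schwarz over the finitely many cosets, so the constant weight $w\equiv1$ works.

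If $H=\{e\}$, the pair norms reduce to the group norms. Then $\|f\|_h$ is the convolution norm on $\ell^2(F_n)$ and $\|f\|_{(2,1)}=\|f\|_2$. Haagerup's inequality gives $\|f\|_h\le C\|f(1+\ell)^{2}\|_2$, so pair rapid decay holds. The polynomial weight $w=(1+\ell)^2$ is subexponential, so the pair lies in $\mathbf{SLC}_{\mathrm{subexp}}$. Here I am using that $\|\cdot\|_h$ for $H=\{e\}$ dominates, and in fact equals, the operator norm of left convolution on $\ell^2$, as in \cite{ChatterjiZarka2024v1}.

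The case $H\cong\mathbb Z=\langle a\rangle$ is the main obstacle. I would first try to cite the known result that if $G$ is hyperbolic and $H$ is quasiconvex, or more generally if $G$ is relatively hyperbolic with respect to a family containing $H$ and $H$ has polynomial growth, then $(G,H)$ has pair rapid decay. This is the statement Theorem~\ref{thm:B} makes precise: a maximal cyclic subgroup of $F_n$ is malnormal, so $F_n$ is hyperbolic relative to it, and $\mathbb Z$ is quasi-isometrically embedded and hence has polynomial growth for $\ell_G|_H$. If $a$ is not primitive-root-maximal, I would pass to the maximal cyclic subgroup $\langle b\rangle\supseteq\langle a\rangle$ of finite index. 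The comparison $\|\cdot\|_{(2,1),(G,H)}$ versus $\|\cdot\|_{(2,1),(G,\langle b\rangle)}$ changes only by a factor of $\sqrt{[\langle b\rangle:\langle a\rangle]}$, and the same holds for $\|\cdot\|_h$.

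If citing is not available, I would prove the estimate directly, in the spirit of Haagerup. Decompose $f$ and $\varphi$ by the length of coset representatives and the position along the axis of $a$. In a tree, the product $gh$ with $g$ in a sphere and $h$ in a coset $yH$ cancels along a geodesic segment; because $H$ is cyclic, this cancellation is controlled by a single integer parameter. That gives a Cauchy--Schwarz bound with polynomial loss. This direct computation is where I expect the real work to lie.
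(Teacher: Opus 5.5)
Your proposal is correct. The necessity direction and the finite-index case are the same as in the paper, but you handle the amenable cases by a different route.

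\textbf{The paper's route.} It treats $H=\{e\}$ and $H\cong\mathbb Z$ at once: $F_n$ has RD and $H$ is amenable, so $(F_n,H)$ has pair rapid decay by \cite[Corollary~2.14]{ChatterjiZarka2024v1}.

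\textbf{Your route.} You do $H=\{e\}$ by Haagerup's inequality, which is the same input. For $H=\langle a\rangle$ you apply Theorem~\ref{thm:hybrid-rd-iff-poly-growth} to the maximal cyclic subgroup $H'=\langle b\rangle\supseteq\langle a\rangle$. This subgroup is malnormal and quasiconvex, so $F_n$ is hyperbolic relative to it \cite{Bowditch2012}. You then descend to the subgroup $H$ of index $m=[H':H]$. The descent is sound: the inequalities $\|u\|_{(2,1),(G,H)}\le\|u\|_{(2,1),(G,H')}\le\sqrt m\,\|u\|_{(2,1),(G,H)}$ give $\|f\|_{h,(G,H)}\le\sqrt m\,\|f\|_{h,(G,H')}$. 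Hence the pair rapid decay inequality holds for $(F_n,H)$ with constant $mC_h$.

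\textbf{Comparison.} Your route is more roundabout. The sufficiency half of Theorem~\ref{thm:hybrid-rd-iff-poly-growth} itself first shows that $G$ has RD and then appeals to \cite{ChatterjiZarka2024v1}. So you end up using RD of $F_n$ together with relative hyperbolicity and a finite-index reduction, none of which the amenable-subgroup corollary needs. The Haagerup-type computation you sketch as a fallback is also unnecessary. What your route does yield is a small stand-alone fact: pair rapid decay passes from $H'$ to its finite-index subgroups.

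\textbf{A caution about the ``known result'' you quote.} Without a growth hypothesis, the quasiconvex version is false; see $F_2\le F_2*F_2$ in Remark~\ref{rem:F4-factor-counterexample}. Relative hyperbolicity with respect to a family merely containing $H$ is not enough even with polynomial growth; see $\mathbb Z\le\bigl((\mathbb Z/2\mathbb Z)\wr\mathbb Z\bigr)*\mathbb Z$ in the remark following Corollary~\ref{cor:non-s-normal}. What you actually use is hyperbolicity relative to the single subgroup $\langle b\rangle$ of polynomial growth. That is exactly Theorem~\ref{thm:hybrid-rd-iff-poly-growth}, so your argument is unaffected.
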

     
     \begin{proof}
     	Assume first that $(F_n,H)\in\mathbf{SLC}_{\mathrm{subexp}}$. If $[F_n:H]=m<\infty$, then for $f\in\mathbb C F_n$,
     	\[
     	\|f\|_{h,(F_n,H)}\le \|f\|_1\le \sqrt m\,\|f\|_{(2,1),(F_n,H)}.
     	\]
     	Thus $\mathbf{SLC}_{\mathrm{subexp}}$ holds with $w\equiv 1$. Suppose now $[F_n:H]=\infty$.
     	If $\operatorname{rank}(H)\ge 2$, this contradicts Proposition~\ref{thm:F2-rank-ge2-bad}.
     	Hence $\operatorname{rank}(H)\le 1$.
     	Since finitely generated subgroups of free groups are free,
     	$\operatorname{rank}(H)=0$ gives $H=\{e\}$, and
     	$\operatorname{rank}(H)=1$ gives $H\cong\mathbb Z$.

     	Now assume $H=\{e\}$ or $H\cong\mathbb Z$.
     	Then $H$ is amenable. Since $F_n$ has  rapid decay, Corollary~2.14 in \cite{ChatterjiZarka2024v1} implies that the $(F_n,H)$ has pair rapid decay. Therefore
     	$(F_n,H)\in\mathbf{SLC}_{\mathrm{subexp}}$.
     \end{proof}

  \begin{corollary}
	Let $H\le F_n (n\ge 2)$ be a finitely generated subgroup. Then
		$(F_n,H)$ has pair rapid decay if and only if
		either $[F_n:H]<\infty$, or $H\cong \mathbb Z$, or $H=\{e\}$.
\end{corollary}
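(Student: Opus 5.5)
The plan is to deduce the corollary from Theorem~\ref{thm:F2-fg-classification-subexpSLC}. Pair rapid decay is a special case of \(\mathbf{SLC}_{\mathrm{subexp}}\), and the sufficiency direction of that theorem already produces pair rapid decay. So both implications reduce to facts established above.

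\emph{Step 1: pair rapid decay implies \(\mathbf{SLC}_{\mathrm{subexp}}\).} Suppose \((F_n,H)\) has pair rapid decay, with constants \(C_h,s_1\). Put \(w(g)\coloneqq(1+\ell(g))^{s_1}\), which takes values in \([1,\infty)\). Then the defining inequality \(\|f\|_h\le C_h\|fw\|_{(2,1)}\) holds verbatim. The radial majorant is \(W(t)=(1+\lfloor t\rfloor)^{s_1}\), so \(\log W(t)/t\le s_1\log(1+t)/t\to0\). Hence \((F_n,H)\in\mathbf{SLC}_{\mathrm{subexp}}\), and Theorem~\ref{thm:F2-fg-classification-subexpSLC} forces \([F_n:H]<\infty\), or \(H\cong\mathbb Z\), or \(H=\{e\}\).

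\emph{Step 2: each of the three cases has pair rapid decay.}
\begin{itemize}
\item If \(H=\{e\}\) or \(H\cong\mathbb Z\), then \(H\) is amenable. Since \(F_n\) has rapid decay by Haagerup, \cite[Corollary~2.14]{ChatterjiZarka2024v1} gives pair rapid decay, exactly as in the proof of Theorem~\ref{thm:F2-fg-classification-subexpSLC}.
\item If \([F_n:H]=m<\infty\), the proof of Theorem~\ref{thm:F2-fg-classification-subexpSLC} already gives \(\|f\|_h\le\|f\|_1\le\sqrt m\,\|f\|_{(2,1)}\). The second inequality is Cauchy--Schwarz over the \(m\) cosets. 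Since \(1\le(1+\ell)^{s_1}\) pointwise for any \(s_1>0\), we get \(\|f\|_{(2,1)}\le\|f(1+\ell)^{s_1}\|_{(2,1)}\). Therefore pair rapid decay holds with \(C_h=\sqrt m\).
\end{itemize}

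There is essentially no obstacle here. The only point needing care is Step 1, namely checking that a polynomial weight meets the subexponential condition, together with the constraint \(w\ge1\) in Definition~\ref{def:subexp-RD-pair}. Both are immediate.
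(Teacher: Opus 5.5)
Your proposal is correct and is exactly the deduction the paper intends; the paper states the corollary without a separate proof, right after Theorem~\ref{thm:F2-fg-classification-subexpSLC}. Necessity holds because the polynomial weight $(1+\ell)^{s_1}\ge 1$ is subexponential, and sufficiency comes from that theorem's proof, using \cite[Corollary~2.14]{ChatterjiZarka2024v1} when $H$ is trivial or cyclic and $\|f\|_h\le\|f\|_1\le\sqrt m\,\|f\|_{(2,1)}$ when $H$ has finite index; your extra observation $\|f\|_{(2,1)}\le\|f(1+\ell)^{s_1}\|_{(2,1)}$ upgrades the paper's ``$w\equiv 1$'' remark to genuine pair rapid decay, a step the paper leaves implicit.
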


The finite generation assumption in Theorem~\ref{thm:F2-fg-classification-subexpSLC} is essential.

\begin{proposition}\label{prop:fg-assumption-essential}
	There exists a subgroup \(H\le F_2\) such that \((F_2,H)\in \mathbf{SLC}_{\mathrm{subexp}}\), while \([F_2:H]=\infty\), \(H\neq\{e\}\), and \(H\not\cong\mathbb Z\).
\end{proposition}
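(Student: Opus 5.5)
The plan is to use a normal subgroup $H\trianglelefteq F_2$ with amenable quotient of subexponential growth, and then apply Remark~\ref{rem:subexp-analogue-thm12}. The natural choice is the commutator subgroup $H=[F_2,F_2]$, with $F_2/H\cong\mathbb Z^2$. A normal subgroup with amenable quotient is co-amenable. In this case the Schreier graph $S(F_2,H,S)$ is exactly the Cayley graph of $\mathbb Z^2$ with respect to the image of $S$, so it has polynomial, hence subexponential, growth.

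By the forward implication in Remark~\ref{rem:subexp-analogue-thm12}, subexponential growth of the Schreier graph gives $(F_2,H)\in\mathbf{SLC}_{\mathrm{subexp}}$. In fact polynomial growth gives pair rapid decay directly, by \cite[Theorem~1.2]{ChatterjiZarka2024v1}, which is even stronger.

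The explicit weight is $w(g)=\beta_{r(gH)}$ with $\beta_n=(n+1)(\mu(n)+1)$, where $\mu(n)\asymp n^2$. The sum $C_h^2=\sum_{c\in\mathbb Z^2}\beta_{r(c)}^{-2}$ is finite, and $W(t)\le\beta_t$ grows polynomially. The only thing to confirm is that the Chatterji--Zarka argument applies verbatim here; since $H$ is normal, this is the standard case.

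It remains to check the three side conditions.
\begin{enumerate}
\item $[F_2:H]=|\mathbb Z^2|=\infty$.
\item $H\neq\{e\}$, since $[a,b]\neq e$.
\item $H\not\cong\mathbb Z$. Suppose $H=\langle c\rangle$. Then $[a,b]=c^k$ for some $k$, and $a[a,b]a^{-1}\in H$ gives $ac^ka^{-1}=c^m$. In a free group, conjugate elements have the same cyclically reduced length, so $m=\pm k$. It follows that $a$ normalizes $\langle c^k\rangle$, so $a$ and $c$ generate a cyclic group by centralizer arguments in free groups. The same holds for $b$, and then $a,b$ would commute, which is a contradiction.

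A simpler route for (iii) is to note that $H$ is not finitely generated. This is classical: a nontrivial normal subgroup of infinite index in $F_n$ is never finitely generated, by the Schreier/Karrass--Solitar theorem. Indeed $H$ is free of infinite rank. This observation also explains why the example does not contradict Theorem~\ref{thm:F2-fg-classification-subexpSLC}.
\end{enumerate}

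The main point needing care is the first step: verifying that Remark~\ref{rem:subexp-analogue-thm12} applies, namely that the weight is a genuine function on $G$ with subexponential radial majorant. Since $\ell_X(gH)\le\ell(g)$, we get $W(t)\le\beta_{\lfloor t\rfloor}$, which is polynomial. Everything else is standard.
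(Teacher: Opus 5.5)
Your proof is correct and follows the same route as the paper. Both arguments realize $H$ as a pullback along a surjection from $F_2$ onto a group of polynomial growth, so that the Schreier graph $F_2/H$ has polynomial growth and Remark~\ref{rem:subexp-analogue-thm12} gives $(F_2,H)\in\mathbf{SLC}_{\mathrm{subexp}}$; both also get non-cyclicity because a nontrivial normal subgroup of infinite index in $F_2$ is non-cyclic (indeed not finitely generated). The only difference is the example: you take the normal subgroup $[F_2,F_2]$ with quotient $\mathbb Z^2$, while the paper takes the non-normal subgroup $\pi^{-1}(\langle u\rangle)$ for $\pi\colon F_2\twoheadrightarrow H_3(\mathbb Z)$, and your choice is, if anything, simpler.
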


\begin{proof}
	Let
	\[
	Q=H_3(\mathbb Z)=\langle u,v,z\mid [u,v]=z,\ [u,z]=[v,z]=e\rangle,
	\]
    and fix a surjection
    $\pi:F_2=\langle a,b\rangle\twoheadrightarrow Q$ such that
    $\pi(a)=u$ and $\pi(b)=v$.
	Set $K:=\langle u\rangle\le Q$ and $H:=\pi^{-1}(K)\le F_2$.
	
	First, there is a canonical bijection
	\[
	\Theta:F_2/H\longrightarrow Q/K,\qquad \Theta(gH)=\pi(g)K,
	\]
	which is an isomorphism of Schreier graphs with respect to the generating sets $\{a^{\pm1},b^{\pm1}\}$ and $\{u^{\pm1},v^{\pm1}\}$.
	Hence $F_2/H$ and $Q/K$ have the same growth type.
	Since $Q$ has polynomial growth, so does $Q/K$ (indeed
	$|B_{Q/K}(R)|\le |B_Q(R)|$), therefore $F_2/H$ has subexponential growth.
	By Remark~\ref{rem:subexp-analogue-thm12}, the subexponential growth of $F_2/H$ implies that
	$(F_2,H)\in \mathbf{SLC}_{\mathrm{subexp}}$.

	If $v^mK=v^nK$, then $v^{m-n}\in K=\langle u\rangle$. In the abelianization
	$Q_{\mathrm{ab}}\cong \mathbb Z^2$, where $u\mapsto (1,0)$ and $v\mapsto (0,1)$, this forces
	$m=n$, thus $Q/K$ is infinite, so $[F_2:H]=|Q/K|=\infty$.
	Also $a\in H$, hence $H\neq\{e\}$.
	
	To see that $H$ is not cyclic, let $N:=\ker\pi\triangleleft F_2$.
	Since $Q$ is not free, $N\neq\{e\}$.
	A standard fact on free groups says that every nontrivial normal subgroup of
	a nonabelian free group is noncyclic (\cite[Section~1.A, Exercise~7, p.~87]{Hatcher2002}).
	Thus $N$ is noncyclic.
	Because $N\le H$, the subgroup $H$ cannot be cyclic. Moreover, by Theorem~2.10 in Chapter~I, Section~2 of \cite{MagnusKarrassSolitar2004}, it follows that $H$ is not finitely generated.
	
\end{proof}

\begin{corollary}\label{thm:SL2Z-fg-classification-subexpSLC}
	Let $G=\mathrm{SL}(2,\mathbb Z),$ and let $H\le G$ be finitely generated.
	Then $(G,H)\in \mathbf{SLC}_{\mathrm{subexp}}$ if and only if
	either $[G:H]<\infty$ or $H$ is virtually cyclic. 
\end{corollary}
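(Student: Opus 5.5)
We reduce to the free group case by passing to a finite-index free subgroup of $G=\mathrm{SL}(2,\mathbb Z)$. Let $F\le G$ be a free subgroup of finite index; for instance the kernel of reduction mod $3$ intersected with the commutator subgroup, or simply a torsion-free finite-index subgroup, which is free of rank $\ge 2$ since $G$ is virtually free and non-elementary.

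\textbf{Easy direction.} If $[G:H]=m<\infty$, then exactly as in Theorem~\ref{thm:F2-fg-classification-subexpSLC} we have $\|f\|_h\le\|f\|_1\le\sqrt m\,\|f\|_{(2,1)}$, so $w\equiv1$ works. If $H$ is virtually cyclic, then $H$ is amenable. Moreover $G$ is hyperbolic and hence has RD. Corollary~2.14 of \cite{ChatterjiZarka2024v1} then gives pair rapid decay for $(G,H)$, and in particular $(G,H)\in\mathbf{SLC}_{\mathrm{subexp}}$.

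\textbf{Hard direction.} Suppose $(G,H)\in\mathbf{SLC}_{\mathrm{subexp}}$, $[G:H]=\infty$, and $H$ is not virtually cyclic. Set $H_0:=H\cap F$. Then $H_0$ has finite index in $H$, so it is finitely generated. It is free, and it has rank $\ge2$, since rank $\le1$ would make $H$ virtually cyclic. Also $[F:H_0]=\infty$.

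Rather than transferring the $\mathbf{SLC}$ property abstractly, I would rerun the test-function argument of Proposition~\ref{thm:F2-rank-ge2-bad} directly inside $G$.
\begin{enumerate}
\item Marshall Hall's theorem in $F$ gives a finite-index $K\le F$ with $K=H_0*L$ and $L\neq\{e\}$. Lemma~\ref{lem:free-factor-intersection} then gives $k\in L$ with $H_0\cap kH_0k^{-1}=\{e\}$.
\item The difficulty is that the norms $\|\cdot\|_{(2,1)}$ and $\|\cdot\|_h$ are computed with respect to cosets of $H$, not $H_0$. Here $H\cap kHk^{-1}$ need not be trivial, but it meets $H_0\cap kH_0k^{-1}=\{e\}$ trivially. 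Since $[H:H_0]<\infty$, the intersection $H\cap kHk^{-1}$ is finite, of size at most $[H:H_0]=:d$.
\item Take $f_R=\mathbf 1_{H_{0,R}k}$ and $\varphi_R=\mathbf 1_{k^{-1}H_{0,R}^{-1}}$ as before. Each left $H$-coset meets $H_{0,R}k$ in at most $d$ points. Hence $\|f_R\|_{(2,1)}\le\sqrt{dN_R}$ and $\|f_Rw\|_{(2,1)}\le\sqrt{d}\,W(T_R)\sqrt{N_R}$.
\item The support of $\varphi_R$ lies in the single coset $k^{-1}H$, so $\|\varphi_R\|_{(2,1)}=N_R$. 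Likewise $f_R*\varphi_R$ is supported in $H$ with total mass $N_R^2$.
\item This yields $N_R\le C_h\sqrt d\,W(T_R)\sqrt{N_R}$, so $W$ grows exponentially in $T_R\asymp R$. That contradicts subexponentiality.
\end{enumerate}

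\textbf{Main obstacle.} The main obstacle is step 2, the finiteness of $H\cap kHk^{-1}$. Suppose $x\in H\cap kHk^{-1}$. Some power $x^j$ with $1\le j\le d!$ lies in $H_0$. The same holds after conjugating by $k$, with a uniform exponent $N=d!$ for which $x^N\in H_0\cap kH_0k^{-1}=\{e\}$ (this needs care). Thus every element of $H\cap kHk^{-1}$ is torsion. In the virtually free group $G$, a subgroup all of whose elements are torsion is finite: it meets the torsion-free free group $F$ trivially, hence embeds in $G/F$.

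With this, $|H\cap kHk^{-1}|\le |G:F|$, and we use this bound in place of $d$ throughout the estimates above. Since $H\cap kHk^{-1}$ is finite, the coset-multiplicity bound holds and the contradiction goes through. Therefore $H$ must be of finite index or virtually cyclic.
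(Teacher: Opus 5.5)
Your argument is correct, but it is organized differently from the paper's. Both proofs pass to $H_0=H\cap F$ for a finite-index free subgroup $F$; the paper takes $F=\Gamma(3)$.

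The paper then transfers the inequality rather than the test functions. Using $gH\cap F=x(H\cap F)$ for $x\in gH\cap F$, it identifies the $(2,1)$-norms of $(F,H_0)$ and $(G,H)$ on $\mathbb CF$ and deduces $\|f\|_{h,(F,H_0)}\le\|f\|_{h,(G,H)}$. It writes this as an equality, but only $\le$ is used. It next converts the subexponential bound on $W$ from $\ell_G$ to a word length on $F$ via $\ell_G\le A\ell_F+B$. Finally it applies Theorem~\ref{thm:F2-fg-classification-subexpSLC} to $(F,H_0)$ as a black box.

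You instead stay in $(G,H)$ with the ambient length and the original weight, and rerun the test functions of Proposition~\ref{thm:F2-rank-ge2-bad}. This avoids restricting the $h$-norm and changing length functions. The paper's route, in exchange, is more modular and reuses the free-group theorem verbatim.

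Your ``main obstacle'' in fact disappears. Since $k\in L\le F$, one has $kFk^{-1}=F$, so
\[
H_0\cap kHk^{-1}=H_0\cap k(H\cap F)k^{-1}=H_0\cap kH_0k^{-1}=\{e\}.
\]
If $h_1k,h_2k$ with $h_i\in H_{0,R}$ lie in one left $H$-coset, then $h_2^{-1}h_1\in H_0\cap kHk^{-1}=\{e\}$. So each $H$-coset meets $H_{0,R}k$ in at most one point. The estimates are then exactly those of the free-group proof (take $d=1$), and the torsion argument with exponent $d!$ is unnecessary.

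The same identity gives $(H\cap kHk^{-1})\cap H_0=\{e\}$, hence $|H\cap kHk^{-1}|\le[H:H_0]$ in one line. In other words, $H$ is not $s$-normal, which is precisely the mechanism the paper later isolates in Corollary~\ref{cor:non-s-normal}.
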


\begin{proof}
	The finite-index case is exactly as in the free-group proof. If $H$ is virtually cyclic, then $H$ is amenable. Since $\mathrm{SL}(2,\mathbb Z)$ has
	RD, Corollary~2.14 in \cite{ChatterjiZarka2024v1} implies that $(G,H)$ has pair rapid decay,
	thus $(G,H)\in\mathbf{SLC}_{\mathrm{subexp}}$ with polynomial weight.
	
	It remains to prove necessity. Assume $[G:H]=\infty$ and  $H$ not virtually cyclic, and suppose, for contradiction, that $(G,H)\in\mathbf{SLC}_{\mathrm{subexp}}$.
	So there exist $C_h>0$ and $w:G\to[1,\infty)$ with
	\[
	\|f\|_{h,(G,H)}\le C_h\|fw\|_{(2,1),(G,H)},
	\qquad
	W(R):=\sup_{\ell(x)\le R}w(x)=e^{o(R)}.
	\]
	
	Let $F:=\Gamma(3)\le \mathrm{SL}(2,\mathbb Z)$, so that $F$ has finite index in $G$,
	is torsion-free, and is a free group of finite rank. Set $K:=H\cap F$. Then
	\[
	[H:K]=[H:H\cap F]\le [G:F]<\infty,
	\]
	and hence $K$ is finitely generated.

	Because \(K\) has finite index in \(H\), virtual cyclicity is equivalent for \(H\) and \(K\).
	The subgroup \(H\) is not virtually cyclic, so \(K\) is not virtually cyclic.

	We claim that \([F:K]=\infty\). Otherwise, if \([F:K]<\infty\), then
	\[
	[G:K]=[G:F]\,[F:K]<\infty.
	\]
	Since \(K\le H\), we have \([G:H]\le [G:K]<\infty\), contradicting \([G:H]=\infty\).
	

	Since $[H:K]<\infty$ and $H$ is not virtually cyclic, $K$ is not virtually cyclic; in particular,
	$K\neq\{e\}$ and $K\not\cong \mathbb Z$.
	
	Let $(C_h,w)$ be as in the definition of $(G,H)\in \mathbf{SLC}_{\mathrm{subexp}}$.
	We next show that $(F,K)\in \mathbf{SLC}_{\mathrm{subexp}}$.	
	Let \(\ell:=\ell_G|_F\), then \(\ell\) is a proper length function on \(F\).
	Moreover, for any $u\in \mathbb CF$ the coset decomposition for $(F,K)$ agrees with
	the decomposition on the $F$-orbit in $G/H$, and hence
	$\|u\|_{(2,1),(F,K)}=\|u\|_{(2,1),(G,H)}$.
	Hence for $f\in \mathbb CF$,
	\[
	\|f\|_{h,(F,K),\ell}
	= \|f\|_{h,(G,H)}
	\le C_h\|fw\|_{(2,1),(G,H)}
	= C_h\|f\,w|_F\|_{(2,1),(F,K)}.
	\]
	So \((F,K)\) satisfies the same type of inequality with weight \(w_F:=w|_F\)
	with respect to the length function \(\ell=\ell_G|_F\). Since \(F\) has finite index in \(G\), \(\ell\) is equivalent to any word length
	\(\ell_F\) on \(F\). Therefore, after changing the constants the same inequality holds with \(\ell_F\) in place of \(\ell\).
	Moreover, for some \(A,B>0\), \(\ell_G(x)\le A\,\ell_F(x)+B\) for all \(x\in F\).
	Hence
	\[
	W_F(R):=\sup_{\ell_F(x)\le R} w_F(x)\le W_G(AR+B)=e^{o(R)}.
	\]
	Therefore \((F,K)\in \mathbf{SLC}_{\mathrm{subexp}}\).

	Finally, $F$ is a nonabelian free group of finite rank, say $F\cong F_m$ with $m\ge2$.
	Applying Theorem~\ref{thm:F2-fg-classification-subexpSLC} to the pair $(F,K)$ we get $[F:K]<\infty$ or $K\cong\mathbb Z$ or $K=\{e\}$. Contradiction.
\end{proof}

\begin{remark}\label{rem:F4-factor-counterexample}
	We give a counterexample to Theorem~1.3 of \cite{ChatterjiZarka2024v1}.
	Consider
	\[
	G=F_2*F_2\cong F_4=\langle a,b,c,d\rangle,\qquad
	H=\langle a,b\rangle,\qquad
	k:=c.
	\]
	Then \(H\) is finitely generated, \([G:H]=\infty\), \(\operatorname{rank}(H)=2\),

	By Theorem \ref{thm:F2-fg-classification-subexpSLC} we get $(F_2*F_2,H)\notin \mathbf{SLC}_{\mathrm{subexp}}$, and a fortiori
	\((F_2*F_2,H)\) does not have pair rapid decay.
	Since \(F_2*F_2\) is strongly relatively hyperbolic with respect to the factor \(H\), this shows that \cite[Theorem~1.3]{ChatterjiZarka2024v1} requires additional hypotheses in the form stated in version~v1.
\end{remark}

\medskip

   Incidentally, we record here a more general statement concerning free products and $\mathbf{SLC}_{\mathrm{subexp}}$. It should be noted that the counterexample given in Remark~\ref{rem:F4-factor-counterexample} shows that the converse of this statement does not hold.

\begin{proposition}\label{prop:amalgam_inheritance_slc}
	
	Let \(A,B\) be finitely generated groups, let \(C\le A,B\), and set
	\(G:=A*_{C}B\) and \(H:=B\). If \((G,H)\in \mathbf{SLC}_{\mathrm{subexp}}\), then \((A,C)\in \mathbf{SLC}_{\mathrm{subexp}}\).
	
\end{proposition}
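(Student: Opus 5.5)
The plan is to restrict the $\mathbf{SLC}_{\mathrm{subexp}}$ inequality for $(G,B)$ to functions supported on the factor $A$, exactly as in the reduction from $\mathrm{SL}(2,\mathbb Z)$ to $\Gamma(3)$ in Corollary~\ref{thm:SL2Z-fg-classification-subexpSLC}. The point is that the $A$-orbit of the base coset in $G/B$ is a copy of $A/C$. The argument has three steps: identify the mixed norms, compare the $h$-norms, and control the weight.

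\textbf{Step 1: identifying the mixed norms.} By the normal form theorem for amalgamated products, $A\cap B=C$ in $G=A*_C B$. Hence for $a,a'\in A$ we have $aB=a'B$ if and only if $a^{-1}a'\in A\cap B=C$, i.e.\ $aC=a'C$. So the map $aC\mapsto aB$ is an $A$-equivariant bijection from $A/C$ onto the $A$-orbit of $eB$ in $G/B$, and for each $a$ we have $aB\cap A=aC$. Consequently every $u\in\mathbb CA$ satisfies
\[
\|u\|_{(2,1),(G,B)}=\|u\|_{(2,1),(A,C)},
\]
since the two norms sum the same fibres.

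\textbf{Step 2: comparing the $h$-norms.} I use the description of $\|\cdot\|_h$ from \cite{ChatterjiZarka2024v1} already used in the proof of Proposition~\ref{thm:F2-rank-ge2-bad}: $\|f\|_h$ is the supremum of $\|f*\varphi\|_{(2,1)}/\|\varphi\|_{(2,1)}$ over nonzero test functions $\varphi$. For $f\in\mathbb CA$, the supremum defining $\|f\|_{h,(A,C)}$ runs over $\varphi\in\mathbb CA\subset\mathbb CG$. For such $\varphi$, the product $f*\varphi$ is again supported in $A$, so Step~1 applies to both numerator and denominator. Therefore
\[
\|f\|_{h,(A,C)}\le\|f\|_{h,(G,B)}\le C_h\|fw\|_{(2,1),(G,B)}=C_h\|f\,w|_A\|_{(2,1),(A,C)} .
\]
If instead the $h$-norm is taken as the operator norm of $\lambda_{G/B}(f)$, the same conclusion holds. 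Indeed, $\ell^2(A/C)$ sits isometrically and $\lambda(A)$-invariantly inside $\ell^2(G/B)$ as the functions supported on the $A$-orbit, so the restricted operator norm is at most the full one. I expect this step, namely checking that the $h$-norm in \cite{ChatterjiZarka2024v1} is monotone under this restriction, to be the only delicate point.

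\textbf{Step 3: controlling the weight.} Take the weight $w_A:=w|_A\ge 1$ on $A$. Fix a finite generating set of $A$ and put $K:=\max$ of the $\ell_G$-lengths of these generators. Then $\ell_G(a)\le K\ell_A(a)$ for every $a\in A$, which gives
\[
W_A(R)=\sup_{\ell_A(a)\le R}w(a)\le W_G(KR).
\]
Hence $\log W_A(R)/R\le K\cdot\log W_G(KR)/(KR)\to0$. Note that this direction needs no undistortion hypothesis on $A$. Together with Step~2, this shows that $(A,C)\in\mathbf{SLC}_{\mathrm{subexp}}$.
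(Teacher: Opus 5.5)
Your proposal is correct and follows the paper's proof step for step: the identification $aB\cap A=aC$ (from $A\cap B=C$) gives equality of the $(2,1)$-norms, the compatibility $\iota(f*_A\varphi)=\iota(f)*_G\iota(\varphi)$ gives $\|f\|_{h,(A,C)}\le\|f\|_{h,(G,B)}$, and the weight is restricted to $A$. The only cosmetic difference is that you allow an arbitrary word length on $G$ and absorb a Lipschitz constant $K$, whereas the paper builds $\ell_G$ from $S_A\cup S_B$ so that $\ell_G\le\ell_A$ on $A$ directly.
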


\begin{proof}
	Choose finite symmetric generating sets \(S_A\) of \(A\) and \(S_B\) of \(B\), and let
	\(\ell_A\) and \(\ell_G\) be the corresponding word lengths on \(A\) and \(G\), where
	\(\ell_G\) is defined by \(S_A\cup S_B\). Let \(\iota:\mathbb CA\hookrightarrow \mathbb CG\)
	denote extension by zero outside \(A\).
	
	For each left coset \(gB\in G/B\), either \(A\cap gB=\varnothing\), or
	\(A\cap gB=xC\) for any \(x\in A\cap gB\); indeed, if \(x,y\in A\cap gB\), then
	\(x^{-1}y\in A\cap B=C\). It follows that for every \(\phi\in\mathbb CA\),
	\[
	\|\iota(\phi)\|_{(2,1),(G,B)}^2
	=\sum_{gB\in G/B}\|\iota(\phi)|_{gB}\|_1^2
	=\sum_{aC\in A/C}\|\phi|_{aC}\|_1^2
	=\|\phi\|_{(2,1),(A,C)}^2.
	\]
	Moreover, for \(f,\phi\in\mathbb CA\), one has
	\(\iota(f*_A\phi)=\iota(f)*_G\iota(\phi)\), since both sides agree on \(A\), and both vanish on \(G\setminus A\). Therefore, for
	\(f\in\mathbb CA\),
	\[
	\|f\|_{h,(A,C)}
	=\sup_{\|\phi\|_{(2,1),(A,C)}=1}\|f*_A\phi\|_{(2,1),(A,C)}
	\le \|\iota(f)\|_{h,(G,B)}.
	\]
	
	Now let \(w\) witness \((G,B)\in\mathbf{SLC}_{\mathrm{subexp}}\), and set \(v:=w|_A\).
	Then for \(f\in\mathbb CA\),
	\[
	\|f\|_{h,(A,C)}
	\le \|\iota(f)\|_{h,(G,B)}
	\le C_h\,\|\iota(f)w\|_{(2,1),(G,B)}
	= C_h\,\|fv\|_{(2,1),(A,C)},
	\]
	where the last equality follows from the norm identity above.
	
	Finally, since \(S_A\subset S_A\cup S_B\), one has \(\ell_G(a)\le \ell_A(a)\) for all
	\(a\in A\). Hence
	\[
	\sup_{\ell_A(a)\le R}v(a)
	\le \sup_{\ell_G(g)\le R}w(g).
	\]
	Since the right-hand side is subexponential in \(R\), so is the left-hand side. Thus
	\((A,C)\in\mathbf{SLC}_{\mathrm{subexp}}\).
\end{proof}

\begin{corollary}
	\label{cor:finite_C_absolute}
	Let \(A\) and \(B\) be finitely generated groups, let \(C\le A,B\) be finite, and set
	\(G:=A*_C B\). If \((G,B)\in \mathbf{SLC}_{\mathrm{subexp}}\), then \(A\) has subexponential rapid decay.
	Consequently, if \(A\) does not have subexponential rapid decay, then
	\((A*_C B,B)\notin \mathbf{SLC}_{\mathrm{subexp}}\).
	In particular, \((A*B,B)\notin \mathbf{SLC}_{\mathrm{subexp}}\).
\end{corollary}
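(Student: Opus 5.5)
The plan is to specialize Proposition~\ref{prop:amalgam_inheritance_slc} to a finite edge group \(C\), and then to remove \(C\) from the conclusion by exploiting its finiteness. First, Proposition~\ref{prop:amalgam_inheritance_slc}, applied with \(H=B\), gives \((A,C)\in\mathbf{SLC}_{\mathrm{subexp}}\). So there are \(C_h>0\) and a weight \(v:A\to[1,\infty)\) with subexponential radial majorant \(V\) such that
\[
\|f\|_{h,(A,C)}\le C_h\|fv\|_{(2,1),(A,C)}
\]
for all \(f\in\mathbb CA\). What we want is the absolute version of this, namely the pair \((A,\{e\})\), where the \((2,1)\)-norm is just the \(\ell^2(A)\)-norm and \(\|\cdot\|_h\) is the convolution operator norm on \(\ell^2(A)\).

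The next step is to compare the \((2,1),(A,C)\) norm with the \(\ell^2(A)\) norm. Write \(m:=|C|<\infty\). Each left coset \(aC\) has \(m\) elements, so Cauchy--Schwarz gives
\[
\|u\|_2\le\|u\|_{(2,1),(A,C)}\le\sqrt m\,\|u\|_2
\]
for every \(u\). Using these bounds in the definition of \(\|f\|_h\) as a supremum over \(\varphi\) of \(\|f*\varphi\|_{(2,1)}/\|\varphi\|_{(2,1)}\), we get
\[
\|f\|_{h,(A,\{e\})}=\sup_\varphi\frac{\|f*\varphi\|_2}{\|\varphi\|_2}\le\sup_\varphi\frac{\|f*\varphi\|_{(2,1),(A,C)}}{m^{-1/2}\|\varphi\|_{(2,1),(A,C)}}=\sqrt m\,\|f\|_{h,(A,C)}.
\]
Combining this with the previous display,
\[
\|f\|_{h,(A,\{e\})}\le\sqrt m\,C_h\|fv\|_{(2,1),(A,C)}\le m\,C_h\|fv\|_2 .
\]
The weight \(v\) is unchanged, so the subexponential radial majorant is preserved. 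This says that \((A,\{e\})\in\mathbf{SLC}_{\mathrm{subexp}}\), which is what we mean by \(A\) having subexponential rapid decay.

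The second assertion is the contrapositive of the first. For the \emph{in particular}, take \(C=\{e\}\). Then \(A*_C B=A*B\), and the claim becomes that \((A*B,B)\notin\mathbf{SLC}_{\mathrm{subexp}}\) whenever \(A\) lacks subexponential rapid decay. This is presumably the intended reading; the unconditional phrasing in the statement must be understood with that hypothesis on \(A\), since for instance \(A\) trivial makes \(B\) of finite index.

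The only delicate point is the norm comparison inside \(\|\cdot\|_h\): the constants must go in the right direction, with the lower bound applied to \(\varphi\) and the upper bound applied to \(f*\varphi\). Both bounds are elementary, so I expect no real obstacle.
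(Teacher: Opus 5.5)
Your proof is correct and follows the paper's argument essentially verbatim: Proposition~\ref{prop:amalgam_inheritance_slc} gives \((A,C)\in\mathbf{SLC}_{\mathrm{subexp}}\), and the comparison \(\|\xi\|_2\le\|\xi\|_{(2,1),(A,C)}\le\sqrt{|C|}\,\|\xi\|_2\) then yields \(\|\lambda_A(f)\|\le\sqrt{|C|}\,\|f\|_{h,(A,C)}\le |C|\,C_h\,\|fv\|_2\) with the same weight \(v\). Your caveat about the final clause is also well founded: the paper's proof does not address it, and it can only hold under the standing hypothesis that \(A\) lacks subexponential rapid decay (for instance, \(A=B=\mathbb Z\) gives \((F_2,\mathbb Z)\in\mathbf{SLC}_{\mathrm{subexp}}\) by Theorem~\ref{thm:F2-fg-classification-subexpSLC}).
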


\begin{proof}
	By Proposition~\ref{prop:amalgam_inheritance_slc}, $(A,C)$ satisfies
	$\mathbf{SLC}_{\mathrm{subexp}}$ with weight $v$.
	For $\xi\in\mathbb CA$,
	\[
	\|\xi\|_2\le \|\xi\|_{(2,1),(A,C)}\le \sqrt{|C|}\,\|\xi\|_2
	\]
	(by $\ell^2\le \ell^1$ on each $C$-coset, and Cauchy--Schwarz with $|C|$).
	Hence for $f\in\mathbb CA$,
	\[
	\|\lambda_A(f)\|
	\le \sqrt{|C|}\,\|f\|_{h,(A,C)}
	\le \sqrt{|C|}\,C_h\,\|fv\|_{(2,1),(A,C)}
	\le |C|\,C_h\,\|fv\|_2.
	\]
	Since $v$ is subexponential, this is subexponential RD for $A$.
\end{proof}

\medskip

    It is natural to ask what additional condition is required in order that $(G,H)$ have pair rapid decay when $G$ is strongly relatively hyperbolic with respect to $H$. We provide a necessary and sufficient condition. Here, the term ``strongly relatively hyperbolic'' is used to mean relatively hyperbolic in the sense of Osin and Bowditch(\cite{Osin2006,Bowditch2012}). The modifier ``strongly'' is included solely to distinguish this notion from relative hyperbolicity in the sense of Farb(\cite{Farb1998}).

\begin{theorem}
	\label{thm:hybrid-rd-iff-poly-growth}
	Let $G$ be a finitely generated group, and let $H<G$ be a proper finitely generated subgroup.
	Assume that $G$ is strongly relatively hyperbolic with respect to $H$.
	Then the $(G,H)$ has pair rapid decay if and only if $H$ has polynomial growth with respect to the induced length $\ell_G|_H$.
\end{theorem}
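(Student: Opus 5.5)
We prove the two implications separately. Necessity uses the intersection trick from Proposition~\ref{thm:F2-rank-ge2-bad} and only needs one almost malnormality property of peripheral subgroups. Sufficiency uses the Drutu--Sapir machinery for rapid decay relative to peripheral subgroups, adapted to the quasi-regular representation.

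\emph{Necessity.} Suppose $(G,H)$ has pair rapid decay with constants $C_h,s_1$. Since $G$ is strongly relatively hyperbolic with respect to $H$ and $H$ is proper, $H$ is almost malnormal. Hence we can choose $k\in G\setminus H$ such that $H\cap kHk^{-1}$ is finite; in fact this holds for every $k\notin H$. Let $F:=H\cap kHk^{-1}$ and $H_R:=\{h\in H:\ell_G(h)\le R\}$, with $N_R:=|H_R|$.

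We use the test functions $f_R:=\mathbf 1_{H_Rk}$ and $\varphi_R:=\mathbf 1_{k^{-1}H_R^{-1}}$, as in Proposition~\ref{thm:F2-rank-ge2-bad}. If $h_1kH=h_2kH$, then $h_2^{-1}h_1\in F$. So each coset $hkH$ meets $H_Rk$ in at most $|F|$ points, which gives
\[
\|f_R\|_{(2,1)}\le\sqrt{|F|N_R},\qquad \|f_R(1+\ell)^{s_1}\|_{(2,1)}\le\sqrt{|F|N_R}\,(1+R+\ell(k))^{s_1}.
\]
The convolution $f_R*\varphi_R$ is supported in $H$ and has total mass $N_R^2$. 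Also $\|\varphi_R\|_{(2,1)}=N_R$. Therefore $N_R\le\|f_R\|_h\le C_h\sqrt{|F|N_R}\,(1+R+\ell(k))^{s_1}$, that is,
\[
N_R\le C_h^2|F|\,(1+R+\ell(k))^{2s_1}.
\]
This is polynomial growth of $H$ with respect to $\ell_G|_H$.

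\emph{Sufficiency.} Suppose $|H_R|\le C(1+R)^d$. We verify the ball-supported criterion, \cite[Proposition~2.7(3)]{ChatterjiZarka2024v1}: for $f$ supported in $B(R)$ and $\varphi\in\mathbb CG$, we need $\|f*\varphi\|_{(2,1)}\le P(R)\|f\|_{(2,1)}\|\varphi\|_{(2,1)}$ with $P$ polynomial. We may take $f,\varphi\ge 0$.

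First replace $f$ and $\varphi$ by functions on $G/H$. Set $\bar\varphi(yH)=\sum_{z\in yH}\varphi(z)$. Then $\|f*\varphi\|_{(2,1)}\le\|F_f\|_{\ell^2(G/H)\to\ell^2(G/H)}\|\bar\varphi\|_2$, where $F_f$ is the kernel operator with $F_f(xH,yH)=\sum_{g:\,gyH=xH}f(g)$. This is just $\lambda_X(f)$ acting on $\bar\varphi$. So the goal becomes the estimate
\[
\|\lambda_X(f)\|_{2\to2}\le P(R)\|f\|_{(2,1)}\quad\text{for }\operatorname{supp} f\subset B(R).
\]

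To get it, we run the Drutu--Sapir decomposition for relatively hyperbolic groups, \cite{DrutuSapir2005}, on the coned-off Cayley graph. Consider the relative geodesic triangle with vertices $e$, $g$ and a representative of $yH$. Up to a uniformly bounded region, it either has a thin center in $G$, or it passes through a single peripheral coset $aH$ with projections that are close in the relative sense. In the thin-center case, the classical Haagerup-type counting argument for hyperbolic spaces applies: the Schreier structure along relative geodesics grows at most like the ambient ball, and we only lose a polynomial factor in $R$. In the peripheral case, the contribution splits as a sum over cosets $aH$ of products. In each product, the $H$-part of $g$ lies in $H\cap B(CR)$, which has polynomial size by assumption. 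The $\ell^1$ sum over this $H$-part is then bounded by Cauchy--Schwarz, with a factor $\sqrt{|H\cap B(CR)|}$. This is exactly where polynomial growth of $H$ is used: it replaces the RD requirement for $H$ that Drutu--Sapir impose.

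\emph{Main obstacle.} The hard step is the peripheral case of sufficiency. The quasi-regular setting already collapses one copy of $H$ into cosets, so we must check that the collapsed fiber sums interact correctly with the bounded-coset-penetration decomposition and do not overcount. I would first try to reduce to the Drutu--Sapir inequality applied to $\bar f\otimes\bar\varphi$ on $G$, by lifting $\bar\varphi$ to a function supported on minimal coset representatives. Lengths of these representatives are controlled by the relative distance, which should make the peripheral estimate reduce to counting in $H\cap B(CR)$.
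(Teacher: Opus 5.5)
Your necessity argument is correct. It differs from the paper's only in the test functions. The paper pairs $f_R=\sum_{h\in H\cap B_G(R)}\delta_{tht^{-1}}$ with the single Dirac mass $\delta_t$, so that $f_R*\delta_t$ lives on the one coset $tH$. You instead reuse the pair $\mathbf 1_{H_Rk}$, $\mathbf 1_{k^{-1}H_R^{-1}}$ from Proposition~\ref{thm:F2-rank-ge2-bad}. Both arguments use only the finiteness of $H\cap kHk^{-1}$ and give the same polynomial bound.

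The sufficiency half, however, is not a proof. The dichotomy ``thin center versus a single peripheral coset'', the claim that the thin case loses only a polynomial factor, and the Cauchy--Schwarz count over $H\cap B(CR)$ are heuristics. You also leave the peripheral case explicitly unresolved, and that is the only place where the hypothesis enters.

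The missing observation is that no adaptation of the Drutu--Sapir machinery to $\lambda_{G/H}$ is needed. Polynomial growth of $H$ does not \emph{replace} the RD hypothesis on the peripheral subgroup; it is a special case of it. Indeed, $\|\lambda_H(f)\|\le\|f\|_1\le|H\cap B_G(R)|^{1/2}\|f\|_2$ for $f$ supported in $H\cap B_G(R)$. Hence \cite[Thm.~1.1]{DrutuSapir2005} applies as stated and gives RD for $G$ itself. It then only remains to pass from RD of $G$ to pair RD, which the paper does by citing \cite[Proposition~2.15]{ChatterjiZarka2024v1}.

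You can also finish using your own reduction:
\begin{enumerate}
\item From $|f*\varphi|\le|f|*|\varphi|$ you get $\|f\|_h\le\|\lambda_{G/H}(|f|)\|$.
\item Since $\ell_G\le c\,|\cdot|_T$ on $H$ for a finite generating set $T$ of $H$, the group $H$ has polynomial growth in the usual sense. By Gromov's theorem it is virtually nilpotent, hence amenable.
\item Amenability of $H$ means $\lambda_{G/H}$ is weakly contained in $\lambda_G$.
\item Therefore, for $\supp f\subset B_G(R)$, you get $\|f\|_h\le\|\lambda_G(|f|)\|\le C(1+R)^D\|f\|_2\le C(1+R)^D\|f\|_{(2,1)}$.
\end{enumerate}
This last inequality is the ball-supported form of pair RD \cite[Proposition~2.7(3)]{ChatterjiZarka2024v1}.
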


\begin{proof}
    
    	We prove the necessity. Since $G$ is strongly relatively hyperbolic with respect to $H$, the subgroup $H$ is almost malnormal; that is, for each $t\in G\setminus H$, the subgroup $J_t:=H\cap t^{-1}Ht$ is finite (see \cite[Theorems~1.4 and~1.5]{Osin2006}). Fix such a $t$ and write $M:=|J_t|<\infty$.
    	
    	For $R\ge 0$, let
    	\[
    	F_R:=H\cap B_G(R),\qquad
    	f_R:=\sum_{h\in F_R}\delta_{tht^{-1}},\qquad
    	\phi:=\delta_t.
    	\]
    	Then $\operatorname{supp}(f_R)\subseteq B_G(R+2\ell(t))$ and $f_R*\phi=\sum_{h\in F_R}\delta_{th}$. Since $\operatorname{supp}(f_R*\phi)\subseteq tH$, we obtain
    	\[
    	\|f_R*\phi\|_{(2,1)}=|F_R|.
    	\]
    	
    	For each left coset $C=gH$, set $n_C:=|\operatorname{supp}(f_R)\cap C|$. If $th_1t^{-1},th_2t^{-1}\in C$, then $h_1^{-1}h_2\in H\cap t^{-1}Ht=J_t$, and hence $n_C\le M$. Therefore,
    	\[
    	\|f_R\|_{(2,1),H}^2=\sum_C n_C^2\le M\sum_C n_C=M|F_R|.
    	\]
    	
    	Applying the pair rapid decay property to $f_R$ and $\phi$, we obtain
    	\[
    	|F_R|
    	\le C(1+R+2\ell(t))^D\,\|f_R\|_{(2,1),H}\,\|\phi\|_{(2,1),H}
    	\le C(1+R+2\ell(t))^D\sqrt{M|F_R|},
    	\]
    	whence
    	\[
    	|F_R|\le C^2M(1+R+2\ell(t))^{2D}.
    	\]
    	Thus $H$ has polynomial growth with respect to the restricted length function $\ell_G|_H$.

	\smallskip
	
	It remains to prove the sufficiency. Assume that $H$ has polynomial growth with respect to the induced length$\ell_G|_H$.
	Since $G$ is strongly relatively hyperbolic with respect to $H$, and $H$ has
	polynomial growth, it follows from \cite[Thm.~1.1]{DrutuSapir2005} that $G$ has the rapid decay property.  Now, applying Proposition~2.15 in \cite{ChatterjiZarka2024v1},  we get the $(G,H)$ has pair rapid decay.
\end{proof}

\begin{remark}
	The necessity argument uses only the almost malnormality of $H$ in $G$ and therefore does not require $H$ to be finitely generated. In the strongly relatively hyperbolic setting considered here, peripheral subgroups are undistorted in $G$; thus $\ell_G|_H$ is equivalent to any word metric on $H$, and the latter conclusion is equivalent to polynomial growth of $H$ in the usual sense.
\end{remark}

\medskip

From the proof of necessity above, we can abstract a more general result.

\begin{definition}(\cite[\S5.1]{PetersonThom2011} and \cite[Definition~3.1]{Thom2009})\label{def:s-normal}
	Let $G$ be a group and let $H\leq G$ be an infinite subgroup. we say that $H$ is \emph{$s$-normal} in $G$ if $H\cap g^{-1}Hg$ is infinite for every $g\in G$. Equivalently, $gHg^{-1}\cap H$ is infinite for every
	$g\in G$.
\end{definition}

\begin{corollary}\label{cor:non-s-normal}
	Let $G$ be a finitely generated group with word length $\ell_G$, and let $H<G$ be a proper subgroup. Assume that  \((G,H)\in \mathbf{SLC}_{\mathrm{subexp}}\). If $H$ is not $s$-normal in $G$, then $H$ has polynomial growth with respect to the induced length $\ell_G|_H$. 
\end{corollary}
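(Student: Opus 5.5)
The plan is to rerun the necessity argument of Theorem~\ref{thm:hybrid-rd-iff-poly-growth}, with the polynomial factor $(1+R)^D$ replaced by the radial majorant $W$, and then to \emph{amplify} the resulting estimate so that subexponentiality of $W$ upgrades to a polynomial bound. The amplification step is the part I am not sure can be made to work in general.

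\textbf{Step 1 (single-conjugator bound).} Since $H$ is not $s$-normal, there is $t\in G$ with $J_t:=H\cap t^{-1}Ht$ finite; put $M:=|J_t|$. As in the proof of Theorem~\ref{thm:hybrid-rd-iff-poly-growth}, set $F_R:=H\cap B_G(R)$, $f_R:=\sum_{h\in F_R}\delta_{tht^{-1}}$ and $\phi:=\delta_t$. Then $f_R*\phi$ is supported in the single coset $tH$, so $\|f_R*\phi\|_{(2,1)}=|F_R|$. Each coset meets $\supp(f_R)$ in at most $M$ points, so $\|f_R w\|_{(2,1)}^2\le M\,|F_R|\,W(R+2\ell(t))^2$. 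Applying $\mathbf{SLC}_{\mathrm{subexp}}$ gives
\[
|F_R|\le C_h^2\,M\,W(R+2\ell(t))^2 .
\]
This is only a subexponential bound, so a further step is needed.

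\textbf{Step 2 (amplification by $k$-fold products).} For $k\ge1$, replace $f_R$ by the function $f_R^{(k)}$ that records the words $t h_1 t h_2\cdots t h_k t^{-1}$ with $h_i\in F_R$ (suitably normalized), and test it against $\phi_k:=\delta_{t}$ or a short product of copies of $t$. The aim is to arrange three things:
\begin{enumerate}
\item the convolution lands in a single coset and has $\ell^1$-mass $|F_R|^k$;
\item every coset contains at most $M^k$ of the words, which is where almost malnormality along $t$ is iterated;
\item every word has length at most $k(R+c)$, with $c$ depending only on $t$.
\end{enumerate}
The same computation as in Step~1 would then give $|F_R|^k\le C_h^2 M^k W(k(R+c))^2$, that is,
\[
|F_R|\le M\,C_h^{2/k}\,W\bigl(k(R+c)\bigr)^{2/k}.
\]
Since $\log W(T)=o(T)$, one chooses $k=k(R)$ growing slowly with $R$. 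The target is to make $W(k(R+c))^{2/k}$ bounded by a power of $R$: either by choosing $k\sim\log R$ and using a quantitative form of subexponentiality, or by interleaving the choice of $k$ with Step~1 to bootstrap.

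\textbf{Main obstacle.} The difficulty is item (2) of Step~2: controlling how many of the words $t h_1\cdots t h_k t^{-1}$ fall into each coset. For $k=1$ this is exactly the finiteness of $J_t$. For $k\ge 2$, injectivity modulo $H$ needs a ping-pong or normal-form independence of the kind available in free products (compare Lemma~\ref{lem:free-factor-intersection}), and the hypothesis ``not $s$-normal'' alone does not supply it.

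My first attempt will be to replace $t$ by a high power or a suitable product $t'$. The goal is that the subgroups $t'^{\,j}Ht'^{-j}$ play ping-pong modulo finite intersections, so that the multiplicity is at most $M'^{k}$ and the exponent losses stay uniform in $k$. If this fails, the fallback is to take $W$ of the specific form available in the pair rapid decay case. That form gives the polynomial bound directly from Step~1, with $W(R)=(1+R)^{s_1}$. In that case the subexponential version would only yield the bound in Step~1, and this is the point where I expect the statement as worded to need the extra hypothesis.
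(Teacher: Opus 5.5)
Your Step~1 is exactly the argument the paper intends. The paper gives no proof beyond ``abstracting'' the necessity half of Theorem~\ref{thm:hybrid-rd-iff-poly-growth}, and you read its output correctly: it yields $|H\cap B_G(R)|\le C_h^2M\,W(R+2\ell_G(t))^2$, which is only subexponential growth of $H$ with respect to $\ell_G|_H$.

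Your suspicion is right, and the gap cannot be closed, because the statement as worded is false. Let $\Gamma$ be Grigorchuk's group, let $G:=\Gamma\wr C_2=(\Gamma\times\Gamma)\rtimes\langle\sigma\rangle$ with $\sigma$ swapping the two factors, and let $H:=\Gamma\times\{1\}$.
\begin{enumerate}
\item $G$ is finitely generated and has subexponential growth. So the first proposition of Subsection~\ref{subsec:hyperbolic}, with weight $w(g)=b_{\ell(g)}$, gives $(G,H)\in\mathbf{SLC}_{\mathrm{subexp}}$.
\item $H$ is proper and infinite, and $H\cap\sigma H\sigma^{-1}=\{e\}$, so $H$ is not $s$-normal.
\item If the generating set of $G$ contains a generating set of $\Gamma\times\{1\}$, then $|H\cap B_G(R)|\ge|B_\Gamma(R)|$, which grows faster than any polynomial.
\end{enumerate}
The correct conclusion under $\mathbf{SLC}_{\mathrm{subexp}}$ is $\lim_{R\to\infty}\frac1R\log|H\cap B_G(R)|=0$, which is what your Step~1 proves. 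Polynomial growth does hold under pair rapid decay, which is your fallback. All later uses of this corollary in the paper only need to exclude exponential growth, so they survive with the weaker conclusion.

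Step~2 fails for a reason that has nothing to do with ping-pong. Suppose (1)--(3) held for every $k$. Fix $R$ and let $k\to\infty$ in $|F_R|\le M\,C_h^{2/k}W(k(R+c))^{2/k}$. Since $\log W(T)=o(T)$, this gives $|F_R|\le M$ for all $R$, so every non-$s$-normal $H$ with $(G,H)\in\mathbf{SLC}_{\mathrm{subexp}}$ would be finite. That is already false for $G=F_2=\langle a,b\rangle$, $H=\langle a\rangle$, $t=b$: here $M=1$, the pair $(G,H)$ even has pair rapid decay, and you have full free-product normal forms.

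The underlying obstruction is requirement (1). If $f^{(k)}*\delta_y$ is supported in a single coset $xH$, then $\operatorname{supp}f^{(k)}\subset xHy^{-1}$, which is a translate of the conjugate $yHy^{-1}$. So Step~2 is the same kind of test as Step~1 (a nonnegative function on a translate of a conjugate of $H$, tested against one Dirac mass), run at radius about $k(R+c)$. It can only reproduce a Step~1-type bound there. Drop the amplification and record the subexponential conclusion that Step~1 already gives.
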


\medskip

\begin{remark}
	Corollary~\ref{cor:non-s-normal} should not be viewed as a characterization.
	
	\begin{enumerate}
		\item The condition that $H$ be not $s$-normal is not necessary for \((G,H)\in \mathbf{SLC}_{\mathrm{subexp}}\). Indeed, let $G:=F_2\times \mathbb Z$ and let $H:=\mathbb Z$ be the second direct factor. Then $H$ is finitely generated and normal in $G$, hence $s$-normal. Moreover, $H$ has polynomial growth with respect to the induced length. Since $G/H\cong F_2$ and $F_2$ has $\mathrm{RD}$, it follows from the normal-subgroup criterion that $(G,H)$ has pair rapid decay.
		
		\item The subgroup-side condition in Corollary~\ref{cor:non-s-normal} is not sufficient to guarantee that \((G,H)\in \mathbf{SLC}_{\mathrm{subexp}}\) in general. Let
		\[
		K:=(\mathbb Z/2\mathbb Z)\wr \mathbb Z,\qquad
		G:=K*\mathbb Z,\qquad
		H:=\mathbb Z,
		\]
		where $H$ is the second free factor. Then $H$ has polynomial growth and is malnormal in $G$, hence in particular $H$ is not $s$-normal. Moreover, $H$ is co-amenable in $G$. Since the Schreier graph $X=G/H$ has exponential growth, it follows from Remark~\ref{rem:subexp-analogue-thm12} that	$(G,H)\notin \mathbf{SLC}_{\mathrm{subexp}}.$
	\end{enumerate}
\end{remark}

\medskip

\begin{proposition}
	\label{prop:surface-subexp-obstruction}
	Let $G=\pi_1(\Sigma_g),  g\ge 2,$
	where $\Sigma_g$ is a closed orientable surface of genus $g$.
   Let \(H \le G\) be a finitely generated subgroup with \([G\!:\!H]=\infty\), \(H\neq\{e\}\), and \(H\not\cong \mathbb{Z}\).
	Then $(G,H)\notin \mathbf{SLC}_{\mathrm{subexp}}.$
	
\end{proposition}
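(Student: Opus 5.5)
Set \(G=\pi_1(\Sigma_g)\) and let \(H\) be as in Proposition~\ref{prop:surface-subexp-obstruction}. The plan is to follow the free-group argument of Proposition~\ref{thm:F2-rank-ge2-bad}. The first step is to pass from \(H\) to a free subgroup of \(G\).

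\textbf{Step 1: \(H\) is free of rank at least \(2\).} A subgroup of infinite index in a closed surface group corresponds to a noncompact cover of \(\Sigma_g\). A noncompact surface has free fundamental group, so \(H\) is free. Since \(H\ne\{e\}\) and \(H\not\cong\mathbb Z\), we get \(r:=\operatorname{rank}(H)\ge2\).

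\textbf{Step 2: find \(k\) with \(H\cap kHk^{-1}=H\cap k^{-1}Hk=\{e\}\).} By Scott's theorem (LERF), there is a finite-index subgroup \(K\le G\) containing \(H\) in which \(H\) is a free factor, \(K=H*L\). This uses that \(H\) is carried by an incompressible subsurface \(\Sigma'\) of a finite cover \(\Sigma_K\), which deformation retracts so that \(H=\pi_1(\Sigma')\). Then \(K\) is itself a closed surface group, so it is not free. Hence \(L\) cannot be trivial, as otherwise \(K=H\) would have finite index in \(G\).

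This free-factor step is the main obstacle. A more robust fallback uses hyperbolic geometry. \(H\) is a finitely generated subgroup of a surface group, hence quasiconvex and of infinite index. Its limit set \(\Lambda H\) is then a proper closed subset (a Cantor set) of \(\partial\mathbb H^2\). Take a hyperbolic element \(k\in G\) whose fixed points avoid \(\Lambda H\). Replacing \(k\) by a high power, ping-pong gives \(\langle H,k\rangle\cong H*\langle k\rangle\). Lemma~\ref{lem:free-factor-intersection}, applied to \(k\) and \(k^{-1}\), then yields both trivial intersections. Either route only requires producing a single such \(k\).

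\textbf{Step 3: the counting argument.} This repeats Proposition~\ref{thm:F2-rank-ge2-bad} verbatim. Let \(H_R\) be the ball of radius \(R\) in a free basis of \(H\), so \(N_R=|H_R|\ge(2r-1)^R\). Set \(f_R=\mathbf 1_{H_Rk}\) and \(\varphi_R=\mathbf 1_{k^{-1}H_R^{-1}}\).

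Because \(H\cap kHk^{-1}=\{e\}\), the points \(hk\), \(h\in H_R\), lie in distinct left cosets of \(H\). Hence \(\|f_R\|_{(2,1)}=\sqrt{N_R}\) and \(\|f_Rw\|_{(2,1)}\le\sqrt{N_R}\,W(MR+\ell_G(k))\). On the other side, \(\|\varphi_R\|_{(2,1)}=N_R\) and \(\|f_R*\varphi_R\|_{(2,1)}=N_R^2\).

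So \(N_R\le\|f_R\|_h\le C_h\sqrt{N_R}\,W(MR+\ell_G(k))\). This forces \(\log W(T)/T\) to stay bounded below by a positive constant along \(T=MR+\ell_G(k)\). That contradicts subexponentiality of \(W\), so \((G,H)\notin\mathbf{SLC}_{\mathrm{subexp}}\).
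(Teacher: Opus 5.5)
Your first route for Step~2 cannot work. Every finite-index subgroup $K$ of $\pi_1(\Sigma_g)$ is again a closed surface group of genus $\ge 2$. Such a group is one-ended, hence freely indecomposable, so a splitting $K=H*L$ with $H\neq\{e\}\neq L$ does not exist. Combined with free indecomposability, your own observation that $K$ is not free shows that no such $K$ exists.

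Scott's theorem realizes $H$ as $\pi_1(Y)$ for an incompressible subsurface $Y$ of a finite cover. The rest of the cover is glued to $Y$ along the essential curves $\partial Y$, so $H$ is a vertex group of a splitting over cyclic subgroups, not a free factor. Marshall Hall's theorem, which drives Proposition~\ref{thm:F2-rank-ge2-bad}, has no analogue here.

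The proof therefore has to go through your fallback, which is sound and is exactly the paper's limit-set argument. The ping-pong step and Lemma~\ref{lem:free-factor-intersection} are unnecessary. By north--south dynamics, $k^N\Lambda_H\cap\Lambda_H=\varnothing$ for large $N$. Any nontrivial $x\in H\cap k^NHk^{-N}$ would then be hyperbolic with both fixed points in $\Lambda_H\cap k^N\Lambda_H$, which is impossible. If you keep the free product instead, you must produce a neighbourhood $V$ of $\{k^+,k^-\}$ with $hV\cap V=\varnothing$ for all $h\in H\setminus\{e\}$, and that needs an argument. Also $H\cap k^{-1}Hk=k^{-1}(H\cap kHk^{-1})k$, so one trivial intersection suffices.

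With this repair your proof is correct, and it differs from the paper's at the two ends. For freeness, you use that an infinite-index subgroup corresponds to a noncompact cover, whose fundamental group is free. This is more elementary than the paper's appeal to Scott's theorem to get $H\cong\pi_1(Y)$ with $\partial Y\neq\varnothing$.

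For the conclusion, the paper shows that $H$ is not $s$-normal and invokes Corollary~\ref{cor:non-s-normal}, together with undistortedness of the quasiconvex subgroup $H$. You instead redo the explicit computation with $f_R=\mathbf 1_{H_Rk}$ and $\varphi_R=\mathbf 1_{k^{-1}H_R^{-1}}$. This needs only the trivial bound $\ell_G(h)\le M|h|_H$, and it contradicts subexponentiality of $W$ directly.

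That is a genuine advantage. Corollary~\ref{cor:non-s-normal} is stated with the conclusion ``polynomial growth''. Its proof, run with a subexponential weight, only yields $|H\cap B_G(R)|\le C_h^2\,|H\cap t^{-1}Ht|\,W(R+2\ell(t))^2$, i.e.\ subexponential growth. That still suffices for the paper, but your computation makes the required estimate explicit and avoids relying on the misstated conclusion.
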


	\begin{proof}

	By Scott's Theorem~3.3 \cite{Scott1978}, after conjugating \(H\) in \(G\) and choosing basepoints appropriately if necessary, there exist a finite-sheeted covering
	$p:\Sigma' \to \Sigma_g$ and a compact incompressible embedded subsurface \(Z \subset \Sigma'\) such that, if \(Y\) denotes the connected component of \(Z\) containing the chosen basepoint of \(\Sigma'\), then
	$H = p_*\bigl(i_*\pi_1(Y)\bigr),$ where \(i:Y \hookrightarrow \Sigma'\) is the inclusion. Since \(Y\) is incompressible, the map
	$i_*:\pi_1(Y)\to \pi_1(\Sigma')$
	is injective. Since \(p\) is a covering map, the map
	$p_*:\pi_1(\Sigma')\to \pi_1(\Sigma_g)$
	is injective as well. Hence \(H \cong \pi_1(Y)\).

	
	If $\partial Y=\varnothing$, then $Y=\Sigma'$, since $\Sigma'$ is connected and closed. Hence $H=p_*\pi_1(\Sigma'),$ so $H$ has finite index in $G$, contrary to the assumption $[G:H]=\infty$. Thus $\partial Y\neq\varnothing$. Since $H\neq\{e\}$ and $H\not\cong \mathbb Z$, it follows that $H$ is a nonabelian free group.
	
	Now regard $G$ as a cocompact Fuchsian group acting on $X=\mathbb H^2$. Since $G$ has no parabolic elements, neither does $H$. In the Fuchsian setting, every finitely generated subgroup without parabolic elements is convex cocompact; in particular, $H$ is quasiconvex in $G$.
	
	Let $\Lambda_H\subset \partial X\simeq S^1$ be the limit set of $H$. Since $H$ has infinite index in $G$ and is quasiconvex, $\Lambda_H$ is a proper closed subset of $S^1$ (\cite[Lemma~3.9]{KapovichShort1996}). Choose disjoint nonempty open sets $U_+,U_-\subset S^1\setminus \Lambda_H$.
	
	For cocompact Fuchsian groups, ordered pairs of attracting and repelling fixed points of hyperbolic elements are dense in $(S^1\times S^1)\setminus \Delta$. Hence there exists a hyperbolic element $a\in G$ such that $a^+\in U_+$ and $a^-\in U_-$. Since $a^-\notin \Lambda_H$ and $\Lambda_H$ is compact, we may choose an open neighborhood $V_-$ of $a^-$ such that $\overline{V_-}\cap \Lambda_H=\varnothing$. By the north--south dynamics of $a$ on $S^1$, there exists $N\in\mathbb N$ such that
	\[
	a^n(S^1\setminus V_-)\subset U_+ \qquad (n\ge N).
	\]
	Since $\Lambda_H\subset S^1\setminus V_-$, we obtain
	\[
	a^N\Lambda_H\subset U_+\subset S^1\setminus \Lambda_H.
	\]
	Set $g:=a^N$. Then $\Lambda_H\cap g\Lambda_H=\varnothing.$
	
	We claim that $H\cap gHg^{-1}=\{e\}$. Indeed, if $x\neq e$ belongs to this intersection, then, since $G$ is torsion-free and cocompact Fuchsian, $x$ is hyperbolic. Its two fixed points on $S^1$ must then lie both in $\Lambda_H$ and in $\Lambda_{gHg^{-1}}=g\Lambda_H$, contradicting the disjointness above. Thus $H$ is not $s$-normal in $G$.
	
	Finally, since $H$ is quasiconvex in the hyperbolic group $G$, it is undistorted in $G$. Equivalently, the restricted length $\ell_G|_H$ is equivalent to a word metric on $H$. As $H$ is a nonabelian free group, it has exponential growth with respect to any word metric, and hence cannot have polynomial growth with respect to $\ell_G|_H$. Corollary~\ref{cor:non-s-normal} therefore implies that $(G,H)\notin \mathbf{SLC}_{\mathrm{subexp}}$.
	
\end{proof}

\begin{theorem}
	Let $G=\pi_1(\Sigma_g)$, where $\Sigma_g$ is a closed orientable surface of genus $g\ge 2$, and let $H\le G$ be a finitely generated subgroup. Then $(G,H)\in \mathbf{SLC}_{\mathrm{subexp}}$ if and only if either $H$ has finite index in $G$, or $H$ is trivial or infinite cyclic.
\end{theorem}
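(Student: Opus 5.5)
The proof splits into the two implications, and each direction reduces almost entirely to results already established above.

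\emph{Necessity.} Suppose $(G,H)\in\mathbf{SLC}_{\mathrm{subexp}}$ with $H$ finitely generated. If $[G:H]=\infty$ and $H$ is neither trivial nor infinite cyclic, this contradicts Proposition~\ref{prop:surface-subexp-obstruction} directly. Hence $H$ has finite index, or $H=\{e\}$, or $H\cong\mathbb Z$. No further work is needed on this side.

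\emph{Sufficiency} has three cases.

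\textbf{Finite index.} If $[G:H]=m<\infty$, I argue exactly as in Theorem~\ref{thm:F2-fg-classification-subexpSLC}. For $f\in\mathbb CG$,
\[
\|f\|_{h,(G,H)}\le\|f\|_1\le\sqrt m\,\|f\|_{(2,1),(G,H)}.
\]
The second inequality is Cauchy--Schwarz over the $m$ cosets. So the weight $w\equiv1$ works, and its radial majorant is trivially subexponential.

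\textbf{$H$ trivial or infinite cyclic.} Here $H$ is amenable. The surface group $G$ is a non-elementary hyperbolic group, so it has RD by the de la Harpe--Jolissaint theorem cited in the introduction. Corollary~2.14 of \cite{ChatterjiZarka2024v1} (amenable $H$ in an RD group) then gives that $(G,H)$ has pair rapid decay. Pair rapid decay is the special case of $\mathbf{SLC}_{\mathrm{subexp}}$ with the polynomial weight $w=(1+\ell)^{s_1}$, whose radial majorant $W(t)=(1+t)^{s_1}$ satisfies $\log W(t)/t\to0$. So $(G,H)\in\mathbf{SLC}_{\mathrm{subexp}}$.

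The only point needing care is checking the hypotheses of the cited Corollary~2.14. These are that $G$ has RD with respect to a word length and that $H$ is amenable, which holds trivially for $\{e\}$ and $\mathbb Z$. Apart from this, the argument is a direct assembly of Proposition~\ref{prop:surface-subexp-obstruction} with the free-group template, and I expect no real obstacle.
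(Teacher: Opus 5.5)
Your proposal is correct and follows the paper's own proof. Necessity comes directly from Proposition~\ref{prop:surface-subexp-obstruction}, and sufficiency repeats the argument of Theorem~\ref{thm:F2-fg-classification-subexpSLC}: the weight $w\equiv 1$ for finite index, and for trivial or cyclic $H$, RD of the hyperbolic group $G$ combined with Corollary~2.14 of \cite{ChatterjiZarka2024v1}, viewing pair rapid decay as a special case of $\mathbf{SLC}_{\mathrm{subexp}}$.
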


\begin{proof}
	The necessity is a consequence of Proposition~\ref{prop:surface-subexp-obstruction}. The sufficiency is established by the same argument as in the proof of the corresponding implication in Theorem~\ref{thm:F2-fg-classification-subexpSLC}.
\end{proof}

	\medskip

\begin{proposition}\label{cor:quasiconvex}
	Let \(G\) be a non-elementary finitely generated hyperbolic group. Suppose \(H\le G\) is finitely generated, quasiconvex, of infinite index, and not virtually cyclic. Then \((G,H)\notin \mathbf{SLC}_{\mathrm{subexp}}.\)
\end{proposition}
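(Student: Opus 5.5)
The plan is to imitate the surface-group argument (Proposition~\ref{prop:surface-subexp-obstruction}) and reduce to Corollary~\ref{cor:non-s-normal}. That corollary says that if $(G,H)\in\mathbf{SLC}_{\mathrm{subexp}}$ and $H$ is not $s$-normal, then $H$ has polynomial growth with respect to $\ell_G|_H$. It therefore suffices to prove two things:
\begin{enumerate}
\item there is $g\in G$ with $H\cap gHg^{-1}$ finite;
\item $H$ does not have polynomial growth for $\ell_G|_H$.
\end{enumerate}
The assumption $[G:H]=\infty$ makes $H$ proper, so the corollary applies.

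The second point is the easy one. A quasiconvex subgroup of a hyperbolic group is finitely generated, undistorted and itself hyperbolic. Hence $\ell_G|_H$ is bi-Lipschitz (up to additive constants) to a word metric on $H$. A hyperbolic group that is not virtually cyclic is non-elementary, so it contains a free subgroup $F_2$ and has exponential growth. Exponential growth in a word metric gives exponential growth for $\ell_G|_H$, which rules out polynomial growth.

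For the first point I would use limit sets. Since $H$ is quasiconvex of infinite index, its limit set $\Lambda_H$ is a proper closed subset of $\partial G$, by \cite[Lemma~3.9]{KapovichShort1996}. Choose disjoint nonempty open sets $U_+,U_-\subset\partial G\setminus\Lambda_H$. In a non-elementary hyperbolic group, pairs $(a^+,a^-)$ of fixed points of loxodromic elements are dense in $\partial G\times\partial G$ minus the diagonal. So there is a loxodromic $a$ with $a^\pm\in U_\pm$. North--south dynamics then give $N$ with $a^N\Lambda_H\subset U_+$, hence $\Lambda_H\cap g\Lambda_H=\varnothing$ for $g:=a^N$.

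Next I would invoke the standard fact (Gromov; Gitik--Mitra--Rips--Sageev) that for quasiconvex $H,K$ one has $\Lambda(H\cap K)=\Lambda_H\cap\Lambda_K$. Applied to $K=gHg^{-1}$, whose limit set is $g\Lambda_H$, this gives $\Lambda(H\cap gHg^{-1})=\varnothing$. A subgroup of a hyperbolic group with empty limit set is finite, so $H$ is not $s$-normal. Combined with the second point, Corollary~\ref{cor:non-s-normal} yields a contradiction, and so $(G,H)\notin\mathbf{SLC}_{\mathrm{subexp}}$.

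The main obstacle is this first step. Torsion prevents the surface-group shortcut, where a nontrivial element of $H\cap gHg^{-1}$ would be loxodromic with fixed points in both limit sets. That is why the argument relies on the limit-set intersection theorem for quasiconvex subgroups. Alternatively, one can observe directly that an infinite subgroup of a hyperbolic group contains a loxodromic element, whose fixed points would lie in $\Lambda_H\cap g\Lambda_H=\varnothing$; this avoids the full intersection theorem.
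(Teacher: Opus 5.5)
Your proposal is correct and is essentially the paper's proof: the limit-set and north--south dynamics argument produces $g$ with $H\cap gHg^{-1}$ finite, undistortedness and non-elementarity give exponential growth of $H$ for $\ell_G|_H$ (which contradicts even the subexponential bound that the $\mathbf{SLC}_{\mathrm{subexp}}$ estimate actually yields), and Corollary~\ref{cor:non-s-normal} finishes. Your attention to torsion is well placed, but the Gitik--Mitra--Rips--Sageev equality is more than you need: the trivial inclusion $\Lambda(H\cap gHg^{-1})\subseteq\Lambda_H\cap g\Lambda_H$ together with ``empty limit set implies finite'' suffices, exactly as in Proposition~\ref{prop:slc_core_closed_hyperbolic_3_manifold}.
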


\begin{proof}
	Since \(H\) is quasiconvex of infinite index in the hyperbolic group \(G\), the same limit-set argument as in Proposition~\ref{prop:surface-subexp-obstruction} shows that \(H\) is not \(s\)-normal in \(G\). On the other hand, \(H\) is itself a non-elementary hyperbolic group, and hence has exponential growth with respect to any word metric. Since \(H\) is quasiconvex in \(G\), it is undistorted, so \(\ell_G|_H\) is equivalent to a word metric on \(H\). Therefore \(H\) cannot have polynomial growth with respect to \(\ell_G|_H\), and Corollary~\ref{cor:non-s-normal} implies that \((G,H)\notin \mathbf{SLC}_{\mathrm{subexp}}\).
\end{proof}

\begin{corollary}\label{cor: relatively hyperbolic}
	Let $(G,\mathcal P)$ be a non-elementary relatively hyperbolic group, and let $H\le G$.
	Assume that $\Lambda(H)\subsetneq \partial(G,\mathcal P)$ and 
	$\omega_G^+(H):=\limsup_{R\to\infty}|H\cap B_G(R)|^{1/R}>1.$
	Then $(G,H)\notin \mathbf{SLC}_{\mathrm{subexp}}.$
\end{corollary}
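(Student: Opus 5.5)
The plan is to reduce to Corollary~\ref{cor:non-s-normal}. The growth hypothesis $\omega_G^+(H)>1$ says that $|H\cap B_G(R)|$ grows exponentially along a subsequence, so $H$ does not have polynomial growth with respect to $\ell_G|_H$. Once we show that $H$ is not $s$-normal, the contrapositive of Corollary~\ref{cor:non-s-normal} gives $(G,H)\notin\mathbf{SLC}_{\mathrm{subexp}}$. Note that $H$ is proper, since $\Lambda(G)=\partial(G,\mathcal P)$, and that $H$ is infinite, since $\omega_G^+(H)>1$.

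To show $H$ is not $s$-normal, I would adapt the limit-set argument from Proposition~\ref{prop:surface-subexp-obstruction} to the Bowditch boundary $\partial(G,\mathcal P)$.

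\textbf{Step 1.} Since $\Lambda(H)$ is a proper closed subset of $\partial(G,\mathcal P)$, the complement is a nonempty open set. Because $G$ is non-elementary, $\partial(G,\mathcal P)$ is perfect, so I can pick two disjoint nonempty open sets $U_\pm$ in the complement.

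\textbf{Step 2.} For a non-elementary convergence group acting minimally on its limit set, pairs of fixed points of loxodromic elements are dense in $\partial^2$. So there is a loxodromic $a\in G$ with $a^+\in U_+$ and $a^-\in U_-$.

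\textbf{Step 3.} North–south dynamics then gives $g=a^N$ with $g\Lambda(H)\cap\Lambda(H)=\varnothing$. Here $g\Lambda(H)=\Lambda(gHg^{-1})$.

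\textbf{Step 4.} I then claim $H\cap gHg^{-1}$ is finite. Suppose it were infinite. An infinite subgroup of a convergence group has nonempty limit set, and $\Lambda(H\cap gHg^{-1})\subseteq\Lambda(H)\cap\Lambda(gHg^{-1})=\varnothing$, which is a contradiction. This works uniformly: loxodromic elements contribute two fixed points, and infinite parabolic subgroups contribute their parabolic fixed point. So $H$ is not $s$-normal.

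The main obstacle is Step 4, because $H$ is arbitrary: not finitely generated, not relatively quasiconvex. I therefore rely only on general convergence-group facts, namely that every infinite subgroup $K$ has $\Lambda(K)\neq\varnothing$ and that limit sets are monotone under inclusion. The first fact holds because an infinite sequence of distinct elements of $K$ has a convergence subsequence, whose attracting point lies in $\Lambda(K)$.

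A secondary point is justifying density of loxodromic fixed-point pairs on the Bowditch boundary. This is standard for geometrically finite convergence actions, and I would cite it rather than reprove it.
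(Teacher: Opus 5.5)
Your proposal is correct and is the paper's argument. The paper compresses your Steps 1--4 into ``the standard limit-set argument'', which produces $g$ with $H\cap gHg^{-1}$ finite, and then, as in your proposal, concludes from Corollary~\ref{cor:non-s-normal} together with $\omega_G^+(H)>1$.

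One small sharpening. Under $\mathbf{SLC}_{\mathrm{subexp}}$, the argument behind Corollary~\ref{cor:non-s-normal} gives only the subexponential bound $|H\cap B_G(R)|\le C_h^2\,|H\cap gHg^{-1}|\,W(R+2\ell(g))^2$, not a polynomial one. The hypothesis $\omega_G^+(H)>1$ contradicts this weaker bound as well, so your reduction is valid even with this corrected form of the corollary.
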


\begin{proof}
Immediate from Corollary~\ref{cor:non-s-normal}, since the standard limit-set argument
shows that $\Lambda(H)\subsetneq \partial(G,\mathcal P)$ implies that $H$ is not $s$-normal in $G$, while the hypothesis $\omega_G^+(H)>1$ shows that $H$ does not have polynomial growth with respect to the ambient word metric on $G$.
\end{proof}

\begin{example}
	Let $M$ be a finite-volume non-compact hyperbolic $3$-manifold, and let $G=\pi_1(M)$.
	If $\mathcal P$ denotes the collection of cusp subgroups, then $(G,\mathcal P)$ is
	non-elementary relatively hyperbolic, but $G$ is not Gromov hyperbolic since it contains
	subgroups isomorphic to $\mathbb Z^2$.
	
	Choose a Schottky subgroup $H\cong F_2\le G$. Then $H$ is finitely generated and has
	exponential growth, so $\omega_G^+(H)>1$. Moreover, $\Lambda(H)$ is a Cantor subset of the
	Bowditch boundary $\partial(G,\mathcal P)\cong S^2$, hence
	$\Lambda(H)\subsetneq \partial(G,\mathcal P)$. Therefore, by the preceding corollary,
	$(G,H)\notin \mathbf{SLC}_{\mathrm{subexp}}$.
	
	This example is not covered by Proposition~\ref{cor:quasiconvex}, since $G$ is not
	hyperbolic.
\end{example}

\begin{remark}
	The Corollary~\ref{cor: relatively hyperbolic} should not be interpreted as suggesting a rigid kernel-like structure. Even under the hypotheses \((G,H)\in \mathbf{SLC}_{\mathrm{subexp}}\) and \(\omega_G^+(H)>1\), it is false in general that, after passing to a finite-index subgroup \(G_0\le G\), the subgroup \(H_0:=H\cap G_0\) must be commensurated in \(G_0\), let alone normal or the kernel of an epimorphism.
	
	Indeed, by \cite{dowdall2017} and together with their discussion recovering the rank-two free examples of \cite{bestvina1997}, there exist \(N\ge 3\) and a free subgroup $\Gamma \le \operatorname{Out}(F_N)$, with \(\Gamma\cong F_2\), such that the associated extension
	\[
	1\longrightarrow F_N\longrightarrow G \xrightarrow{p} \Gamma \longrightarrow 1
	\]
	is a non-elementary hyperbolic group. Fix an infinite cyclic subgroup \(K\le \Gamma\), and set \(H:=p^{-1}(K)\). Then \(H\) is finitely generated and of infinite index in \(G\). Since \(F_N\le H\), the subgroup \(H\) has exponential growth, hence \(\omega_G^+(H)>1\). Moreover, \(F_N\triangleleft G\) is an infinite normal subgroup of the hyperbolic group \(G\), so \(\Lambda(F_N)=\partial G\); therefore \(\Lambda(H)=\partial G\) as well.
	
	On the other hand, since \(F_2\) has property \(\mathrm{RD}\) and \(K\cong \mathbb Z\) is amenable, hence the pair \((\Gamma,K)\) satisfies \(\mathbf{SLC}_{\mathrm{subexp}}\). Pulling this estimate back along the quotient map \(p\) yields \((G,H)\in \mathbf{SLC}_{\mathrm{subexp}}\) (See Lemma~\ref{lem:pullback-pair-rd-slc} for the proof).
	
	We claim that \(H\) does not become commensurated in any finite-index subgroup of \(G\). Suppose otherwise that \(G_0\le G\) has finite index and that \(H_0:=H\cap G_0\) is commensurated in \(G_0\). Let \(\Gamma_0:=p(G_0)\) and \(K_0:=K\cap \Gamma_0\). Then \(\Gamma_0\) is a finite-index subgroup of \(\Gamma\), hence a non-abelian free group and \(K_0\) is infinite cyclic. Since \(p(H_0)=K_0\), the commensurability of \(H_0\) in \(G_0\) forces \(K_0\) to be commensurated in \(\Gamma_0\).
	But this is impossible, Let \(C\) be the maximal cyclic subgroup of \(\Gamma_0\) containing \(K_0\). Since \(\Gamma_0\) is non-abelian free, \(C\) is proper. Choose \(\gamma\in \Gamma_0\setminus N_{\Gamma_0}(C)\). If \(K_0\cap \gamma K_0\gamma^{-1}\) were nontrivial, then the maximal cyclic subgroups containing \(K_0\) and \(\gamma K_0\gamma^{-1}\) would coincide, forcing \(\gamma\in N_{\Gamma_0}(C)\), a contradiction. Thus \(K_0\cap \gamma K_0\gamma^{-1}=\{e\}\), so \(K_0\) is not commensurated in \(\Gamma_0\). Hence \(H_0\) is not commensurated in \(G_0\).
	
	In particular, \(\mathbf{SLC}_{\mathrm{subexp}}\) together with \(\omega_G^+(H)>1\) does not force any virtual kernel-type conclusion.
\end{remark}

\medskip

\begin{corollary}\label{cor:HJH-vc}
	Let \((G,\mathcal P)\) be a non-elementary relatively hyperbolic group, and let
	\(H\le G\) be a finitely generated relatively quasiconvex subgroup with \([G:H]=\infty\).
	Set
	\[
	J_H:=\big\langle\,H\cap gPg^{-1}: P\in\mathcal P,\ \lvert H\cap gPg^{-1}\rvert=\infty\,\big\rangle.
	\]
	If \((G,H)\in \mathbf{SLC}_{\mathrm{subexp}}\), then \(H/J_H\) is virtually cyclic.
\end{corollary}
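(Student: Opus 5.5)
Throughout, suppose \((G,H)\in \mathbf{SLC}_{\mathrm{subexp}}\) and, for contradiction, that \(H/J_H\) is not virtually cyclic. The plan is to reduce to Corollary~\ref{cor:non-s-normal}. That is, we want to show that \(H\) is not \(s\)-normal in \(G\), and also that \(H\) does not have polynomial growth with respect to \(\ell_G|_H\).

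\emph{Step 1: \(H\) is not \(s\)-normal.} Since \(H\) is relatively quasiconvex of infinite index in the non-elementary relatively hyperbolic group \((G,\mathcal P)\), its limit set \(\Lambda(H)\) is a proper closed subset of the Bowditch boundary \(\partial(G,\mathcal P)\). This is the relatively hyperbolic analogue of \cite[Lemma~3.9]{KapovichShort1996}, proved via the geometrically finite convergence action of \(H\) on \(\Lambda(H)\). Next we choose a loxodromic \(a\in G\) with \(a^{\pm}\) in disjoint open sets of \(\partial(G,\mathcal P)\setminus\Lambda(H)\). Loxodromic fixed pairs are dense in the space of distinct pairs for a non-elementary convergence group, so such an \(a\) exists. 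By north--south dynamics we get \(g=a^N\) with \(g\Lambda(H)\cap\Lambda(H)=\varnothing\).

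Then \(H\cap gHg^{-1}\) has limit set contained in \(\Lambda(H)\cap g\Lambda(H)=\varnothing\). Since an infinite subgroup of a convergence group has nonempty limit set, this intersection is finite. Hence \(H\) is not \(s\)-normal. This is exactly the ``standard limit-set argument'' already invoked in Corollary~\ref{cor: relatively hyperbolic}.

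\emph{Step 2: \(H\) has exponential growth in \(\ell_G|_H\).} By Corollary~\ref{cor:non-s-normal}, Step 1 forces \(H\) to have polynomial growth with respect to \(\ell_G|_H\). It therefore suffices to show that if \(H/J_H\) is not virtually cyclic, then \(H\) has a free subgroup of rank two that is quasi-isometrically embedded in \(G\).

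The relative quasiconvexity of \(H\) gives that \(H\) is itself relatively hyperbolic with respect to the finitely many \(H\)-conjugacy classes of infinite intersections \(H\cap gPg^{-1}\). Together these generate \(J_H\). If \(H\) acted elementarily on \(\Lambda(H)\), then \(H\) would be virtually cyclic (so \(H/J_H\) would be too) or parabolic (so \(H=J_H\)). Either case contradicts our assumption, so \(H\) is non-elementary.

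A non-elementary relatively quasiconvex subgroup contains two independent loxodromics \(u,v\). For large \(m\), the ping-pong group \(\langle u^m,v^m\rangle\cong F_2\) is quasiconvex without parabolics, hence undistorted in \(G\). So \(|H\cap B_G(R)|\) grows exponentially, contradicting polynomial growth.

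\emph{Main obstacle.} The delicate point is the dichotomy in Step 2: we need ``\(H/J_H\) not virtually cyclic \(\Rightarrow\) \(H\) non-elementary''. Here I would use the classification of elementary subgroups of relatively hyperbolic groups (Osin): an elementary subgroup is finite, virtually cyclic loxodromic, or contained in a conjugate of some \(P\in\mathcal P\). In the last case \(H\) is infinite and parabolic, so \(H=H\cap gPg^{-1}\subseteq J_H\). In every case \(H/J_H\) is virtually cyclic, so the contrapositive delivers non-elementarity. Step 1 also needs care at parabolic points of \(\Lambda(H)\). I would handle these by noting that the Bowditch boundary is perfect and that loxodromic pairs are dense in it.
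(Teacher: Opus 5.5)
Your argument is correct and has the same skeleton as the paper's. The finite/virtually cyclic/parabolic trichotomy makes $H$ non-elementary. Relative quasiconvexity together with $[G:H]=\infty$ gives $\Lambda(H)\subsetneq\partial(G,\mathcal P)$ and hence that $H$ is not $s$-normal. Corollary~\ref{cor:non-s-normal} then contradicts exponential growth of $|H\cap B_G(R)|$; the paper bundles these last steps as Corollary~\ref{cor: relatively hyperbolic}.

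The one genuine difference is the growth step. The paper passes through Hruska's induced peripheral structure $(H,\mathcal O)$ and Xie's uniform exponential growth theorem. You instead take a ping-pong copy of $F_2$ generated by powers of two independent loxodromics in $H$, using only convergence dynamics on the Bowditch boundary and Osin's classification of elementary subgroups of $G$. This is more elementary, and it is enough, because only plain exponential growth is ever used. It also makes your sentence invoking Hruska's relatively hyperbolic structure on $H$ superfluous, since nothing later depends on it.

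Two small remarks:
\begin{itemize}
\item The quasi-isometric embedding of $\langle u^m,v^m\rangle$ is unnecessary. With $L:=\max\{\ell_G(u^m),\ell_G(v^m)\}$ one has $\ell_G(x)\le L\,|x|_{F_2}$, which already gives $|H\cap B_G(Lk)|\ge 3^k$.
\item Under $\mathbf{SLC}_{\mathrm{subexp}}$, the argument behind Corollary~\ref{cor:non-s-normal} does not give polynomial growth. Take an element $t$ with $M:=|H\cap t^{-1}Ht|<\infty$; the argument yields only the subexponential bound $|H\cap B_G(R)|\le C_h^2M\,W(R+2\ell(t))^2$. That bound is still all your contradiction needs.
\end{itemize}
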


\begin{proof}
	Assume, for contradiction, that $Q:=H/J_H$	is not virtually cyclic. By Hruska's induced peripheral structure theorem \cite[Theorem~9.1]{Hruska2010},
	there exist finitely many subgroups $O_1,\dots,O_m\le H$ such that
	$(H,\mathcal O)$ is relatively hyperbolic, where $\mathcal O=\{O_1,\dots,O_m\}$, and
	\[
	J_H=\langle\!\langle O_1,\dots,O_m\rangle\!\rangle_H.
	\]
	
	We claim that $(H,\mathcal O)$ is non-elementary. Indeed, if $(H,\mathcal O)$ were
	elementary, then $H$ would be finite, virtually cyclic, or parabolic. In the first two
	cases, the quotient $Q$ would be finite or virtually cyclic. In the parabolic case,
	$H$ is conjugate in $H$ to one of the peripheral subgroups $O_i$, hence
	\[
	J_H=\langle\!\langle O_1,\dots,O_m\rangle\!\rangle_H=H,
	\]
	so $Q$ is trivial. In every case, $Q$ is virtually cyclic, contrary to assumption.
	Thus $(H,\mathcal O)$ is non-elementary.
	
	Since $H$ is finitely generated and relatively hyperbolic with respect to the proper
	subgroups in $\mathcal O$, Xie's theorem \cite[Theorem~1.1]{Xie2007} implies that
	$H$ has uniform exponential growth. Hence, for some finite generating set $T$ of $H$ with associated word length $|\cdot|_T$, one has $\omega(H,T)>1.$
	Let $C:=\max_{t\in T}\ell_G(t).$ Then $\ell_G(h)\le C\,|h|_T$ for $h\in H$, 
	so $B_H^T(n)\subseteq H\cap B_G(Cn).$
	Therefore $\omega_G^+(H)\ge \omega(H,T)^{1/C}>1.$

	On the other hand, since $H$ is relatively quasiconvex in $(G,\mathcal P)$ and
	$[G:H]=\infty$, so $\Lambda(H)\subsetneq \partial(G,\mathcal P).$
	Applying Corollary~\ref{cor: relatively hyperbolic}, we obtain $
	(G,H)\notin \mathbf{SLC}_{\mathrm{subexp}},$
	contrary to the assumption. Hence $H/J_H$ must be virtually cyclic.
\end{proof}

\begin{theorem}\label{thm:SLC-characterization-locally-qc}
   Let \(G\) be a non-elementary finitely generated hyperbolic group, and assume that \(G\) is locally quasiconvex. Let \(H\le G\) be a finitely generated subgroup. Then \((G,H)\in \mathbf{SLC}_{\mathrm{subexp}}\) if and only if either \(H\) has finite index in \(G\) or \(H\) is virtually cyclic.
\end{theorem}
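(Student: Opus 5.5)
The argument splits into the two implications, and the substantive work has already been done in Proposition~\ref{cor:quasiconvex}.

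\emph{Sufficiency.} Suppose first that \([G:H]=m<\infty\). The argument in the proof of Theorem~\ref{thm:F2-fg-classification-subexpSLC} applies verbatim. For \(f\in\mathbb CG\) we have
\[
\|f\|_{h,(G,H)}\le\|f\|_1\le\sqrt m\,\|f\|_{(2,1),(G,H)}.
\]
Here the second inequality is Cauchy--Schwarz over the \(m\) cosets. So \(\mathbf{SLC}_{\mathrm{subexp}}\) holds with \(w\equiv1\).

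Suppose next that \(H\) is virtually cyclic. Then \(H\) is amenable, and \(G\) has RD, since hyperbolic groups have RD by de la Harpe and Jolissaint. By \cite[Corollary~2.14]{ChatterjiZarka2024v1}, the pair \((G,H)\) has pair rapid decay. A polynomial weight \((1+\ell)^{s_1}\) has subexponential radial majorant, so \((G,H)\in\mathbf{SLC}_{\mathrm{subexp}}\). This is the same route as in Corollary~\ref{thm:SL2Z-fg-classification-subexpSLC}.

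\emph{Necessity.} Assume \((G,H)\in\mathbf{SLC}_{\mathrm{subexp}}\), \([G:H]=\infty\), and \(H\) not virtually cyclic; we derive a contradiction. By local quasiconvexity, the finitely generated subgroup \(H\) is quasiconvex in \(G\). So \(H\) is finitely generated, quasiconvex, of infinite index and not virtually cyclic in the non-elementary hyperbolic group \(G\), and Proposition~\ref{cor:quasiconvex} gives \((G,H)\notin\mathbf{SLC}_{\mathrm{subexp}}\), a contradiction. Hence \(H\) has finite index or is virtually cyclic.

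The only delicate point is inside Proposition~\ref{cor:quasiconvex}, which I will accept as stated. It needs \(H\) to be non-elementary, and this holds because a quasiconvex subgroup of a hyperbolic group is hyperbolic, and a hyperbolic group that is not virtually cyclic is infinite and non-elementary, hence has exponential growth. It also needs the limit-set step, namely that \(\Lambda_H\subsetneq\partial G\) for an infinite-index quasiconvex subgroup \cite[Lemma~3.9]{KapovichShort1996}. From this one finds \(g\) with \(g\Lambda_H\cap\Lambda_H=\varnothing\), using north--south dynamics of a loxodromic element whose fixed points lie off \(\Lambda_H\); such elements exist by density of loxodromic fixed-point pairs in the non-elementary case. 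That yields \(H\cap gHg^{-1}\) finite, so \(H\) is not \(s\)-normal. Corollary~\ref{cor:non-s-normal} together with undistortedness of \(H\) then excludes polynomial growth.

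Thus no new obstacle arises, and the theorem is a direct assembly of earlier results.
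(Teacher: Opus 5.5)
Your proposal is correct and matches the paper's proof: necessity follows from local quasiconvexity plus Proposition~\ref{cor:quasiconvex}, and sufficiency goes as in Theorem~\ref{thm:F2-fg-classification-subexpSLC}, with $w\equiv 1$ in the finite-index case and RD of $G$ plus Corollary~2.14 of \cite{ChatterjiZarka2024v1} in the virtually cyclic case. One minor remark on the step you accepted: with a merely subexponential $W$, the estimate behind Corollary~\ref{cor:non-s-normal} gives $|H\cap B_G(R)|\le C_h^2\,|H\cap t^{-1}Ht|\,W(R+2\ell(t))^2$, which is only subexponential (not polynomial) growth; this still contradicts the exponential growth of the non-elementary $H$ in $\ell_G|_H$ (which holds even without undistortedness), so your conclusion stands.
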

\begin{proof}
    The necessity follows from Proposition~\ref{cor:quasiconvex}. The proof of sufficiency is similar to that of Theorem~\ref{thm:F2-fg-classification-subexpSLC}.
\end{proof}

  \medskip

While $(G,H)\in \mathbf{SLC}_{\mathrm{subexp}}$ clearly holds when $H\le G$ has finite index or is virtually cyclic, quasiconvexity cannot be omitted: the conclusion fails in general for non-quasiconvex subgroups of non-elementary finitely generated hyperbolic groups.

\begin{proposition}\label{prop:ripsconstruction}
	There exist a non-elementary hyperbolic group $G$ and a finitely generated subgroup $H\le G$ such that $H$ is not quasiconvex in $G$ and $(G,H)\notin \mathbf{SLC}_{\mathrm{subexp}}$.
\end{proposition}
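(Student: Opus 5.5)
The label suggests a Rips construction. My first instinct would be to use a Rips extension $1\to N\to G\to Q\to 1$ with $N$ the non-quasiconvex subgroup. That fails: $N$ is normal, hence $s$-normal, so Corollary~\ref{cor:non-s-normal} does not apply. Worse, since $G/N\cong Q$, pulling back suitable estimates (as in Lemma~\ref{lem:pullback-pair-rd-slc}) can even give $\mathbf{SLC}_{\mathrm{subexp}}$. So $H$ must be built so that it is \emph{not} $s$-normal, while still having exponential growth in $\ell_G|_H$.

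\textbf{Construction.} I would start from a hyperbolic group $G_1$ with a finitely generated, non-quasiconvex subgroup $N$; the Rips kernel $N\triangleleft G_1$ is the standard choice. Then set
\[
G:=G_1*\mathbb Z=G_1*\langle t\rangle, \qquad H:=N\le G_1\le G.
\]
The group $G$ is non-elementary hyperbolic, being a free product of hyperbolic groups. The subgroup $H$ is finitely generated. It is not quasiconvex in $G$: $G_1$ is a free factor, hence quasiconvex and undistorted in $G$, and quasiconvexity of $N$ in $G$ would then force quasiconvexity of $N$ in $G_1$.

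\textbf{Step 1: $H$ is not $s$-normal.} Write $G=G_1*L$ with $L=\langle t\rangle$, and apply Lemma~\ref{lem:free-factor-intersection} to $k=t$. This gives $G_1\cap t^{-1}G_1t=\{e\}$, hence $H\cap t^{-1}Ht=\{e\}$, which is finite. (The lemma is stated with $H$ a free factor, but its normal-form argument applies verbatim to the factor $G_1$ and any subgroup of it.)

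\textbf{Step 2: growth of $H$.} I need to show that $H$ does not have polynomial growth with respect to $\ell_G|_H$. Since $\ell_G\le C|\cdot|_T$ for any finite generating set $T$ of $H$, it is enough that $H$ has exponential growth in its own word metric. For the Rips kernel this holds: $N$ is a non-elementary subgroup of a hyperbolic group, since an infinite normal subgroup has full limit set. So $N$ contains a free subgroup $F_2$, and exponential growth follows. Combining the two steps, Corollary~\ref{cor:non-s-normal} gives $(G,H)\notin\mathbf{SLC}_{\mathrm{subexp}}$.

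\textbf{Main obstacle.} The delicate point is the input to the construction: that the Rips kernel $N$ is finitely generated yet not quasiconvex in $G_1$. I would take $Q$ infinite and finitely presented. Rips' theorem then gives a finitely generated $N$ of infinite index. An infinite normal subgroup of infinite index cannot be quasiconvex in a hyperbolic group, since quasiconvex subgroups of infinite index have limit set strictly smaller than $\partial G_1$. The remaining bookkeeping is to confirm that the free-factor embedding preserves non-quasiconvexity, which I argued above via undistortedness of $G_1$ in $G$.
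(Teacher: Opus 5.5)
Your proof is correct, but it follows a different route from the paper's, and your reason for abandoning the normal Rips kernel is mistaken. The paper uses exactly $H:=N$ for a Rips extension $1\to N\to G\to Q\to 1$ with $Q=\operatorname{BS}(1,2)$. Non-quasiconvexity comes from $\Lambda_N=\partial G$ together with $[G:N]=\infty$. The obstruction comes from quotient permanence, the generalization of Chatterji--Zarka's Proposition~2.18: $(G,N)\in\mathbf{SLC}_{\mathrm{subexp}}$ would give $Q$ subexponential RD, and Remark~\ref{rem:subexp-analogue-thm12} rules this out because $Q$ is amenable of exponential growth. Lemma~\ref{lem:pullback-pair-rd-slc} yields $\mathbf{SLC}_{\mathrm{subexp}}$ only when $Q$ already has it, so the choice of $Q$ is what makes the normal kernel work.

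You instead destroy $s$-normality by passing to $G_1*\langle t\rangle$, and then run the test-function estimate behind Corollary~\ref{cor:non-s-normal}. Your route needs no quotient-permanence result (which the paper only sketches) and works for any finitely presented non-hyperbolic $Q$. The paper's route produces a \emph{normal}, hence $s$-normal, $H$, for which the limit-set/almost-malnormality mechanism says nothing. The subsequent $(\mathrm{NR}_\infty)$ discussion and the $s$-normal example $\pi^{-1}(\langle a\rangle)$ build on exactly that. In your example, non-quasiconvexity plays no role in the failure; the mechanism is the same as for quasiconvex subgroups.

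Two small repairs are needed. First, you use that $N$ is infinite (for $\Lambda_N=\partial G_1$ and for non-quasiconvexity) without justifying it. Take $Q$ non-hyperbolic, as the paper does; then a finite kernel is impossible, since $G_1/N$ would be hyperbolic. Second, with a subexponential weight, the argument behind Corollary~\ref{cor:non-s-normal} gives only
$|H\cap B_G(R)|\le C_h^2\,W(R+2\ell_G(t))^2$ (here $H\cap t^{-1}Ht=\{e\}$). That is subexponential growth, not polynomial growth. Your Step~2 actually proves exponential growth, which contradicts this bound, so state the target as ``not subexponential'' rather than ``not polynomial''.
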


\begin{proof}
	Let $Q:=BS(1,2)=\langle a,t\mid tat^{-1}=a^2\rangle.$
	Then \(Q\) is finitely presented, solvable, and of exponential growth.
	By the Rips construction, there exists a short exact sequence
	\[
	1\longrightarrow N\longrightarrow G\xrightarrow{\pi} Q\longrightarrow 1
	\]
	with \(G\) non-elementary hyperbolic and \(N\) finitely generated.
	Set \(H:=N\). 
	
	Since \(Q\) is infinite, we have $[G:N] \;=\; |G/N| \;=\; |Q| \;=\; \infty.$ We show that \(N\) must be infinite. Indeed, if \(N\) were finite, then \(Q \cong G/N\) would be a finite-kernel quotient of a hyperbolic group, hence hyperbolic. contradicting the fact that $BS(1,2)$ is not hyperbolic. Therefore \(N\) is infinite. Because \(N \trianglelefteq G\) is an infinite normal subgroup of a non-elementary hyperbolic group, its limit set equals the whole boundary, that is $\Lambda_N = \Lambda_G = \partial G$ by Theorem~12.2(5) of \cite{KapovichBenakli2002}. Now suppose, for contradiction, that \(N\) is quasiconvex in \(G\). For an infinite quasiconvex subgroup \(L \le G\), one has the standard criterion $\Lambda_L=\partial G$ iff  $[G:L]<\infty$ (\cite[Lemma~3.9 and Lemma~3.3(5)]{KapovichShort1996}). Applying this with \(L=N\) and using \(\Lambda_N=\partial G\), we obtain \([G:N]<\infty\), contradicting \([G:N]=\infty\).
	Hence \(N\) is not quasiconvex. 
	By generalizing Proposition~2.18 in \cite{ChatterjiZarka2024v1}, we obtain if 
	\((G,N)\in \mathbf{SLC}_{\mathrm{subexp}}\), then 
	\((Q,\{e\})\in \mathbf{SLC}_{\mathrm{subexp}}\), 
	which contradicts the choice of \(Q\). Therefore \((G,H)\notin \mathbf{SLC}_{\mathrm{subexp}}\).
\end{proof}

\begin{remark}
	
	The generalization of Proposition~2.18 in \cite{ChatterjiZarka2024v1} is standard. The only point we wish to emphasize is that the converse implication admits a shorter argument, without introducing a section of the quotient map or the associated cocycle.
	 Let \(Q:=G/H\), and assume that \((G,H)\) has the rapid decay property. Given \(F \in \mathbb{C}Q\) with \(\operatorname{supp}(F)\subset B_Q(R)\), choose for each \(q\in \operatorname{supp}(F)\) a lift \(g_q\in G\) obtained from a reduced word for \(q\), so that \(\pi(g_q)=q\) and \(\ell_G(g_q)=\ell_Q(q)\le R\), and set \(u:=\sum_{q\in Q}F(q)\delta_{g_q}\). Then \(\operatorname{supp}(u)\subset B_G(R)\) and \(\|u\|_{(2,1)}=\|F\|_2\), since \(u\) has at most one nonzero coefficient on each left coset of \(H\). Under the canonical identification \(\ell^2(G/H)\simeq \ell^2(Q)\), the quasi-regular representation factors through the left regular representation of \(Q\), so that \(\lambda_{G/H}(u)=\lambda_Q(F)\). Hence Lemma~2.12(1) and Proposition~2.7(3) in \cite{ChatterjiZarka2024v1} imply that
	\[
	\|\lambda_Q(F)\|
	=
	\|\lambda_{G/H}(u)\|
	\le
	\|u\|_h
	\le
	C(1+R)^D\|u\|_{(2,1)}
	=
	C(1+R)^D\|F\|_2,
	\]
	which is exactly the rapid decay estimate for \(Q\).
\end{remark}

\medskip

   The previous counterexample is obtained via a Rips-type construction. This raises the question of whether the additional condition $(\mathrm{NR}_\infty)$, namely that every infinite finitely generated normal subgroup of $G$ has finite index in $G$, restores a clean classification. More precisely, if $G$ is a non-elementary finitely generated hyperbolic group satisfying $(\mathrm{NR}_\infty)$ and $H\le G$ is finitely generated, must the condition $(G,H)\in \mathbf{SLC}_{\mathrm{subexp}}$ force $H$ to have finite index in $G$ or to be virtually cyclic? The following proposition shows that the answer is negative in general.

   \begin{proposition}\label{prop:weeks-counterexample}
   	There exist a non-elementary finitely generated hyperbolic group $G$ satisfying $(\mathrm{NR}_\infty)$ and a finitely generated subgroup $H\le G$ such that $[G:H]=\infty$, $H$ is not virtually cyclic, and $(G,H)\in \mathbf{SLC}_{\mathrm{subexp}}$.
   \end{proposition}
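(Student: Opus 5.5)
The example will be the fundamental group of a closed hyperbolic 3-manifold that fibers over the circle, with $H$ the fiber subgroup. The label of the proposition suggests Weeks-type manifolds, but any closed hyperbolic fibered 3-manifold should work. Let $M$ be a closed hyperbolic 3-manifold fibering over $S^1$ with fiber a closed surface $\Sigma$ of genus $\ge 2$ and pseudo-Anosov monodromy $\varphi$; such manifolds exist by Thurston's hyperbolization of mapping tori. Put $G=\pi_1(M)$ and $H=\pi_1(\Sigma)$. Then
\[
1\longrightarrow H\longrightarrow G\xrightarrow{\ p\ } \mathbb Z\longrightarrow 1 .
\]
Here $G$ is a non-elementary hyperbolic group, and $H$ is finitely generated, of infinite index, and not virtually cyclic, since it is a closed surface group of genus at least two.

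To obtain $(G,H)\in \mathbf{SLC}_{\mathrm{subexp}}$, I will pull back along $p$, as in the cited Lemma~\ref{lem:pullback-pair-rd-slc}, or argue directly. Since $H$ is normal, the Schreier graph $G/H$ is the Cayley graph of $\mathbb Z$ for the image generating set, so it has linear growth. Remark~\ref{rem:subexp-analogue-thm12} (the subexponential analogue of \cite[Theorem~1.2]{ChatterjiZarka2024v1}) then gives $(G,H)\in\mathbf{SLC}_{\mathrm{subexp}}$ directly. In fact one even gets pair rapid decay with a polynomial weight, because the quotient $\mathbb Z$ has polynomial growth and $H$ is co-amenable. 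This step is routine.

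The main obstacle is checking $(\mathrm{NR}_\infty)$: every infinite finitely generated normal subgroup $N\trianglelefteq G$ of infinite index must be excluded. My plan is to use the structure theory of 3-manifold groups. Suppose $N$ is such a subgroup. By Scott's core theorem and the tameness theorem (Agol, Calegari--Gabai), the cover $M_N$ associated to $N$ is tame. Since $N$ is normal, its limit set is all of $S^2=\partial G$, so $N$ is geometrically infinite. The Thurston--Canary covering theorem then says that $N$ is virtually a fiber subgroup: some finite-index $G_0\le G$ fibers with fiber group $N_0$ commensurable with $N$, and $G_0/N_0\cong\mathbb Z$. The hard part is to conclude that $N$ is a surface group with $G/N$ virtually $\mathbb Z$; it must not be a normal subgroup of a different kind, which the covering theorem does exclude. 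This is what I expect to be delicate.

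This check does not by itself contradict $(\mathrm{NR}_\infty)$, because fiber subgroups are exactly such normal subgroups. So the naive choice fails, and the example must be modified. I would therefore state the condition as ``every infinite finitely generated normal subgroup of infinite index is virtually a fiber'' and look for a manifold with no normal fiber at all. The fix is to take $H$ non-normal with nontrivial normal core. I would take a closed hyperbolic 3-manifold $M$ with $b_1(M)=0$, such as the Weeks manifold, so that $G$ has finite abelianization. Then $G$ admits no surjection onto $\mathbb Z$, and no normal subgroup has quotient virtually $\mathbb Z$ unless that quotient is infinite dihedral, which I would rule out by the homology computation. The covering theorem argument above then shows $G$ satisfies $(\mathrm{NR}_\infty)$.

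For $H$, I would take a finite cover $M_0\to M$ that fibers, which exists by Agol's virtual fibering theorem, with fiber group $N_0\trianglelefteq G_0$. Set $H:=N_0$. Then $H$ is finitely generated, of infinite index in $G$, and not virtually cyclic. Its normal core $\bigcap_{g\in G} gN_0g^{-1}$ is a finite intersection of fiber groups, hence infinite, though that is not needed for the estimate. The $\mathbf{SLC}_{\mathrm{subexp}}$ property passes to $(G,H)$ as follows. The estimate holds for $(G_0,H)$ by the linear growth of $G_0/H\cong\mathbb Z$ together with Remark~\ref{rem:subexp-analogue-thm12}. The Schreier graph $G/H$ is then a finite union of translates of $G_0/H$, so it still has linear growth, and $H$ remains co-amenable in $G$. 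Remark~\ref{rem:subexp-analogue-thm12} therefore gives $(G,H)\in\mathbf{SLC}_{\mathrm{subexp}}$.
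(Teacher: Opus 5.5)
Your final construction is the paper's: the Weeks manifold, the covering theorem forcing $G/N$ to be virtually cyclic for any infinite finitely generated normal $N$ of infinite index, the homology to exclude surjections onto $\mathbb Z$ or $D_\infty$, and a virtual fiber from Agol's theorem, with $\mathbf{SLC}_{\mathrm{subexp}}$ coming from the linear growth of the Schreier graph. Two small remarks: $b_1(M)=0$ alone would not exclude $D_\infty$ quotients, so you really need $H_1(M;\mathbb Z)\cong(\mathbb Z/5)^2$, which has no $(\mathbb Z/2)^2$ quotient, as in the paper; and your direct count of balls in $G/H$ via left coset representatives of $G_0$ is a valid substitute for the paper's passage to the normal core and its appeal to finite-index permanence.
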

   
\begin{proof}
	Let $M_W$ be the Weeks closed hyperbolic $3$-manifold, and set $G:=\pi_1(M_W)$. Then $G$ is a non-elementary hyperbolic group.
	
	By Agol's virtual fibering theorem \cite{Agol2013}, $M_W$ admits a finite-sheeted cover that fibers over $S^1$. Let $K_0\le G$ be the corresponding finite-index subgroup, and let
	\[
	K:=\bigcap_{g\in G} gK_0g^{-1}\triangleleft G
	\]
	be its normal core. Then $K$ still has finite index in $G$. Since finite covers of fibered $3$-manifolds are fibered, there is a short exact sequence
	\[
	1\longrightarrow H\longrightarrow K\longrightarrow \mathbb Z\longrightarrow 1,
	\]
	where $H\cong \pi_1(\Sigma)$ for a closed hyperbolic surface $\Sigma$. In particular, $H$ is finitely generated and not virtually cyclic, and $[G:H]\ge [K:H]=\infty$.
	
	We claim that $G$ satisfies $(\mathrm{NR}_\infty)$. Let $N\triangleleft G$ be an infinite finitely generated normal subgroup. If $[G:N]=\infty$, then by the normal-subgroup case of Canary's covering theorem, the quotient $G/N$ is virtually cyclic. Hence $G$ admits an epimorphism onto either $\mathbb Z$ or $D_\infty$ (see also Theorem~5.3 and Corollary~6.4 in \cite{boileau2018}). This is impossible, since
	\[
	H_1(G;\mathbb Z)\cong (\mathbb Z/5)^2
	\]
	for the Weeks manifold: in particular, $G$ admits no epimorphism onto $\mathbb Z$, nor onto $D_\infty$, since any epimorphism $G\twoheadrightarrow D_\infty$ would induce a nontrivial homomorphism $G\to \mathbb Z/2$. Therefore every infinite finitely generated normal subgroup of $G$ has finite index, and so $G$ satisfies $(\mathrm{NR}_\infty)$.
	
	Finally, since $H\triangleleft K$ and $K/H\cong \mathbb Z$, the subgroup $H$ is co-amenable in $K$, and the Schreier graph $K/H$ has polynomial growth. Hence, by Remark~\ref{rem:subexp-analogue-thm12}, one has $(K,H)\in \mathbf{SLC}_{\mathrm{subexp}}.$
	Since $[G:K]<\infty$, the finite-index permanence of $\mathbf{SLC}_{\mathrm{subexp}}$ for pairs implies that $(G,H)\in \mathbf{SLC}_{\mathrm{subexp}}.$
	Thus $G$ is a non-elementary hyperbolic group satisfying $(\mathrm{NR}_\infty)$, the subgroup $H$ is finitely generated, of infinite index, and not virtually cyclic, and yet $(G,H)\in \mathbf{SLC}_{\mathrm{subexp}}$.
\end{proof}

 \medskip
 
\begin{lemma}\label{lem:normal-intersection-kleinian}
	Let \(\Gamma<\mathrm{Isom}^+(\mathbb H^3)\) be a torsion-free non-elementary
	Kleinian group. Then any finite intersection of nontrivial normal subgroups of
	\(\Gamma\) is nontrivial.
\end{lemma}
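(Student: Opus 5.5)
By induction on the number of subgroups, it suffices to show that any two nontrivial normal subgroups $N_1,N_2\triangleleft\Gamma$ intersect nontrivially. Once this is known, $N_1\cap\cdots\cap N_{k-1}$ is again a nontrivial normal subgroup, and intersecting it with $N_k$ gives the inductive step.

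For two subgroups, I will use the commutator trick. Since $N_1$ and $N_2$ are normal, $[N_1,N_2]\le N_1\cap N_2$. If $N_1\cap N_2=\{e\}$, then every element of $N_1$ commutes with every element of $N_2$. So it is enough to show that two nontrivial normal subgroups of $\Gamma$ cannot commute elementwise.

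First, each nontrivial normal subgroup $N$ of $\Gamma$ is non-elementary. Since $\Gamma$ is torsion-free, $N$ is infinite. If $N$ were elementary, its limit set would have at most two points. But the limit set of $N$ is $\Gamma$-invariant, because $N$ is normal, and it is nonempty, because $N$ is infinite and discrete. A nonempty closed $\Gamma$-invariant subset of $\partial\mathbb H^3$ contains the limit set $\Lambda_\Gamma$, which is infinite since $\Gamma$ is non-elementary. This is a contradiction, so $N$ is non-elementary and in particular contains loxodromic elements.

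Now suppose $N_1$ and $N_2$ commute elementwise. Pick a loxodromic $a\in N_1$ with fixed points $a^\pm$. Every $b\in N_2$ commutes with $a$, so $b$ preserves $\{a^+,a^-\}$. Hence the limit set of $N_2$ is contained in the closure of the $N_2$-orbit of this two-point set. Since $N_2$ preserves $\{a^+,a^-\}$ setwise, $N_2$ is elementary, contradicting the previous step. In a torsion-free discrete group one can see this more concretely: the centralizer of a loxodromic element is cyclic, so $N_2$ would be infinite cyclic.

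The only point needing care is the standard fact that the limit set of an infinite normal subgroup equals $\Lambda_\Gamma$, together with the classification of elementary Kleinian groups in the torsion-free case (trivial, cyclic loxodromic, or parabolic $\mathbb Z$ or $\mathbb Z^2$). Both can be cited as standard. Everything else is formal.
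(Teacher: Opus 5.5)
Your proposal is correct and takes essentially the same route as the paper: reduce to two subgroups, show that every nontrivial normal subgroup is non-elementary, pick a loxodromic $a\in N_1$, and use that $C_\Gamma(a)$ is cyclic to get $b\in N_2$ with $1\neq[a,b]\in N_1\cap N_2$. The only difference is that you phrase the last step contrapositively. Your sentence about the limit set of $N_2$ lying in the closure of the orbit of $\{a^+,a^-\}$ is superfluous, since preserving a two-point subset of $\partial\mathbb H^3$ already makes $N_2$ elementary.
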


\begin{proof}
	It is enough to treat the intersection of two such subgroups. Let
	\(A,B\triangleleft \Gamma\) be nontrivial. Since \(\Gamma\) is torsion-free,
	neither \(A\) nor \(B\) contains nontrivial finite subgroups. An infinite
	elementary normal subgroup would preserve a finite subset of
	\(\partial\mathbb H^3\), forcing \(\Gamma\) to be elementary; hence every
	nontrivial normal subgroup of \(\Gamma\) is non-elementary.
	
	Choose a loxodromic element \(a\in A\). Since \(\Gamma\) is torsion-free and
	discrete, the centralizer \(C_\Gamma(a)\) is cyclic, in particular elementary.
	As \(B\) is non-elementary, we may choose \(b\in B\setminus C_\Gamma(a)\).
	Then \([a,b]\neq e\), and because \(A\) and \(B\) are normal in \(\Gamma\), we
	have \([a,b]\in A\cap B\). Thus \(A\cap B\neq\{e\}\).
	
	Applying this argument inductively proves the lemma.
\end{proof}

\begin{proposition}\label{prop:slc_core_closed_hyperbolic_3_manifold}
	Let \(M\) be a closed hyperbolic \(3\)-manifold, let \(G=\pi_1(M)\), and let \(H\leq G\) be a finitely generated subgroup. Assume that $(G,H)\in \mathbf{SLC}_{\mathrm{subexp}}$, $[G:H]=\infty$, and $H$ is not virtually cyclic. Then $\operatorname{Core}_G(H)\neq \{e\}$.
\end{proposition}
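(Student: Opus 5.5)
Proof plan. Use the tameness/virtual fibration dichotomy for finitely generated subgroups of closed hyperbolic $3$-manifold groups. By tameness (Agol, Calegari--Gabai) together with Canary's covering theorem, a finitely generated subgroup $H\le G$ of infinite index is of one of two kinds:
\begin{enumerate}
\item $H$ is geometrically finite, hence convex cocompact (there are no parabolics) and so quasiconvex in the hyperbolic group $G$;
\item $H$ is geometrically infinite, in which case it is a virtual fiber: there is a finite-index subgroup $K\le G$ and a surface subgroup $H_0=H\cap K\triangleleft K$ with $K/H_0\cong\mathbb Z$, and $[H:H_0]<\infty$.
\end{enumerate}
These two cases are treated separately.

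\emph{Geometrically finite case.} Here $H$ is quasiconvex, of infinite index, and not virtually cyclic. Since $G$ is non-elementary, Proposition~\ref{cor:quasiconvex} gives $(G,H)\notin\mathbf{SLC}_{\mathrm{subexp}}$. This contradicts the hypothesis, so this case cannot occur.

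\emph{Virtual fiber case.} We produce a nontrivial normal subgroup of $G$ inside $H$. Replace $K$ by its normal core $K'=\operatorname{Core}_G(K)$, which still has finite index in $G$. The subgroup $H_1:=H_0\cap K'$ is normal in $K'$, and $K'/H_1$ embeds in $K/H_0\cong\mathbb Z$. Since $[G:H]=\infty$, the quotient $K'/H_1$ is infinite, hence $K'/H_1\cong\mathbb Z$. Moreover $H_1$ has finite index in $H_0$, so it is a nontrivial (closed surface) group.

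For each $g\in G$, the conjugate $gH_1g^{-1}$ is again normal in $gK'g^{-1}=K'$. Since $[G:K']<\infty$, only finitely many distinct conjugates occur. Thus $N:=\bigcap_{g\in G}gH_1g^{-1}$ is a finite intersection of nontrivial normal subgroups of the torsion-free non-elementary Kleinian group $K'$. Lemma~\ref{lem:normal-intersection-kleinian} then gives $N\neq\{e\}$. By construction $N$ is normal in $G$ and $N\le H_1\le H$, so $\operatorname{Core}_G(H)\supseteq N\neq\{e\}$.

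\emph{Main obstacle.} The key step is justifying the dichotomy with the precise citations. One needs tameness (Agol; Calegari--Gabai) plus Canary's covering theorem to show that a geometrically infinite finitely generated $H$ is virtually a fiber subgroup of a finite cover, and Thurston/Marden for geometrically finite $\Rightarrow$ convex cocompact, hence quasiconvex. A smaller point is checking that $K'$ is non-elementary and torsion-free; this holds because $M$ is a closed hyperbolic manifold, so $G$ is torsion-free and $K'$ has finite index in $G$.

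Note that the $\mathbf{SLC}_{\mathrm{subexp}}$ hypothesis is used only to exclude the quasiconvex case; the fiber case yields a nontrivial core unconditionally.
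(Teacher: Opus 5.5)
Your proof is correct and uses the same ingredients as the paper's: subgroup tameness plus Canary's covering theorem, an $\mathbf{SLC}_{\mathrm{subexp}}$-versus-exponential-growth argument that rules out the geometrically finite case, and a finite intersection of nontrivial normal subgroups of a finite-index normal subgroup, handled by Lemma~\ref{lem:normal-intersection-kleinian}. The differences are only in organization: the paper first proves $\Lambda(H)=\partial G$ from the estimate $|H\cap B_G(R)|\le C_h^2W(R+\ell(x))^2|K_x\cap B_G(2R)|$ and only then invokes tameness, whereas you split on the tameness dichotomy and cite Proposition~\ref{cor:quasiconvex}; and your core construction is one step shorter because you take the fiber subgroup $H_0$ inside $H$ (the form Canary's theorem gives), while the paper uses the weaker commensurability formulation and so needs the extra core $\operatorname{Core}_{G_0}(L)$.
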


\begin{proof}
	Fix a word length \(\ell\) on \(G\), and write $B_G(R):=\{g\in G:\ell(g)\le R\}.$
	By the assumption \((G,H)\in \mathbf{SLC}_{\mathrm{subexp}}\), there exist a constant \(C_h>0\) and a function \(w:G\to [1,\infty)\) such that
	\[
	\|f\|_{h,(G,H)}\le C_h\,\|fw\|_{(2,1),(G,H)}
	\qquad (f\in \mathbb C G),
	\]
	and $W(R):=\sup_{\ell(g)\le R} w(g)$ has subexponential growth.
	
	For each \(x\in G\), set $K_x:=H\cap xHx^{-1}.$
	We first prove that, for every \(x\in G\) and every \(R\ge 0\),
	\begin{equation}\label{eq:Hx-growth-estimate}
		|H\cap B_G(R)|\le C_h^2\,W(R+\ell(x))^2\,|K_x\cap B_G(2R)|.
	\end{equation}
	
	Choose a set \(S_R\subset H\cap B_G(R)\) containing exactly one representative from each left \(K_x\)-coset that meets \(H\cap B_G(R)\).  Here we introduce definitions that differ slightly in form from those in Theorem~\ref{thm:hybrid-rd-iff-poly-growth}:
	\[
	f_R := \sum_{h \in S_R} \delta_{hx},
	\qquad
	\varphi := \delta_{x^{-1}}.
	\]
	Similar to the proof in Theorem~\ref{thm:hybrid-rd-iff-poly-growth}, we obtain
    $|S_R|\le C_h^2\,W(R+\ell(x))^2.$

	Now every \(h\in H\cap B_G(R)\) can be written as \(h=sk\) with \(s\in S_R\) and \(k\in K_x\). Since \(\ell(s)\le R\) and \(\ell(h)\le R\), we have $\ell(k)=\ell(s^{-1}h)\le 2R.$
	Therefore
	\[
	H\cap B_G(R)\subseteq \bigcup_{s\in S_R} s\bigl(K_x\cap B_G(2R)\bigr),
	\]
	and \eqref{eq:Hx-growth-estimate} follows.
	
	Next we show that \(H\) has exponential growth with respect to the ambient word metric on \(G\). Since \(H\) is not virtually cyclic, choose independent loxodromic elements \(a,b\in H\), then \(\langle a\rangle\) and \(\langle b\rangle\) are quasiconvex cyclic subgroups of \(G\), and
	\(\langle a\rangle\cap\langle b\rangle\) is finite. Since \(G\) is torsion-free, it follows that
	\(\langle a\rangle\cap\langle b\rangle=\{1\}\).
	By Theorem~1 in \cite{Gitik1999}, for sufficiently large \(N\),
	\[
	F:=\langle a^{N},b^{N}\rangle
	= \langle a^{N}\rangle * \langle b^{N}\rangle \cong F_2,
	\]
	and \(F\) is quasiconvex in \(G\). So \(F\) is quasi-isometrically embedded in \(G\) (Theorem~7.7 in \cite{hullhyperbolic}), its intrinsic word metric is quasi-isometric to the restriction of the ambient word metric on \(G\) to \(F\). 
	Hence there exist constants $a>1$ and $c>0$ such that
	\begin{equation}\label{eq:H-exp-growth}
		|H\cap B_G(R)|
		\;\ge\;
		|F\cap B_G(R)|
		\;\ge\;
		c\,a^{R}
		\qquad (R\gg 1).
	\end{equation}
	We claim that $\Lambda(H)=\partial G.$
	Assume otherwise. Then there exists a nonempty open set \(U\subset \partial G\setminus \Lambda(H)\). Since pairs of fixed points of loxodromic elements are dense in \(\partial^2 G\), we may choose a loxodromic element \(b\in G\) such that
	\[
	b^+\in U,
	\qquad
	b^-\notin \Lambda(H).
	\]
	By north-south dynamics, \(b^n\Lambda(H)\subset U\) for all sufficiently large \(n\). Fix such an \(n\), and set \(x=b^n\). Then
	\[
	x\Lambda(H)\cap \Lambda(H)=\varnothing.
	\]
	Therefore
	\[
	\Lambda(K_x)\subseteq \Lambda(H)\cap \Lambda(xHx^{-1})
	=\Lambda(H)\cap x\Lambda(H)
	=\varnothing,
	\]
	so \(K_x\) is finite. Now \eqref{eq:Hx-growth-estimate} implies that \(|H\cap B_G(R)|\) is bounded above by a subexponential function of \(R\), contradicting \eqref{eq:H-exp-growth}. Thus indeed $\Lambda(H)=\partial G.$
	
	Since \(M\) is a closed hyperbolic \(3\)-manifold, the Subgroup Tameness Theorem
	(a consequence of the Tameness Theorem and Canary's Covering Theorem see Theorem~4.1.2 in \cite{AschenbrennerFriedlWilton2015}) implies
	that every finitely generated subgroup of \(G=\pi_1(M)\) is either geometrically finite
	or a virtual surface fiber subgroup. Since \(H\) has infinite index and
	\(\Lambda(H)=\partial G\), it cannot be geometrically finite: indeed, for a
	geometrically finite Kleinian group, cocompactness on the convex hull of the limit set
	would here imply cocompactness on all of \(\mathbb H^3\), hence finite index in \(G\).
	
	Therefore \(H\) is a virtual surface fiber subgroup. Equivalently, there exist a finite-index subgroup
	\(G_1\le G\), a fiber subgroup \(F\triangleleft G_1\) with \(G_1/F\cong \mathbb Z\), and \(g\in G\) such that
	\(H\) and \(gFg^{-1}\) are commensurable. Replacing \((G_1,F)\) by \((gG_1g^{-1},\,gFg^{-1})\) we may assume
	that \(H\) and \(F\) are commensurable.
	
	Set
	\[
	G_0:=\operatorname{Core}_G(G_1)=\bigcap_{g\in G} gG_1g^{-1},
	\qquad
	F_0:=F\cap G_0,
	\qquad
	L:=H\cap F_0.
	\]
	Then \(G_0\triangleleft G\) has finite index, \(F_0\triangleleft G_0\), and \(L\) has finite index in \(F_0\). Define
	\[
	K_0:=\operatorname{Core}_{G_0}(L)=\bigcap_{g\in G_0} gLg^{-1}.
	\]
	Each conjugate \(gLg^{-1}\) with \(g\in G_0\) has the same finite index in \(F_0\) as \(L\), and \(F_0\) is finitely generated; hence there are only finitely many such conjugates. It follows that \(K_0\) has finite index in \(F_0\). In particular,
	\[
	K_0\neq \{e\},
	\qquad
	K_0\triangleleft G_0,
	\qquad
	K_0\le H.
	\]
	
	Now choose a finite transversal \(T\subset G\) for \(G/G_0\) with \(1\in T\), and define
	\[
	K:=\bigcap_{t\in T} tK_0t^{-1}.
	\]
	Then \(K\triangleleft G\) and \(K\le K_0\le H\). 
	We claim that \(K\neq \{e\}\). Since \(G_0\triangleleft G\), for each \(t\in T\) we have
	\(tK_0t^{-1}\triangleleft G_0\). Moreover, \(K_0\neq \{e\}\) implies
	\(tK_0t^{-1}\neq \{e\}\). As \(G_0\) is a torsion-free non-elementary Kleinian group,
	Lemma~\ref{lem:normal-intersection-kleinian} shows that
	\[
	K=\bigcap_{t\in T} tK_0t^{-1}\neq \{e\}.
	\]

	Finally, since \(K\triangleleft G\) and \(K\le H\), we get $K\le \operatorname{Core}_G(H).$ Therefore $\operatorname{Core}_G(H)\neq \{e\},$ as claimed.
\end{proof}

\begin{corollary}\label{cor:slc_positive_l2betti_hyperbolic}
	Let \(G\) be a hyperbolic group with \(\beta_1^{(2)}(G)>0\), and let \(H\le G\) be a finitely generated subgroup. Then	$(G,H)\in \mathbf{SLC}_{\mathrm{subexp}},$ iff $[G:H]<\infty$ or $H$ is virtually cyclic.
\end{corollary}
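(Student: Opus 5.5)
Sufficiency is handled as in Theorem~\ref{thm:F2-fg-classification-subexpSLC}. If \([G:H]=m<\infty\), then \(\|f\|_{h,(G,H)}\le\|f\|_1\le\sqrt m\,\|f\|_{(2,1),(G,H)}\), so the constant weight \(w\equiv1\) works. If \(H\) is virtually cyclic, then \(H\) is amenable. Moreover \(G\) is hyperbolic, hence has RD by de la Harpe--Jolissaint. Corollary~2.14 of \cite{ChatterjiZarka2024v1} then gives pair rapid decay, and a fortiori \((G,H)\in\mathbf{SLC}_{\mathrm{subexp}}\).

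For necessity, assume \((G,H)\in\mathbf{SLC}_{\mathrm{subexp}}\), \([G:H]=\infty\), and \(H\) not virtually cyclic. We aim for a contradiction, following the proof of Proposition~\ref{prop:slc_core_closed_hyperbolic_3_manifold}.

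\emph{Step 1: \(\Lambda(H)=\partial G\).} Note that \(G\) is non-elementary, since \(\beta_1^{(2)}(G)>0\) excludes finite and virtually cyclic groups. The inequality \eqref{eq:Hx-growth-estimate} was proved for an arbitrary finitely generated \(G\) and \(H\), so it holds here. Since \(H\) is not virtually cyclic, it contains two independent loxodromics. Gitik's theorem then yields a quasiconvex free subgroup \(F_2\le H\), which gives exponential growth of \(|H\cap B_G(R)|\). If \(\Lambda(H)\ne\partial G\), north--south dynamics produce \(x\) with \(x\Lambda(H)\cap\Lambda(H)=\varnothing\). Then \(K_x=H\cap xHx^{-1}\) is finite, and \eqref{eq:Hx-growth-estimate} forces subexponential growth of \(H\), a contradiction. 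So \(\Lambda(H)=\partial G\). Since \(H\) has infinite index, the criterion of \cite{KapovichShort1996} shows that \(H\) is not quasiconvex.

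\emph{Step 2: \(H\) has a nontrivial normal core.} Next apply Step 1 to every conjugate-intersection. For each \(x\), \eqref{eq:Hx-growth-estimate} shows that \(K_x\) has exponential growth in the ambient metric, since otherwise \(H\) would grow subexponentially. The aim is to show that \(N:=\operatorname{Core}_G(H)\) is infinite. For hyperbolic groups, an infinite intersection of finitely many conjugates still carries the growth bound. Iterating the estimate on \(H_k=H\cap x_1Hx_1^{-1}\cap\dots\) for finitely many \(x_i\) gives \(|H\cap B_G(R)|\le C^{2k}W(\cdot)^{2k}|H_k\cap B_G(2^kR)|\), so all finite intersections of conjugates of \(H\) are infinite, with full limit set. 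I expect this step to be the main obstacle: passing from ``every finite intersection of conjugates is large'' to ``the full intersection is nontrivial'' needs some finiteness. My first attempt would bound the number of distinct subgroups \(H\cap xHx^{-1}\), either via height/width results for subgroups of hyperbolic groups or via a counting argument using \eqref{eq:Hx-growth-estimate}.

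\emph{Step 3: contradiction with \(\beta_1^{(2)}(G)>0\).} By Gaboriau's theorem (or Lück's), a group with \(\beta_1^{(2)}(G)>0\) has no infinite, finitely generated, normal subgroup of infinite index. More generally, if \(N\trianglelefteq G\) is infinite and \(N\le H\) with \(H\) finitely generated and \([G:H]=\infty\), one gets a contradiction. The mechanism is this: Gaboriau's result that normal subgroups \(N\) of infinite index with \(\beta_1^{(2)}(N)<\infty\) force \(\beta_1^{(2)}(G)=0\), applied through the intermediate subgroup \(H\) (Peterson--Thom type \(s\)-normality vanishing: an \(s\)-normal subgroup \(H\) with \(\beta_1^{(2)}(H)<\infty\) of infinite index forces \(\beta_1^{(2)}(G)=0\)). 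Indeed, Step 2 shows that \(H\) is \(s\)-normal in \(G\), since \(H\cap xHx^{-1}\) is infinite for every \(x\). Moreover \(H\) is finitely generated, so \(\beta_1^{(2)}(H)<\infty\). The theorem of Peterson--Thom \cite{PetersonThom2011} then gives \(\beta_1^{(2)}(G)\le\beta_1^{(2)}(H)\cdot0=0\) when \([G:H]=\infty\), contradicting \(\beta_1^{(2)}(G)>0\). This route in fact bypasses the normal-core issue entirely and needs only Step 1's intersection estimate, so Step 2 reduces to the single-conjugate statement. This is the route I would commit to.
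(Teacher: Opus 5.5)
Your committed route is exactly the paper's proof of necessity: the intersection estimate \eqref{eq:Hx-growth-estimate}, combined with the exponential growth of \(H\), forces every \(H\cap xHx^{-1}\) to be infinite, so \(H\) is \(s\)-normal, and Peterson--Thom's Corollary~5.13 (an infinite finitely generated \(ws\)-normal subgroup of a group with \(\beta_1^{(2)}>0\) has finite index) contradicts \([G:H]=\infty\); your sufficiency argument is the one used in the free-group case. The limit-set Step~1 and the normal-core Step~2 are unnecessary, and the Peterson--Thom result should be cited in that precise form rather than as the inequality \(\beta_1^{(2)}(G)\le\beta_1^{(2)}(H)\cdot 0\).
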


\begin{proof}
	We prove the necessity by contradiction. For \(x\in G\), set $K_x:=H\cap xHx^{-1}.$ Exactly as in the proof of Proposition~\ref{prop:slc_core_closed_hyperbolic_3_manifold}, one obtains
	\begin{equation}\label{eq:Hx-growth-estimate-l2betti}
		|H\cap B_G(R)|
		\le
		C_h^2\,W(R+\ell(x))^2\,|K_x\cap B_G(2R)|
		\qquad (R\ge 0).
	\end{equation}

Since \(H\) is a finitely generated non-virtually-cyclic subgroup of the hyperbolic group \(G\), it contains a nonabelian free subgroup. Choose \(u,v\in H\) such that $\langle u,v\rangle\cong F_2,$
and set $L:=\max\{\ell(u),\ell(v)\}.$
Then every reduced word of length \(n\) in \(\{u^{\pm1},v^{\pm1}\}\) has \(G\)-length at most \(Ln\). Hence
\[
|H\cap B_G(Ln)|
\ge
|\langle u,v\rangle\cap B_G(Ln)|
\ge 4\cdot 3^{n-1}
\qquad (n\ge 1),
\]
so \(|H\cap B_G(R)|\) grows exponentially in \(R\).

	If there existed \(x\in G\) such that \(K_x\) were finite, then \eqref{eq:Hx-growth-estimate-l2betti}
	would imply that \(|H\cap B_G(R)|\) is bounded above by a subexponential function of \(R\), a contradiction.
	Hence \(K_x\) is infinite for every \(x\in G\). In other words, \(H\) is \(s\)-normal in \(G\), and therefore \(ws\)-normal.
	
	Now Corollary~5.13 of \cite{PetersonThom2011} applies: an infinite, finitely generated,
	\(ws\)-normal subgroup of a countable group \(G\) with \(\beta_1^{(2)}(G)>0\) must have finite index.
	Therefore \([G:H]<\infty\), contradicting our assumption. This contradiction proves the result.
\end{proof}

\begin{remark}
	If \(M\) is a closed hyperbolic \(d\)-manifold and \(G=\pi_1(M)\), then
	$\beta_i^{(2)}(G)=\beta_i^{(2)}(M),$ and \(\beta_i^{(2)}(M)\) may be nonzero only for \(i=d/2\). Hence all \(L^2\)-Betti numbers vanish when \(d\) is odd; in particular, for \(d=3\), $\beta_1^{(2)}(G)=0.$ Therefore, a hyperbolic group with \(\beta_1^{(2)}(G)>0\) cannot be isomorphic to \(\pi_1(M)\) for any closed hyperbolic \(3\)-manifold \(M\).

	On the other hand, \(\beta_1^{(2)}(G)>0\) does \emph{not} imply local quasiconvexity. Indeed, by \cite{Kapovich1995} there exists a hyperbolic group \(H_0\) containing a finitely presented non-quasiconvex subgroup \(K\). For \(N\ge 1\), define
	\[
	G:=H_0*F_N.
	\]
	Then \(G\) is hyperbolic, and \(H_0\) is quasiconvex in \(G\) (as a free factor). Since \(H_0\le G\) is quasiconvex, any subgroup of \(H_0\) that is quasiconvex in \(G\) is quasiconvex in \(H_0\); thus \(K\) remains non-quasiconvex in \(G\). Consequently, \(G\) is not locally quasiconvex.
	
	Now write
	\[
	H_0=\langle x_1,\dots,x_m\mid R_1,\dots,R_r\rangle,
	\]
	so
	\[
	G=\langle x_1,\dots,x_m,y_1,\dots,y_N\mid R_1,\dots,R_r\rangle.
	\]
	Applying \cite[Thm.~3.2]{PetersonThom2011} gives
	\[
	\beta_1^{(2)}(G)\ge (m+N)-1-r.
	\]
	Hence choosing \(N>r-m+1\) we obtain \(\beta_1^{(2)}(G)>0\). In particular,
   $\beta_1^{(2)}(G) > 0$ does not imply that $G$ is locally quasiconvex.
\end{remark}

\begin{lemma}\label{lem:poly-growth-schreier-commensurable-normal}
	Let \(G\) be a finitely generated group and \(H\le G\). Suppose that there exist a finite-index subgroup
	\(\Gamma\le G\) and a normal subgroup \(F\triangleleft \Gamma\) such that \(\Gamma/F\) has polynomial growth
	and \(H\cap \Gamma\) is commensurable with \(F\). Then, for every finite symmetric generating set \(S\) of \(G\),
	the Schreier graph \(\mathcal S(G,H,S)\) has polynomial growth.
\end{lemma}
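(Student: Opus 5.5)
The plan is to reduce the claim to a comparison of orbit growth on two coset spaces, using only that finite-index passages and commensurability change coset counts by bounded factors. The growth of the Schreier graph $\mathcal S(G,H,S)$ is the function $n\mapsto |B_G(n)H/H|$, the number of cosets $gH$ with $\ell_S(g)\le n$. Since Schreier graph growth types for different finite generating sets are equivalent up to linear rescaling, it suffices to prove polynomial growth for one $S$. Throughout, set $H_\Gamma:=H\cap\Gamma$ and $E:=H_\Gamma\cap F$. By commensurability, $E$ has finite index both in $H_\Gamma$ and in $F$.

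\textbf{Step 1 (pass to $\Gamma$).} Fix a finite transversal $T$ for $G/\Gamma$, so $G=\bigcup_{t\in T}\Gamma t$. By the standard Schreier argument, there is a constant $c$ with the following property: every $g\in B_G(n)$ can be written as $g=\gamma t$ with $t\in T$ and $\ell_\Gamma(\gamma)\le cn+c$, where $\ell_\Gamma$ is a word length on $\Gamma$. Hence
\[
|B_G(n)H/H|\le \sum_{t\in T}\bigl|\{\gamma tH:\ \ell_\Gamma(\gamma)\le cn+c\}\bigr|.
\]
For each fixed $t$, the map $\gamma tH\mapsto \gamma\,(tHt^{-1}\cap\Gamma)$ is injective on the cosets meeting $\Gamma t$. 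This is the only place where $H$ is conjugated, and I will handle it by noting that $tHt^{-1}\cap\Gamma$ contains $t E t^{-1}\cap\Gamma$. The subgroup $tFt^{-1}\cap\Gamma$ is not obviously normal in $\Gamma$, so I expect this to be the main obstacle. What I would try first is to replace $\Gamma$ by its normal core $\Gamma_0\triangleleft G$ of finite index and $F$ by $F_0:=F\cap\Gamma_0$. Then $F_0\triangleleft\Gamma_0$, the quotient $\Gamma_0/F_0$ embeds with finite index in $\Gamma/F$ and so still has polynomial growth, and $H\cap\Gamma_0$ is commensurable with $F_0$. Moreover, for $t\in G$, conjugation by $t$ is an automorphism of $\Gamma_0$ that carries $F_0$ to a normal subgroup $tF_0t^{-1}$ of $\Gamma_0$ with isomorphic quotient, which therefore also has polynomial growth. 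This removes the normality issue.

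\textbf{Step 2 (reduce to a normal quotient).} With $\Gamma_0$ in hand, each count becomes the growth of the $\Gamma_0$-orbit of a coset in $\Gamma_0/(tHt^{-1}\cap\Gamma_0)$. The subgroup $tHt^{-1}\cap\Gamma_0$ contains $tE_0t^{-1}$ with $E_0:=H\cap F_0$, and $E_0$ has finite index $m$ in $F_0$. Therefore each coset of $tHt^{-1}\cap\Gamma_0$ is a union of cosets of $tE_0t^{-1}$, so
\[
|\{\gamma (tHt^{-1}\cap\Gamma_0):\ell(\gamma)\le r\}|\le |\{\gamma\, tE_0t^{-1}:\ell(\gamma)\le r\}|\le m\,|\{\gamma\, tF_0t^{-1}:\ell(\gamma)\le r\}|.
\]
The last factor is the size of the $r$-ball in the Cayley graph of $\Gamma_0/tF_0t^{-1}$, which is polynomial in $r$.

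\textbf{Step 3 (assemble).} Summing over the finitely many $t\in T$, each term is bounded by $m$ times a polynomial in $cn+c$. Hence $|B_G(n)H/H|$ grows polynomially, and $\mathcal S(G,H,S)$ has polynomial growth for every finite symmetric generating set $S$.
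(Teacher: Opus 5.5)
Your proposal is correct and follows the paper's proof essentially step for step. Both arguments pass to the normal core of $\Gamma$ and bound coset counts through $E_0=H\cap F_0\le tHt^{-1}\cap\Gamma_0$ (up to conjugation), losing a factor $[F_0:E_0]$. Both then use a Reidemeister--Schreier decomposition over a finite transversal, with each conjugate $tF_0t^{-1}$ normal in $\Gamma_0$ and having quotient isomorphic to $\Gamma_0/F_0$. The only difference is ordering: you bound each conjugated count directly, whereas the paper first proves polynomial growth of $G_0/H_0$ and then applies that argument to each $tH_0t^{-1}$.
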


\begin{proof}
	Let
	\[
	G_0:=\operatorname{Core}_G(\Gamma)=\bigcap_{g\in G}g\Gamma g^{-1},\qquad
	H_0:=H\cap G_0,\qquad
	F_0:=F\cap G_0.
	\]
	Then $G_0\triangleleft G$ has finite index, $F_0\triangleleft G_0$, and
	\[
	G_0/F_0\cong G_0F/F\le \Gamma/F,
	\]
	so $G_0/F_0$ has polynomial growth. Since $H\cap\Gamma$ is commensurable with $F$, also
	$H_0=(H\cap\Gamma)\cap G_0$ is commensurable with $F_0=F\cap G_0$, hence
	$L:=H_0\cap F_0$ has finite index in both $H_0$ and $F_0$.
	
	Let $\pi:G_0/L\to G_0/F_0$ and $\rho:G_0/L\to G_0/H_0$ be the natural maps.
	Then each fiber of $\pi$ has size $[F_0:L]$, while $\rho$ is surjective. Therefore, for any finite
	symmetric generating set $U$ of $G_0$ and every $R$,
	\[
	|B_{G_0/L,U}(R)|\le [F_0:L]\cdot |B_{G_0/F_0,U}(R)|,\qquad
	|B_{G_0/H_0,U}(R)|\le |B_{G_0/L,U}(R)|.
	\]
	Hence polynomial growth of $G_0/F_0$ implies polynomial growth of $G_0/L$, and therefore of $G_0/H_0$.
	
	Finally, let $T\subset G$ be a finite set of representatives for $G_0\backslash G$, chosen with $1\in T$, and define
	\[
	C:=\{\,tsu^{-1}\in G_0:\ t,u\in T,\ s\in S,\ ts\in G_0u\,\}.
	\]
	By Reidemeister--Schreier, $C$ generates $G_0$; set $\widetilde C:=C\cup C^{-1}$. Moreover, a standard Reidemeister--Schreier rewriting shows that if $\ell_S(g)\le R$, then $g=g_0t$ for some $t\in T$ with $\ell_{\widetilde C}(g_0)\le R$. Hence
	\[
	B_{\mathcal S(G,H,S)}(R)
	\subseteq
	\bigcup_{t\in T}\{\,g_0tH:\ \ell_{\widetilde C}(g_0)\le R\,\}.
	\]
	For each $t\in T$, the set on the right is naturally identified with the radius-$R$ ball in the Schreier graph of
	$G_0/(G_0\cap tHt^{-1})$ with respect to $\widetilde C$. Since	$G_0\cap tHt^{-1}=tH_0t^{-1}$
	is commensurable with $tF_0t^{-1}\triangleleft G_0$, and $G_0/tF_0t^{-1}\cong G_0/F_0$  has polynomial growth, the preceding argument shows that each such Schreier graph has polynomial growth. Since $T$ is finite, the above inclusion yields a polynomial upper bound on $|B_{\mathcal S(G,H,S)}(R)|$, and hence $\mathcal S(G,H,S)$ has polynomial growth.
\end{proof}

\medskip

\begin{theorem}\label{thm:slc-classification-closed-hyperbolic-3-manifold}
	Let \(M\) be a closed hyperbolic \(3\)-manifold, let \(G=\pi_1(M)\), and let \(H\leq G\) be a finitely generated subgroup. Then $(G,H)\in \mathbf{SLC}_{\mathrm{subexp}}$ if and only if either $H$ is virtually cyclic or $\operatorname{Core}_G(H)\neq \{e\}$.
\end{theorem}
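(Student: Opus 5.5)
The proof splits into the two implications, and the necessity is already done. If $(G,H)\in\mathbf{SLC}_{\mathrm{subexp}}$ and $H$ is not virtually cyclic, there are two cases. If $[G:H]<\infty$, then $\operatorname{Core}_G(H)$ has finite index in the infinite group $G$, so it is nontrivial. If $[G:H]=\infty$, Proposition~\ref{prop:slc_core_closed_hyperbolic_3_manifold} gives $\operatorname{Core}_G(H)\neq\{e\}$ directly.

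For sufficiency, the virtually cyclic case goes as in Theorem~\ref{thm:F2-fg-classification-subexpSLC}. $G$ is hyperbolic, so it has RD, and $H$ is amenable. Corollary~2.14 of \cite{ChatterjiZarka2024v1} then gives pair rapid decay, and in particular $(G,H)\in\mathbf{SLC}_{\mathrm{subexp}}$.

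The substantive case is $N:=\operatorname{Core}_G(H)\neq\{e\}$ with $H$ finitely generated. The plan is to reduce to Lemma~\ref{lem:poly-growth-schreier-commensurable-normal} and Remark~\ref{rem:subexp-analogue-thm12}.

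\textbf{Step 1: $\Lambda(H)=\partial G$.} $N$ is a nontrivial normal subgroup of the torsion-free group $G$, so it is infinite. An infinite normal subgroup of a non-elementary hyperbolic group has full limit set (\cite[Theorem~12.2(5)]{KapovichBenakli2002}). Since $N\le H$, we get $\Lambda(H)=\partial G$.

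\textbf{Step 2: split by index.} If $[G:H]<\infty$, then $\mathbf{SLC}_{\mathrm{subexp}}$ holds with $w\equiv1$, as in the free group argument. Otherwise, $H$ has infinite index and full limit set, so it is not geometrically finite; this is the argument in Proposition~\ref{prop:slc_core_closed_hyperbolic_3_manifold}. By the Subgroup Tameness Theorem, $H$ is then a virtual surface fiber subgroup. Concretely, there are a finite-index $\Gamma\le G$ and a fiber subgroup $F\triangleleft\Gamma$ with $\Gamma/F\cong\mathbb Z$, and after conjugating, $H$ is commensurable with $F$. The step I expect to need most care is the bookkeeping for the hypothesis of Lemma~\ref{lem:poly-growth-schreier-commensurable-normal}, which asks that $H\cap\Gamma$ itself be commensurable with $F$. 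This is fine: $[H:H\cap\Gamma]<\infty$, so $H\cap\Gamma$ is commensurable with $H$, hence with $F$. Conjugating back does not affect the conclusion, since conjugating $H$ only changes the Schreier graph by an isomorphism.

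\textbf{Step 3: polynomial growth of the Schreier graph.} Lemma~\ref{lem:poly-growth-schreier-commensurable-normal} now shows that $\mathcal S(G,H,S)$ has polynomial growth, and in particular subexponential growth.

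\textbf{Step 4: conclude.} The forward direction of Remark~\ref{rem:subexp-analogue-thm12} uses no co-amenability and gives $(G,H)\in\mathbf{SLC}_{\mathrm{subexp}}$ directly.

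Alternatively, one could note that $H\cap\Gamma_0$ is co-amenable in a finite-index $\Gamma_0$ and invoke finite-index permanence. The direct Schreier-graph route avoids needing that permanence statement.
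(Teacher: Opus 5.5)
Your proposal is correct and follows the paper's proof essentially step for step: the nontrivial core gives $\Lambda(H)=\partial G$ via \cite[Theorem~12.2(5)]{KapovichBenakli2002}, tameness and Canary's covering theorem reduce the infinite-index case to a virtual surface fiber subgroup, and Lemma~\ref{lem:poly-growth-schreier-commensurable-normal} gives polynomial growth of the Schreier graph $G/H$. The differences are minor: you finish with the forward direction of Remark~\ref{rem:subexp-analogue-thm12} instead of Lemma~3.3 of \cite{ChatterjiZarka2024v1}, and you treat the virtually cyclic case explicitly, which the paper's proof of this theorem leaves implicit even though it needs separate treatment, since a cyclic subgroup here has trivial core.
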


\begin{proof}
	For necessity, assume that \((G,H)\in \mathbf{SLC}_{\mathrm{subexp}}\) and that
	\(H\) is not virtually cyclic. If \([G:H]=\infty\), then
	Proposition~\ref{prop:slc_core_closed_hyperbolic_3_manifold} gives
	\(\operatorname{Core}_G(H)\neq\{e\}\). If \([G:H]<\infty\), then
	\(\operatorname{Core}_G(H)\) is the kernel of the action of \(G\) on the finite
	set \(G/H\), so it has finite index in \(G\). Since \(G\) is infinite, it follows
	that \(\operatorname{Core}_G(H)\neq\{e\}\). Thus either \(H\) is virtually cyclic
	or \(\operatorname{Core}_G(H)\neq\{e\}\).
     For the converse, assume that \(N:=\operatorname{Core}_G(H)\neq\{e\}\).
	Since \(G=\pi_1(M)\) is torsion-free and hyperbolic, the nontrivial normal
	subgroup \(N\triangleleft G\) is infinite, hence \(\Lambda(N)\neq\varnothing\).
    By Theorem~12.2(5) in \cite{KapovichBenakli2002}, since \(N\triangleleft G\), we have
	\[
	\Lambda(N)=\Lambda(G)=\partial G.
	\]
	As \(N\le H\), it follows that \(\Lambda(H)=\partial G\).
	Exactly as in the proof of Proposition~\ref{prop:slc_core_closed_hyperbolic_3_manifold},
	once one knows that \(\Lambda(H)=\partial G\), tameness together with Canary's
	covering theorem implies that \(H\) is either finite index in \(G\) or a virtual
	surface fiber subgroup. 
	
	If \([G:H]<\infty\), then \((G,H)\in \mathbf{SLC}_{\mathrm{subexp}}\). Assume therefore that \(H\) is a virtual surface fiber subgroup, After conjugating
	if necessary there exists a finite-index subgroup \(\Gamma\le G\) and a fiber subgroup
	\(F\triangleleft \Gamma\) with \(\Gamma/F\cong \mathbb Z\) such that
	\(H\cap \Gamma\) is commensurable with \(F\). Hence the hypotheses of Lemma~\ref{lem:poly-growth-schreier-commensurable-normal} are satisfied, and therefore \(G/H\) has polynomial growth. So \((G,H)\in \mathbf{SLC}_{\mathrm{subexp}}\) by Lemma~3.3 in \cite{ChatterjiZarka2024v1}.
\end{proof}

\begin{corollary}\label{cor:slc-classification-closed-hyperbolic-3-manifold}
	Let \(M\) be a closed hyperbolic \(3\)-manifold, let \(G=\pi_1(M)\), and let \(H\leq G\) be a finitely generated subgroup. Then $(G,H)\in \mathbf{SLC}_{\mathrm{subexp}}$ if and only if either $H$ is virtually cyclic or $H$ is $s$-normal .
\end{corollary}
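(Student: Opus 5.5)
The corollary should follow by combining Theorem~\ref{thm:slc-classification-closed-hyperbolic-3-manifold} with two observations linking $s$-normality and nontrivial normal core. The plan is to show that, for a finitely generated $H\le G=\pi_1(M)$ that is not virtually cyclic, $\operatorname{Core}_G(H)\neq\{e\}$ holds if and only if $H$ is $s$-normal. The virtually cyclic case is identical in both statements, so this reduction suffices.

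\emph{Core nontrivial $\Rightarrow$ $s$-normal.} If $N:=\operatorname{Core}_G(H)\neq\{e\}$, then $N$ is infinite because $G$ is torsion-free, so $H$ is infinite. For every $g\in G$ we have $N=gNg^{-1}\le H\cap gHg^{-1}$. Hence every such intersection is infinite, which is exactly $s$-normality in the sense of Definition~\ref{def:s-normal}.

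\emph{$s$-normal $\Rightarrow$ $(G,H)\in\mathbf{SLC}_{\mathrm{subexp}}$ when $H$ is not virtually cyclic.} I would rerun the middle part of the proof of Proposition~\ref{prop:slc_core_closed_hyperbolic_3_manifold}, reversing which hypothesis is assumed. The aim is to show $\Lambda(H)=\partial G$.

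Suppose instead that $\Lambda(H)\neq\partial G$. The north--south dynamics argument there produces $x=b^n$ with $x\Lambda(H)\cap\Lambda(H)=\varnothing$. Then $\Lambda(H\cap xHx^{-1})=\varnothing$, so $H\cap xHx^{-1}$ is finite, contradicting $s$-normality. Here no $\mathbf{SLC}_{\mathrm{subexp}}$ estimate is needed, since finiteness of $K_x$ directly contradicts the hypothesis.

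With $\Lambda(H)=\partial G$ in hand, I would apply subgroup tameness and Canary's covering theorem exactly as in that proof. They give that $H$ is either of finite index or a virtual surface fiber subgroup. The construction in that proof, using $G_0$, $K_0$, $K$ and Lemma~\ref{lem:normal-intersection-kleinian}, then yields $\operatorname{Core}_G(H)\neq\{e\}$ in the fiber case. In the finite-index case it is immediate that the core is nontrivial. Theorem~\ref{thm:slc-classification-closed-hyperbolic-3-manifold} then gives $(G,H)\in\mathbf{SLC}_{\mathrm{subexp}}$.

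For the forward direction of the corollary, Theorem~\ref{thm:slc-classification-closed-hyperbolic-3-manifold} gives that $H$ is virtually cyclic or has nontrivial core. In the latter case the first observation gives $s$-normality.

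The main obstacle is the step $s$-normal $\Rightarrow\Lambda(H)=\partial G$. It needs the fact that the limit set of an intersection lies in the intersection of the limit sets, together with the fact that an infinite subgroup of a hyperbolic group has nonempty limit set. Both are standard for Kleinian groups, for instance by choosing a loxodromic element of the infinite torsion-free subgroup. I would cite these as in the earlier proposition rather than reprove them.
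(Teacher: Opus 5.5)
Your proposal is correct and follows essentially the same route as the paper. The forward direction goes through nontrivial normal core and torsion-freeness exactly as the paper does. The converse uses the same north--south limit-set argument, which the paper phrases as ``an infinite-index $s$-normal $H$ cannot be quasiconvex'', and then tameness plus Canary's covering theorem to reduce to the virtual fiber case. The only difference is your detour through $\operatorname{Core}_G(H)\neq\{e\}$ followed by an appeal to Theorem~\ref{thm:slc-classification-closed-hyperbolic-3-manifold}, whereas the paper passes directly from the virtual fiber case to the Schreier-graph growth argument in that theorem's proof; both are valid.
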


\begin{proof}
    If \((G,H)\in \mathbf{SLC}_{\mathrm{subexp}}\) and \(H\) is not virtually cyclic, then Proposition~\ref{prop:slc_core_closed_hyperbolic_3_manifold} yields \(\operatorname{Core}_G(H)\neq\{e\}\). Since \(G=\pi_1(M)\) is torsion-free, \(\operatorname{Core}_G(H)\) is infinite, and hence \(H\) is \(s\)-normal in \(G\).
    
    Conversely, If \(H\) is of infinite index, \(s\)-normal, and not virtually cyclic, then \(H\)
    cannot be quasiconvex. Since \(H\) is finitely generated, tameness together with
    Canary's covering theorem implies that \(H\) is either geometrically finite or a
    virtual fiber subgroup. In the present setting, geometric finiteness is equivalent
    to quasiconvexity by Proposition~4.4.2 of \cite{AschenbrennerFriedlWilton2015}, so
    the former is impossible. Hence \(H\) is a virtual fiber subgroup, and the proof of
    Theorem~\ref{thm:slc-classification-closed-hyperbolic-3-manifold} shows that
    \((G,H)\in \mathbf{SLC}_{\mathrm{subexp}}\). The virtually cyclic case is immediate.
\end{proof}

\medskip

   Although for \(G=\pi_1(M)\) with \(M\) a closed hyperbolic \(3\)-manifold, the conditions that \(H\) be \(s\)-normal in \(G\) and that \(\operatorname{Core}_G(H)\neq \{e\}\) play the same role, these two notions are distinct in general, and the corresponding classification statements need not coincide for other classes of groups.


	
	


\begin{proposition}\label{prop:slc_necessity_kleinian_parabolic}
	Let \(G<\mathrm{Isom}(\mathbb H^3)\) be a non-elementary finitely generated Kleinian group, and let \(H\le G\) be a finitely generated subgroup. If \((G,H)\in \mathbf{SLC}_{\mathrm{subexp}}\), \(H\) is not virtually cyclic, and \(\operatorname{Core}_G(H)=\{e\}\), then \(H\) is a rank-\(2\) parabolic subgroup.
\end{proposition}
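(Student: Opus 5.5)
The plan is to follow the proof of Proposition~\ref{prop:slc_core_closed_hyperbolic_3_manifold}, with two changes: use relative hyperbolicity of geometrically finite groups where hyperbolicity was used before, and handle the possibility that $G$ is geometrically infinite or has torsion. First pass to a torsion-free finite-index subgroup $G'\le G$ (Selberg) and set $H':=H\cap G'$. The $\mathbf{SLC}_{\mathrm{subexp}}$ estimate restricts to $(G',H')$ exactly as in the proof of Corollary~\ref{thm:SL2Z-fg-classification-subexpSLC}, since the $(2,1)$-norms agree on $G'$-orbits in $G/H$. Moreover, $H'$ is not virtually cyclic. If $\operatorname{Core}_{G'}(H')\ne\{e\}$, intersect its finitely many $G$-conjugates and apply Lemma~\ref{lem:normal-intersection-kleinian} to obtain a nontrivial normal subgroup of $G$ inside $H$. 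So I may assume $G$ is torsion-free.

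The first key step is the growth inequality \eqref{eq:Hx-growth-estimate}, proved exactly as before: $|H\cap B_G(R)|\le C_h^2W(R+\ell(x))^2|K_x\cap B_G(2R)|$ with $K_x=H\cap xHx^{-1}$. Next split on whether $H$ is elementary.
\begin{enumerate}
\item If $H$ is elementary and not virtually cyclic, then $H$ is parabolic, hence virtually $\mathbb Z^2$, which gives the conclusion.
\item If $H$ is non-elementary, it contains a Schottky subgroup generated by powers of two independent loxodromics. By Gitik/relative quasiconvexity this subgroup is undistorted, so $|H\cap B_G(R)|$ grows exponentially. The displayed inequality then forces $K_x$ to be infinite for every $x$, i.e.\ $H$ is $s$-normal.
\end{enumerate}

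Then run the limit-set argument. If $\Lambda(H)\subsetneq\Lambda(G)$, pick a loxodromic $b\in G$ with $b^{\pm}\notin\Lambda(H)$, using density of loxodromic fixed-point pairs in $\Lambda(G)^2$. For large $n$, $x=b^n$ gives $\Lambda(H)\cap x\Lambda(H)=\varnothing$. Then $K_x$ has empty limit set, so it is finite or at most a parabolic subgroup fixing a single point, which is impossible because that point would lie in both limit sets. This contradicts $s$-normality, so $\Lambda(H)=\Lambda(G)$. Now invoke the Tameness Theorem together with Canary's covering theorem, in the form of Theorem~4.1.2 of \cite{AschenbrennerFriedlWilton2015} for finitely generated Kleinian groups. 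Either $H$ is geometrically finite, or $H$ is a virtual fiber of a finite-volume (doubly degenerate) cover. In the fiber case, the core argument of Proposition~\ref{prop:slc_core_closed_hyperbolic_3_manifold} (cores in a normal finite-index $G_0$ plus Lemma~\ref{lem:normal-intersection-kleinian}) yields $\operatorname{Core}_G(H)\ne\{e\}$, a contradiction. The doubly degenerate case in which $G$ itself is geometrically infinite should be handled by Thurston/Canary as well: a finitely generated infinite-index subgroup with full limit set in a finitely generated $G$ is a virtual fiber.

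The main obstacle is the geometrically finite case: $H$ geometrically finite, of infinite index, with $\Lambda(H)=\Lambda(G)$. For $G$ geometrically finite, I would use the convex-core volume argument to show that $[G:H]<\infty$. The convex hulls coincide, so the convex core of $\mathbb H^3/H$ covers that of $\mathbb H^3/G$. The former has finite volume away from cusps, so the covering degree is finite, except for the issue of whether rank-one cusps of $H$ unwrap rank-two cusps of $G$. I expect that subtlety, together with the case where $G$ is geometrically infinite but $H$ is geometrically finite with full limit set (excluded by Ahlfors/Canary: a geometrically finite group's limit set cannot equal a degenerate group's limit set unless the index is finite), to require the most careful citation. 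Once $[G:H]<\infty$ is forced, $\operatorname{Core}_G(H)$ has finite index and is nontrivial, contradicting the hypothesis. Hence the only surviving case is the elementary one, namely that $H$ is a rank-$2$ parabolic subgroup.
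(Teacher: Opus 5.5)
Your outline is sound and matches the paper's proof up to $\Lambda(H)=\Lambda(G)$: the growth inequality \eqref{eq:Hx-growth-estimate}, exponential growth of $|H\cap B_G(R)|$ from a free subgroup, and the north--south argument producing a finite $K_x$. For the growth step no undistortion is needed, since words of length $n$ have $G$-length at most $Mn$. Your initial reduction to torsion-free $G$ also works, provided you take $G'\le G^+$ normal in $G$. Then the $G$-conjugates of $\operatorname{Core}_{G'}(H')$ are normal in $G'$, and Lemma~\ref{lem:normal-intersection-kleinian} applies. The paper instead passes to a torsion-free $G_1\le G^+$ only at the end.

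The real divergence is the last step. The paper applies Theorem~1.5 of \cite{yang2010} to $H_1=H\cap G_1$, which is finitely generated, of infinite index, and satisfies $\Lambda(H_1)=\Lambda(G_1)$. This directly gives finite-index subgroups $G_0\le G_1$, $H_0\le H_1$ with $H_0\triangleleft G_0$ and $G_0/H_0\cong\mathbb Z$. The paper then runs the core argument, with no geometrically finite versus infinite split.

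Your split via \cite[Theorem~4.1.2]{AschenbrennerFriedlWilton2015} is mis-scoped, because that theorem concerns finite-volume manifolds. For infinite-covolume $G$ you must therefore use the full-limit-set consequence of Canary's covering theorem, which you quote at the end of that paragraph. That statement already excludes a geometrically finite $H$ of infinite index with $\Lambda(H)=\Lambda(G)$. So the case you call the main obstacle never arises.

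If you want a direct argument for that case anyway, the cusp worry is unfounded. Geometric finiteness gives $\operatorname{vol}N_1(C(H))<\infty$ for the $1$-neighbourhood of the convex core of $\mathbb H^3/H$, and this includes the cusp regions. Since the convex hulls coincide, $N_1(C(H))\to N_1(C(G))$ is a covering of degree $[G:H]$. Hence $[G:H]\le \operatorname{vol}N_1(C(H))/\operatorname{vol}N_1(C(G))<\infty$, whether or not $G$ is geometrically finite.
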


\begin{proof}

	Fix a word length \(\ell\) on \(G\), and let \(C_h>0\) and \(w:G\to[1,\infty)\) witness \((G,H)\in \mathbf{SLC}_{\mathrm{subexp}}\). Write
	\[
	W(R):=\sup_{\ell(g)\le R}w(g).
	\]
	Since \(\operatorname{Core}_G(H)=\{e\}\) and \(G\) is infinite, necessarily \([G:H]=\infty\).
	
	For \(x\in G\), set \(K_x:=H\cap xHx^{-1}\). Exactly as in the proof of Proposition~\ref{prop:slc_core_closed_hyperbolic_3_manifold}, one obtains
	\begin{equation}\label{eq:Hx-growth-estimate-kleinian1}
		|H\cap B_G(R)|
		\le
		C_h^2\,W(R+\ell(x))^2\,|K_x\cap B_G(2R)|
		\qquad (x\in G,\ R\ge0).
	\end{equation}
	
	Assume first that \(H\) is non-elementary. Then \(H\) contains a free subgroup
	\(F=\langle u,v\rangle\cong F_2\). Let $M:=\max\{\ell(u),\ell(v)\}.$ Then every reduced word in \(u^{\pm1},v^{\pm1}\) of length at most \(n\) has
	ambient \(G\)-length at most \(Mn\). Hence $|H\cap B_G(Mn)|\ge |F\cap B_F(n)|.$
	Since \(F\cong F_2\), the right-hand side grows exponentially in \(n\). Therefore
	there exist constants \(a>1\) and \(c>0\) such that
	\begin{equation}\label{eq:H-exp-growth-kleinian1}
		|H\cap B_G(R)|\ge c\,a^R
		\qquad (R\gg1).
	\end{equation}
	
	The same limit-set argument as in Proposition~\ref{prop:slc_core_closed_hyperbolic_3_manifold}, with \(\partial G\) replaced by \(\Lambda(G)\), together with \eqref{eq:Hx-growth-estimate-kleinian1} and \eqref{eq:H-exp-growth-kleinian1}, shows that \(\Lambda(H)=\Lambda(G)\).

	Let \(G^+:=G\cap \mathrm{Isom}^+(\mathbb H^3)\), and choose a torsion-free finite-index subgroup \(G_1\le G^+\). Set \(H_1:=H\cap G_1\). Then \(H_1\) is finitely generated, non-elementary, and of infinite index in \(G_1\), and
	\[
	\Lambda(H_1)=\Lambda(H)=\Lambda(G)=\Lambda(G_1).
	\]
	By Theorem~1.5 of \cite{yang2010}, there exist finite-index subgroups \(G_0\le G_1\) and \(H_0\le H_1\) such that
	\[
	H_0\triangleleft G_0,
	\qquad
	G_0/H_0\cong \mathbb Z.
	\]
	
	Set \(N:=\operatorname{Core}_G(G_0)\) and \(F:=H_0\cap N\). Then \(N\triangleleft G\) has finite index, and \(F\triangleleft N\). Since \(N\) has finite index in \(G_0\), the subgroup \(F\) has finite index in \(H_0\), and in particular \(F\neq\{e\}\).
	
	Choose a finite transversal \(T\subset G\) for \(G/N\), with \(1\in T\), and define
	\[
	J:=\bigcap_{t\in T} tFt^{-1}.
	\]
	Since each \(tFt^{-1}\) is a nontrivial normal subgroup of the torsion-free non-elementary Kleinian group \(N\), Lemma~\ref{lem:normal-intersection-kleinian} gives \(J\neq\{e\}\). Moreover, \(J\triangleleft G\), and because \(1\in T\) we have \(J\le F\le H\). Hence $J\le \operatorname{Core}_G(H),$	contradicting the assumption \(\operatorname{Core}_G(H)=\{e\}\).
	
	Therefore \(H\) cannot be non-elementary. Thus \(H\) is elementary. Since \(H\) is not virtually cyclic, the standard classification of finitely generated elementary Kleinian groups implies that \(H\) must be a rank-\(2\) parabolic subgroup.
\end{proof}

\begin{theorem}\label{th:H3}
	Let \(G<\mathrm{Isom}(\mathbb H^3)\) be a non-elementary finitely generated discrete subgroup, and let \(H\le G\) be a finitely generated subgroup. Then \((G,H)\in \mathbf{SLC}_{\mathrm{subexp}}\) if and only if \(H\) is virtually cyclic, or \(H\) is a rank-\(2\) parabolic subgroup, or $H$ is $s$-normal.
\end{theorem}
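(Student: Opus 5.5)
We prove the two directions separately. The necessity direction reduces to Proposition~\ref{prop:slc_necessity_kleinian_parabolic}; sufficiency is a case analysis over the three alternatives.

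\emph{Necessity.} Assume \((G,H)\in \mathbf{SLC}_{\mathrm{subexp}}\) and that \(H\) is not virtually cyclic.

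If \(\operatorname{Core}_G(H)=\{e\}\), then Proposition~\ref{prop:slc_necessity_kleinian_parabolic} shows directly that \(H\) is a rank-\(2\) parabolic subgroup.

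If \(\operatorname{Core}_G(H)=:N\neq\{e\}\), we show \(H\) is \(s\)-normal. Since \(N\le H\cap gHg^{-1}\) for every \(g\in G\), it suffices to show that \(N\) is infinite. Suppose instead that \(N\) is finite and nontrivial. A finite normal subgroup of \(G\) fixes a point of \(\mathbb H^3\cup\partial\mathbb H^3\). The fixed set of \(N\) is a \(G\)-invariant totally geodesic subspace (a point, geodesic or plane) or a boundary subset. Then \(G\) preserves this set, so \(\Lambda(G)\) lies in a round circle or \(G\) is elementary. In the elementary case we have a contradiction at once.

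The circle case (a Fuchsian-type \(G\) together with a reflection or rotation in the kernel) is the main obstacle. Here I would not insist on \(s\)-normality through \(N\). Instead I would rerun the growth argument from Proposition~\ref{prop:slc_core_closed_hyperbolic_3_manifold}: using \eqref{eq:Hx-growth-estimate-kleinian1}, exponential growth of a non-elementary \(H\) forces every \(K_x\) to be infinite, which is \(s\)-normality directly. If \(H\) is elementary and not virtually cyclic, it is rank-\(2\) parabolic anyway. So in every case the three-way disjunction holds.

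\emph{Sufficiency.} We take the three alternatives in turn.

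If \(H\) is virtually cyclic, then \(H\) is amenable. A finitely generated Kleinian group is relatively hyperbolic relative to virtually abelian cusp subgroups, so it has RD by \cite{DrutuSapir2005}. Corollary~2.14 of \cite{ChatterjiZarka2024v1} then gives pair RD, hence \((G,H)\in\mathbf{SLC}_{\mathrm{subexp}}\).

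If \(H\) is rank-\(2\) parabolic, it is again amenable (virtually \(\mathbb Z^2\)), and the same argument applies.

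If \(H\) is \(s\)-normal, of infinite index and not virtually cyclic, I would first show \(\Lambda(H)=\Lambda(G)\). If not, the north--south dynamics argument from Proposition~\ref{prop:surface-subexp-obstruction} produces \(x\) with \(H\cap xHx^{-1}\) elementary. For a non-elementary \(H\), such an intersection being infinite is at first sight still possible, so this step needs care. I would argue that an infinite elementary \(K_x\) has limit set meeting \(\Lambda(H)\cap x\Lambda(H)=\varnothing\), which gives a contradiction.

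With \(\Lambda(H)=\Lambda(G)\) in hand, pass to a torsion-free finite-index subgroup and apply Theorem~1.5 of \cite{yang2010} to obtain finite-index \(H_0\triangleleft G_0\) with \(G_0/H_0\cong\mathbb Z\). Lemma~\ref{lem:poly-growth-schreier-commensurable-normal} then makes the Schreier graph \(G/H\) of polynomial growth, and Lemma~3.3 of \cite{ChatterjiZarka2024v1} gives \((G,H)\in\mathbf{SLC}_{\mathrm{subexp}}\).

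If \(H\) is \(s\)-normal but elementary and not virtually cyclic, it is rank-\(2\) parabolic and already covered. The finite-index case is trivial.
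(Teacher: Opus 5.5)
Your proposal is correct and follows the same plan as the paper. The differences lie in three sub-steps.

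\emph{Necessity.} The split on \(\operatorname{Core}_G(H)\), the appeal to Proposition~\ref{prop:slc_necessity_kleinian_parabolic}, and the discussion of a finite normal subgroup fixing a plane are all unnecessary. The paper simply notes that a non-virtually-cyclic \(H\) is either rank-\(2\) parabolic or non-elementary. In the latter case, \eqref{eq:Hx-growth-estimate-kleinian1} set against the exponential growth of an \(F_2\le H\) makes every \(K_x\) infinite. That is the argument you fall back on anyway.

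\emph{RD of \(G\).} You invoke relative hyperbolicity and \cite{DrutuSapir2005}. The paper instead passes to a torsion-free finite-index \(G_0\) and uses tameness to write \(G_0=\pi_1(N)\) with \(N\) compact. It then either obtains a finite-volume lattice (RD by \cite{ChatterjiRuane2005}), or embeds \(G_0\) as a geometrically finite subgroup of a finite-volume manifold group \cite[Theorem~4.8.5]{AschenbrennerFriedlWilton2015} and uses that RD passes to subgroups and to finite-index overgroups.

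Your one-line claim is immediate only when \(G\) is geometrically finite. When \(G\) is geometrically infinite, its action on \(\mathbb H^3\) does not exhibit it as relatively hyperbolic. The abstract relative hyperbolicity you need comes from the compact core (or tameness) plus Thurston's hyperbolization of the resulting pared manifold. That is exactly the input the paper's case analysis supplies. Once you say this explicitly, together with the torsion-free finite-index reduction, your route is shorter and avoids both the two-case split and the heredity step.

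\emph{The \(s\)-normal case.} You use \cite[Theorem~1.5]{yang2010} where the paper cites tameness and Canary's covering theorem. Both yield a finite-index \(G_0\) and \(H_0\triangleleft G_0\) with \(G_0/H_0\cong\mathbb Z\) and \(H\cap G_0\) commensurable with \(H_0\). That is all Lemma~\ref{lem:poly-growth-schreier-commensurable-normal} needs, so this is only a change of citation.

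Your limit-set step is best stated directly: \(\Lambda(K_x)\subseteq\Lambda(H)\cap x\Lambda(H)=\varnothing\) forces \(K_x\) to be finite, contradicting \(s\)-normality. There is no need to discuss whether \(K_x\) is elementary.
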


\begin{proof}
	For the necessity, assume that \((G,H)\in \mathbf{SLC}_{\mathrm{subexp}}\), and suppose that \(H\) is neither virtually cyclic nor a rank-\(2\) parabolic subgroup. Then \(H\) is non-elementary. If \(H\) were not \(s\)-normal, then the argument in the proof of Proposition~\ref{prop:slc_necessity_kleinian_parabolic} would give a contradiction between the \(\mathbf{SLC}_{\mathrm{subexp}}\) estimate and the exponential growth of \(H\). Hence \(H\) is \(s\)-normal.
	
	For the sufficiency, the virtually cyclic case is clear. Assume first that \(H\) is a rank-\(2\) parabolic subgroup. Then \(H\) is virtually \(\mathbb Z^2\), hence amenable. It therefore suffices to show that \(G\) has property \(\mathrm{RD}\).
	
	Set $G^{+}:=G\cap \mathrm{Isom}^{+}(\mathbb H^3).$ Then \(G^{+}\) has index at most \(2\) in \(G\). By Selberg's lemma \cite{alperin1987,selberg1960}, \(G^{+}\)
	contains a torsion-free subgroup \(G_0\) of finite index. Since \(G_0\) is a
	finite-index subgroup of the finitely generated non-elementary Kleinian group
	\(G^{+}\), it is again finitely generated and non-elementary. Let $M:=\mathbb H^3/G_0.$
	Then \(M\) is an orientable complete hyperbolic \(3\)-manifold with finitely generated
	fundamental group. By the Tameness Theorem \cite{calegari2006}, \(M\) is homeomorphic to the interior of a compact \(3\)-manifold \(N\). Since \(M\) is orientable and irreducible, we may choose \(N\) to be orientable and irreducible. Moreover, \(N\) is homotopy equivalent to its interior, and hence $\pi_1(N)\cong \pi_1(M)\cong G_0.$
	
	We distinguish two cases.
	
	If \(\partial N=\varnothing\) or every component of \(\partial N\) is a torus,
	then \(N\not\cong S^1\times D^2\) and \(N\not\cong T^2\times I\), since
	otherwise \(\pi_1(N)\cong \mathbb Z\) or \(\mathbb Z^2\), contrary to the fact
	that \(G_0\) is non-elementary. Since \(N\) is irreducible, it follows that
	every torus component of \(\partial N\) is incompressible; indeed, if some torus
	component of \(\partial N\) were compressible, then \(N\cong S^1\times D^2\),
	contrary to the fact that \(\pi_1(N)\cong G_0\) is non-elementary.
     Hence each boundary component gives rise to a rank-\(2\) cusp, and
	therefore \(M\cong \operatorname{int}(N)\) has finite volume. Thus \(G_0\) is a
	lattice in the rank-one Lie group \(\mathrm{Isom}^{+}(\mathbb H^3)\), and hence
	\(G_0\) has property \(\mathrm{RD}\) by \cite{ChatterjiRuane2005}.
	
	Suppose next that \(\partial N\) has a component of genus at least \(2\). Then
	\(M\cong \operatorname{int}(N)\) has infinite volume. Moreover,
	\(N\not\cong T^2\times I\). Hence by
	\cite[Theorem~4.8.5]{AschenbrennerFriedlWilton2015}, there exists a
	finite-volume hyperbolic \(3\)-manifold \(M'\) such that \(\pi_1(N)\cong G_0\)
	embeds into \(\pi_1(M')\) as a geometrically finite subgroup. The group
	\(\pi_1(M')\) is a lattice in \(\mathrm{Isom}^{+}(\mathbb H^3)\), hence has
	property \(\mathrm{RD}\) by \cite{ChatterjiRuane2005}. Since property
	\(\mathrm{RD}\) is inherited by subgroups \cite{Jolissaint1990}, it follows that
	\(G_0\) has property \(\mathrm{RD}\) in this case as well.
	
	Thus \(G_0\) has property \(\mathrm{RD}\) in all cases. Finally, property
	\(\mathrm{RD}\) is stable under passage to finite-index overgroups
	\cite{Jolissaint1990}, so \(G\) has property \(\mathrm{RD}\).

	It remains to consider the case where \(H\) is \(s\)-normal and neither virtually cyclic nor a rank-\(2\) parabolic subgroup. Then \(H\) is non-elementary. Exactly as in the proof of Corollary~\ref{cor:slc-classification-closed-hyperbolic-3-manifold}, the \(s\)-normality of \(H\) implies that \(\Lambda(H)=\Lambda(G)\). Once \(\Lambda(H)=\Lambda(G)\) is known, the remainder of the argument is identical to the converse direction of Theorem~\ref{thm:slc-classification-closed-hyperbolic-3-manifold}: tameness together with Canary's covering theorem reduces the problem to Lemma~\ref{lem:poly-growth-schreier-commensurable-normal}, and hence yields \((G,H)\in \mathbf{SLC}_{\mathrm{subexp}}\).
\end{proof}

\begin{remark}
	In the setting of finitely generated Kleinian groups \(G<\mathrm{Isom}(\mathbb H^3)\), the condition \(\operatorname{Core}_G(H)\neq\{e\}\) is not sufficient to ensure that \((G,H)\in \mathbf{SLC}_{\mathrm{subexp}}\) in general.

	Let \(\Gamma=\pi_1(\Sigma_g)\) (\(g\ge2\)) be realized as a cocompact Fuchsian group preserving a totally geodesic plane \(P\subset \mathbb H^3\), and let \(r\) be the reflection in \(P\). Set \(G:=\langle \Gamma,r\rangle \cong \Gamma\times C_2\). Then \(G\) is a non-elementary finitely generated discrete subgroup of \(\mathrm{Isom}(\mathbb H^3)\).
	
	Let \(K<\Gamma\) be a finitely generated subgroup of infinite index which is not virtually cyclic, and set \(H:=\langle K,r\rangle=K\times C_2\). Since \(r\) is central in \(G\), one has \(\langle r\rangle\le \operatorname{Core}_G(H)\), and hence \(\operatorname{Core}_G(H)\neq\{e\}\).
	
	On the other hand, if \(Z:=\langle r\rangle\triangleleft G\), then \(Z\le H\). Quotient permanence for \(\mathbf{SLC}_{\mathrm{subexp}}\) therefore shows that, if \((G,H)\in \mathbf{SLC}_{\mathrm{subexp}}\), then \((G/Z,H/Z)\in \mathbf{SLC}_{\mathrm{subexp}}\).
	
	But \((G/Z,H/Z)\cong (\Gamma,K)\), and by the surface-group classification, since \(K\) is finitely generated, of infinite index, and not virtually cyclic, one has \((\Gamma,K)\notin \mathbf{SLC}_{\mathrm{subexp}}\). Hence \((G,H)\notin \mathbf{SLC}_{\mathrm{subexp}}\).
\end{remark}

The following proposition shows that the necessity direction of Theorem~\ref{th:H3} remains valid for non-elementary finitely generated discrete subgroups \(G<\mathrm{Isom}(\mathbb H^n)\) (\(n\ge 3\)), with the corresponding parabolic alternative sharpened to: \(H\) is parabolic and virtually \(\mathbb Z^r\) for some \(2\le r\le n-1\).

\begin{proposition}
	Let $G$ be a finitely generated group hyperbolic relative to
	$\mathcal P=\{P_1,\dots,P_m\}$, and let $H\le G$ be a finitely generated subgroup.
	Assume $(G,H)\in \mathbf{SLC}_{\mathrm{subexp}}$.
	Then either $H$ is $s$-normal in $G$, or $H$ is parabolic
	(i.e.\ conjugate into some $P_i$), or $H$ is
	virtually cyclic. Moreover, if each peripheral subgroup $P_i$ is
	virtually nilpotent, then in the parabolic case $H$ is virtually nilpotent.
\end{proposition}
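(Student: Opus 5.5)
Suppose \((G,H)\in \mathbf{SLC}_{\mathrm{subexp}}\), that \(H\) is not \(s\)-normal, and that \(H\) is neither parabolic nor virtually cyclic. We aim for a contradiction. The basic input is Corollary~\ref{cor:non-s-normal}. Since \(H\) is not \(s\)-normal, there is some \(x\in G\) with \(K_x=H\cap xHx^{-1}\) finite. The growth estimate \eqref{eq:Hx-growth-estimate}, proved in Proposition~\ref{prop:slc_core_closed_hyperbolic_3_manifold} using only the \(\mathbf{SLC}_{\mathrm{subexp}}\) inequality, then gives
\[
|H\cap B_G(R)|\le C_h^2\,W(R+\ell(x))^2\,|K_x|.
\]
So \(H\) has subexponential growth with respect to \(\ell_G|_H\), and hence also with respect to any word metric on \(H\), because \(|h|_T\le R\) implies \(\ell_G(h)\le cR\). (Corollary~\ref{cor:non-s-normal} even gives polynomial growth relative to \(\ell_G|_H\); subexponential growth is all we need here.)

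Next we run the relatively hyperbolic trichotomy for subgroups. By Osin's classification of subgroups acting on the (Bowditch or coned-off) hyperbolic space of \((G,\mathcal P)\), a subgroup \(H\) falls into one of three cases. It may be elliptic in the relative sense, meaning bounded orbits in the coned-off Cayley graph; then it is finite or parabolic, i.e.\ conjugate into some \(P_i\). It may be virtually cyclic, containing a loxodromic element with finite-index cyclic subgroup. Otherwise it contains two independent loxodromic elements \(a,b\). In the last case, a ping-pong argument on the Bowditch boundary, or Osin's results on hyperbolically embedded elementary subgroups \(E(a),E(b)\), gives \(\langle a^N,b^N\rangle\cong F_2\) for large \(N\). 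Each element of this free group has \(\ell_G\)-length at most \(Mn\), where \(n\) is its reduced length, exactly as in Corollary~\ref{cor:slc_positive_l2betti_hyperbolic}. Therefore \(|H\cap B_G(Mn)|\ge 4\cdot 3^{n-1}\), which contradicts the subexponential bound. So \(H\) is finite, parabolic, or virtually cyclic. Finite groups are virtually cyclic, which proves the first statement.

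The main obstacle is making the trichotomy rigorous without assuming that \(H\) is relatively quasiconvex. I would cite Osin's theorem (\cite[Theorem~1.14 / Cor.~4.5]{Osin2006}): a subgroup of a relatively hyperbolic group with no loxodromic element is either finite or parabolic. The key point is that a subgroup acting on the relative Cayley graph with all elements elliptic fixes a point or a horoball, so it has bounded orbits and hence lies in a conjugate of a peripheral subgroup. If \(H\) contains loxodromics whose fixed-point pairs all coincide, then \(H\le E(a)\), which is virtually cyclic. Otherwise two loxodromics have disjoint fixed-point sets, and ping-pong applies.

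For the moreover statement, suppose \(H\le gP_ig^{-1}\) with \(P_i\) virtually nilpotent. Then \(H\) is a subgroup of a virtually nilpotent group, hence virtually nilpotent. This part uses only that subgroups of virtually nilpotent groups are virtually nilpotent. Alternatively, the subexponential growth of \(H\) together with Gromov's theorem would also give virtual nilpotency, but finite generation of \(H\) and the subgroup property already suffice.
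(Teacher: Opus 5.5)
Your proposal is correct and follows essentially the same route as the paper. Non-$s$-normality together with the $\mathbf{SLC}_{\mathrm{subexp}}$ estimate bounds the growth of $H\cap B_G(R)$, while Osin's subgroup trichotomy forces a non-parabolic, non-virtually-cyclic $H$ to contain $F_2$ and hence to grow exponentially in $\ell_G$. Relying only on the subexponential bound $|H\cap B_G(R)|\le C_h^2W(R+\ell(x))^2|K_x|$ is the right strength: under $\mathbf{SLC}_{\mathrm{subexp}}$, as opposed to pair rapid decay, the argument behind Corollary~\ref{cor:non-s-normal} yields only subexponential growth, not the polynomial growth that the corollary asserts.
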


\begin{proof}
	Assume that $H$ is neither $s$-normal nor parabolic nor virtually cyclic.
	By Corollary~\ref{cor:non-s-normal}, $H$ has polynomial growth with respect to the induced length $\ell_G|_H$.
	On the other hand, since $H$ is finitely generated, non-parabolic, and not virtually cyclic in a relatively hyperbolic group, it contains a free subgroup of rank $2$ \cite{Osin2006}; hence $H$ has exponential growth (for any word metric), and therefore also exponential growth with respect to $\ell_G|_H$.
	This is a contradiction.
\end{proof}

\begin{corollary}\label{prop:slc-subexp-snormal}
	Let \(G\) be a non-elementary finitely generated hyperbolic group, let \(\ell\) be a word length on \(G\), and let \(H\le G\). Assume that $(G,H)\in \mathbf{SLC}_{\mathrm{subexp}}$ and $H$ is not virtually cyclic.
	Then for every \(g\in G\) the subgroup \(H\cap gHg^{-1}\) is not virtually cyclic.
	In particular, \(H\) is \(s\)-normal in \(G\).

\end{corollary}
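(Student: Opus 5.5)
The plan is to use the intersection estimate from the proof of Proposition~\ref{prop:slc_core_closed_hyperbolic_3_manifold}. Its derivation used only the $\mathbf{SLC}_{\mathrm{subexp}}$ inequality with the test functions $f_R=\sum_{s\in S_R}\delta_{sx}$ and $\varphi=\delta_{x^{-1}}$, so it is valid in any finitely generated group. For every $g\in G$ and $R\ge0$, with $K_g:=H\cap gHg^{-1}$, it gives
\[
|H\cap B_G(R)|\le C_h^2\,W(R+\ell(g))^2\,|K_g\cap B_G(2R)| .
\]
First we make sure that nothing in that argument used finite generation of $H$ or specific 3-manifold structure. The representatives $S_R$ of $K_g$-cosets lie in pairwise distinct left $H$-cosets after right multiplication by $g$, since $s_1g H=s_2gH$ forces $s_1^{-1}s_2\in gHg^{-1}\cap H$. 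The $(2,1)$-norm computation is then the same as before.

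Second, we show that $H$ has exponential growth with respect to $\ell_G$. $H$ is a subgroup of a hyperbolic group that is not virtually cyclic. By the Tits alternative for hyperbolic groups (Gromov), it is either finite, virtually cyclic, or contains a free subgroup $\langle u,v\rangle\cong F_2$. An infinite torsion subgroup cannot occur: any infinite subgroup contains a loxodromic element, so an infinite subgroup without $F_2$ fixes a point pair of the boundary and is virtually cyclic. Hence $H\supseteq\langle u,v\rangle$. Exactly as in the proof of Corollary~\ref{cor:slc_positive_l2betti_hyperbolic}, we get $|H\cap B_G(Ln)|\ge 4\cdot 3^{n-1}$ with $L=\max\{\ell(u),\ell(v)\}$.

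Third, we argue by contradiction. Suppose that for some $g$ the subgroup $K_g$ is virtually cyclic. Then $K_g$ is either finite or a virtually-$\mathbb Z$ subgroup of a hyperbolic group. The latter is quasiconvex, hence undistorted, so $|K_g\cap B_G(2R)|\le A R+B$ grows at most linearly. In either case the displayed inequality gives
\[
|H\cap B_G(R)|\le C_h^2\,W(R+\ell(g))^2(AR+B),
\]
which is $e^{o(R)}$ because $W$ is subexponential and $\ell(g)$ is fixed. This contradicts exponential growth. So no $K_g$ is virtually cyclic; in particular every $K_g$ is infinite, which is exactly $s$-normality (Definition~\ref{def:s-normal}), with $H$ infinite since it is not virtually cyclic.

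The main obstacle is the linear growth bound for infinite virtually cyclic subgroups in the ambient metric. I would cite the standard fact that cyclic subgroups generated by infinite-order elements of hyperbolic groups are quasi-isometrically embedded; the paper already uses this via \cite{Gitik1999} and \cite{hullhyperbolic}. Finite extensions then preserve this. The Tits-alternative step is routine and can be cited.
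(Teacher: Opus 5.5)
Your proposal is correct and follows essentially the same route as the paper's proof: apply the intersection estimate \eqref{eq:Hx-growth-estimate} with $x=g$, use that virtually cyclic subgroups of a hyperbolic group are undistorted so that $|K_g\cap B_G(2R)|$ grows at most linearly, and contradict the exponential growth of $|H\cap B_G(R)|$ coming from a copy of $F_2$ inside $H$. Your extra checks fill in details the paper leaves implicit, namely that the derivation of the estimate never uses finite generation of $H$, and the Tits-alternative justification that $F_2\le H$.
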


\begin{proof}
	Fix \(g\in G\) and set \(K_g=H\cap gHg^{-1}\). Apply \eqref{eq:Hx-growth-estimate} with \(x=g\). If \(K_g\) is virtually cyclic, then \(K_g\cap B_G(R)\) has linear growth, since virtually cyclic subgroups of hyperbolic groups are quasiconvex and therefore undistorted. Hence \eqref{eq:Hx-growth-estimate} implies that \(H\cap B_G(R)\) has subexponential growth. This is impossible, because \(H\) is non-virtually-cyclic in a hyperbolic group, so it contains a subgroup isomorphic to \(F_2\), which yields exponential growth for \(H\cap B_G(R)\). Therefore \(H\cap gHg^{-1}\) is not virtually cyclic for every \(g\in G\). In particular, \(H\) is \(s\)-normal in \(G\).
\end{proof}

\begin{remark}\label{rem:slc-subexp-snormal-redundant}
	In Proposition~\ref{prop:slc-subexp-snormal}, the assumptions that \(H\) be infinite, finitely generated, or of infinite index are unnecessary. Indeed, every finite subgroup is virtually cyclic, so the condition that \(H\) be non-virtually-cyclic already forces \(H\) to be infinite. Moreover, if \([G:H]<\infty\), then \(H\) is automatically \(s\)-normal.
\end{remark}

\medskip

However, the $s$-normality of $H$ in $G$ does not by itself imply that $(G,H)\in \mathbf{SLC}_{\mathrm{subexp}}$. Indeed, a slight modification of Proposition~\ref{prop:ripsconstruction} yields a counterexample.
Let $Q=\operatorname{BS}(1,2)=\langle a,t \mid tat^{-1}=a^2\rangle$ and $A:=\langle a\rangle<Q$. Then $A$ is not normal in $Q$, since $tAt^{-1}=\langle a^2\rangle\neq A$. By a Rips construction, there exist a non-elementary hyperbolic group $G$ and a surjective homomorphism $\pi:G\twoheadrightarrow Q$ whose kernel $N:=\ker\pi$ is finitely generated. Set $H:=\pi^{-1}(A)$.

Since $A$ is not normal in $Q$, the subgroup $H$ is not normal in $G$. On the other hand, for every $g\in G$ one has $N\subseteq H\cap gHg^{-1}$. Moreover, $N$ must be infinite, for otherwise $Q\cong G/N$ would be hyperbolic, contradicting the fact that $\operatorname{BS}(1,2)$ is not hyperbolic. It follows that $H$ is $s$-normal in $G$.
Finally, the Schreier graph $G/H$ is naturally identified with $Q/A$. Since $A$ is co-amenable in $Q$ and the Schreier graph $Q/A$ has exponential growth, Remark~\ref{rem:subexp-analogue-thm12} implies that $(G,H)\notin \mathbf{SLC}_{\mathrm{subexp}}$.
It is therefore natural to ask under the above assumptions whether $s$-normality can in fact be promoted to commensuratedness.

\begin{lemma}\label{lem:pullback-pair-rd-slc}
	Let \(\pi:G\twoheadrightarrow Q\) be a surjective homomorphism of finitely generated groups, let \(L\le Q\), and set \(H:=\pi^{-1}(L)\).  
	If \((Q,L)\) has pair rapid decay, respectively belongs to \(\mathbf{SLC}_{\mathrm{subexp}}\), then \((G,H)\) has pair rapid decay, respectively belongs to \(\mathbf{SLC}_{\mathrm{subexp}}\).
\end{lemma}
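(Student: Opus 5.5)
The plan is to push every norm through the canonical identification of homogeneous spaces and reduce the estimate for \((G,H)\) to the one for \((Q,L)\) with a pulled-back weight. Since \(H=\pi^{-1}(L)\) contains \(\ker\pi\), the map \(gH\mapsto \pi(g)L\) is a \(G\)-equivariant bijection \(\Theta:G/H\to Q/L\), where \(G\) acts on \(Q/L\) through \(\pi\). It is also a graph isomorphism of Schreier graphs if we equip \(Q\) with the generating set \(\pi(S)\). We are free to choose this generating set because \(\mathbf{SLC}_{\mathrm{subexp}}\) and pair rapid decay are insensitive to changing the finite generating set: word lengths are bi-Lipschitz equivalent, and polynomial or subexponential radial majorants stay such. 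With this choice, \(\ell_Q(\pi(g))\le \ell_G(g)\) for all \(g\in G\).

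For \(f\in\mathbb CG\), set \(\pi_*f:=\sum_g f(g)\delta_{\pi(g)}\in\mathbb CQ\). The key steps, in order, are as follows.

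First, \(\lambda_{G/H}(f)=\Theta^{-1}\lambda_{Q/L}(\pi_*f)\Theta\) as operators on \(\ell^2\). This holds because \(\lambda_{G/H}(g)\) corresponds to \(\lambda_{Q/L}(\pi(g))\) under \(\Theta\).

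Second, since the definition of \(\|\cdot\|_h\) is, via \cite[Lemma~2.12]{ChatterjiZarka2024v1} and the convolution description used in Proposition~\ref{thm:F2-rank-ge2-bad}, \(\|f\|_h=\sup\|f*\varphi\|_{(2,1)}/\|\varphi\|_{(2,1)}\), I would rather work with the inequality actually needed. The cleanest route is to show directly that \(\|f\|_{h,(G,H)}\le \||f|\|_{h,(G,H)}\) and \(\||f|\|_{h,(G,H)}\le \|\pi_*|f|\|_{h,(Q,L)}\). For nonnegative functions, the \((2,1)\)-norm of \(f*\varphi\) depends only on the pushforward to \(G/H\). Fiberwise \(\ell^1\)-sums factor through \(\pi\): for \(\varphi\ge0\) one has \(\pi_\#(f*\varphi)=\pi_*f\ast \pi_\#\varphi\) as functions on \(Q/L\cong G/H\). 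Hence, for \(\varphi\ge 0\),
\[
\|f*\varphi\|_{(2,1),(G,H)}=\|\lambda_{Q/L}(\pi_*f)\,\pi_\#\varphi\|_{2}\le \|\pi_*f\|_{h,(Q,L)}\|\varphi\|_{(2,1),(G,H)}.
\]
The positive-to-general reduction uses \(|f*\varphi|\le|f|*|\varphi|\). For the middle inequality I would use that the pushforward of \(\ker\pi\)-cosets lands in \(L\)-cosets, i.e.\ \((\pi_*\psi)\) restricted to a coset \(qL\) is the fiber-sum of \(\psi\) on the corresponding \(H\)-coset.

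Third, apply the hypothesis on \((Q,L)\): \(\|\pi_*|f|\|_h\le C\|(\pi_*|f|)w_Q\|_{(2,1),(Q,L)}\). Define \(w_G:=w_Q\circ\pi\). Then \((\pi_*|f|)w_Q=\pi_*(|f|w_G)\), and its \((2,1)\)-norm over \((Q,L)\) equals \(\||f|w_G\|_{(2,1),(G,H)}=\|fw_G\|_{(2,1),(G,H)}\), again because fiber sums commute with \(\Theta\). Finally, \(\sup_{\ell_G(g)\le t}w_G(g)\le \sup_{\ell_Q(q)\le t}w_Q(q)\) by \(\ell_Q\circ\pi\le\ell_G\). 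So the majorant stays polynomial (for \(w_Q=(1+\ell_Q)^{s}\) we get \(w_G\le(1+\ell_G)^s\), and we may replace \(w_G\) by \((1+\ell_G)^s\)) or subexponential.

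The main obstacle is the second step: verifying carefully that \(\|\cdot\|_h\) is controlled by the convolution action on nonnegative functions, so that the pullback identity for fiber sums applies. I would first try to use only \(|f*\varphi|\le |f|*|\varphi|\), together with the fact that the \((2,1)\)-norm depends only on \(|\varphi|\). If the paper's \(\|\cdot\|_h\) is instead defined through \(\lambda_{G/H}\) directly, the first step's conjugation identity already gives the result.
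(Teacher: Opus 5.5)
Your proposal is correct and is essentially the paper's argument: push forward along \(\pi\) with absolute values, use \(\pi^{-1}(qL)=qH\) to match \((2,1)\)-norms via fiber sums, dominate \(|f*\varphi|\le|f|*|\varphi|\), and pull back the weight as \(w_Q\circ\pi\); since \(\|\cdot\|_h\) is by definition the operator norm of left convolution on \(\ell^{(2,1)}\), the obstacle you flag does not arise. The differences are cosmetic: you bound the pushed-forward convolution through \(\|\lambda_{Q/L}(F)\|\le\|F\|_{h,(Q,L)}\) rather than the paper's pointwise inequality \(\pi_\#(f*\varphi)\le\pi_\#(f)*\pi_\#(\varphi)\) on \(Q\), and you make explicit two points the paper leaves implicit, namely choosing the generating set \(\pi(S)\) on \(Q\) so that \(\ell_Q\circ\pi\le\ell_G\), and dominating \(w_Q\circ\pi\) by \((1+\ell_G)^s\) in the polynomial case.
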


\begin{proof}
	Let \( \pi_{\#}:\mathbb CG\to\mathbb CQ\) be
	\( \pi_{\#}(f)(q):=\sum_{\pi(g)=q}|f(g)|\).
	Since \(\pi^{-1}(qL)=qH\), we have
	\[
	\|f\|_{(2,1),(G,H)}=\| \pi_{\#}(f)\|_{(2,1),(Q,L)}.
	\]
	Also, for \(f,\varphi\in\mathbb CG\),
	\[
	 \pi_{\#}(f*\varphi)\le  \pi_{\#}(f)* \pi_{\#}(\varphi)
	\]
	pointwise on \(Q\), hence $\|f*\varphi\|_{(2,1),(G,H)}
	\le
	\| \pi_{\#}(f)* \pi_{\#}(\varphi)\|_{(2,1),(Q,L)}.$
	
	If \((Q,L)\) has pair rapid decay, apply its estimate to \( \pi_{\#}(f), \pi_{\#}(\varphi)\), and pull back the weight via \(\pi\). If \(w_Q(q)=(1+\ell_Q(q))^s\), set \(w_G:=w_Q\circ\pi\); since \(\ell_Q(\pi(g))\le \ell_G(g)\), \(w_G\) has polynomial growth. This gives pair rapid decay for \((G,H)\).
	
	The \(\mathbf{SLC}_{\mathrm{subexp}}\) case is identical: if
	\(\|F\|_{h,(Q,L)}\le C_h\|F\omega\|_{(2,1),(Q,L)}\),
	take \(w:=\omega\circ\pi\), and note
	\(\sup_{\ell_G(g)\le R}w(g)\le \sup_{\ell_Q(q)\le R}\omega(q)\).
	Thus subexponential control is preserved.
\end{proof}

\begin{proposition}
	\label{prop:snormal-noncommensurated}
	There exist a torsion-free non-elementary finitely generated hyperbolic group \(G\) and a finitely generated subgroup \(H\le G\) such that \((G,H)\) has pair rapid decay, \(H\) is \(s\)-normal in \(G\), and \(H\) is not commensurated in \(G\). In particular, \((G,H)\in \mathbf{SLC}_{\mathrm{subexp}}\).
\end{proposition}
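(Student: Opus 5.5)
The plan is to reuse the extension from the remark after Corollary~\ref{cor: relatively hyperbolic}: take $N\ge3$ and a free subgroup $\Gamma\le\operatorname{Out}(F_N)$, $\Gamma\cong F_2$, as in \cite{dowdall2017,bestvina1997}, such that
\[
1\longrightarrow F_N\longrightarrow G\xrightarrow{p}\Gamma\longrightarrow1
\]
is a non-elementary hyperbolic group. The group $G$ is torsion-free, because it is an extension of the torsion-free group $F_N$ by the torsion-free group $F_2$. Choose an infinite cyclic subgroup $K\le\Gamma$ and set $H:=p^{-1}(K)$. This $H$ is an extension of $F_N$ by $\mathbb Z$, so it is finitely generated.

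The next step is to establish pair rapid decay. The group $\Gamma\cong F_2$ has RD and $K\cong\mathbb Z$ is amenable, so Corollary~2.14 of \cite{ChatterjiZarka2024v1} gives pair rapid decay for $(\Gamma,K)$. Lemma~\ref{lem:pullback-pair-rd-slc} then pulls this back along $p$, so $(G,H)$ has pair rapid decay. Pair rapid decay already gives $(G,H)\in\mathbf{SLC}_{\mathrm{subexp}}$ by taking the polynomial weight $w=(1+\ell)^{s_1}$.

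For $s$-normality, take any $g\in G$. Since $F_N\triangleleft G$ and $F_N\le H$, we have $F_N=gF_Ng^{-1}\le H\cap gHg^{-1}$, and $F_N$ is infinite. So $H\cap gHg^{-1}$ is infinite for every $g$, which is exactly $s$-normality.

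The main point is that $H$ is not commensurated, and the argument is already in the cited remark (here with $G_0=G$). Let $C\le\Gamma$ be the maximal cyclic subgroup containing $K$; it is proper because $\Gamma$ is nonabelian. Pick $\gamma\in\Gamma\setminus N_\Gamma(C)$ and a lift $g\in G$ with $p(g)=\gamma$. The nonabelian free group $\Gamma$ is CSA, so every nontrivial element lies in a unique maximal cyclic subgroup. Hence $K\cap\gamma K\gamma^{-1}\neq\{e\}$ would force $C=\gamma C\gamma^{-1}$, which is impossible; therefore $K\cap\gamma K\gamma^{-1}=\{e\}$.

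This intersection must be transported to $G$, and it is the one step that needs care. Because $H=p^{-1}(K)$, we get $H\cap gHg^{-1}=p^{-1}(K\cap\gamma K\gamma^{-1})=F_N$. Suppose $H\cap gHg^{-1}$ had finite index in $H$. Then $F_N$ would have finite index in $H$, and so $K\cong H/F_N$ would be finite, a contradiction. Thus $H$ is not commensurated in $G$, and all the required properties hold.
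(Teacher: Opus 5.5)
Your proof is correct and is essentially the paper's argument: you pull back pair rapid decay from $(\Gamma,K)$ via Corollary~2.14 of \cite{ChatterjiZarka2024v1} and Lemma~\ref{lem:pullback-pair-rd-slc}, get $s$-normality from the kernel lying in every $H\cap gHg^{-1}$, and get non-commensuration from $H\cap gHg^{-1}=\ker p$ having infinite index in $H$. There are two minor differences, neither of which affects the argument. First, you use the free-by-free hyperbolic extension of \cite{dowdall2017,bestvina1997} (already used in the paper's earlier remark) instead of the Farb--Mosher surface-by-free extension. Second, you take an arbitrary infinite cyclic $K$ with a maximal-cyclic-subgroup argument instead of $K=\langle a\rangle$ with a lift of $b$.
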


\begin{proof}
	Let \(S\) be a closed, connected, orientable surface of genus \(g\ge 2\), and choose independent pseudo-Anosov classes \(f,g\in\mathrm{Mod}(S)\).
	By Farb--Mosher \cite[Thms.~1.3, 1.4]{FarbMosher2002}, for all sufficiently large \(N\) the subgroup $Q:=\langle a,b\rangle:=\langle f^N,g^N\rangle$
	is a Schottky subgroup of \(\mathrm{Mod}(S)\), hence free of rank \(2\), and the associated extension
	\[
	1\to \Pi:=\pi_1(S)\to G \xrightarrow{\pi} Q\to 1
	\]
	is word-hyperbolic.
	Since \(\Pi\) and \(Q\cong F_2\) are torsion-free, \(G\) is torsion-free. Moreover, \(G\) is non-elementary, since it surjects onto \(Q\cong F_2\) and therefore cannot be virtually cyclic.

	Set \(H:=\pi^{-1}(\langle a\rangle)\).
	The restriction \(\pi|_H\) gives \(1\to \Pi\to H\to \langle a\rangle\cong\mathbb Z\to 1\), split by a lift of \(a\), hence
	\(H\cong \Pi\rtimes \mathbb Z\).
	Thus \(H\) is finitely generated and not virtually cyclic.
	
	For any \(y\in G\), writing \(q=\pi(y)\), one has
	\[
	yHy^{-1}=\pi^{-1}(q\langle a\rangle q^{-1}),\qquad
	H\cap yHy^{-1}=\pi^{-1}\!\bigl(\langle a\rangle\cap q\langle a\rangle q^{-1}\bigr).
	\]
	Therefore \(H\cap yHy^{-1}\supseteq \ker\pi=\Pi\), so \(H\cap yHy^{-1}\) is infinite for all \(y\), i.e. \(H\) is \(s\)-normal.
	
	Now choose \(u\in G\) with \(\pi(u)=b\). In \(Q=F(a,b)\), the cyclic subgroup \(\langle a\rangle\) is malnormal, so
	\(\langle a\rangle\cap b\langle a\rangle b^{-1}=\{1\}\).
	Hence $H\cap uHu^{-1}=\pi^{-1}(1)=\Pi.$
	Since \(H/\Pi\cong\mathbb Z\), we get \([H:H\cap uHu^{-1}]=[H:\Pi]=\infty\), so \(H\) is not commensurated in \(G\).
	
	Finally, \(Q\cong F_2\) has RD and \(\langle a\rangle\) is amenable, so \((Q,\langle a\rangle)\) has pair RD by Corollary~2.14 in \cite{ChatterjiZarka2024v1}.
	Applying Lemma~\ref{lem:pullback-pair-rd-slc} to \(\pi\) and \(H=\pi^{-1}(\langle a\rangle)\), we obtain that \((G,H)\) has pair RD.
	In particular, \((G,H)\in \mathbf{SLC}_{\mathrm{subexp}}\).
\end{proof}

\begin{remark}\label{rem:core-of-good-not-commensurated-example}
	In Proposition~\ref{prop:snormal-noncommensurated}, one has
	\(\operatorname{Core}_G(H)=\Pi\). Indeed,
	\(\operatorname{Core}_G(H)=\pi^{-1}(\operatorname{Core}_Q(\langle a\rangle))\), and
	\(\operatorname{Core}_Q(\langle a\rangle)=\{1\}\) since \(\langle a\rangle\) is a non-normal maximal cyclic subgroup of the free group \(Q\cong F_2\). Thus the example lies in the nontrivial-core regime.
\end{remark}

\medskip

For certain special classes of non-elementary hyperbolic groups, we have already obtained a satisfactory classification of finitely generated subgroups \(H\le G\) for which \((G,H)\in \mathbf{SLC}_{\mathrm{subexp}}\). A natural question is whether such a classification can be extended to arbitrary non-elementary hyperbolic groups.

More modestly, even if such a satisfactory classification is out of reach, one may still try to understand the \emph{mechanism} by which pairs \((G,H)\) satisfying \(\mathbf{SLC}_{\mathrm{subexp}}\) arise. In most examples constructed so far, the pair \((G,H)\) is obtained from a surjection \(\pi:G\twoheadrightarrow Q\) by pulling back a subgroup \(L\le Q\), that is, \(H=\pi^{-1}(L)\). This suggests the following principle: for a non-elementary hyperbolic group \(G\), every finitely generated infinite-index subgroup \(H\le G\) satisfying \(\mathbf{SLC}_{\mathrm{subexp}}\) should either be virtually cyclic, or arise as the pullback of a subgroup from a nontrivial quotient of \(G\).

Since \(G\) is non-elementary hyperbolic, it is Hopfian. Hence for any surjection \(\pi:G\twoheadrightarrow Q\), one has \(\ker\pi\neq \{e\}\) if and only if \(Q\not\cong G\). Moreover, a subgroup \(H\le G\) is the pullback of a subgroup from a nontrivial quotient of \(G\) if and only if \(\operatorname{Core}_G(H)\neq \{e\}\).

Indeed, if \(H=\pi^{-1}(L)\) for some surjection \(\pi:G\twoheadrightarrow Q\) with nontrivial kernel, then \(\ker\pi\le H\). Since \(\ker\pi\triangleleft G\), it follows that \(\ker\pi\le \operatorname{Core}_G(H)\), and hence \(\operatorname{Core}_G(H)\neq \{e\}\). Conversely, if \(\operatorname{Core}_G(H)\neq \{e\}\), let \(N:=\operatorname{Core}_G(H)\). Then \(N\triangleleft G\), \(N\le H\), and for the quotient map \(q:G\twoheadrightarrow G/N\) one has \(H=q^{-1}(H/N)\).

This leads to the following question.

\begin{question}
	Let \(G\) be a non-elementary finitely generated hyperbolic group, and let
	\(H\le G\) be a finitely generated subgroup of infinite index. Suppose that
	\((G,H)\in \mathbf{SLC}_{\mathrm{subexp}}\) and that \(H\) is not virtually cyclic.
	Must one have \(\operatorname{Core}_G(H)\neq \{e\}\)?
\end{question}

\bigskip


\subsection{Subgroup Classification in Irreducible Nonuniform Higher-Rank Lattices}\label{subsec:lattices}

Throughout this section, we let \(n\ge 3\) and write \(G:=\mathrm{SL}_n(\mathbb Z)\).
Our aim is to determine when \((G,H)\) satisfies \(\mathbf{SLC}_{\mathrm{subexp}}\) for a subgroup \(H\le G\).

Although the final statement is naturally formulated for all \(n\ge 3\), the essential obstruction already appears in rank \(3\). We therefore begin with the concrete subgroup \(\mathrm{UT}_3(\mathbb Z)\). This example deserves to be treated separately, since besides being completely explicit, it also appears as a distinguished test case in the literature on related rigidity phenomena; see, for instance, \cite{Oppenheim2023}. Among the possible proofs of the failure of \(\mathbf{SLC}_{\mathrm{subexp}}\) for $\bigl(\mathrm{SL}_3(\mathbb Z),\mathrm{UT}_3(\mathbb Z)\bigr),$
we emphasize the argument that admits a structural extension.

We then pass to the parabolic subgroup
\[
P:=\mathrm{Stab}_{\mathrm{SL}_3(\mathbb Z)}(\langle e_2,e_3\rangle)
=\{g\in \mathrm{SL}_3(\mathbb Z)\mid g_{12}=g_{13}=0\}.
\]
It is included not only as a generalization of the unipotent case, but also as the main intermediate step in the proof of the general result.  Once the failure of \(\mathbf{SLC}_{\mathrm{subexp}}\) is established for $\bigl(\mathrm{SL}_3(\mathbb Z),P\bigr),$ the extension to the general subgroup theorem is conceptually close.
The discussion of these two rank-\(3\) examples leads to Theorem~\ref{thm:SLn-subexp-classification}, where we prove that for every  subgroup $H\le \mathrm{SL}_n(\mathbb Z)$, the pair \(\bigl(\mathrm{SL}_n(\mathbb Z),H\bigr)\) satisfies \(\mathbf{SLC}_{\mathrm{subexp}}\) if and only if \(H\) has finite index in \(\mathrm{SL}_n(\mathbb Z)\).

\begin{example}\label{ex:sl3-ut3-no-slc}
	The pair $\big(SL_3(\mathbb{Z}),\,UT_3(\mathbb{Z})\big)\notin \mathbf{SLC}_{\mathrm{subexp}}$.
\end{example}

\begin{proof}
	Choose a hyperbolic matrix $A\in SL_2(\mathbb{Z})$, e.g.
	\(
	A=\begin{psmallmatrix}2&1\\1&1\end{psmallmatrix}.
	\)
	Set
	\[
	t=\operatorname{diag}(1,A)\in SL_3(\mathbb{Z}),
	\quad
	U=\left\{
	\begin{pmatrix}
		1&0&0\\
		a&1&0\\
		b&0&1
	\end{pmatrix}:a,b\in\mathbb{Z}
	\right\}\cong\mathbb{Z}^2,
	\quad
	K:=\langle U,t\rangle.
	\]
	Writing $u(v)=\begin{psmallmatrix}1&0\\ v&I_2\end{psmallmatrix}$ for $v\in\mathbb{Z}^2$,
	one checks	$t\,u(v)\,t^{-1}=u(Av),$
	hence $K\cong \mathbb{Z}^2\rtimes_A \mathbb{Z}.$
	Therefore \(K\) is solvable, hence amenable. Since $A$ is hyperbolic,
	$\mathbb{Z}^2\rtimes_A\mathbb{Z}$ is not virtually nilpotent; by Milnor--Wolf,
	$K$ has exponential growth.
	
	It remains to show $K\cap UT_3(\mathbb{Z})=\{e\}$.
	Take $g\in K$ and write $g=u(v)t^n$.
	In block form,
	\[
	g=\begin{pmatrix}1&0\\ v&A^n\end{pmatrix},
	\]
	so its characteristic polynomial is	$\chi_g(X)=(X-1)\chi_{A^n}(X).$
	If $g\in UT_3(\mathbb{Z})$, then $g$ is unipotent, so $\chi_g(X)=(X-1)^3$;
	thus $\chi_{A^n}(X)=(X-1)^2$, impossible for $n\neq0$ because $A$ is hyperbolic.
	Hence $n=0$, so $g=u(v)\in U\cap UT_3(\mathbb{Z})$, and this forces $v=0$.
	Therefore \(g=e\), so \(K\cap \mathrm{UT}_3(\mathbb Z)=\{e\}\).
	
	Assume toward contradiction that $\bigl(\mathrm{SL}_3(\mathbb Z),\mathrm{UT}_3(\mathbb Z)\bigr)\in \mathbf{SLC}_{\mathrm{subexp}}$
	with constants \(C_h>0\) and \(w\). Since each left coset of \(\mathrm{UT}_3(\mathbb Z)\) meets \(K\) in at most one point, the zero-extension argument yields
	\[
	\|\lambda_K(f)\|=\|f\|_{h,(K,\{e\})}\le C_h\,\|f\cdot (w|_K)\|_2
	\qquad (f\in \mathbb CK).
	\]
	Let \(\ell\) be the ambient word length on \(\mathrm{SL}_3(\mathbb Z)\), and let \(\ell_K\) be a word length on \(K\). If \(S_K\) is a finite symmetric generating set for \(K\) and \(L:=\max_{s\in S_K}\ell(s)\), then \(\ell(k)\le L\,\ell_K(k)\) for all \(k\in K\), hence $\sup_{\ell_K(k)\le R} w(k)\le W(LR).$
	Thus \(w|_K\) has subexponential growth with respect to \(\ell_K\). Since \(K\) is amenable, Remark~\ref{rem:subexp-analogue-thm12} applied to \((K,\{e\})\) implies that \(K\) has subexponential growth, contradicting the fact that \(K\) has exponential growth.
\end{proof}

The preceding example is based on a transverse subgroup, and is therefore covered by the special case \(K\cap H=\{e\}\). The parabolic setting is different. In fact, for
\[
H=\operatorname{Stab}_{\mathrm{SL}_n(\mathbb Z)}(\langle e_2,\dots,e_n\rangle),
\]
one can show that every finitely generated amenable subgroup of exponential growth meets some conjugate of \(H\) in an infinite subgroup. Thus the transverse condition \(K\cap H=\{e\}\) is too restrictive here, and one is naturally led to consider the relative quotient \(K/(K\cap H)\).

\begin{lemma}\label{lem:restriction-subgroup}
	Assume that \((G,H)\in \mathbf{SLC}_{\mathrm{subexp}}\). Let \(K<G\), and set
	\(L:=K\cap H\). Then the same convolution estimate holds on \((K,L)\), namely
	\[
	\|f\|_{h,(K,L)}
	\le C_h\,\|f\cdot (w|_K)\|_{(2,1),(K,L)}
	\qquad (f\in \mathbb CK),
	\]
	and \(w|_K\) is subexponential with respect to the restricted length \(\ell|_K\).
	In particular, if \(K\) is finitely generated, then $(K,L)\in \mathbf{SLC}_{\mathrm{subexp}}.$

\end{lemma}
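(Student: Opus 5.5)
The plan is to run the zero-extension argument already used in the proof of Proposition~\ref{prop:amalgam_inheritance_slc} (and implicitly in Example~\ref{ex:sl3-ut3-no-slc}), now for an arbitrary subgroup \(K\le G\) with \(L=K\cap H\). Let \(\iota:\mathbb CK\hookrightarrow\mathbb CG\) denote extension by zero.

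\textbf{Step 1: the \((2,1)\)-norms agree.} First I would check that \(\iota\) is isometric for the mixed norms. Fix a left coset \(gH\). Either \(K\cap gH=\varnothing\), or \(K\cap gH=kL\) for any \(k\in K\cap gH\): if \(k,k'\in K\cap gH\), then \(k^{-1}k'\in K\cap H=L\). Thus the nonempty traces of \(H\)-cosets on \(K\) are exactly the \(L\)-cosets, each occurring once. Hence
\[
\|\iota(\phi)\|_{(2,1),(G,H)}=\|\phi\|_{(2,1),(K,L)}\qquad(\phi\in\mathbb CK),
\]
and the same identity holds for \(\phi\,w|_K\).

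\textbf{Step 2: the convolution norms compare.} Next, \(\iota(f*_K\phi)=\iota(f)*_G\iota(\phi)\), since convolution of functions supported in the subgroup \(K\) stays in \(K\). Using the definition of \(\|\cdot\|_h\) as a supremum over \(\phi\) of the \((2,1)\)-norm of \(f*\phi\), restricting the supremum to \(\phi\) supported in \(K\) gives
\[
\|f\|_{h,(K,L)}\le\|\iota(f)\|_{h,(G,H)}.
\]
Combining this with the \(\mathbf{SLC}_{\mathrm{subexp}}\) inequality for \((G,H)\) and Step~1 yields the displayed estimate with the same constant \(C_h\). For \(\ell|_K\), the radial majorant of \(w|_K\) is bounded by \(W\), which is subexponential.

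\textbf{Step 3: finitely generated \(K\).} If \(K\) is finitely generated with word length \(\ell_K\), set \(M=\max_{s\in S_K}\ell(s)\). Then \(\ell\le M\ell_K\) on \(K\), so
\[
\sup_{\ell_K(k)\le R}w(k)\le W(MR)=e^{o(R)}.
\]
This is exactly the bound used in Example~\ref{ex:sl3-ut3-no-slc}, and it gives \((K,L)\in\mathbf{SLC}_{\mathrm{subexp}}\).

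The only delicate point is Step~2. The supremum defining \(\|f\|_{h,(K,L)}\) runs over \(\mathbb CK\), whereas \(\|\iota(f)\|_{h,(G,H)}\) runs over all of \(\mathbb CG\). The inequality goes in the right direction only because Step~1 shows the \(K\)-supported test functions are normalized identically in both norms. This needs care, but it is not a real obstacle.
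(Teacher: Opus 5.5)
Your proposal is correct and follows the paper's proof step for step. You use the coset-trace identity $gH\cap K=kL$ to make zero-extension $\iota$ an isometry for the $(2,1)$-norms. You then use the compatibility $\iota(f*\varphi)=(\iota f)*(\iota\varphi)$ to get $\|f\|_{h,(K,L)}\le\|\iota f\|_{h,(G,H)}$, and finally the bound $\ell\le M\ell_K$ on $K$ to get $\sup_{\ell_K(k)\le R}w(k)\le W(MR)$.
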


\begin{proof}
	Let \(\iota:\mathbb CK\to \mathbb CG\) be extension by zero. For \(\varphi\in \mathbb CK\),
	if \(gH\cap K\neq\varnothing\) and \(k\in gH\cap K\), then $gH\cap K=kH\cap K=k(H\cap K)=kL.$
	Hence 	$\|\iota\varphi\|_{(2,1),(G,H)}=\|\varphi\|_{(2,1),(K,L)}.$

	Thus \(\iota\) is an isometric embedding of \(\ell^{(2,1)}(K,L)\) into \(\ell^{(2,1)}(G,H)\).
	
	Now let \(f\in\mathbb CK\). Since \(K\) is a subgroup, \(f*\varphi\) is supported in \(K\) for every \(\varphi\in\mathbb CK\), and $\iota(f*\varphi)=(\iota f)*(\iota\varphi).$
	Therefore
	\[
	\|f\|_{h,(K,L)}
	\le \|\iota f\|_{h,(G,H)}
	\le C_h\,\|(\iota f)w\|_{(2,1),(G,H)}.
	\]
	Since \((\iota f)w\) is supported in \(K\), the same coset decomposition gives
	\[
	\|(\iota f)w\|_{(2,1),(G,H)}
	=\|f\cdot (w|_K)\|_{(2,1),(K,L)}.
	\]
	This proves the stated inequality.
	
	 If \(K\) is finitely generated and \(\ell_K\) is the word length associated with a finite symmetric generating set \(S_K\), then with $M:=\max_{s\in S_K}\ell(s)$
	one has \(\ell(k)\le M\,\ell_K(k)\) for all \(k\in K\). Hence $\sup_{\ell_K(k)\le R} w(k)\le W(MR),$ so \(w|_K\) is subexponential with respect to \(\ell_K\). Together with the
	convolution estimate proved above, this shows that $(K,L)\in \mathbf{SLC}_{\mathrm{subexp}}.$
\end{proof}

\begin{proposition}\label{prop:relative-obstruction}
	Assume that \(G\) is finitely generated and \(H<G\). Suppose that \(G\) contains a finitely generated amenable subgroup \(K\), and set \(L:=K\cap H\). If the Schreier graph of the left coset space \(K/L\) has exponential growth, then \((G,H)\notin \mathbf{SLC}_{\mathrm{subexp}}\). In particular, the same conclusion holds if \(K\cap H=\{e\}\) and \(K\) has exponential growth.
\end{proposition}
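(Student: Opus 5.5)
The plan is to argue by contradiction, combining Lemma~\ref{lem:restriction-subgroup} with the co-amenable direction of Remark~\ref{rem:subexp-analogue-thm12}. Suppose \((G,H)\in \mathbf{SLC}_{\mathrm{subexp}}\), witnessed by \(C_h\) and \(w\). Since \(K\) is finitely generated, Lemma~\ref{lem:restriction-subgroup} gives \((K,L)\in \mathbf{SLC}_{\mathrm{subexp}}\), with the restricted weight \(w|_K\). Here we use a word length \(\ell_K\) on \(K\); subexponentiality follows from \(\sup_{\ell_K(k)\le R}w(k)\le W(MR)\).

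Next I would use amenability. Because \(K\) is amenable, every subgroup of \(K\) is co-amenable in \(K\); in particular \(L\) is co-amenable in \(K\). Concretely, the quasi-regular representation \(\lambda_{K/L}\) weakly contains the trivial representation, since amenability of \(K\) gives a \(K\)-invariant mean on \(K/L\). This is exactly the hypothesis needed for \cite[Proposition~3.6]{ChatterjiZarka2024v1}, which gives \(\|f\|_{h,(K,L)}=\|f\|_1\) for \(f\in\mathbb R_+K\).

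Then I apply the reverse implication of Remark~\ref{rem:subexp-analogue-thm12} to the pair \((K,L)\). For each \(n\), take the test function \(f_n\) that puts a unit mass on one minimal-length representative of each coset in \(B_{K/L}(L,n)\). These representatives lie in distinct cosets and have \(\ell_K\le n\). Hence \(\|f_n\|_1=\mu(n):=|B_{K/L}(n)|\) and \(\|f_n w\|_{(2,1)}\le \sqrt{\mu(n)}\,W_K(n)\). Combining this with \(\|f_n\|_h=\|f_n\|_1\) gives
\[
\mu(n)\le C_h^2W_K(n)^2,
\]
so the Schreier graph \(K/L\) has subexponential growth. This contradicts the hypothesis that it has exponential growth.

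For the ``in particular'' clause, take \(L=\{e\}\). Then the Schreier graph of \(K/L\) is the Cayley graph of \(K\), so its growth is exponential exactly when \(K\) has exponential growth.

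I expect the main obstacle to be making sure that Proposition~3.6 of \cite{ChatterjiZarka2024v1} applies to \(\|\cdot\|_{h,(K,L)}\) as defined through \((2,1)\)-norms, and not only through the operator norm of \(\lambda_{K/L}\). If it does not apply directly, I would fall back on a direct lower bound. The idea is to test \(f_n\) against \(\varphi\) equal to the normalized indicator of a large Følner-type set \(F\subset K\) saturated by \(L\)-cosets. Then \(\|f_n*\varphi\|_{(2,1)}/\|\varphi\|_{(2,1)}\) approaches \(\|f_n\|_1\) by near-invariance of the corresponding finitely supported function on \(K/L\). Amenability of \(K\) gives Følner sets in \(K/L\) for the finite set \(\mathrm{supp}(f_n)\).
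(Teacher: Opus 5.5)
This is correct and is essentially the paper's proof: you restrict to \((K,L)\) with Lemma~\ref{lem:restriction-subgroup}, use that amenability of \(K\) makes \(L\) co-amenable, and invoke the reverse direction of Remark~\ref{rem:subexp-analogue-thm12}, whose test-function argument (minimal coset representatives, giving \(\mu(n)\le C_h^2W_K(n)^2\)) you have simply written out. Your fallback is not needed, and as literally stated it breaks down when \(L\) is infinite, because the indicator of an \(L\)-saturated set has infinite \((2,1)\)-norm; the fix is to place one point in each coset of a F\o lner set of \(K/L\), or equivalently to use \(\|f\|_h\ge\|\lambda_{K/L}(f)\|\) together with \(\|\lambda_{K/L}(f)\|=\|f\|_1\) for \(f\ge 0\) under co-amenability.
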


\begin{proof}
	Assume that \((G,H)\in \mathbf{SLC}_{\mathrm{subexp}}\). By Lemma~\ref{lem:restriction-subgroup}, the pair \((K,L)\) also satisfies \(\mathbf{SLC}_{\mathrm{subexp}}\), with respect to the restricted weight \(w|_K\), which is subexponential with respect to any word length on \(K\). Since \(K\) is amenable, \(L\) is co-amenable in \(K\). It then follows from Remark~\ref{rem:subexp-analogue-thm12} that the Schreier graph of the left coset space \(K/L\) has subexponential growth, a contradiction.
\end{proof}

\begin{remark}
	There is also a direct proof that the pair $\bigl(\mathrm{SL}_3(\mathbb Z),\mathrm{UT}_3(\mathbb Z)\bigr)$
	does not satisfy Pair RD. Consider the subgroup $A=\{a(n)=I+nE_{21}:n\in\mathbb Z\}\cong \mathbb Z.$
	As above, one has \(A\cap \mathrm{UT}_3(\mathbb Z)=\{e\}\), so the map \(A\to \mathrm{SL}_3(\mathbb Z)/\mathrm{UT}_3(\mathbb Z)\), \(a\mapsto a\,\mathrm{UT}_3(\mathbb Z)\) is injective. Let \(N_R:=\lfloor e^{R/C}-1\rfloor\), \(S_R:=\{a(n): |n|\le N_R\}\), and \(f_R:=\mathbf 1_{S_R}\).
	Then \(\operatorname{supp}(f_R)\subset B_{\mathrm{SL}_3(\mathbb Z)}(R)\), and $
	\|f_R\|_{(2,1)}=\sqrt{2N_R+1}.$
	On the other hand, the injectivity above gives	$\|f_R\|_h\ge \|\lambda_A(f_R)\|,$
	and since \(A\cong \mathbb Z\) is abelian, one has \(\|\lambda_A(f_R)\|=2N_R+1\). Hence there exist constants \(c_1,c_2>0\) and \(R_0>0\) such that, for all \(R\ge R_0\),
	\[
	\|f_R\|_h\ge c_1 e^{R/C},
	\qquad
	\|f_R\|_{(2,1)}\le c_2 e^{R/(2C)}.
	\]
	This is incompatible with Pair RD.
\end{remark}

\begin{proposition}\label{prop:sl3-parabolic-no-slc-subexp}
	Let \(G:=\mathrm{SL}_3(\mathbb Z)\), \(W_0:=\langle e_2,e_3\rangle\subset \mathbb Z^3\), and $H:=\mathrm{Stab}_G(W_0)=\{g\in G:\ g_{12}=g_{13}=0\}.$
	Then \((G,H)\notin \mathbf{SLC}_{\mathrm{subexp}}\).
\end{proposition}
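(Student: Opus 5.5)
The plan is to apply Proposition~\ref{prop:relative-obstruction}. We need a finitely generated amenable subgroup \(K\le \mathrm{SL}_3(\mathbb Z)\) such that \(L:=K\cap H\) is small and the Schreier graph \(K/L\) has exponential growth. Since \(H\) is the stabilizer of \(W_0=\langle e_2,e_3\rangle\), the subgroup \(U\) of Example~\ref{ex:sl3-ut3-no-slc} (lower unipotents) lies in \(H\) and is useless here. Instead I would use the transposed unipotent group
\[
U':=\Bigl\{u'(w)=\begin{pmatrix}1&w^{T}\\ 0&I_2\end{pmatrix}: w\in\mathbb Z^2\Bigr\}\cong \mathbb Z^2,
\]
together with \(t=\operatorname{diag}(1,A)\), where \(A=\begin{psmallmatrix}2&1\\1&1\end{psmallmatrix}\). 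Set \(K:=\langle U',t\rangle\).

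First, the structure of \(K\). A direct computation gives \(t\,u'(w)\,t^{-1}=u'\bigl((A^{-1})^{T}w\bigr)\), so \(U'\triangleleft K\) and \(K\cong \mathbb Z^2\rtimes_{A^{-T}}\mathbb Z\). Hence \(K\) is finitely generated and solvable, so amenable. Every element of \(K\) can be written uniquely as \(u'(w)t^n\), with block form \(\begin{psmallmatrix}1&w^TA^n\\0&A^n\end{psmallmatrix}\).

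Second, the intersection \(L=K\cap H\). Such an element sends \(e_2,e_3\) to vectors whose first coordinates are the entries of \(w^TA^n\). So it lies in \(H\) if and only if \(w^TA^n=0\), that is, \(w=0\). Therefore \(L=\langle t\rangle\cong\mathbb Z\). Since \(U'\cap\langle t\rangle=\{e\}\) and \(K=U'\langle t\rangle\), the map \(w\mapsto u'(w)L\) is a bijection \(\mathbb Z^2\to K/L\). Thus \(|B_{K/L}(R)|\) equals the number of \(w\) for which some element \(u'(w)t^n\) has length at most \(R\) in \(K\).

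Third, and this is the main step, exponential growth of \(K/L\). I would use the standard argument for \(\mathbb Z^2\rtimes_B\mathbb Z\) with \(B=A^{-T}\) hyperbolic. Replace \(B\) by a power \(B^m\) whose expanding eigenvalue exceeds \(3\), and fix a nonzero \(w_0\). Consider the vectors \(\sum_{k=0}^{N}\varepsilon_k B^{mk}w_0\) with \(\varepsilon_k\in\{0,1\}\). Each is realized by a word of the form \(\prod_k\bigl(u'(w_0)^{\varepsilon_k}t^{m}\bigr)\) followed by \(t^{-m(N+1)}\), of length \(O(mN)\).

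These \(2^{N+1}\) vectors are pairwise distinct. To see this, project onto the expanding eigenline of \(B\), where the component of \(w_0\) is nonzero because the eigenvector is irrational. The distinct sums then differ by a "\(\lambda\)-adic" expansion with digits in \(\{-1,0,1\}\) and \(\lambda>3\), which cannot vanish. This yields \(|B_{K/L}(CN)|\ge 2^{N+1}\), so \(K/L\) has exponential growth.

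Finally, Proposition~\ref{prop:relative-obstruction}, applied to the amenable subgroup \(K\) and \(L=K\cap H\), gives \((G,H)\notin\mathbf{SLC}_{\mathrm{subexp}}\). I expect the distinctness computation in the third step to be the only point needing care; everything else is an explicit matrix check.
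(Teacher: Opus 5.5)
Your proof is correct, and its overall strategy is the paper's: apply Proposition~\ref{prop:relative-obstruction} to a copy of \(\mathbb Z^2\rtimes_A\mathbb Z\), then show the Schreier graph of \(K/L\) grows exponentially using \(2^{m}\) binary sums and a dominant-term argument. The difference lies in the choice of \(K\), and this changes the nature of the distinctness check.

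\textbf{The paper's choice.} It starts from the lower-unipotent group \(K_0=\langle u(1,0),u(0,1),t\rangle\), which lies inside \(H\), and conjugates it by \(g=I+E_{12}\). The resulting \(L=K\cap H\) is a rank-one subgroup of the normal \(\mathbb Z^2\). So distinctness of cosets means showing \(v_\varepsilon-v_{\varepsilon'}\notin\mathbb Z A^m e_2\). The paper does this with a determinant computation and the super-increasing sequence \(y_j\).

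\textbf{Your choice.} You use \(U'\), the unipotent radical of the opposite parabolic \(\mathrm{Stab}(\mathbb Z e_1)\), which meets \(H\) trivially. Then \(L=\langle t\rangle\) is a complement to \(U'\), so \(K/L\) is literally \(\mathbb Z^2\) with the affine action. Distinctness reduces to distinctness of vectors, which your projection settles at once. Two small remarks: the projection should be \emph{along} the contracting eigenline, and \(\lambda=(3+\sqrt5)/2>2\) already suffices, so \(m=1\) works.

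\textbf{What each buys.} Your construction is exactly the \(n=3\) instance of the one the paper uses later in Theorem~\ref{thm:SLn-no-subexponential-weight}, with \(P'=\mathrm{Stab}(\mathbb Z e_1)\), \(W=U_{P'}\) and \(H\cap W=\{e\}\). Your growth step is the rank-zero case of Lemma~\ref{le:rank-exponential}. The paper's proof of this proposition is more laborious, but it rehearses the rank-one intersection case, which is the one that must be handled for general \(H\).
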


\begin{proof}
	By Proposition~\ref{prop:relative-obstruction}, it is enough to construct a finitely generated amenable subgroup \(K<G\) such that, with \(L:=K\cap H\), the Schreier graph of \(K/L\) has exponential growth.
	
	Let
	\[
	A:=\begin{pmatrix}2&1\\1&1\end{pmatrix}\in\mathrm{SL}_2(\mathbb Z),\qquad
	t:=\mathrm{diag}(1,A)\in G,\qquad
	u(a,b):=\begin{pmatrix}1&0&0\\ a&1&0\\ b&0&1\end{pmatrix}.
	\]
	Set
	\[
	K_0:=\langle u(1,0),u(0,1),t\rangle
	=\{u(v)t^n:\ v\in\mathbb Z^2,\ n\in\mathbb Z\}
	\cong \mathbb Z^2\rtimes_A\mathbb Z.
	\]
	Since \(A\) is hyperbolic, then \(K_0\) has exponential growth. Moreover, \(K_0\) is solvable, hence amenable. Let
	\[
	g:=\begin{pmatrix}1&1&0\\0&1&0\\0&0&1\end{pmatrix},
	\qquad
	K:=gK_0g^{-1}.
	\]
	Then \(K\) is again finitely generated, amenable, and of exponential growth.
	
	Write \(A^n=\begin{pmatrix}p_n&q_n\\ q_n&r_n\end{pmatrix}\). For \(k=gu(a,b)t^n g^{-1}\in K\), a direct computation gives \(\mathrm{row}_1(k)=(1+a,\ p_n-1-a,\ q_n)\). Hence \(k\in H\) if and only if \(q_n=0\) and \(a=p_n-1\). If \(q_n=0\), then \(A^n\) is diagonal. Since \(A^n\in \mathrm{SL}_2(\mathbb Z)\), this forces \(A^n=\pm I\), and therefore \(n=0\) because \(A\) is hyperbolic. It follows that \(a=0\), while \(b\) is arbitrary. Thus
	\[
	L:=K\cap H=g\langle u(0,1)\rangle g^{-1}\cong \mathbb Z.
	\]
	
	Let \(L_0:=\langle u(0,1)\rangle<K_0\). For \(m\ge1\) and \(\varepsilon=(\varepsilon_0,\dots,\varepsilon_{m-1})\in\{0,1\}^m\), set \(v_\varepsilon:=\sum_{i=0}^{m-1}\varepsilon_i A^i e_1\), \(k_\varepsilon:=u(v_\varepsilon)t^m\in K_0\), and \(x_\varepsilon:=gk_\varepsilon g^{-1}\in K\). Since \(u(v)t\cdot u(w)t=u(v+Aw)t^2\), one has \(k_\varepsilon=\prod_{i=0}^{m-1}\bigl(u(\varepsilon_i e_1)t\bigr)\). Hence, with respect to the finite generating set
	\[
	S:=\{g u(1,0)g^{-1},\, g u(0,1)g^{-1},\, g t g^{-1}\}^{\pm1}
	\]
	of \(K\), one has \(\ell_S(x_\varepsilon)\le 2m\).

	We claim that the left cosets \(x_\varepsilon L\) are pairwise distinct. Equivalently, the left cosets \(k_\varepsilon L_0\) are pairwise distinct, and
	$k_\varepsilon L_0=k_{\varepsilon'}L_0$ if and only if 
	$v_\varepsilon-v_{\varepsilon'}\in \mathbb Z\,A^m e_2.$
	Set \(u_m:=A^m e_2\). Since \(A^m\in \mathrm{SL}_2(\mathbb Z)\), the vector \(u_m\) is primitive, so \(v\in \mathbb Z\,u_m\) if and only if \(\det(u_m,v)=0\). It therefore suffices to show that \(\det(u_m,v_\varepsilon-v_{\varepsilon'})\neq0\) for \(\varepsilon\neq\varepsilon'\).
	
	Write \(A^j e_2=(x_j,y_j)\) for \(j\ge1\). Then \((x_{j+1},y_{j+1})=(2x_j+y_j,\ x_j+y_j)\) and \((x_1,y_1)=(1,1)\). Hence \(x_j\ge y_j>0\) for all \(j\), and \(x_j>y_j\) for \(j\ge2\). In particular, \(y_{j+1}=x_j+y_j>2y_j\) for \(j\ge2\). Since \(y_2=2>y_1\), it follows inductively that
	\[
	y_j>\sum_{k<j}y_k\qquad (j\ge2).
	\]
	
	On the other hand, since \(A\) is symmetric and \(\det A=1\), one has \(\det(u_m,A^i e_1)=\det(A^{m-i}e_2,e_1)=-y_{m-i}\). Therefore
	\[
	\det(u_m,v_\varepsilon-v_{\varepsilon'})
	=-\sum_{j=1}^m \sigma_j y_j
	\]
	for suitable \(\sigma_j\in\{-1,0,1\}\), not all zero if \(\varepsilon\neq\varepsilon'\). If \(j_0\) is maximal with \(\sigma_{j_0}\neq0\), then
	\[
	\left|\sum_{j=1}^m \sigma_j y_j\right|
	\ge y_{j_0}-\sum_{j<j_0}y_j>0.
	\]
	Thus \(\det(u_m,v_\varepsilon-v_{\varepsilon'})\neq0\), and the cosets \(x_\varepsilon L\) are distinct.
	
	It follows that the ball of radius \(2m\) in the Schreier graph of \(K/L\) contains at least \(2^m\) vertices. Hence this Schreier graph has exponential growth. Proposition~\ref{prop:relative-obstruction} now gives \((G,H)\notin \mathbf{SLC}_{\mathrm{subexp}}\).
\end{proof}

\begin{lemma}\label{lem:full-rank-subgroup-Zd}
	Let \(d\ge 1\), and let \(L\le \mathbb Z^d\). If
	\(\operatorname{rank}_{\mathbb Z}(L)=d\), then \([\mathbb Z^d:L]<\infty\).
	Moreover, if \(m=[\mathbb Z^d:L]\), then $m\mathbb Z^d\subseteq L.$
\end{lemma}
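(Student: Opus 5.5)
The plan is to derive both assertions from the structure theorem for finitely generated abelian groups, using only that subgroups of \(\mathbb Z^d\) are free abelian. First, the subgroup \(L\le\mathbb Z^d\) is free abelian of rank \(d\). By the elementary divisor (Smith normal form) theorem, there is a basis \(f_1,\dots,f_d\) of \(\mathbb Z^d\) and positive integers \(a_1\mid a_2\mid\cdots\mid a_r\) such that \(a_1f_1,\dots,a_rf_r\) is a basis of \(L\), where \(r=\operatorname{rank}_{\mathbb Z}(L)\). Since \(r=d\) by hypothesis, every basis vector gets a nonzero coefficient. Hence
\[
\mathbb Z^d/L\;\cong\;\bigoplus_{i=1}^d \mathbb Z/a_i\mathbb Z,
\]
which is finite of order \(a_1\cdots a_d\). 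This proves \([\mathbb Z^d:L]<\infty\).

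An alternative route, if one prefers to avoid Smith normal form, is as follows. Choose \(d\) linearly independent elements of \(L\) and let \(M\) be the integer matrix they form. The lattice \(M\mathbb Z^d\subseteq L\) has index \(|\det M|\neq0\) in \(\mathbb Z^d\). This can be seen via the adjugate identity \(\det(M)\,\mathbb Z^d=M\operatorname{adj}(M)\mathbb Z^d\subseteq M\mathbb Z^d\), together with \([\mathbb Z^d:\det(M)\mathbb Z^d]=|\det M|^d<\infty\). Either route suffices.

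For the second assertion, set \(m=[\mathbb Z^d:L]\). The quotient \(Q=\mathbb Z^d/L\) is a finite abelian group of order \(m\). By Lagrange's theorem, every element \(\bar v\in Q\) satisfies \(m\bar v=0\), that is, \(mv\in L\) for every \(v\in\mathbb Z^d\). This is exactly the inclusion \(m\mathbb Z^d\subseteq L\).

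There is no real obstacle here. The only step that needs care is justifying that full rank forces finite index, which the Smith normal form (or the determinant argument) handles directly.
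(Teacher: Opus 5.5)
Your proof is correct. The second assertion is argued exactly as in the paper: Lagrange's theorem in the finite quotient gives $m\bar v=0$, i.e.\ $mv\in L$.

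The first assertion takes a different route. The paper observes that $\mathbb Z^d/L$ is a finitely generated abelian group of rank $d-d=0$, hence finite. This uses additivity of rank along $0\to L\to\mathbb Z^d\to\mathbb Z^d/L\to 0$ together with the structure theorem.

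Your two routes compare with this as follows.
\begin{enumerate}
\item The Smith normal form (stacked basis) argument gives more: it identifies the quotient as $\bigoplus_i\mathbb Z/a_i\mathbb Z$. It therefore already yields $a_d\mathbb Z^d\subseteq L$ with $a_d\mid m$, a sharper multiplier than $m$.
\item The determinant argument is the most elementary, since it needs no structure theory at all. As written, the adjugate step only shows $[\mathbb Z^d:M\mathbb Z^d]\le|\det M|^d$, not that the index equals $|\det M|$. Finiteness is all you use, so this is harmless, but the equality claim is stated without proof.
\end{enumerate}
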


\begin{proof}
	Since \(\operatorname{rank}_{\mathbb Z}(L)=d\), the quotient
	\(\mathbb Z^d/L\) is a finitely generated abelian group of rank
	\(d-d=0\), hence finite. Thus \(m:=[\mathbb Z^d:L]<\infty\).
	
	Let \(v\in \mathbb Z^d\), and let \(\overline{v}\) denote its class in
	\(\mathbb Z^d/L\). Since \(|\mathbb Z^d/L|=m\), the order of
	\(\overline{v}\) divides \(m\). Therefore \(m\overline{v}=0\), i.e.
	\(mv\in L\). Since \(v\) was arbitrary, we obtain
	\(m\mathbb Z^d\subseteq L\).
\end{proof}

\begin{lemma}\label{lem:transitivity-on-lines}
	The group \(\mathrm{SL}_n(\mathbb Z)\) acts transitively on the set of
	primitive rank-one direct summands of \(\mathbb Z^n\). In particular, if
	\(P_i\le \mathrm{SL}_n(\mathbb Z)\) denotes the stabilizer of
	\(\mathbb Z e_i\), then \(P_1,\dots,P_n\) are all conjugate in
	\(\mathrm{SL}_n(\mathbb Z)\).
\end{lemma}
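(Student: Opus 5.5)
The plan is to prove transitivity by showing that every primitive vector of \(\mathbb Z^n\) can be the first column of a matrix in \(\mathrm{SL}_n(\mathbb Z)\).

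First, a primitive rank-one direct summand \(D\le\mathbb Z^n\) has the form \(D=\mathbb Z v\) with \(v\) primitive, meaning \(\gcd\) of its coordinates is \(1\). Indeed, \(D\) is infinite cyclic. A generator \(v\) cannot be a proper multiple \(kw\) with \(|k|\ge2\), since then \(\mathbb Z^n/D\) would have torsion \(\bar w\), contradicting that \(D\) is a direct summand. Conversely, any primitive \(v\) spans a direct summand, as the extension below shows.

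The key step is to extend a primitive \(v\) to a basis of \(\mathbb Z^n\). I would do this by induction on \(n\) using elementary column/row operations, i.e. the Euclidean algorithm. Acting on \(v\) by elementary matrices \(I+cE_{ij}\) (\(i\neq j\)), which lie in \(\mathrm{SL}_n(\mathbb Z)\), we can run the Euclidean algorithm on pairs of coordinates. This reduces \(v\) to \((\gcd,0,\dots,0)^T=(\pm1,0,\dots,0)^T\). If the sign is \(-1\), we apply \(\operatorname{diag}(-1,-1,1,\dots,1)\), which is in \(\mathrm{SL}_n(\mathbb Z)\) since \(n\ge2\); this turns it into \(e_1\). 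Hence there is \(g\in\mathrm{SL}_n(\mathbb Z)\) with \(gv=e_1\), so \(g(\mathbb Z v)=\mathbb Z e_1\). Since every summand is thereby carried to \(\mathbb Z e_1\), transitivity follows.

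For the second claim, \(\mathbb Z e_i\) is a primitive rank-one summand, so there is \(g\) with \(g\mathbb Z e_1=\mathbb Z e_i\). Explicitly, we can take \(g\) to be a signed permutation matrix of determinant \(1\) sending \(e_1\mapsto e_i\). Then \(P_i=\mathrm{Stab}(\mathbb Z e_i)=gP_1g^{-1}\).

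The only real obstacle is the determinant-one constraint, which rules out plain permutation matrices and the sign \(-1\). This is handled by using elementary transvections and compensating sign changes on a second coordinate, which is available because \(n\ge 3\).
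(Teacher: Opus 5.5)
Your proof is correct and takes essentially the same route as the paper: both find $g\in\mathrm{SL}_n(\mathbb Z)$ matching $v$ with $e_1$ and then conjugate stabilizers. The difference is that the paper simply cites the standard fact that a primitive vector extends to a $\mathbb Z$-basis and negates $v_2$ when $\det=-1$, whereas you prove that extension directly via the Euclidean algorithm with transvections. One small slip: your closing remark should read $n\ge 2$ rather than $n\ge 3$. In fact the sign correction is not even needed, since $gv=-e_1$ already gives $g(\mathbb Z v)=\mathbb Z e_1$, and the case $n=1$ is trivial.
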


\begin{proof}
    If $n=1$, the statement is trivial. Assume $n\ge 2$. Let $L\le \mathbb Z^n$ be a rank-one direct summand. Then	\(L=\mathbb Z v\) for some primitive vector \(v\in \mathbb Z^n\). Since \(v\) is primitive, it can be extended to a \(\mathbb Z\)-basis $v,v_2,\dots,v_n$
	of \(\mathbb Z^n\). Let \(g\in \mathrm{GL}_n(\mathbb Z)\) be the matrix
	whose columns are \(v,v_2,\dots,v_n\). Then \(g e_1=v\), so
	\(g(\mathbb Z e_1)=L\). If \(\det(g)=-1\), replace \(v_2\) by \(-v_2\),
	this changes the determinant by a factor of \(-1\) and does not change
	the image of \(\mathbb Z e_1\). Hence we may assume that \(g\in
	\mathrm{SL}_n(\mathbb Z)\). This proves transitivity.
	
	The final assertion follows by taking \(L=\mathbb Z e_i\).
\end{proof}

\begin{theorem}[\cite{Meiri2017,Tits1976,bass1967}]\label{thm:congruence-inclusion}

	Let \(n\ge 3\), and for \(N\ge 1\) define \(u_{ij}(r):=I_n+rE_{ij}\) \((r\in \mathbb Z,\ i\neq j)\), and
	\[
	\Lambda_n(N):=\bigl\langle u_{ij}(N)\mid 1\le i\neq j\le n\bigr\rangle
	\le \mathrm{SL}_n(\mathbb Z).
	\]
	Then \(\Gamma_n(N^2)\subseteq \Lambda_n(N)\), where \(\Gamma_n(m):=\ker\!\bigl(\mathrm{SL}_n(\mathbb Z)\to \mathrm{SL}_n(\mathbb Z/m\mathbb Z)\bigr)\) is the principal congruence subgroup of level \(m\). In particular, \(\Lambda_n(N)\) has finite index in \(\mathrm{SL}_n(\mathbb Z)\).
	
\end{theorem}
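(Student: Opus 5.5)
This statement is cited from the literature (Tits, Bass--Milnor--Serre, Meiri), so the plan is to recall how the standard argument goes rather than to reprove congruence-subgroup-type results from scratch. First we reduce to the \emph{elementary} congruence subgroup. Let \(E_n(N)\) denote the normal closure of \(\Lambda_n(N)\) in \(\mathrm{SL}_n(\mathbb Z)\), i.e.\ the subgroup generated by all conjugates of the \(u_{ij}(N)\). The argument has two steps: show that \(E_n(N)\supseteq \Gamma_n(N^2)\), and show that \(E_n(N)\) is not much bigger than \(\Lambda_n(N)\), in the sense that \(\Lambda_n(N)\supseteq E_n(N^2)\) or, more precisely, that \(\Lambda_n(N)\) already contains \(\Gamma_n(N^2)\).

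For the first step we use the Bass--Milnor--Serre solution of the congruence subgroup problem for \(\mathrm{SL}_n(\mathbb Z)\), \(n\ge3\). It gives \(E_n(m)=\Gamma_n(m)\) for every \(m\); in other words, \(\mathrm{SK}_1(\mathbb Z,m\mathbb Z)=1\), since Mennicke symbols over \(\mathbb Z\) are trivial.

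For the second step we use Tits' lemma. For \(n\ge3\), the subgroup generated by the \(u_{ij}(N)\) contains the normal closure of the \(u_{ij}(N^2)\). The key commutator identities are
\[
[u_{ik}(a),u_{kj}(b)]=u_{ij}(ab),\qquad i,j,k \text{ distinct}.
\]
Given \(g\in\mathrm{SL}_n(\mathbb Z)\), which is a product of elementary matrices \(u_{kl}(1)\), one conjugates \(u_{ij}(N^2)=[u_{ik}(N),u_{kj}(N)]\) step by step. The Steinberg relations show that conjugating a generator \(u_{pq}(N^2)\) by \(u_{kl}(1)\) yields a product of elements \(u_{rs}(N)\) or \(u_{rs}(N^2)\), which lie in \(\Lambda_n(N)\). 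The one delicate case is \(\{p,q\}=\{l,k\}\). There we use the factorization \(u_{pq}(N^2)=[u_{pr}(N),u_{rq}(N)]\) through a third index \(r\), which is available because \(n\ge3\), and conjugate each factor separately. An induction on word length then gives \(gu_{ij}(N^2)g^{-1}\in\Lambda_n(N)\). Hence \(E_n(N^2)\subseteq\Lambda_n(N)\), and combined with the first step we get \(\Gamma_n(N^2)=E_n(N^2)\subseteq\Lambda_n(N)\).

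The finite-index claim is then immediate: \(\Gamma_n(N^2)\) is the kernel of a map to the finite group \(\mathrm{SL}_n(\mathbb Z/N^2\mathbb Z)\), so it has finite index.

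The main obstacle is the first step, \(E_n(m)=\Gamma_n(m)\), which is the deep input (Bass--Milnor--Serre). Here we simply invoke the cited result rather than reproving the triviality of Mennicke symbols. The Tits-type commutator bookkeeping in the second step is routine once the case split on the indices is organized.
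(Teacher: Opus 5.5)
\textbf{There is a gap in your sketch of the Tits step.} Your two-step architecture is the standard one behind the paper's citations; the paper itself gives no proof, only the references. The first step uses Bass--Milnor--Serre (Mennicke for \(\mathbb Z\)) to get \(\Gamma_n(m)=E_n(m)\). The second uses Tits's theorem \(E_n(N^2)\subseteq\Lambda_n(N)\). If you quote Tits's theorem as a black box, you are done. The justification you sketch for it, however, does not work.

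Your single-conjugation case analysis is fine. In the non-delicate cases, conjugating \(u_{pq}(N^2)\) by \(u_{kl}(\pm1)\) gives a product of \(u_{rs}(\pm N^2)\)'s. In the delicate case \((k,l)=(q,p)\), however, the output \([u_{qr}(N)u_{pr}(N),\,u_{rp}(-N)u_{rq}(N)]\) is a product of factors \(u_{rs}(\pm N)\). From there the induction on word length breaks, however you set it up:
\begin{itemize}
\item If you peel off the innermost letter of \(g\), the inductive hypothesis concerns conjugates of \(u_{ij}(N^2)\) and says nothing about \(h\,u_{rs}(N)\,h^{-1}\).
\item If you peel off the outermost letter, you need \(\Lambda_n(N)\) to be normalized by the \(u_{kl}(\pm1)\).
\end{itemize}
Neither is true. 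The map \(\Gamma_n(N)\to M_n(\mathbb Z/N\mathbb Z)\), \(I+NA\mapsto A\bmod N\), is a homomorphism sending \(\Lambda_n(N)\) into the zero-diagonal matrices. But \(u_{21}(1)u_{12}(N)u_{21}(-1)=I+N(E_{12}+E_{22}-E_{11}-E_{21})\) maps to a matrix with nonzero diagonal when \(N\ge2\). So \(\Lambda_n(N)\) is not normal, and the index bookkeeping is not routine.

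\textbf{What is missing} is a class of elements that is stable under all elementary conjugations and still lies in \(\Lambda_n(N)\). The standard choice is the subgroup \(Z\) generated by \(z_{ij}(c,r):=u_{ji}(r)u_{ij}(c)u_{ji}(-r)\), with \(c\in N^2\mathbb Z\) and \(r\in\mathbb Z\). Two facts are needed.
\begin{itemize}
\item \(Z\subseteq\Lambda_n(N)\). Write \(c=N^2a\) and choose \(k\notin\{i,j\}\); this is where \(n\ge3\) enters. Then
\[
z_{ij}(N^2a,r)=\bigl[u_{jk}(rNa)\,u_{ik}(Na),\;u_{ki}(-rN)\,u_{kj}(N)\bigr]\in\Lambda_n(N).
\]
\item \(Z\) is normal in \(\mathrm{SL}_n(\mathbb Z)\). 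Equivalently, for \(n\ge3\) the relative elementary group \(E_n(N^2)\) is generated by the \(z_{ij}\)'s (Stein, Vaserstein--Suslin, Tits). This is the genuinely nontrivial computation. Its key case is conjugating \(z_{ij}(c,r)\) by \(u_{ij}(s)\): that stays inside the \(\{i,j\}\)-copy of \(\mathrm{SL}_2\) and must be routed through a third index.
\end{itemize}
With both facts, \(E_n(N^2)\subseteq Z\subseteq\Lambda_n(N)\), and the rest of your argument goes through.

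A minor point: \(\mathrm{SL}_n(\mathbb Z)\) is generated by the \(u_{kl}(\pm1)\), not by products of the \(u_{kl}(1)\) alone.
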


\begin{theorem}\label{thm:line-parabolic-rank-criterion}
	Let \(G=\mathrm{SL}_n(\mathbb Z)\) with \(n\ge 3\), and let \(P<G\) be a
	maximal parabolic subgroup stabilizing a primitive rank-one direct
	summand of \(\mathbb Z^n\). Let \(U_P\) denote its unipotent radical.
	Then \(U_P\cong \mathbb Z^{n-1}\), and for every subgroup \(H\le G\) the
	following are equivalent:
	\begin{enumerate}
		\item \([G:H]<\infty\).
		\item For every conjugate \(P'=gPg^{-1}\) of \(P\) in \(G\),
		if \(U_{P'}:=gU_Pg^{-1}\) denotes the unipotent radical of \(P'\), then $\operatorname{rank}_{\mathbb Z}\bigl(H\cap U_{P'}\bigr)=n-1.$
	\end{enumerate}
   Equivalently, \([G:H]=\infty\) if and only if there exists a conjugate
   \(P'=gPg^{-1}\) such that
   \(\operatorname{rank}_{\mathbb Z}\bigl(H\cap U_{P'}\bigr)\le n-2\).
\end{theorem}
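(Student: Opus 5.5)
Concretely, take $P=\mathrm{Stab}_G(\mathbb Z e_1)$. I would first identify its unipotent radical as
\[
U_P=\Bigl\{\begin{pmatrix}1&x^{T}\\0&I_{n-1}\end{pmatrix}:x\in\mathbb Z^{n-1}\Bigr\}\cong\mathbb Z^{n-1},
\]
or the transpose convention matching the paper's stabilizer; the isomorphism is $x\mapsto u(x)$, since $u(x)u(y)=u(x+y)$. By Lemma~\ref{lem:transitivity-on-lines}, all the conjugates $gPg^{-1}$ are exactly the stabilizers of primitive lines. They include the stabilizers $P_i$ of the lines $\mathbb Z e_i$, and the radical $U_{P_i}$ is generated by the elementary matrices $u_{ij}(1)$ for $j\ne i$, with the index order fixed by the chosen convention.

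For (1)$\Rightarrow$(2), suppose $[G:H]=m<\infty$. Then $K:=\operatorname{Core}_G(H)$ is normal of finite index $M$ dividing $m!$. For any $u\in U_{P'}$ the power $u^{M}$ lies in $K\subseteq H$, so $H\cap U_{P'}\supseteq M\,U_{P'}$. This has rank $n-1$. Since the rank cannot exceed $n-1$, equality holds.

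For (2)$\Rightarrow$(1), apply the hypothesis to each $P_i$, $i=1,\dots,n$. Then $H\cap U_{P_i}$ is a full-rank subgroup of $U_{P_i}\cong\mathbb Z^{n-1}$. By Lemma~\ref{lem:full-rank-subgroup-Zd} there is $m_i\ge1$ with $m_iU_{P_i}\subseteq H$, so $u_{ij}(m_i)=u_{ij}(1)^{m_i}\in H$. Put $N:=\operatorname{lcm}(m_1,\dots,m_n)$. Then every generator $u_{ij}(N)$ of $\Lambda_n(N)$ lies in $H$, because each pair $(i,j)$ with $i\ne j$ occurs in the radical of one of the line stabilizers: with the row convention the pair belongs to $U_{P_i}$, otherwise to $U_{P_j}$. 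Hence $\Lambda_n(N)\le H$, and Theorem~\ref{thm:congruence-inclusion} gives $\Gamma_n(N^2)\le H$. So $[G:H]\le|\mathrm{SL}_n(\mathbb Z/N^2)|<\infty$.

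The equivalent reformulation is just the contrapositive, using $\operatorname{rank}(H\cap U_{P'})\le n-1$ always. The main obstacle is bookkeeping rather than any real difficulty. One must fix the convention for which off-diagonal entries make up $U_{P_i}$, verify that $U_P$ is abelian and equal to $\mathbb Z^{n-1}$, and check that the radicals of the $n$ coordinate line stabilizers together contain every elementary generator $u_{ij}(1)$. The substantive input is Theorem~\ref{thm:congruence-inclusion}, which is available here and is where $n\ge3$ is used.
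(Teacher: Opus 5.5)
Your proposal is correct and follows the paper's proof essentially step for step: you identify $U_P\cong\mathbb Z^{n-1}$, reduce to the coordinate line stabilizers $P_i$ via Lemma~\ref{lem:transitivity-on-lines}, use Lemma~\ref{lem:full-rank-subgroup-Zd} to obtain $u_{ij}(N)\in H$ for all $i\neq j$, and conclude with Theorem~\ref{thm:congruence-inclusion}. The only cosmetic difference is in (1)$\Rightarrow$(2). There you use $\operatorname{Core}_G(H)$ to get $u^M\in H$ for all $u\in U_{P'}$, whereas the paper uses the injection $U_{P'}/(U_{P'}\cap H)\hookrightarrow G/H$; both arguments are immediate.
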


\begin{proof}
	Let \(P_1:=\operatorname{Stab}_G(\mathbb Z e_1)\).
	By Lemma~\ref{lem:transitivity-on-lines}, \(P\) is conjugate to \(P_1\).
	Choose \(g_0\in G\) such that \(P=g_0P_1g_0^{-1}\).
	
	Let \(U_1:=U_{P_1}\). Explicitly,
	\[
	U_1=
	\left\{
	I_n+\sum_{j\neq 1} a_j E_{1j}\;:\; a_j\in \mathbb Z
	\right\}
	=
	\bigl\langle u_{1j}(1)\mid j\neq 1\bigr\rangle.
	\]
	Since the matrices \(u_{1j}(1)\) (\(j\neq 1\)) commute pairwise, the map
	\[
	\phi_1:\mathbb Z^{n-1}\longrightarrow U_1,\qquad
	(a_j)_{j\neq 1}\longmapsto \prod_{j\neq 1} u_{1j}(a_j),
	\]
	is an isomorphism. Hence \(U_1\cong \mathbb Z^{n-1}\). Therefore
	\(U_P=g_0U_1g_0^{-1}\cong \mathbb Z^{n-1}\). Consequently, for every
	conjugate \(P'=gPg^{-1}\) of \(P\), \(U_{P'}=gU_Pg^{-1}\cong
	\mathbb Z^{n-1}\).
	
	We first prove \((1)\Rightarrow(2)\). Let \(P'=gPg^{-1}\) be any
	conjugate of \(P\), and let \(U_{P'}\) be its unipotent radical. The
	natural map
	\[
	U_{P'}/(U_{P'}\cap H)\longrightarrow G/H,\qquad
	u(U_{P'}\cap H)\longmapsto uH,
	\]
	is injective. Therefore
	\([U_{P'}:U_{P'}\cap H]\le [G:H]<\infty\).
	Since \(U_{P'}\cong \mathbb Z^{n-1}\), every finite-index subgroup of
	\(U_{P'}\) has \(\mathbb Z\)-rank \(n-1\). Hence
	\(\operatorname{rank}_{\mathbb Z}(H\cap U_{P'})=n-1\)
	for every conjugate \(P'\) of \(P\).
	
	We now prove \((2)\Rightarrow(1)\). Since the set of conjugates of \(P\)
	depends only on the conjugacy class of \(P\), we may replace \(P\) by a
	conjugate and assume that \(P=P_1:=\operatorname{Stab}_G(\mathbb Z e_1)\).
	For \(1\le i\le n\), let
	\(P_i:=\operatorname{Stab}_G(\mathbb Z e_i)\).
	By Lemma~\ref{lem:transitivity-on-lines}, the subgroups \(P_1,\dots,P_n\)
	are all conjugate in \(G\), hence each \(P_i\) is a conjugate of \(P\).
	
	For each \(i\), let \(U_i:=U_{P_i}\). Then
	\[
	U_i=
	\left\{
	I_n+\sum_{j\neq i} a_j E_{ij}\;:\; a_j\in \mathbb Z
	\right\}
	=
	\bigl\langle u_{ij}(1)\mid j\neq i\bigr\rangle.
	\]
	Since the matrices \(u_{ij}(1)\) (\(j\neq i\)) commute pairwise, the map
	\[
	\phi_i:\mathbb Z^{n-1}\longrightarrow U_i,\qquad
	(a_j)_{j\neq i}\longmapsto \prod_{j\neq i} u_{ij}(a_j),
	\]
	is an isomorphism. In particular, \(U_i\cong \mathbb Z^{n-1}\).
	
	By hypothesis, \(L_i:=H\cap U_i\) has \(\mathbb Z\)-rank \(n-1\) for
	every \(i=1,\dots,n\). By Lemma~\ref{lem:full-rank-subgroup-Zd}, each
	\(L_i\) has finite index in \(U_i\). Write \(d_i:=[U_i:L_i]\).
	Applying Lemma~\ref{lem:full-rank-subgroup-Zd} to
	\(\phi_i^{-1}(L_i)\le \mathbb Z^{n-1}\), we obtain
	\(d_i\mathbb Z^{n-1}\subseteq \phi_i^{-1}(L_i)\).
	Equivalently, \(u_{ij}(d_i)\in H\) for every \(j\neq i\).
	
	Now set \(N:=\operatorname{lcm}(d_1,\dots,d_n)\).
	Since \(u_{ij}(r)^m=u_{ij}(mr)\) for all \(r,m\in\mathbb Z\), it follows
	that \(u_{ij}(N)\in H\) for all \(i\neq j\). Therefore
	\[
	\Lambda_n(N):=\bigl\langle u_{ij}(N)\mid 1\le i\neq j\le n\bigr\rangle
	\subseteq H.
	\]
	
	By Theorem~\ref{thm:congruence-inclusion},
	\(\Gamma_n(N^2)\subseteq \Lambda_n(N)\subseteq H\).
	Since \(\Gamma_n(N^2)\) has finite index in \(G\), we conclude that
	\([G:H]\le [G:\Gamma_n(N^2)]<\infty\).
	This proves \((2)\Rightarrow(1)\).
	
	The final reformulation is just the contrapositive of the equivalence of
	\((1)\) and \((2)\).
\end{proof}

   In rank \(3\), one can prove a stronger structural statement for finitely generated amenable subgroups of exponential growth, namely that they virtually contain a subgroup \(U\cong \mathbb Z^2\) consisting entirely of transvections. We record this refinement separately in Appendix~\ref{app:rank3}
   , since it will not be used in the general argument.

    For the proof of Theorem~\ref{thm:SLn-no-subexponential-weight}, the only parabolic input needed is Theorem~\ref{thm:line-parabolic-rank-criterion}, together with the following rank-two reduction and Lemma~\ref{le:rank-exponential}. The latter is merely a generalization of one step in the proof of Proposition~\ref{prop:sl3-parabolic-no-slc-subexp}.

\begin{lemma}\label{lem:rank-two-reduction}
	Let \(U\cong \mathbb Z^d\) with \(d\ge 2\), and let \(N\le U\) be a subgroup of
	rank at most \(d-1\). Then there exists a primitive rank-two direct summand
	\(W\le U\) such that
	\[
	\operatorname{rank}_{\mathbb Z}(N\cap W)\le 1.
	\]
\end{lemma}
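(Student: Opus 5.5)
The plan is to reduce to linear algebra over \(\mathbb Q\) and then pass back to \(\mathbb Z\). Set \(V:=U\otimes\mathbb Q\cong\mathbb Q^d\) and \(N_{\mathbb Q}:=N\otimes\mathbb Q\le V\), so that \(\dim N_{\mathbb Q}=\operatorname{rank}_{\mathbb Z}N\le d-1\). For any direct summand \(W\le U\) we have
\[
\operatorname{rank}_{\mathbb Z}(N\cap W)=\dim\bigl((N\cap W)\otimes\mathbb Q\bigr)\le\dim\bigl(N_{\mathbb Q}\cap W_{\mathbb Q}\bigr).
\]
In fact this is an equality, since \(N\cap W\) has finite index in \(N_{\mathbb Q}\cap W_{\mathbb Q}\cap U\); only the inequality is needed. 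It therefore suffices to find a primitive rank-two summand \(W\) with \(\dim(N_{\mathbb Q}\cap W_{\mathbb Q})\le1\), that is, with \(W_{\mathbb Q}\not\subseteq N_{\mathbb Q}\).

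The choice of \(W\) is as follows. Since \(\dim N_{\mathbb Q}\le d-1<d\), some standard basis vector \(e_i\) of \(U\cong\mathbb Z^d\) does not lie in \(N_{\mathbb Q}\), because the \(e_i\) span \(V\). Pick any \(j\neq i\), which is possible since \(d\ge2\). Take \(W:=\mathbb Z e_i\oplus\mathbb Z e_j\). This is a primitive rank-two direct summand, with complement \(\bigoplus_{k\neq i,j}\mathbb Z e_k\). Since \(e_i\in W_{\mathbb Q}\setminus N_{\mathbb Q}\), the subspace \(N_{\mathbb Q}\cap W_{\mathbb Q}\) is a proper subspace of the two-dimensional space \(W_{\mathbb Q}\). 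Hence its dimension is at most \(1\), and so \(\operatorname{rank}_{\mathbb Z}(N\cap W)\le1\).

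The only point needing care is the rank inequality in the first step: that \(\mathbb Z\)-rank equals \(\mathbb Q\)-dimension after tensoring, and that \((N\cap W)\otimes\mathbb Q\hookrightarrow N_{\mathbb Q}\cap W_{\mathbb Q}\). Both follow from flatness of \(\mathbb Q\) over \(\mathbb Z\), so I expect no real obstacle. If the paper's later use requires \(W\) to be chosen inside a given basis, the construction above already provides that.
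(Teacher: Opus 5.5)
Your proof is correct and follows essentially the same route as the paper: both pass to $N\otimes\mathbb Q$, pick a vector of $U$ outside this proper subspace, and put it inside a rank-two direct summand $W$, so that $N\cap W$ cannot have full rank in $W$. The only difference is minor. The paper takes an arbitrary $y\notin N_{\mathbb Q}$, passes to its primitive part and extends that to a $\mathbb Z$-basis of $U$. You instead pick the vector directly from a fixed basis of $U\cong\mathbb Z^d$, which avoids the basis-extension step.
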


\begin{proof}
	Since \(\operatorname{rank}_{\mathbb Z}(N)\le d-1\), the \(\mathbb Q\)-subspace
	\(N_{\mathbb Q}:=N\otimes_{\mathbb Z}\mathbb Q\) is a proper subspace of
	\(U_{\mathbb Q}:=U\otimes_{\mathbb Z}\mathbb Q\). Choose
	\(y\in U\setminus N_{\mathbb Q}\), and write \(y=mx_1\) with \(m\ge 1\) and
	\(x_1\in U\) primitive. Since \(N_{\mathbb Q}\) is a \(\mathbb Q\)-subspace,
	\(x_1\notin N_{\mathbb Q}\). Extend \(x_1\) to a \(\mathbb Z\)-basis $x_1,x_2,\dots,x_d$
	of \(U\). Let $W:=\langle x_1,x_2\rangle \le U.$
	Then \(W\) is a primitive rank-two direct summand of \(U\).
	
	If \(\operatorname{rank}_{\mathbb Z}(N\cap W)=2\), then \(N\cap W\) has full
	rank in the rank-two free abelian group \(W\), hence finite index in \(W\).
	Therefore $(N\cap W)\otimes_{\mathbb Z}\mathbb Q = W\otimes_{\mathbb Z}\mathbb Q.$
	Since \(N\cap W\le N\), it follows that $W\otimes_{\mathbb Z}\mathbb Q \subseteq N_{\mathbb Q},$
	and in particular \(x_1\in N_{\mathbb Q}\), contradicting the choice of
	\(x_1\). Hence $\operatorname{rank}_{\mathbb Z}(N\cap W)\le 1.$
\end{proof}

\begin{lemma}\label{le:rank-exponential}
	Let \(V\cong \mathbb Z^2\), let \(A=\begin{pmatrix}2&1\\1&1\end{pmatrix}\),
	and let $K=V\rtimes_A \langle t\rangle$, where \(tvt^{-1}=Av\) for all \(v\in V\). Let \(L\le K\). If $\operatorname{rank}_{\mathbb Z}(L\cap V)\le 1,$
	then the Schreier graph of \(K/L\) has exponential growth.
\end{lemma}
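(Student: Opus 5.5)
The idea is to produce, inside a ball of radius \(O(m)\) in the Schreier graph of \(K/L\), a family of \(2^m\) pairwise distinct left cosets. This mirrors the argument in Proposition~\ref{prop:sl3-parabolic-no-slc-subexp}. The difference is that \(L\cap V\) is now an arbitrary subgroup of rank at most \(1\) rather than a specific line, and the order of the factors is arranged so that coset comparisons take place in a fixed line of \(V\).

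Write \(u(v)\in V\) for \(v\in\mathbb Z^2\), so that \(t\,u(v)\,t^{-1}=u(Av)\). Set \(N:=L\cap V\). Since \(\operatorname{rank}_{\mathbb Z}N\le1\), there is a nonzero \(w\in\mathbb Z^2\) with \(N\subseteq\mathbb Z w\); if \(N\) is trivial, take any \(w\). Let \(\varphi(v):=\det(w,v)\), a linear functional whose kernel on \(\mathbb Q^2\) is \(\mathbb Q w\).

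For a fixed offset \(i_0\) and \(\varepsilon\in\{0,1\}^m\), consider the elements
\[
k_\varepsilon:=u(v_\varepsilon),\qquad v_\varepsilon:=\sum_{i=0}^{m-1}\varepsilon_i A^{i_0+i}e_1 .
\]
Since \(u(A^jv)=t^j u(v)t^{-j}\), the element \(k_\varepsilon\) is written as \(t^{i_0}\bigl(u(\varepsilon_0e_1)\,t\,u(\varepsilon_1e_1)\,t\cdots t\,u(\varepsilon_{m-1}e_1)\bigr)t^{-(i_0+m-1)}\). This word has length at most \(3m+2i_0\) in the generators \(u(e_1),u(e_2),t\). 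Moreover \(k_{\varepsilon'}^{-1}k_\varepsilon=u(v_\varepsilon-v_{\varepsilon'})\in V\), so \(k_\varepsilon L=k_{\varepsilon'}L\) holds if and only if \(v_\varepsilon-v_{\varepsilon'}\in N\). In particular, equality of the two cosets forces \(\varphi(v_\varepsilon-v_{\varepsilon'})=0\). It therefore suffices to show that \(\sum_i\sigma_i\varphi(A^{i_0+i}e_1)\neq0\) whenever \(\sigma\in\{-1,0,1\}^m\) is nonzero.

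The main step is a superincreasing estimate, obtained by diagonalizing \(A\). Let \(\lambda=(3+\sqrt5)/2>2\), so the eigenvalues of \(A\) are \(\lambda^{\pm1}\), with irrational eigenlines. Decompose \(e_1=\alpha f_++\beta f_-\) along the eigenvectors; this gives
\[
\varphi(A^je_1)=c_+\lambda^j+c_-\lambda^{-j},\qquad c_\pm=\alpha\varphi(f_+),\ \beta\varphi(f_-).
\]
Here \(\alpha\neq0\), because \(e_1\) is rational and hence not an eigenvector. Also \(\varphi(f_+)\neq0\), because \(\ker\varphi=\mathbb Q w\) is a rational line and therefore not the eigenline of \(f_+\). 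Hence \(c_+\neq0\). Since \(\lambda>2\), we have \(\sum_{j<i}\lambda^j<\lambda^i/(\lambda-1)\) with \(\lambda-1>1\). So for \(i_0\) large enough (depending only on \(c_\pm\)), \(|\varphi(A^je_1)|>\sum_{i_0\le k<j}|\varphi(A^ke_1)|\) holds for all \(j\ge i_0\). Taking the maximal index with \(\sigma_j\neq0\), as in Proposition~\ref{prop:sl3-parabolic-no-slc-subexp}, then yields the required non-vanishing.

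It follows that the ball of radius \(3m+2i_0\) around the base coset of \(K/L\) contains at least \(2^m\) vertices, which is exponential growth. The point I expect to need the most care is choosing \(i_0\) uniformly so that the superincreasing inequality holds for every \(j\ge i_0\), including control of the error term \(c_-\lambda^{-j}\). The rest is bookkeeping.
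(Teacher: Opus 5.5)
Your proposal is correct and is essentially the paper's argument: binary combinations along an $A$-orbit, the determinant functional against the direction of $L\cap V$, the decomposition $c_+\lambda^j+c_-\lambda^{-j}$ with $c_+\neq0$ because the eigenlines are irrational, and the maximal-index superincreasing estimate. The only differences are cosmetic: you conjugate back so that $k_\varepsilon\in V$ (digit vector $e_1$, offset $i_0$), whereas the paper takes $k_\varepsilon=v_\varepsilon t^{J+m}$ with digit vector $\eta$ from a basis $(\xi,\eta)$ adapted to $L\cap V$ and compares cosets via $\det(A^{J+m}\xi,\cdot)$.
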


\begin{proof}
	Write the group law on \(V\) additively. Thus every element of $K=V\rtimes_A \langle t\rangle$
	can be written uniquely as \(vt^m\) with \(v\in V\) and \(m\in \mathbb Z\), and
	$(vt^m)(wt^n)=(v+A^m w)t^{m+n}.$
	Set $M:=L\cap V.$
	Since \(V\cong \mathbb Z^2\) and \(\operatorname{rank}_{\mathbb Z}(M)\le 1\),
	there exists a primitive vector \(\xi\in V\) such that $M\subseteq \mathbb Z\xi.$
	Extend \(\xi\) to a \(\mathbb Z\)-basis \((\xi,\eta)\) of \(V\). Let $S:=\{\xi,\eta,t\}^{\pm1}.$
	This is a finite generating set of \(K\). It therefore suffices to prove that
	the Schreier graph \(\operatorname{Sch}(K/L,S)\) has exponential growth.
	
	Let $\lambda=\frac{3+\sqrt5}{2}>1$
	be the expanding eigenvalue of \(A\), and let \(v_+,v_-\in V\otimes_\mathbb Z
	\mathbb R\) be eigenvectors of \(A\) with eigenvalues \(\lambda\) and
	\(\lambda^{-1}\), respectively. Since \(A\in \mathrm{SL}_2(\mathbb Z)\) has
	irrational eigenvalues, neither eigenspace contains a nonzero vector of the
	lattice \(V\). Hence neither \(\xi\) nor \(\eta\) lies in an eigenspace. Write
	$\xi=\alpha_+v_+ + \alpha_-v_-,$ and  $\eta=\beta_+v_+ + \beta_-v_-,$
	with \(\alpha_\pm,\beta_\pm\neq 0\).
	
	Let \(\det\) denote the determinant form on \(V\) with respect to the basis
	\((\xi,\eta)\), so that \(\det(\xi,\eta)=1\). For \(j\ge 0\), define
	$a_j:=\det(A^j\xi,\eta).$
	Then
	\[
	a_j
	=
	\det(\alpha_+\lambda^j v_+ + \alpha_-\lambda^{-j}v_-,\,\eta)
	=
	c_+\lambda^j + c_-\lambda^{-j},
	\]
	where $c_+ = \alpha_+\det(v_+,\eta)\neq 0.$
	Hence \(a_j\sim c_+\lambda^j\) as \(j\to\infty\), and therefore
	\[
	\frac{\sum_{i=0}^{j-1}|a_i|}{|a_j|}
	\longrightarrow
	\frac{1}{\lambda-1}
	<1.
	\]
	Choose \(J\ge 1\) such that
	\begin{equation}\label{eq:aj}
	 	|a_j|>\sum_{i=0}^{j-1}|a_i|
	 \qquad\text{for all }j\ge J+1.
	\end{equation}
	
	Now fix \(m\ge 1\). For each
	\(\varepsilon=(\varepsilon_0,\dots,\varepsilon_{m-1})\in\{0,1\}^m\), set
	$v_\varepsilon:=\sum_{r=0}^{m-1}\varepsilon_r A^r\eta\in V$ and $
	k_\varepsilon:=v_\varepsilon t^{J+m}\in K.$
	Since $(vt)(wt)=(v+Aw)t^2$,
	we have $k_\varepsilon
	=
	\Bigl(\prod_{r=0}^{m-1} (\varepsilon_r\eta)\, t\Bigr)t^J,$
	where \(0\eta:=0\in V\). Therefore
	\begin{equation}\label{eq:l_S}
	    \ell_S(k_\varepsilon)\le 2m+J\le (J+2)m.
	\end{equation}

	We claim that the left cosets \(k_\varepsilon L\) are pairwise distinct.
	Suppose $k_\varepsilon L=k_{\varepsilon'}L$
	for some \(\varepsilon,\varepsilon'\in\{0,1\}^m\). Then
	\(k_{\varepsilon'}^{-1}k_\varepsilon\in L\). Writing \(N:=J+m\), we compute
	$(vt^N)^{-1}=-A^{-N}v\, t^{-N},$
	hence $k_{\varepsilon'}^{-1}k_\varepsilon
	=
	A^{-N}(v_\varepsilon-v_{\varepsilon'})\in V.$
	Since this element lies in \(L\cap V=M\subseteq \mathbb Z\xi\), it follows that
	$v_\varepsilon-v_{\varepsilon'}\in \mathbb Z\,A^N\xi.$
	Therefore $0=\det(A^N\xi,\,v_\varepsilon-v_{\varepsilon'}).$
	Using \(\det(Au,Av)=\det(u,v)\) (because \(\det A=1\)), we obtain
	\[
	0
	=
	\sum_{r=0}^{m-1}(\varepsilon_r-\varepsilon_r')
	\det(A^N\xi,\,A^r\eta)
	=
	\sum_{r=0}^{m-1}(\varepsilon_r-\varepsilon_r')\det(A^{N-r}\xi,\eta).
	\]
	Equivalently, $0=\sum_{j=J+1}^{J+m}\sigma_j a_j$
	for suitable \(\sigma_j\in\{-1,0,1\}\), not all zero if
	\(\varepsilon\neq\varepsilon'\). Let \(j_0\) be maximal with
	\(\sigma_{j_0}\neq 0\). Then by ~\eqref{eq:aj},
	\[
	\left|\sum_{j=J+1}^{J+m}\sigma_j a_j\right|
	\ge
	|a_{j_0}|-\sum_{i=0}^{j_0-1}|a_i|
	>0,
	\]
	a contradiction. Hence \(\varepsilon=\varepsilon'\), so the cosets
	\(k_\varepsilon L\) are pairwise distinct.
	
	By~\eqref{eq:l_S}, all these \(2^m\) distinct vertices lie in the ball of radius
	\((J+2)m\) around the base vertex \(L\) in \(\operatorname{Sch}(K/L,S)\). Thus
	\[
	\bigl|B_{\operatorname{Sch}(K/L,S)}((J+2)m)\bigr|\ge 2^m
	\qquad (m\ge 1).
	\]
	Therefore \(\operatorname{Sch}(K/L,S)\) has exponential growth, and hence so
	does the Schreier graph of \(K/L\).
\end{proof}

\begin{theorem}\label{thm:SLn-no-subexponential-weight}
	Let \(n\ge 3\), let \(G=\mathrm{SL}_n(\mathbb Z)\), and let \(H\le G\) be a
	subgroup of infinite index. Then \((G,H)\notin \mathbf{SLC}_{\mathrm{subexp}}\).
\end{theorem}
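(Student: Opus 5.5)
The plan is to reduce the statement to Proposition~\ref{prop:relative-obstruction}. Concretely, we will construct a finitely generated amenable subgroup \(K\le G\) such that the Schreier graph of \(K/(K\cap H)\) has exponential growth. The input that tells us where to look is Theorem~\ref{thm:line-parabolic-rank-criterion}, applied with \(P=\operatorname{Stab}_G(\mathbb Z e_1)\). Since \([G:H]=\infty\), it gives a conjugate \(P'=gPg^{-1}\) with \(\operatorname{rank}_{\mathbb Z}(H\cap U_{P'})\le n-2\). Replacing \(H\) by \(g^{-1}Hg\) does not affect membership in \(\mathbf{SLC}_{\mathrm{subexp}}\): conjugation changes word lengths only by a bounded additive amount, and it maps coset structures isometrically. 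So we may assume \(P'=P_1\). Then \(U:=U_{P_1}=\{I+\sum_{j\ne1}a_jE_{1j}\}\cong\mathbb Z^{n-1}\) and \(N:=H\cap U\) has rank at most \(n-2\).

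Next, Lemma~\ref{lem:rank-two-reduction} gives a primitive rank-two direct summand \(W\le U\) with \(\operatorname{rank}(N\cap W)\le1\). Extend a basis \(w_1,w_2\) of \(W\) to a basis of \(U\cong\mathbb Z^{n-1}\) (identify \(U\) with row vectors \((a_2,\dots,a_n)\)). This basis is given by some \(\gamma\in\mathrm{GL}_{n-1}(\mathbb Z)\), which we adjust by a sign so that \(\det\gamma=\pm1\) is compensated. The block matrix \(h=\mathrm{diag}(\det\gamma^{-1},\gamma^{\pm})\in G\) normalizes \(U\), acts on it linearly, and carries \(W\) to the coordinate summand \(\langle E_{12},E_{13}\rangle\). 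After conjugating \(H\) by this element as well, we may assume that \(W=\{I+aE_{12}+bE_{13}\}\) and still \(\operatorname{rank}(H\cap W)\le1\).

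Now set \(t:=\mathrm{diag}(1,B,I_{n-3})\), where \(B\) is chosen so that conjugation by \(t\) acts on \(W\cong\mathbb Z^2\) (row vectors, \(v\mapsto vB^{-1}\)) by a matrix conjugate in \(\mathrm{GL}_2(\mathbb Z)\) to \(A=\begin{psmallmatrix}2&1\\1&1\end{psmallmatrix}\). Taking \(B=(A^{-1})^{T}\) works after a change of basis of \(V\), which is harmless since Lemma~\ref{le:rank-exponential} only uses that the action is by \(A\) in some basis. Then \(K:=\langle W,t\rangle\cong W\rtimes_A\langle t\rangle\) is finitely generated and solvable, hence amenable. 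Also \(L:=K\cap H\) satisfies \(L\cap W=H\cap W\), which has rank at most \(1\). Lemma~\ref{le:rank-exponential} then shows that \(\operatorname{Sch}(K/L)\) has exponential growth, and Proposition~\ref{prop:relative-obstruction} yields \((G,H)\notin\mathbf{SLC}_{\mathrm{subexp}}\).

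The main obstacle is bookkeeping rather than anything conceptual. We must check that the two conjugations preserve \(\mathbf{SLC}_{\mathrm{subexp}}\): the weight \(w\circ\mathrm{Ad}\) stays subexponential, and \(\|\cdot\|_h\) and \(\|\cdot\|_{(2,1)}\) are invariant under right translation of cosets. We must also verify that the hyperbolic matrix realizing \(A\) on \(W\) can be taken inside \(\mathrm{SL}_n(\mathbb Z)\), using the \(\mathrm{SL}_2\) block acting on coordinates \(2,3\). I would verify the formula \(t(I+vE_{1\ast})t^{-1}=I+(vB^{-1})E_{1\ast}\) directly, and then pick \(B\) so that \(B^{-1}\) is conjugate to \(A\).
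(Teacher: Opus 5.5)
Your proposal is correct and follows the paper's proof. Both use Theorem~\ref{thm:line-parabolic-rank-criterion} to find a unipotent radical meeting \(H\) in rank at most \(n-2\), Lemma~\ref{lem:rank-two-reduction} to extract \(W\), a hyperbolic Levi element acting on \(W\) by \(A\), Lemma~\ref{le:rank-exponential}, and then the amenable-subgroup obstruction of Proposition~\ref{prop:relative-obstruction}, which the paper's last paragraph re-derives via Lemma~\ref{lem:restriction-subgroup}.

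The only difference is bookkeeping. You conjugate \(H\) to normalize \(P'\) and \(W\), whereas the paper keeps \(H\) fixed and transports the element into \(P'\) via \(t:=g_0^{-1}t_Bg_0\), so it never needs conjugation invariance. Your normalization is legitimate, but the mechanism is the automorphism \(\operatorname{Ad}(g)\), not right translation of cosets as your last paragraph says. \(\operatorname{Ad}(g)\) preserves convolution, carries cosets of \(H\) onto cosets of \(gHg^{-1}\), and changes lengths by at most \(2\ell(g)\). Right translation alone preserves \(\|\cdot\|_h\), but it moves the weight relative to \(f\) in \(\|fw\|_{(2,1)}\).
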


\begin{proof}
	Let \(P<G\) be the maximal parabolic subgroup stabilizing the primitive line
	\(\mathbb Z e_1\), and let \(U_P\) be its unipotent radical. Then
	\(U_P\cong \mathbb Z^{n-1}\). By
	Theorem~\ref{thm:line-parabolic-rank-criterion}, since \([G:H]=\infty\), there
	exists a conjugate \(P'=gPg^{-1}\) such that
	\[
	\operatorname{rank}_{\mathbb Z}(H\cap U_{P'})\le n-2.
	\]
	Set \(U:=U_{P'}\) and \(N:=H\cap U\).
	Then \(U\cong \mathbb Z^{n-1}\) and \(\operatorname{rank}_{\mathbb Z}(N)\le n-2\).
	
	By Lemma~\ref{lem:rank-two-reduction}, there exists a primitive rank-two direct
	summand \(W\le U\) such that
	\[
	\operatorname{rank}_{\mathbb Z}(N\cap W)\le 1.
	\]
	
	Choose \(g_0\in G\) such that \(g_0P'g_0^{-1}=P_1:=\operatorname{Stab}_G(\mathbb Z e_1)\).
	Then
	\[
	g_0Ug_0^{-1}=U_1=
	\left\{u(x):=I_n+\sum_{j=2}^n x_jE_{1j}\;\middle|\; x=(x_2,\dots,x_n)\in \mathbb Z^{n-1}\right\}.
	\]
	Under the identification \(u(x)\leftrightarrow x\), the subgroup
	\(g_0Wg_0^{-1}\) corresponds to a primitive rank-two direct summand
	\(W_0\le \mathbb Z^{n-1}\). Choose \(C\in \mathrm{SL}_{n-1}(\mathbb Z)\) such
	that \(C(W_0)\) is generated by the first two coordinate axes. Via the composed
	identification
	\[
	U \xrightarrow{\operatorname{Ad}(g_0)} U_1 \xrightarrow{\sim} \mathbb Z^{n-1}
	\xrightarrow{C} \mathbb Z^{n-1},
	\]
	choose a basis \(x_1,\dots,x_{n-1}\) of \(U\) such that \(W=\langle x_1,x_2\rangle\).
	
	For \(B\in \mathrm{SL}_{n-1}(\mathbb Z)\), let $t_B:=\operatorname{diag}(1,B^{-1})\in P_1.$
	Then \(t_Bu(x)t_B^{-1}=u(xB)\) for \(x\in \mathbb Z^{n-1}\). Taking
	\[
	B=C^{-1}\operatorname{diag}(A,I_{n-3})C,
	\qquad
	A=\begin{pmatrix}2&1\\1&1\end{pmatrix},
	\]
	with the obvious interpretation \(B=C^{-1}AC\) when \(n=3\), we obtain
	\[
	t:=g_0^{-1}t_Bg_0\in P'
	\]
	preserving \(W\) and acting on \(W\) by \(A\) with respect to the basis
	\(x_1,x_2\).
	
	Set
	\[
	K:=\langle W,t\rangle \le G,
	\qquad
	L:=K\cap H,
	\qquad
	M:=L\cap W=H\cap W.
	\]
	Since \(tWt^{-1}=W\), the subgroup \(W\) is normal in \(K\). Moreover,
	\(\langle t\rangle\cap U=\{e\}\), because \(t_B\) lies in the Levi subgroup of
	\(P_1\), which intersects \(U_1\) trivially. As \(W\le U\), it follows that $
	\langle t\rangle\cap W=\{e\}.$
	Therefore
	\[
	K=W\rtimes \langle t\rangle \cong \mathbb Z^2\rtimes_A \mathbb Z,
	\qquad
	\operatorname{rank}_{\mathbb Z}(M)\le 1.
	\]
	In particular, \(K\) is polycyclic, hence amenable.
	
	Since \(K=W\rtimes\langle t\rangle \cong \mathbb Z^2\rtimes_A\mathbb Z\) and
	\(\operatorname{rank}_{\mathbb Z}(L\cap W)\le 1\), Lemma~\ref{le:rank-exponential}
	shows that the Schreier graph of \(K/L\) has exponential growth.

	Assume now that \((G,H)\in \mathbf{SLC}_{\mathrm{subexp}}\), say
	\[
	\|f\|_{h,(G,H)} \le C_h\, \|fw\|_{(2,1),(G,H)}
	\qquad (f\in \mathbb C G),
	\]
	where $W(t):=\sup_{\ell(g)\le t} w(g)$
	has subexponential growth. By Lemma~\ref{lem:restriction-subgroup} $(K,L)\in \mathbf{SLC}_{\mathrm{subexp}}.$ Since \(K\) is amenable,
	\(L\) is co-amenable in \(K\). Remark~\ref{rem:subexp-analogue-thm12} then
	implies that the Schreier graph of \(K/L\) has subexponential growth, a
	contradiction. Therefore \((G,H)\notin \mathbf{SLC}_{\mathrm{subexp}}\).
\end{proof}

\begin{theorem}\label{thm:SLn-subexp-classification}
	Let \(n\ge 3\), let \(G=\mathrm{SL}_n(\mathbb Z)\), and let \(H\le G\) be a subgroup. The pair \((G,H)\in \mathbf{SLC}_{\mathrm{subexp}}\) if and only if \(H\) has finite index.
\end{theorem}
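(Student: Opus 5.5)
The proof splits into the two implications. For the direction from \(\mathbf{SLC}_{\mathrm{subexp}}\) to finite index, I would argue by contraposition and invoke Theorem~\ref{thm:SLn-no-subexponential-weight}. If \([G:H]=\infty\), that theorem gives \((G,H)\notin \mathbf{SLC}_{\mathrm{subexp}}\). Hence \((G,H)\in \mathbf{SLC}_{\mathrm{subexp}}\) forces \([G:H]<\infty\). No finite generation of \(H\) is needed here, since Theorem~\ref{thm:SLn-no-subexponential-weight} and its inputs make no such assumption:
\begin{itemize}
\item Theorem~\ref{thm:line-parabolic-rank-criterion};
\item Lemma~\ref{lem:restriction-subgroup}, which is applied to the finitely generated subgroup \(K\), not to \(H\);
\item Remark~\ref{rem:subexp-analogue-thm12}.
\end{itemize}

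For the converse, assume \(m:=[G:H]<\infty\). I would show directly that the constant weight \(w\equiv1\) works, exactly as in the proof of Theorem~\ref{thm:F2-fg-classification-subexpSLC}.

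First, \(\|f\|_{h,(G,H)}\le\|f\|_1\). This holds because for each \(g\) the convolution operator \(\varphi\mapsto\delta_g*\varphi\) is an isometry of \(\ell^{(2,1)}(G,H)\): left translation permutes the left cosets of \(H\) and preserves the \(\ell^1\)-norm on each coset. The triangle inequality then gives the bound.

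Second, since there are only \(m\) cosets, Cauchy--Schwarz over the coset index gives
\[
\|f\|_1=\sum_{x\in G/H}\|f|_x\|_1\le \sqrt m\,\Bigl(\sum_{x\in G/H}\|f|_x\|_1^2\Bigr)^{1/2}=\sqrt m\,\|f\|_{(2,1),(G,H)}.
\]
Combining the two estimates, \(\|f\|_{h}\le\sqrt m\,\|f\cdot 1\|_{(2,1)}\). The radial majorant \(W\equiv1\) is trivially subexponential, so \((G,H)\in\mathbf{SLC}_{\mathrm{subexp}}\).

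This argument in fact gives pair rapid decay as well, with \(s_1\) arbitrary, because \(1\le(1+\ell)^{s_1}\). That is consistent with Theorem~\ref{thm:D}, though it is not needed here.

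The only step that needs some care is the first inequality \(\|f\|_h\le\|f\|_1\). This depends on the definition of \(\|\cdot\|_h\) from \cite{ChatterjiZarka2024v1} as the operator norm of left convolution on \(\ell^{(2,1)}(G,H)\), and I will check the translation-invariance claim against that definition. Everything else is immediate from the preceding theorem.
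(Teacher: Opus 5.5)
Your proposal is correct and is exactly the argument the paper intends. The paper gives no separate proof of this theorem: it obtains necessity directly from Theorem~\ref{thm:SLn-no-subexponential-weight}, and sufficiency from the finite-index estimate \(\|f\|_{h}\le\|f\|_1\le\sqrt{m}\,\|f\|_{(2,1)}\) with \(w\equiv1\), as in the proof of Theorem~\ref{thm:F2-fg-classification-subexpSLC}; your justification of \(\|f\|_h\le\|f\|_1\), via left translations acting isometrically on \(\ell^{(2,1)}(G,H)\), correctly supplies the one step the paper leaves implicit.
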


\begin{corollary}
		Let \(n\ge 3\), let \(G=\mathrm{SL}_n(\mathbb Z)\), and let \(H\le G\) be a subgroup. The pair \((G,H)\)has pair rapid decay  if and only if \(H\) has finite index.
\end{corollary}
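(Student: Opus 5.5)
The statement to prove is the final corollary: for \(n\ge3\) and \(G=\mathrm{SL}_n(\mathbb Z)\), the pair \((G,H)\) has pair rapid decay if and only if \([G:H]<\infty\). The plan is to sandwich pair rapid decay between the two conditions already shown equivalent in Theorem~\ref{thm:SLn-subexp-classification}. That is, we prove the chain of implications
\[
[G:H]<\infty \;\Longrightarrow\; (G,H)\ \text{has pair rapid decay} \;\Longrightarrow\; (G,H)\in \mathbf{SLC}_{\mathrm{subexp}} \;\Longrightarrow\; [G:H]<\infty .
\]
The last arrow is exactly Theorem~\ref{thm:SLn-subexp-classification}, or equivalently the contrapositive of Theorem~\ref{thm:SLn-no-subexponential-weight}.

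\textbf{Finite index gives pair rapid decay.} Suppose \(m:=[G:H]<\infty\). For \(f\in\mathbb CG\), the definition of \(\|\cdot\|_h\) as a convolution operator norm on \(\ell^{(2,1)}(G,H)\) together with Young's inequality gives \(\|f\|_h\le\|f\|_1\). Each coset of \(H\) contributes to the \(\ell^1\) sum, and there are only \(m\) cosets, so Cauchy--Schwarz over cosets yields
\[
\|f\|_1=\sum_{x\in G/H}\|f|_x\|_1\le \sqrt m\,\|f\|_{(2,1)}\le \sqrt m\,\|f(1+\ell)^{s_1}\|_{(2,1)}
\]
for any \(s_1>0\). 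This is exactly the computation used in Theorem~\ref{thm:F2-fg-classification-subexpSLC}, and it gives pair rapid decay with \(C_h=\sqrt m\) and, say, \(s_1=1\).

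\textbf{Pair rapid decay gives \(\mathbf{SLC}_{\mathrm{subexp}}\).} Given pair rapid decay with constants \(C_h,s_1\), take the weight \(w:=(1+\ell)^{s_1}\), which satisfies \(w\ge1\). Its radial majorant is \(W(t)=(1+\lfloor t\rfloor)^{s_1}\). Since \(\log W(t)/t\to0\), this weight is subexponential, so Definition~\ref{def:subexp-RD-pair} holds with the same constant \(C_h\).

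There is no real obstacle here. All the substance lies in Theorem~\ref{thm:SLn-no-subexponential-weight}, and the only point needing care is the bound \(\|f\|_h\le\|f\|_1\). This bound is immediate: \(f*\varphi=\sum_g f(g)\,\delta_g*\varphi\), and left translation by \(g\) permutes the left cosets of \(H\), hence is an isometry of \(\ell^{(2,1)}(G,H)\).
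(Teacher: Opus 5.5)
Your proposal is correct and follows essentially the paper's route. The paper gives no separate proof of this corollary; it obtains it by combining Theorem~\ref{thm:SLn-no-subexponential-weight} (infinite index excludes $\mathbf{SLC}_{\mathrm{subexp}}$) with the two elementary implications you prove: finite index $\Rightarrow$ pair rapid decay via $\|f\|_h\le\|f\|_1\le\sqrt m\,\|f\|_{(2,1)}$, exactly as in Theorem~\ref{thm:F2-fg-classification-subexpSLC}, and pair rapid decay $\Rightarrow$ $\mathbf{SLC}_{\mathrm{subexp}}$ because polynomial weights are subexponential.
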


\clearpage
\appendix

\section{The rank-three transvection argument}\label{app:rank3}

\begin{proposition}\label{prop:SL3-transvection}
	Let \(K\le \mathrm{SL}_3(\mathbb Z)\) be finitely generated, amenable, and of exponential growth.
	Then there exist a finite-index subgroup \(K'\le K\) and a subgroup \(U\le K'\) such that
	\[
	U\cong \mathbb Z^2,
	\qquad
	(u-I)^2=0\quad\text{for all }u\in U.
	\]
\end{proposition}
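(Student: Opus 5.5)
The plan is to reduce to the structure of virtually polycyclic subgroups of \(\mathrm{SL}_3\) and then run a weight analysis on the unipotent radical inside the \(3\times3\) Heisenberg Lie algebra.

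\emph{Reduction to a triangular group.} Since \(K\) is a finitely generated amenable linear group, the Tits alternative shows that \(K\) is virtually solvable. By Mal'cev's theorem, a solvable subgroup of \(\mathrm{SL}_3(\mathbb Z)\) is polycyclic. Choose a finite-index \(K'\le K\) whose Zariski closure is connected. By Lie--Kolchin, \(K'\) is then triangularizable over \(\mathbb C\). Let \(N\le K'\) be the subgroup of unipotent elements. It is normal, \(K'/N\) embeds in a torus (so it is abelian), and \(N\) is a finitely generated torsion-free nilpotent group. Since \(N\) is unipotent and defined over \(\mathbb Q\), after a \(\mathrm{GL}_3(\mathbb Q)\)-conjugation its Mal'cev completion \(\mathfrak n:=\log N\otimes\mathbb Q\) lies in the strictly upper triangular algebra \(\mathfrak h=\langle E_{12},E_{23},E_{13}\rangle\). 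This conjugation does not affect the condition \((u-I)^2=0\).

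\emph{\(N\) has Hirsch rank at least \(2\).} If \(\operatorname{rank}N\le1\), then \(K'\) acts on \(N\cong\mathbb Z\) (or on the trivial group) through \(\{\pm1\}\), and \(K'/N\) is abelian. So \(K'\) is virtually nilpotent, which contradicts exponential growth by Milnor--Wolf. Hence \(\dim\mathfrak n\in\{2,3\}\).

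\emph{Case \(\dim\mathfrak n=3\).} Then \(\mathfrak n=\mathfrak h\), and \(U:=N\cap\exp\langle E_{12},E_{13}\rangle\) is a lattice in a \(2\)-dimensional abelian subgroup, so \(U\cong\mathbb Z^2\). Moreover \((aE_{12}+bE_{13})^2=0\), so every \(u\in U\) satisfies \((u-I)^2=0\).

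\emph{Case \(\dim\mathfrak n=2\).} Any \(2\)-dimensional abelian subalgebra of \(\mathfrak h\) must contain the center \(E_{13}\): two elements with independent projections to \(\langle E_{12},E_{23}\rangle\) have nonzero bracket. So \(\mathfrak n=\langle x,E_{13}\rangle\) with \(x=aE_{12}+bE_{23}\). Here \(N\) itself is abelian of rank \(2\), and \(x^2=ab\,E_{13}\). If \(ab=0\), take \(U:=N\) and we are done.

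\emph{Main obstacle: the subcase \(ab\neq0\).} This is the genuinely new point; symmetric squares of \(\mathrm{SL}_2\) give such unipotents. I would rule it out as follows. Exponential growth of \(K'\) forces some \(t\in K'\) whose conjugation action on \(N\cong\mathbb Z^2\) has an eigenvalue of modulus different from \(1\). Otherwise \(K'\) acts on \(N\) by an integral matrix group all of whose eigenvalues are roots of unity, hence virtually unipotently, and \(K'\) would be virtually nilpotent.

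Write the semisimple part of \(t\) as \(\operatorname{diag}(d_1,d_2,d_3)\) in the triangular basis. The action on \(E_{12}\), \(E_{23}\), \(E_{13}\) is by \(d_1/d_2\), \(d_2/d_3\), \(d_1/d_3\). Preserving the line \(\mathbb Q x\) with \(ab\neq0\) forces \(\lambda:=d_1/d_2=d_2/d_3\), so the eigenvalues of \(t\) on \(\mathfrak n\) are \(\lambda\) and \(\lambda^2\). But \(t\) preserves the lattice \(N\), so \(\lambda\cdot\lambda^2=\det(\operatorname{Ad}t|_N)=\pm1\). Thus \(|\lambda|=1\), contradicting the choice of \(t\). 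So the subcase \(ab\neq0\) cannot occur.

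Combining the cases gives \(K'\) of finite index in \(K\) and \(U\le N\le K'\) with \(U\cong\mathbb Z^2\) and \((u-I)^2=0\) for all \(u\in U\).
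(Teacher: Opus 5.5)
Your proof is correct. Its skeleton is the paper's: Tits alternative, Mal'cev, Lie--Kolchin, a unipotent normal subgroup \(N\) with abelian quotient, \(h(N)\ge 2\), and the case analysis inside the Heisenberg algebra. Two steps, however, are handled differently.

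\emph{Triangularization.} You take \(N\) to be all unipotent elements of \(K'\) and conjugate only over \(\mathbb Q\) to put \(\log N\) into \(\mathfrak h\). The paper instead takes \(N=[K,K]\) and proves an integral triangularization (Lemmas~\ref{lem:SL2-unipotent} and~\ref{lem:SL3-integral-triangularization}) so as to work in \(\mathrm{UT}_3(\mathbb Z)\). Since \(U\cong\mathbb Z^2\) and \((u-I)^2=0\) are conjugation-invariant, your rational conjugation suffices and makes those two lemmas unnecessary.

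\emph{The regular subcase.} The paper observes that \(C=\{n\in N:(n-I)^2=0\}\) is a \(K\)-invariant rank-one subgroup of \(N\). Hence \(K\) acts on \(N\) through matrices \(\begin{psmallmatrix}\pm1&*\\0&\pm1\end{psmallmatrix}\), and the paper then checks \(\gamma_4(K_0)=1\) by hand. You instead show that any \(t\) normalizing \(\mathfrak n\) has weights \(\lambda,\lambda^2\) there, and use unimodularity to force \(|\lambda|=1\). This contradicts your observation that exponential growth forces some element to act on \(N\) with an eigenvalue off the unit circle. The paper's route is more elementary: integrality on an invariant rational line gives eigenvalues \(\pm1\) at once, with no need for Kronecker's theorem. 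Yours explains structurally why a regular abelian \(N\) admits no hyperbolic normalizer.

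A few phrasings should be tightened; none affects correctness.
\begin{itemize}
\item Only the diagonal entries \(d_i\) of \(t\) matter; its semisimple part need not be diagonal in your basis.
\item \(t\) is upper triangular in your rational basis because it preserves the flag \(\ker x\subset\ker x^2\).
\item \(t\) preserves \(\mathfrak n\), hence the line through \(x\) modulo \(E_{13}\), but not necessarily \(\mathbb Q x\) itself.
\end{itemize}
All of this can be bypassed. When \(ab\neq0\) one has \(\mathfrak n=\langle x,x^2\rangle\), so \(txt^{-1}=\lambda x+\nu x^2\), and then \(tx^2t^{-1}=(txt^{-1})^2=\lambda^2x^2\) directly.

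Finally, ``hence virtually unipotently'' deserves a line. By Kronecker, the eigenvalues are roots of unity of order dividing \(12\). Since the action of \(K'\) on \(\mathfrak n\otimes\mathbb C\) is triangularizable (Lie--Kolchin), the common kernel of its two diagonal characters has finite index and acts unipotently.
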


\begin{lemma}\label{lem:SL2-unipotent}
	If \(M\le \mathrm{SL}_2(\mathbb Z)\) consists of unipotent matrices, then there exists
	\(h\in \mathrm{SL}_2(\mathbb Z)\) such that $hMh^{-1}\le \mathrm{UT}_2(\mathbb Z).$
\end{lemma}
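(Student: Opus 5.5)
The plan is to show that all nontrivial elements of \(M\) fix one common primitive line of \(\mathbb Z^2\), and then conjugate that line to \(\mathbb Z e_1\) using Lemma~\ref{lem:transitivity-on-lines} with \(n=2\).

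If \(M=\{I\}\) there is nothing to prove, so I will fix \(u\in M\) with \(u\neq I\).

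\textbf{Step 1: fixed lines of unipotents.} For \(u\), the matrix \(u-I\) is a nonzero nilpotent \(2\times 2\) integer matrix. Hence \(\ker(u-I)\) is a one-dimensional rational subspace. Its intersection with \(\mathbb Z^2\) is a line \(\mathbb Z v\) with \(v\) primitive. Every nontrivial unipotent \(m\in \mathrm{SL}_2(\mathbb Z)\) likewise has a unique fixed primitive line \(\mathbb Z w_m\), and can be written as \(m=I+N\) with \(N\) nilpotent of rank one and \(\ker N=\mathbb Q w_m\).

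\textbf{Step 2: normalize \(u\).} By Lemma~\ref{lem:transitivity-on-lines} with \(n=2\), I choose \(h\in\mathrm{SL}_2(\mathbb Z)\) with \(hv=e_1\). After replacing \(M\) by \(hMh^{-1}\), I may assume \(v=e_1\). Then
\[
u=\begin{pmatrix}1&a\\0&1\end{pmatrix},\qquad a\neq 0 .
\]

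\textbf{Step 3 (the main point): a trace computation.} Let \(m\in M\) with \(m\neq I\), and suppose its fixed line is \(\mathbb Z w\) with \(w=(p,q)\) and \(q\neq 0\). A nilpotent rank-one \(N\) with kernel \(\mathbb Q w\) has the form \(N=c\,w\,(q,-p)\) for some \(c\in\mathbb Q\setminus\{0\}\). This gives
\[
m=\begin{pmatrix}1+cpq & -cp^2\\ cq^2 & 1-cpq\end{pmatrix}.
\]
For \(k\in\mathbb Z\),
\[
\operatorname{tr}(u^k m)=2+kac q^2 ,
\]
which differs from \(2\) whenever \(k\neq 0\). But \(u^k m\in M\) because \(M\) is a group, so it is unipotent and must have trace \(2\). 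This is a contradiction. Hence \(q=0\), so \(m\) fixes \(e_1\).

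\textbf{Step 4: conclusion.} Every element \(m\in M\) now satisfies \(m e_1=e_1\). So \(m\) is upper triangular with \(m_{11}=1\). Since \(\det m=1\), also \(m_{22}=1\). Therefore \(m\in \mathrm{UT}_2(\mathbb Z)\), which proves the lemma.

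Step 3 is the only nonformal step. It is the point where the group structure of \(M\) is used, rather than just the unipotence of its individual elements.
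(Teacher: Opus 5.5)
Your proof is correct and follows essentially the same route as the paper: conjugate one nontrivial $u\in M$ into upper unitriangular form, then use that $um\in M$ is unipotent, so $\operatorname{tr}(um)=2$, to force the lower-left entry of every $m\in M$ to vanish. The only difference is cosmetic. The paper writes $hmh^{-1}$ with general entries and reads off $\operatorname{tr}(ux)=2+ar$ directly, so your parametrization of $m$ through its fixed line is not needed, and neither are the powers $u^k$ (the case $k=1$ suffices).
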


\begin{proof}
	If \(M=1\), there is nothing to prove. Assume \(M\neq 1\), and choose
	\(1\neq z\in M\). Since \(z\) is a nontrivial unipotent matrix in \(\mathrm{SL}_2(\mathbb Z)\),
	the space $L:=\ker(z-I)\subset \mathbb Q^2$
	is a one-dimensional rational subspace. Choose a primitive vector
	\(v\in L\cap \mathbb Z^2\), and choose \(h\in \mathrm{SL}_2(\mathbb Z)\)
	such that \(h(v)=e_1\). Then
	\[
	u:=hzh^{-1}=\begin{pmatrix}1&a\\0&1\end{pmatrix}
	\]
	for some \(a\in\mathbb Z\setminus\{0\}\).
	
	Now let \(m\in M\) be arbitrary, and write
	\[
	x:=hmh^{-1}=\begin{pmatrix}p&q\\ r&s\end{pmatrix}\in \mathrm{SL}_2(\mathbb Z).
	\]
	Since \(x\) is unipotent, we have $\operatorname{tr}(x)=p+s=2.$
	Also \(ux=h(zm)h^{-1}\in hMh^{-1}\), so \(ux\) is unipotent as well. Hence
	\[
	2=\operatorname{tr}(ux)=\operatorname{tr}\!\begin{pmatrix}p+ar&q+as\\ r&s\end{pmatrix}
	=p+s+ar=2+ar.
	\]
	Therefore \(ar=0\), and since \(a\neq 0\), we get \(r=0\). Thus \(x\) is upper triangular.
	
	Finally, \(x\in \mathrm{SL}_2(\mathbb Z)\) is unipotent, so its diagonal entries are both \(1\). Hence
	\[
	x=\begin{pmatrix}1&q\\0&1\end{pmatrix}\in \mathrm{UT}_2(\mathbb Z).
	\]
	Since \(m\in M\) was arbitrary, we conclude that $hMh^{-1}\le \mathrm{UT}_2(\mathbb Z).$
\end{proof}

\begin{lemma}\label{lem:SL3-integral-triangularization}
	If \(N\le \mathrm{SL}_3(\mathbb Z)\) is finitely generated and consists of unipotent matrices, then there exists
	\(g\in \mathrm{SL}_3(\mathbb Z)\) such that $gNg^{-1}\le \mathrm{UT}_3(\mathbb Z).$
\end{lemma}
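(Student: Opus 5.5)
The plan is to find a common nonzero vector in \(\mathbb Z^3\) fixed by \(N\), pass to the quotient lattice of rank two, and finish with Lemma~\ref{lem:SL2-unipotent}. Everything happens over \(\mathbb Q\) first, and the integrality comes at the end.

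\textbf{Step 1: a common fixed vector.} Since \(N\) consists of unipotent matrices, Kolchin's theorem applied over \(\mathbb Q\) shows that \(N\) stabilizes a full flag \(0<V_1<V_2<\mathbb Q^3\), acting unipotently on each quotient. In particular \(V_1=\mathbb Q v\) is pointwise fixed by \(N\). Kolchin's theorem is a standard theorem, not one of the paper's results. If one prefers to avoid it, one can argue as in the proof of Lemma~\ref{lem:SL2-unipotent}: choose a nontrivial \(z\in N\), use the trace conditions \(\operatorname{tr}(x)=\operatorname{tr}(zx)=3\) and the identity \(\operatorname{tr}(x^{-1})=3\) for all \(x\in N\), and prove directly that \(N\) preserves \(\ker(z-I)\) or \(\operatorname{im}(z-I)\). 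I expect this direct route to be the main obstacle, since it needs a case split on whether \(z-I\) has rank one or rank two. I would therefore cite Kolchin (Engel) directly. Finite generation is not needed for Step~1, but it costs nothing to keep it.

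\textbf{Step 2: integral change of basis.} Scale \(v\) to be primitive in \(\mathbb Z^3\). By the argument of Lemma~\ref{lem:transitivity-on-lines}, there is \(g_1\in\mathrm{SL}_3(\mathbb Z)\) with \(g_1v=e_1\). Then every \(x\in g_1Ng_1^{-1}\) fixes \(e_1\), so it has the block form
\[
x=\begin{pmatrix}1&*\\0&x'\end{pmatrix},\qquad x'\in\mathrm{SL}_2(\mathbb Z).
\]
The map \(x\mapsto x'\) is a homomorphism, and each \(x'\) is unipotent, because the characteristic polynomial of \(x\) is \((X-1)\) times that of \(x'\).

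\textbf{Step 3: the rank-two quotient.} Apply Lemma~\ref{lem:SL2-unipotent} to the image \(M:=\{x'\}\). This gives \(h\in\mathrm{SL}_2(\mathbb Z)\) with \(hMh^{-1}\le\mathrm{UT}_2(\mathbb Z)\). Set \(g:=\operatorname{diag}(1,h)\,g_1\in\mathrm{SL}_3(\mathbb Z)\). Every element of \(gNg^{-1}\) then has first column \(e_1\) and lower-right block upper unitriangular, so it is upper unitriangular. This gives \(gNg^{-1}\le\mathrm{UT}_3(\mathbb Z)\).
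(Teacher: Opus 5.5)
Your proof is correct, and its second half is the same as the paper's. Once a primitive vector \(v\) fixed by \(N\) is found, both arguments pass to the rank-two quotient \(\mathbb Z^3/\mathbb Zv\) and apply Lemma~\ref{lem:SL2-unipotent} there. You do this by moving \(v\) to \(e_1\) and reading off the lower-right block; the paper does it by pulling back an invariant line to a lattice flag \(0<\mathbb Zv_1<L_2<\mathbb Z^3\).

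Where you differ is in producing \(v\). The paper uses that \(N\) is nilpotent and picks \(1\neq z\in Z(N)\), so that \(W=\ker(z-I)\) is automatically \(N\)-invariant. It then splits into two cases. If \(\dim W=1\), the generator of \(W\cap\mathbb Z^3\) is fixed. If \(\dim W=2\), Lemma~\ref{lem:SL2-unipotent} applied to \(W\cap\mathbb Z^3\) yields a fixed vector. Choosing \(z\) central is exactly what removes the invariance problem you anticipated for an arbitrary \(z\).

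You instead take \(v\) directly from Kolchin's theorem over \(\mathbb Q\). This is legitimate: the fixed space \(\bigcap_{x\in N}\ker(x-I)\) is cut out by rational linear equations, so it is nonzero over \(\mathbb Q\) as soon as it is nonzero over \(\overline{\mathbb Q}\). Nilpotency of unipotent groups is itself usually derived from Kolchin, so your route costs nothing extra and avoids the case split.

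It also gives more than you use. Intersecting the full \(\mathbb Q\)-flag \(V_1<V_2\) with \(\mathbb Z^3\) gives saturated \(N\)-invariant sublattices. These are direct summands with an adapted \(\mathbb Z\)-basis (sign adjusted to lie in \(\mathrm{SL}_3(\mathbb Z)\)), and upper triangular unipotent matrices are automatically unitriangular. So Steps 2--3 and Lemma~\ref{lem:SL2-unipotent} are not even needed. That argument works for \(\mathrm{SL}_n(\mathbb Z)\) for every \(n\) and without finite generation, whereas the paper's version is written specifically for \(n=3\) around its rank-two lemma.
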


\begin{proof}
	If \(N=1\), there is nothing to prove. Assume \(N\neq 1\).
	Since \(N\) is a finitely generated unipotent linear group, it is nilpotent, hence \(Z(N)\neq 1\).
	Choose \(1\neq z\in Z(N)\), and set $W:=\ker(z-I)\subset \mathbb Q^3.$
	Then \(W\) is a nonzero proper rational \(N\)-invariant subspace. Indeed, if \(n\in N\) and \(w\in W\), then $z(nw)=n(zw)=nw.$ We distinguish two cases.
	
	\smallskip
	
	\noindent\emph{Case 1: \(\dim W=1\).}
	Let \(v_1\in W\cap \mathbb Z^3\) be primitive. Since \(N\) acts unipotently on the one-dimensional space
	\(\mathbb Qv_1\), it fixes \(v_1\) pointwise. Thus \(N\) acts on the quotient
	\(\mathbb Z^3/\mathbb Zv_1\cong \mathbb Z^2\) by unipotent matrices. 
		Let \(\pi:\mathbb Z^3\to \mathbb Z^3/\mathbb Zv_1\cong \mathbb Z^2\) be the quotient map.
	By Lemma~\ref{lem:SL2-unipotent}, the induced action of \(N\) on \(\mathbb Z^3/\mathbb Zv_1\)
	preserves a rank-one direct summand. Let \(\bar L_1\subset \mathbb Z^3/\mathbb Zv_1\) be such a summand, and set $L_2:=\pi^{-1}(\bar L_1).$
	Then \(L_2\) is \(N\)-invariant, \(\mathbb Zv_1<L_2<\mathbb Z^3\), and \(\operatorname{rank}(L_2)=2\).
	Choosing a basis adapted to the flag $0<\mathbb Zv_1<L_2<\mathbb Z^3$
	gives the claim.

	\smallskip
	
	\noindent\emph{Case 2: \(\dim W=2\).}
	Let \(L:=W\cap \mathbb Z^3\), a rank-two \(N\)-invariant lattice.
	The induced action of \(N\) on \(L\) is by unipotent matrices in \(\mathrm{SL}_2(\mathbb Z)\).
	By Lemma~\ref{lem:SL2-unipotent}, there exists a primitive vector \(v_1\in L\) fixed by all elements of \(N\).
	Now apply the argument of Case 1 to the induced action on the quotient \(\mathbb Z^3/\mathbb Zv_1\):
	again we obtain an \(N\)-invariant complete lattice flag $0<\mathbb Zv_1<L_2<\mathbb Z^3.$
	Choosing a basis adapted to this flag gives $gNg^{-1}\le \mathrm{UT}_3(\mathbb Z)$
	for some \(g\in \mathrm{SL}_3(\mathbb Z)\).
\end{proof}

\begin{proof}[Proof of Proposition~\ref{prop:SL3-transvection}]
	Passing to finite index does not change whether a finitely generated group has exponential growth.
	Hence we may replace \(K\) by a finite-index subgroup whenever convenient.
	
	By the Tits alternative \cite{tits1972}, a finitely generated linear group is either virtually solvable or contains a nonabelian free subgroup. Since \(K\) is amenable, it cannot contain \(F_2\), and hence \(K\) is virtually solvable. After passing to a finite-index subgroup, we may therefore assume that \(K\) is solvable. Since \(K\le \mathrm{SL}_3(\mathbb Z)\), a classical theorem of Mal'cev \cite{ceccherini2021} implies that \(K\) is polycyclic. Moreover, as \(K\) is linear over a field of characteristic \(0\), Selberg's lemma \cite{alperin1987,selberg1960} shows that \(K\) has a torsion-free subgroup of finite index. Thus, after passing to finite index once more, we may assume that \(K\) is torsion-free polycyclic. Since \(K\) has exponential growth, Theorem 4.3(1) of Wolf \cite{wolf1968} implies that \(K\) is not virtually nilpotent.

	Let \(G\) be the Zariski closure of \(K\) in \(\mathrm{SL}_3(\mathbb C)\), and replace \(K\) by
	\(K\cap G^\circ\), still of finite index. Then \(G^\circ\) is a connected solvable algebraic group.
	By the Lie--Kolchin theorem \cite{szamuely2017}, there exists \(x\in \mathrm{GL}_3(\mathbb C)\) such that $xG^\circ x^{-1}\le B_3(\mathbb C),$
	where \(B_3\) denotes the upper triangular Borel subgroup. Set $N:=[K,K].$
	Then $
	xNx^{-1}\subset [B_3(\mathbb C),B_3(\mathbb C)]\subset U_3(\mathbb C),$
	so every element of \(N\) is unipotent. Since \(N\le K\) and \(K\) is torsion-free polycyclic,
	\(N\) is a finitely generated torsion-free nilpotent group.
	
	We claim that the Hirsch rank \(h(N)\) satisfies \(h(N)\ge 2\).
	If \(h(N)=0\), then \(N=1\), hence \(K\) is abelian, contradicting exponential growth.
	If \(h(N)=1\), then \(N\cong \mathbb Z\), and conjugation induces a homomorphism
	$K\to \mathrm{Aut}(N)\cong \{\pm 1\}.$
	Its kernel \(K_0\) has finite index in \(K\) and centralizes \(N\). Since
	\([K_0,K_0]\le [K,K]=N\), the quotient \(K_0/N\) is abelian, so \(K_0\) is nilpotent of class at most \(2\).
	Hence \(K_0\), and therefore \(K\), has polynomial growth, contradiction. Thus \(h(N)\ge 2\).
	
	By Lemma~\ref{lem:SL3-integral-triangularization}, after conjugating by an element of \(\mathrm{SL}_3(\mathbb Z)\),
	we may assume that $N\le \mathrm{UT}_3(\mathbb Z).$
	Write
	\[
	u(a,b,c):=
	\begin{pmatrix}
		1 & a & b\\
		0 & 1 & c\\
		0 & 0 & 1
	\end{pmatrix},
	\qquad a,b,c\in \mathbb Z.
	\]
	A direct computation gives $(u(a,b,c)-I)^2=ac\,E_{13}.$
	Hence $(u-I)^2=0$ if and only if $ac=0$.
	Since \(h(N)\ge 2\) and \(h(\mathrm{UT}_3(\mathbb Z))=3\), there are only two possibilities:
	\(h(N)=2\) or \(h(N)=3\).
	
	\smallskip
	
	\noindent\emph{Case 1: \(h(N)=3\).}
	Consider the homomorphism
	\[
	\pi_c:\mathrm{UT}_3(\mathbb Z)\to \mathbb Z,
	\qquad
	\pi_c(u(a,b,c))=c.
	\]
	If \(\pi_c(N)=0\), then \(N\subset \{c=0\}\cong \mathbb Z^2\), contradicting \(h(N)=3\).
	Therefore \(\pi_c(N)\cong \mathbb Z\), and $U:=\ker(\pi_c|_N)$
	has Hirsch rank \(2\). Since \(U\subset \{c=0\}\), every \(u\in U\) satisfies \((u-I)^2=0\).
	Moreover \(U\) is torsion-free abelian of rank \(2\), hence \(U\cong \mathbb Z^2\).
	
	\smallskip
	
	\noindent\emph{Case 2: \(h(N)=2\).}
	Then \(N\) is abelian, because a torsion-free nilpotent group of Hirsch rank \(2\) is abelian.
	In \(\mathrm{UT}_3(\mathbb Z)\) one has
	\[
	[u(a,b,c),u(a',b',c')]=u(0,ac'-a'c,0).
	\]
	Thus the abelianity of \(N\) implies that all pairs \((a,c)\) arising from elements of \(N\)
	lie on a single rank-one subgroup of \(\mathbb Z^2\).
	
	If this subgroup is one of the coordinate axes, then either \(a=0\) for all elements of \(N\),
	or \(c=0\) for all elements of \(N\). In both cases \((u-I)^2=0\) for every \(u\in N\), so we may simply take $U:=N\cong \mathbb Z^2.$
	
	Assume now that we are in the regular case: there exists a primitive pair \((p,q)\in \mathbb Z^2\) with
	\(pq\neq 0\) such that every element of \(N\) has the form \(u(mp,b,mq)\).
	Then the subgroup
	\[
	C:=\{n\in N:(n-I)^2=0\}
	\]
	is exactly
	\[
	C=\{u(0,b,0):b\in \mathbb Z\}\cap N\cong \mathbb Z.
	\]
	Since the condition \((n-I)^2=0\) is conjugacy-invariant and \(N=[K,K]\) is characteristic in \(K\),
	the subgroup \(C\) is normal in \(K\).
	
	Now conjugation yields
	\[
	\rho:K\to \mathrm{Aut}(N)\cong \mathrm{GL}_2(\mathbb Z).
	\]
	Because \(C\) is a \(K\)-invariant rank-one subgroup of \(N\), the image \(\rho(K)\) preserves a rank-one lattice in
	\(\mathbb Z^2\). hence, after choosing a basis of \(N\),
	\[
	\rho(K)\subset
	\left\{
	\begin{pmatrix}
		\pm 1 & *\\
		0 & \pm 1
	\end{pmatrix}
	\right\}.
	\]
	Therefore the induced actions on \(C\) and on \(N/C\cong \mathbb Z\) both have finite image.
	Let \(K_0\le K\) be the finite-index subgroup acting trivially on both \(C\) and \(N/C\).
	Then for every \(k\in K_0\) and \(n\in N\), $knk^{-1}\equiv n \pmod C,$
	so \([k,n]\in C\). Since \(K_0\) centralizes \(C\), we obtain
	\[
	[K_0,N]\subset C\subset Z(K_0).
	\]
	On the other hand,
	\[
	[K_0,K_0]\subset [K,K]=N.
	\]
	Hence
	\[
	\gamma_3(K_0)=[K_0,[K_0,K_0]]\subset [K_0,N]\subset Z(K_0),
	\]
	so \(\gamma_4(K_0)=1\), where \(\gamma_i(K_0)\) denotes the lower central series of \(K_0\). Thus \(K_0\) is nilpotent of class at most \(3\), and therefore has polynomial growth,
	contradicting the exponential growth of \(K\). This contradiction excludes the regular case.
	
	So in all cases we obtain a subgroup \(U\le N\le K\) with
	\[
	U\cong \mathbb Z^2,
	\qquad
	(u-I)^2=0\quad(\forall u\in U).
	\]
	Finally, if we performed an integral conjugation at the triangularization step, we conjugate \(U\) back.
	This preserves both the isomorphism type and the condition \((u-I)^2=0\).
\end{proof}

\newpage
\section*{Acknowledgement}

The author is sincerely grateful to Professor Qin Wang for her helpful comments and valuable suggestions, which have improved the manuscript. The author also thanks Professor Indira Chatterji for correspondence regarding the work in \cite{ChatterjiZarka2024v1}.

\bibliographystyle{plain}
\bibliography{pairrapiddecay.bib}

\bigskip

\address{Jvbin Yao \endgraf
	Research Center for Operator Algebras, School of Mathematical Sciences, East China Normal University, Shanghai 200241, PR China
}

\textit{E-mail address}: \texttt{52285500009@stu.ecnu.edu.cn}




\end{document}